\newcommand{\spacebelowtable}{2ex}
\newcommand{\marginparstretch}{0.6}
\let\oldmarginpar\marginpar
\renewcommand\marginpar[1]{\-\oldmarginpar[\framebox{\setstretch{\marginparstretch}\begin{minipage}{\marginparwidth}{\raggedleft\tiny #1}\end{minipage}}]{\framebox{\setstretch{\marginparstretch}\begin{minipage}{\marginparwidth}{\raggedright\tiny #1}\end{minipage}}}}
        \tikzset{
        cvertex/.style={circle,draw=black,inner sep=1pt,outer sep=3pt},
        vertex/.style={circle,fill=black,inner sep=1pt,outer sep=3pt},
        DBs/.style={circle,draw=black,circle,fill=black,inner sep=0pt, minimum size=3pt},
        DB/.style={circle,draw=black,circle,fill=black,inner sep=0pt, minimum size=4pt},
         DWs/.style={circle,draw=black,circle,fill=white,inner sep=0pt, minimum size=3pt},
         DWds/.style={circle,draw=black,densely dotted,circle,fill=white,inner sep=0pt, minimum size=3pt},
        DW/.style={circle,draw=black,inner sep=0pt, minimum size=4pt},
        tvertex/.style={inner sep=1pt,font=\scriptsize},
        gap/.style={inner sep=0.5pt,fill=white},
        Ggap/.style={inner sep=0.5pt,fill=green!40!black!20}}
\tikzstyle{mybox} = [draw=black, fill=blue!10, very thick,
\tikzstyle{boxtitle} =[fill=blue!50, text=white,rectangle,rounded corners]
\newcommand{\arrow}[2][20]
 {
  \hspace{-5pt}
  \begin{tikzpicture}
   \node (A) at (0,0) {};
   \node (B) at (#1pt,0) {};
   \draw [#2] (A) -- (B);
  \end{tikzpicture}
  \hspace{-5pt}
 }
\newcommand{\birational}[1][20]{\arrow[#1]{->,dashed}}
\newtheorem{thm}{Theorem}[section]
\newtheorem{prop}[thm]{Proposition}
\newtheorem{lemma}[thm]{Lemma}
\newtheorem{defin}[thm]{Definition}
\newtheorem{cor}[thm]{Corollary}
\theoremstyle{definition} 
\newtheorem{warning}[thm]{Warning}
\newtheorem{example}[thm]{Example}
\newtheorem{remark}[thm]{Remark}
\newtheorem{conj}[thm]{Conjecture}
\newtheorem{notation}[thm]{Notation}
\numberwithin{equation}{section}
\newcounter{tempenum}
\newcommand{\m}{\mathfrak{m}}
\newcommand{\n}{\mathfrak{n}}
\newcommand{\p}{\mathfrak{p}}
\renewcommand{\t}[1]{\textnormal{#1}}
\def\op{\mathop{\rm op}\nolimits}
\def\rest{\mathop{\rm res}\nolimits}
\def\CM{\mathop{\rm CM}\nolimits}
\def\uCM{\mathop{\underline{\rm CM}}\nolimits}
\def\mod{\mathop{\rm mod}\nolimits}
\def\coh{\mathop{\rm coh}\nolimits}
\def\Qcoh{\mathop{\rm Qcoh}\nolimits}
\def\Mod{\mathop{\rm Mod}\nolimits}
\def\Rad{\mathop{\rm Rad}\nolimits}
\def\refl{\mathop{\rm ref}\nolimits}
\def\proj{\mathop{\rm proj}\nolimits}
\def\pd{\mathop{\rm pd}\nolimits}
\def\id{\mathop{\rm inj.dim}\nolimits}
\def\uHom{\mathop{\underline{\rm Hom}}\nolimits}
\def\Hom{\mathop{\rm Hom}\nolimits}
\def\RHom{\mathop{\rm {\bf R}Hom}\nolimits}
\def\End{\mathop{\rm End}\nolimits}
\def\Ext{\mathop{\rm Ext}\nolimits}
\def\add{\mathop{\rm add}\nolimits}
\def\Cok{\mathop{\rm Cok}\nolimits}
\def\Ker{\mathop{\rm Ker}\nolimits}
\def\rank{\mathop{\rm rank}\nolimits}
\def\Im{\mathop{\rm Im}\nolimits}
\def\Supp{\mathop{\rm Supp}\nolimits}
\def\Spec{\mathop{\rm Spec}\nolimits}
\def\sup{\mathop{\rm sup}\nolimits}
\def\width{\mathop{\rm wid}\nolimits}
\def\cwidth{\mathop{\rm cwid}\nolimits}
\def\Max{\mathop{\rm Max}\nolimits}
\def\Perf{\mathop{\rm{per}}\nolimits}
\def\pt{\mathop{\rm{pt}}\nolimits}
\def\D{\mathop{\rm{D}^{}}\nolimits}
\def\Db{\mathop{\rm{D}^b}\nolimits}
\def\Kb{\mathop{\rm{K}^b}\nolimits}
\def\flopflop{{\sf{FF}}}
\def\flop{{\sf{F}}}
\def\flopback{{\sf{F'}}}
\def\Id{\mathop{\rm{Id}}\nolimits}
\newcommand{\K}{\mathop{{}_{}\mathbb{C}}\nolimits}
\newcommand{\nef}{\mathrm{nef}}
\newcommand{\deform}{\mathrm{def}}
\newcommand{\DA}{\mathrm{A}_{\deform}}
\newcommand{\con}{\mathrm{con}}
\newcommand{\CA}{\mathrm{A}_{\con}}
\newcommand{\CB}{\mathrm{B}_{\con}}
\newcommand{\CAab}{\mathrm{A}^{\ab}_{\con}}
\newcommand{\AB}{\mathrm{A}}
\newcommand{\BB}{\mathrm{B}}
\newcommand{\FCA}{\mathbb{F}\CA}
\newcommand{\cEU}{\mathcal{E}_U}
\newcommand{\cFU}{\mathcal{F}_U}
\def\RA{\mathop{\rm RA}\nolimits}
\def\LA{\mathop{\rm LA}\nolimits}
\def\ab{\mathop{\rm ab}\nolimits}
\def\redu{\mathop{\rm red}\nolimits}
\def\inc{\mathop{\rm inc}\nolimits}
\def\Rp{{\rm\bf R}p}
\def\Rf{{\rm\bf R}f}
\def\Ri{{\rm\bf R}i}
\def\RGamma{{\rm\bf R}\Gamma}
\def\RHom{{\rm{\bf R}Hom}}
\def\RsHom{{\bf R}\mathcal{H}om}
\newcommand\RDerived[1]{{\rm\bf R}{#1}}
\newcommand\RDerivedi[2]{{\rm\bf R}^{#1}{#2}}
\newcommand\Rfi[1]{{\rm\bf R}^{#1}f}
\newcommand\FM[1]{\operatorname{\sf{FM}}({#1})}
\newcommand\art{\mathsf{Art}}
\newcommand\cart{\mathsf{CArt}}
\newcommand\alg{\mathsf{Alg}}
\newcommand\calg{\mathsf{CAlg}}
\newcommand\Sets{\mathsf{Sets}}
\newcommand\cDef{c\mathcal{D}ef}
\newcommand\Def{\mathcal{D}ef}
\newcommand{\cA}{\mathcal{A}}
\newcommand{\cC}{\mathcal{C}}
\newcommand{\cD}{\mathcal{D}}
\newcommand{\cE}{\mathcal{E}}
\newcommand{\cF}{\mathcal{F}}
\newcommand{\cG}{\mathcal{G}}
\newcommand{\cI}{\mathcal{I}}
\newcommand{\cL}{\mathcal{L}}
\newcommand{\cM}{\mathcal{M}}
\newcommand{\cN}{\mathcal{N}}
\newcommand{\cO}{\mathcal{O}}
\newcommand{\cQ}{\mathcal{Q}}
\newcommand{\cV}{\mathcal{V}}
\newcommand{\cW}{\mathcal{W}}
\newcommand{\Per}{\mathop{{}^{0}\mathrm{Per}}\nolimits}
\newcommand{\mPer}{\mathop{{}^{-1}\mathrm{Per}}\nolimits}
\newcommand{\TwistVsFlop}{\Psi}
\newcommand\PairsCat[2]{\operatorname{Pairs}(#1,#2)}
\newcommand\LargeFamily[2]{\big(#1,\,#2\big)}
\newlength\tempWidth
\newcommand\InSpaceOf[2]{
   \settowidth{\tempWidth}{$#1$}
   \phantom{#1}
   \hspace{-\tempWidth}
   {#2}}
\newcommand{\defColor}{gray!50}
\newcommand{\defBGColor}{gray!10}
\newcommand{\defGridColor}{gray}
  \def\size{5}
  \def\bulge{1/3}
\def\gridShift{-4}
\def\bend{0.1}
\def\len{0.8}
\def\dotsize{0.1}
\newcommand{\defCurveCoords}[3]{
  (#1+\bend,-#3,#2) .. controls (#1-\bend,-#3/3,#2) and (#1-\bend, #3/3,#2) .. (#1+\bend,#3,#2)
}
\newcommand{\defDrawDot}[2]{
    \draw[fill] (#1+\dotsize,\gridShift,#2+\dotsize) -- (#1+\dotsize,\gridShift,#2-\dotsize) -- (#1-\dotsize,\gridShift,#2-\dotsize) -- (#1-\dotsize,\gridShift,#2+\dotsize) -- cycle
    }
\newcommand{\defCurve}[3]{
  \draw[color=#3] \defCurveCoords{#1}{#2}{\len}
}
\newcommand{\defThreeFold}[3]{
  \draw[color=#3] (#1+0.5+\size,#2-0.3) .. controls (#1+0.5+2*\size*\bulge,#2-0.3+\size/2) and (#1+0.5-\size*\bulge,#2-0.3+\size/2) .. (#1+0.5-\size,#2-0.3) .. controls (#1+0.5-5*\size*\bulge,#2-0.3-\size/2) and (#1+0.5+4*\size*\bulge,#2-0.3-\size/2) .. (#1+0.5+\size,#2-0.3)
}
\newcommand{\defGridLines}[4]{
  \def\x{#1} \def\y{#2}
  \def\numx{#3} \def\numy{#4}
  \foreach \i in {0,...,\numy} {
    \draw[color=\defGridColor] (\x,\gridShift,\y+\i) -- +(\numx,0,0);
  };
  \foreach \i in {0,...,\numx} {
    \draw[color=\defGridColor] (\x+\i,\gridShift,\y) -- +(0,0,\numy);
  };
  \draw[dotted] (\x,0.35*\gridShift,\y) -- (\x,0.9*\gridShift,\y)
}
\newcommand{\defUniversal}[4]{
  \def\x{#1} \def\y{#2}
  \def\numx{#3} \def\numy{#4}
  \def\zero{0}
  \draw[color=\defBGColor,fill=\defBGColor] \defCurveCoords{\x}{\y}{\len} -- \defCurveCoords{\x}{\y+\numx}{-\len} -- cycle;
  \draw[color=\defBGColor,fill=\defBGColor] \defCurveCoords{\x}{\y}{\len} -- \defCurveCoords{\x+\numy}{\y}{-\len} -- cycle;
  \draw[color=\defBGColor,fill=\defBGColor] \defCurveCoords{\x+\numy}{\y}{\len} -- \defCurveCoords{\x+\numy}{\y+\numx}{-\len} -- cycle;
  \draw[color=\defBGColor,fill=\defBGColor] \defCurveCoords{\x}{\y+\numx}{\len} -- \defCurveCoords{\x+\numy}{\y+\numx}{-\len} -- cycle;
  \defUniversalCurves{#1}{#2}{#3}{#4}
  \foreach \i in {1,...,\numy} {
    \foreach \j in {1,...,\numx} {
      \defDrawDot{\x+\i}{\y+\j};
    };
  }
}
\newcommand{\defUniversalComm}[4]{
  \def\x{#1} \def\y{#2}
  \def\numx{#3} \def\numy{#4}
  \def\zero{0}
  \draw[color=\defBGColor,fill=\defBGColor] \defCurveCoords{\x}{\y}{\len} -- \defCurveCoords{\x}{\y+\numx}{-\len} -- cycle;
  \draw[color=\defBGColor,fill=\defBGColor] \defCurveCoords{\x}{\y}{\len} -- \defCurveCoords{\x+\numy}{\y}{-\len} -- cycle;
  \defUniversalCurves{#1}{#2}{#3}{#4}
}
\newcommand{\defUniversalCurves}[4]{
  \foreach \i in {0,...,#4} {
    \ifx\i\zero \defCurve{#1+\i}{#2}{black} \else \defCurve{#1+\i}{#2}{\defColor} \fi;
    \defDrawDot{#1+\i}{#2};
  };
  \foreach \j in {1,...,#3} {
    \defCurve{#1}{#2+\j}{\defColor};
    \defDrawDot{#1}{#2+\j};
  }
}
\begin{document}
\title{\textsc{Noncommutative deformations and flops}}
\author{Will Donovan}
\address{Will Donovan, The Maxwell Institute, School of Mathematics, James Clerk Maxwell Building, The King's Buildings, Mayfield Road, Edinburgh, EH9 3JZ, UK.}
\email{Will.Donovan@ed.ac.uk}
\author{Michael Wemyss}
\address{Michael Wemyss, The Maxwell Institute, School of Mathematics, James Clerk Maxwell Building, The King's Buildings, Mayfield Road, Edinburgh, EH9 3JZ, UK.}
\email{wemyss.m@googlemail.com}
\begin{abstract}
We prove that the functor of noncommutative deformations of every flipping or flopping irreducible rational curve in a 3-fold is representable, and hence associate to every such curve a noncommutative deformation algebra $\CA$. This new invariant extends and unifies known invariants for flopping curves in 3-folds, such as the width of Reid \cite{Pagoda}, and the bidegree of the normal bundle. It also applies in the settings of flips and singular schemes. We show that the noncommutative deformation algebra $\CA$ is finite dimensional, and give a new way of obtaining the commutative deformations of the curve, allowing us to make explicit calculations of these deformations for certain $(-3,1)$-curves.

We then show how our new invariant $\CA$ also controls the homological algebra of flops. For any flopping curve in a projective 3-fold with only Gorenstein terminal singularities, we construct an autoequivalence of the derived category of the 3-fold by twisting around a universal family over the noncommutative deformation algebra $\CA$, and prove that this autoequivalence is an inverse of Bridgeland's flop--flop functor. This demonstrates that it is strictly necessary to consider noncommutative deformations of curves in order to understand the derived autoequivalences of a 3-fold, and thus the Bridgeland stability manifold.

\end{abstract}
\subjclass[2010]{Primary 14D15; Secondary 14E30, 14F05, 16S38, 18E30}
\thanks{The first author was supported by the EPSRC grant~EP/G007632/1 and by the Erwin Schr\"odinger Institute, Vienna, during the course of this work. The second author was supported by EPSRC grant~EP/K021400/1.}
\maketitle
\parindent 20pt
\parskip 0pt

\tableofcontents

\section{Introduction}

To understand the birational geometry of algebraic varieties via the minimal model program, it is necessary to understand the geometry of certain codimension two modifications known as {\em flips} and {\em flops}. Even for modifications of irreducible rational curves in dimension three this geometry is extraordinarily rich, and has been discussed by many authors, for example  \cite{Pinkham,Pagoda,ClemensKollarMori,KM,KollarMori,Kawa}. A central problem is to classify flips and flops in a satisfying manner, and to construct appropriate invariants.

In the first half of this paper we associate a new invariant to every flipping or flopping curve in a $3$-fold, using noncommutative deformation theory.   This generalises and unifies the classical invariants into one new object, the {\em noncommutative deformation algebra} $\CA$ associated to the curve.  Our new invariant is a finite dimensional algebra, and can be associated to any contractible rational curve in any $3$-fold, regardless of singularities.  It recovers classical invariants in natural ways.  Moreover, unlike these classical invariants, $\CA$ is an algebra, and in the second half of the paper we exploit this to give the first intrinsic description of a derived autoequivalence associated to a general flopping curve.

\subsection{Background on $3$-folds}
\label{sect intro background}

The simplest example of a $3$-fold flop is the well-known Atiyah flop. In this case the flopping curve has normal bundle $\cO(-1)\oplus\cO(-1)$ and is rigid, and the flop may be factored
 as a blow-up of the curve followed by a blow-down.  However, for more general flops the curve is no longer rigid, and the factorization of the birational map becomes more complicated.  For a general flopping irreducible rational curve $C$ in a smooth 3-fold $X$, the following classical invariants are associated to $C$.

\begin{enumerate}
\item {\bf Normal bundle.} Denoting $\cO(a)\oplus \cO(b)$ as simply $(a,b)$, the normal bundle must be $(-1,-1)$, $(-2,0)$ or $(-3,1)$ \cite[Prop.~2]{Pinkham}.
\item {\bf Width \cite[5.3]{Pagoda}.}  In the first two cases above, the width of $C$ is defined to be
\[
\mbox{sup}\big\{ n\mid \exists\mbox{ a scheme } C_n\cong C\times \Spec \mathbb{C}[x]/x^n \mbox{ with } C\subset C_n\subset X \big\}  .
\]
More generally, in the case $(-3,1)$, it is not possible to define the width invariant in this way.
\item {\bf Dynkin type \cite{KM,Kawa}.} By taking a generic hyperplane section, Katz--Morrison and Kawamata  assign to $C$ the data of a Dynkin diagram with a marked vertex. Only some possibilities arise, as shown in Table~\ref{table compare invts} and \eqref{Dynkin marked}.
\item {\bf Length \cite[p95--96]{ClemensKollarMori}.} This is defined to be the multiplicity of the curve $C$ in the fundamental cycle of a generic hyperplane section.
\item {\bf Normal bundle sequence \cite[Thm.~4]{Pinkham}.} The flop $f\colon X \birational X'$ factors into a sequence of blow-ups in centres $C_1,\ldots,C_n$, followed by blow-downs. The normal bundles of these curves form the $\cN$-sequence.
\setcounter{tempenum}{\theenumi}
\end{enumerate}

\opt{10pt}{The following table }\opt{12pt}{Table~\ref{table compare invts} }summarises the relations between the invariants above, and shows that none of these invariants classify all analytic equivalence types of flopping curves.

\newcommand{\seqspacing}{\,}
\begin{table}[!htbp]
\opt{12pt}{\hspace*{-8mm}}
\begin{tabular}{*5c}
\toprule
 {\scriptsize (1)} & {\scriptsize (2)}  & {\scriptsize (3)}& {\scriptsize (4)} & {\scriptsize (5)} \\
 {\bf $\cN_{C|X}$} & {\bf width} & {\bf Dynkin type} & {\bf \: length \:} & {\bf $\cN$-sequence}    \\
\midrule
 $\,(-1,-1)\,$   &  1    & \multirow{2}*{$A_1$}  & \multirow{2}*{1}     & $(-1,-1)$ \\\cmidrule(l){1-2} \cmidrule(l){5-5}
 $(-2,0)$   &   $n>1$   & & & {\scriptsize $\overbrace{(-2,0) \ldots (-2,0)\seqspacing(-1,-1)}^n$ } \\\cmidrule(l){1-5}
\multirow{5}*{$(-3,1)$}  &  \multirow{5}*{\small undefined}  & $D_4$&   2  & {\scriptsize $(-3,1)\seqspacing(-2,-1)\seqspacing(-1,-1)$} \\\cmidrule(l){3-5}
&   & $E_6$ & 3 & \multirow{3}*{\scriptsize $(-3,1)\seqspacing(-3,0)\seqspacing(-2,-1)\seqspacing(-1,-1)$ } \\\cmidrule(l){3-4}
 &   &  $E_7$  & 4 & \\\cmidrule(l){3-4}
 &   &  $E_8(6)$ & 6 &  \\\cmidrule(l){3-5}
 & & $E_8(5)$ & 5 &  {\scriptsize $(-3,1)\seqspacing(-3,0)\seqspacing(-3,0)\seqspacing(-2,-1)\seqspacing(-1,-1)$ }   \\
\bottomrule
\end{tabular}\\[\spacebelowtable]
\caption{Classical invariants of flopping rational curves in smooth 3-folds.}\label{table compare invts}
\end{table}
\opt{10pt,a4}{\vspace{-5mm}}

\subsection{Noncommutative deformations and invariants}
\label{sect intro noncomm def}
Our new invariant of flopping and flipping rational curves $C$ in a 3-fold $X$ is constructed by noncommutatively deforming the associated sheaf $E:=\cO_C(-1)$. Classically, infinitesimal deformations of $C$ are controlled by $\Ext^1_X(E,E)$, the dimension of which is determined by the normal bundle $\cN_{C|X}$.  If $\dim_{\K} \Ext^1_X(E,E)\leq 1$, the curve $C$ deforms over an artinian base $\K[x]/x^{n}$.  However, when $\dim_{\K} \Ext^1_X(E,E)\geq 2$, deformations of $C$ are less well understood, and for example in the setting of flops the width invariant of Reid is no longer sufficient to characterize them.

In this paper, we solve this problem by using noncommutative deformation theory.  We give preliminary definitions here, leaving details to \S\ref{NCdef section}. Recall that the formal commutative deformations of the simple sheaf $E$ associated to $C$ are described by a functor from commutative local artinian $\K$-algebras to sets, given by
\begin{align*}
\cDef_E\colon \cart & \to \Sets \\
R & \mapsto \left. \{ \text{flat $R$-families of coherent sheaves deforming $E$} \} \middle/ \sim \right. .
\end{align*}
In the above we take the point of view that a family is an $R$-module in $\coh X$, as this turns out to generalise.  Following \cite{Laudal,Eriksen}, we may then define  a {\em noncommutative deformation functor} $\Def_E\colon \art_1 \to \Sets$ from $1$-pointed artinian not-necessarily-commutative $\K$-algebras to sets, which extends $\cDef_E$ (for details see \ref{NC deformation definition}).  Our first main theorem is the following.

\begin{thm}[=\ref{chase couples 1}\eqref{chase couples 1 2}]\label{main thm representability} For a flopping or flipping curve $C$ in a 3-fold $X$,  the noncommutative deformation functor $\Def_E$ is representable.
\end{thm}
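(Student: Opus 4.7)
The plan is to apply the noncommutative analogue of Schlessinger's criterion due to Laudal: the deformation functor $\Def_E$ of a simple object in a $\K$-linear abelian category is pro-representable whenever $\Ext^1(E,E)$ is finite-dimensional over $\K$, with representing algebra $\CA$ built as a quotient of the completed tensor algebra on the dual $\Ext^1_X(E,E)^{\ddual}$ by relations obtained from obstruction classes in $\Ext^2_X(E,E)$. The main content of the proof is therefore the finite-dimensionality of the relevant Ext groups.

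I would begin by reducing to a local model. Since $\Def_E$ depends only on a formal neighbourhood of $C$ in $X$, one may assume that $X$ admits a projective birational contraction $f \colon X \to \Spec R$ to a complete local Noetherian $\K$-algebra $(R,\m)$, with exceptional locus exactly $C$. Such a contraction exists uniformly for flipping and flopping curves, and the reduction preserves $\Def_E$.

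Next, I would verify that $\Ext^i_X(E,E)$ is finite-dimensional over $\K$ for $i=1,2$. The sheaf $E=\cO_C(-1)$ is coherent with proper support, so each $\mathcal{E}xt^i_X(E,E)$ is coherent and supported on $C$. The derived pushforward $\Rf_*\mathcal{E}xt^i_X(E,E)$ is then a finitely generated $R$-module supported only at the closed point of $\Spec R$, since $f$ is an isomorphism off $C$ and $C$ itself lies over the closed point. Such modules have finite length, hence finite $\K$-dimension, and the local-to-global spectral sequence transfers this finiteness to $\Ext^i_X(E,E)$.

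Applying Laudal's criterion then completes the argument. The main obstacle is bridging the geometric definition of $\Def_E$, phrased via $R$-modules in $\coh X$, with the abstract algebraic formulation in which Laudal's theorem is usually stated. The cleanest route is a tilting equivalence: in a formal neighbourhood of $C$, one replaces $\coh X$ by $\modCat \Lambda$ for a finite $R$-algebra $\Lambda$, under which $E$ becomes a simple $\Lambda$-module and Laudal's theorem applies directly to $\Lambda$-module deformations. In the flopping case such a tilting bundle is supplied by Van den Bergh's noncommutative crepant resolution; in the flipping case, where $X$ may have terminal singularities, an analogous construction must be developed, and this is the technical heart of the argument.
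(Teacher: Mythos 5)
There is a genuine gap, and it sits exactly at the step you treat as routine. Laudal's criterion (and the sheaf-theoretic extensions by Eriksen, Segal, Efimov--Lunts--Orlov) yields only \emph{pro}-representability: a pro-object of $\art_1$, i.e.\ a complete augmented algebra, which in general is infinite-dimensional. The theorem asks for representability on $\art_1$ itself, which forces the representing algebra to be finite-dimensional with nilpotent augmentation ideal. Finite-dimensionality of $\Ext^1_X(E,E)$ and $\Ext^2_X(E,E)$ --- which your support argument does give, but which holds for \emph{any} proper curve, contractible or not --- does not imply this: a quotient of the completed free algebra on $\Ext^1_X(E,E)^\vee$ by finitely many obstruction relations is typically infinite-dimensional. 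For instance, a $(-2,0)$-curve moving in a one-parameter family has $\dim_\K\Ext^1=\dim_\K\Ext^2=1$ and commutative deformation algebra $\K[[x]]$, so your argument as written would ``prove'' representability in a situation where it fails; the flipping/flopping hypothesis must enter beyond supplying a local model. In the paper it enters precisely to show the deformation algebra is artinian: the representing object is identified with the contraction algebra $\CA=\End_{\widehat{R}}(\widehat{R}\oplus N_1)/[\widehat{R}]$, which is supported only at $\m$ because the contraction is an isomorphism away from one point, hence is finite-dimensional (\ref{cont is fd for flop}). This is why the paper explicitly discards the Laudal/Schlessinger route (opening of \S\ref{NCdef section}, and the remark after \ref{NC deformation definition} that the prorepresenting hull is not used) and instead proves representability directly: after the reductions $\Def^X_{i_*E}\cong\Def^U_E\cong\Def^\Lambda_T\cong\Def^{\AB}_S$ (\ref{local to global deformations the same}, \ref{local to alg deformations the same}, \ref{local to very local deformations the same}), one computes that $\Def^{\AB}_S\cong\Hom_{\art_1}(\CA,-)$ by observing that a $\Gamma$--$\AB$ bimodule structure on $\Gamma\otimes_\K S$ is the same thing as an algebra map $\AB\to\Gamma$ compatible with augmentations, hence killing the idempotent $e$, hence a map $\CA\to\Gamma$ (\ref{complete is representable}); representability then holds \emph{because} $\CA\in\art_1$, and the representing couple is transported back through the chain of isomorphisms (\ref{chase couples 1}).

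Two smaller points. First, your worry that a tilting construction ``must be developed'' for flips is unfounded: Van den Bergh's tilting bundle \cite[3.2.8]{VdB1d} exists for any projective birational morphism with fibres of dimension at most one and $\Rf_*\cO_U=\cO_R$, so flips and flops are handled uniformly in \S\ref{setup section 2.1}; the non-formal work is the finite-dimensionality and identification of $\CA$ described above. Second, even within your framework you would still need to pass to the basic algebra $\AB$ (or otherwise arrange that the simple module has one-dimensional endomorphisms over the residue field of the test algebras), since objects of $\art_1$ are augmented to $\K$; the paper does this via the morita reduction in \S\ref{complete local geometry summary} and \ref{local to very local deformations the same}.
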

As a consequence of \ref{main thm representability}, from the noncommutative deformation theory we obtain a universal family $\cE\in \coh X$.  At the same time, $\cDef_E$ is also representable, giving another universal family $\cF\in\coh X$.    The following are our new invariants.

\begin{enumerate}
\setcounter{enumi}{\thetempenum}
\item {\bf Noncommutative deformation algebra.} This is defined  \opt{12pt}{to be }$\DA:=\End_X(\cE)$.
\item {\bf Commutative deformation algebra.} We define this to be $\End_X(\cF)$, and prove in \ref{contraction alg isos} that it is isomorphic to the {\em abelianization} $\DA^{\ab}$, given by quotienting $\DA$ by the ideal generated by commutators.
\setcounter{tempenum}{\theenumi}
\end{enumerate}

The above objects are algebras, and not just vector spaces.  Nevertheless, taking dimensions we obtain the following new numerical invariants. 

\begin{list}{(${\theenumi}$)}{\usecounter{enumi}}
\setcounter{enumi}{\thetempenum}
\item {\bf Noncommutative width.} We take $\width(C) := \dim_{\K} \DA$.
\item {\bf Commutative width.} We set $\cwidth(C) := \dim_{\K} \DA^{\ab}$.
\setcounter{tempenum}{\theenumi}
\end{list}

For flopping and flipping curves these numerical invariants are always finite by \ref{fd intro}, regardless of singularities.  We explain in \ref{can calculate width} that the noncommutative width $\width(C)$ is readily calculated, and is indeed easier to handle than the commutative width $\cwidth(C)$. From the viewpoint of the homological algebra, we will see later in \S\ref{nec of def} that the noncommutative width is the more natural invariant.

For floppable curves of Type $A$, when Reid's width invariant is defined \cite{Pagoda}, we prove in \ref{(-3,1) not commutative} that all three width invariants agree.  For all other contractible curves, including flips and singular flops, our invariants are new.  We will see in \ref{intro example} below that the noncommutative width may be strictly larger than the commutative width.
Remark~\ref{rem lower width bounds} also gives lower bounds for the width according to Dynkin type. This makes it clear that the algebras $\DA$ can be quite complex: for instance in type $E_8$ we find that $\width(C)\geq40$.

\subsection{Contraction algebras}

In addition to the deformation-theoretic viewpoint given above, we explain in \S\ref{flops section} how $\DA$ arises also as a {\em contraction algebra} associated to an algebra obtained by tilting.  It is this alternative description of  $\DA$ that allows us to calculate it, and also control it homologically. 

Briefly, for any contractible curve $C$ in a 3-fold $X$, by passing to an open neighbourhood $U$ of $C$, and then to the formal fibre $\widehat{U}\to\Spec{\widehat{R}}$, it is well known \cite{VdB1d} that $\widehat{U}$ is derived equivalent to an algebra $\AB:=\End_{\widehat{R}}(\widehat{R}\oplus N_1)$.  See \S\ref{complete local geometry summary} for more details.  Let $[\widehat{R}]$ be the two-sided ideal of $\AB$ consisting of homomorphisms $\widehat{R}\oplus N_1\to\widehat{R}\oplus N_1$ that factor through $\add\widehat{R}$.  We write $\CA:=\AB/[\widehat{R}] $ and call $\CA$ the {\em contraction algebra}.

\begin{thm}[=\ref{contraction alg isos}, \ref{cont is fd for flop}]\label{fd intro} For a flopping or flipping curve in a 3-fold, $\DA\cong\CA$, and furthermore this is a finite dimensional algebra.
\end{thm}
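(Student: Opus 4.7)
The plan is to handle the two claims separately, reducing both to the algebraic side via the derived equivalence summarised in \S\ref{complete local geometry summary}.

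For the isomorphism $\DA \cong \CA$, I would first reduce the computation of $\DA$ to the formal fibre $\widehat{U}$. Since $E = \cO_C(-1)$ is supported on $C$, flat base change and standard descent for families of coherent sheaves with proper support identify the noncommutative deformation functors of $E$ on $X$, $U$ and $\widehat{U}$, and hence their representing algebras. Next, the Van den Bergh tilting equivalence $\Db(\coh \widehat{U}) \simeq \Db(\mod \AB)$ sends $E$ to the simple $\AB$-module $S_1$ at the vertex corresponding to $N_1$, so $\Def_E$ translates into the noncommutative deformation functor of $S_1$ as an $\AB$-module. The remaining task is to show that $\CA$ represents this latter functor. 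In the Laudal--Eriksen framework, a deformation of $S_1$ over a pointed artinian $\K$-algebra $R$ amounts to an algebra homomorphism from $\AB$ into a matrix algebra over $R$ that sends the idempotent $e_0$ associated to $\widehat{R}$ into the Jacobson radical. Such homomorphisms kill every map factoring through $\add \widehat{R}$, so they factor through the quotient $\CA = \AB/[\widehat{R}]$, and one checks that $\CA$ is precisely the universal such quotient, yielding $\DA \cong \CA$.

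For finite-dimensionality, the key observation is the identification $\CA \cong \underline{\End}_{\widehat{R}}(N_1)$, the stable endomorphism algebra of the MCM module $N_1$ modulo maps factoring through $\add \widehat{R}$. This follows because $e_0 = \mathrm{id}_{\widehat{R}}$ lies in $[\widehat{R}]$, so in $\CA$ the idempotent $e_0$ vanishes and $\CA = e_1 \AB e_1 / (e_1 [\widehat{R}] e_1)$; since $e_1 \AB e_1 = \End_{\widehat{R}}(N_1)$ and $e_1 [\widehat{R}] e_1$ is exactly the ideal of endomorphisms of $N_1$ factoring through $\add \widehat{R}$, the claimed description follows. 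Now for a flipping or flopping curve the contraction makes $\Spec \widehat{R}$ an isolated Gorenstein singularity at the closed point, and $N_1$ is an MCM $\widehat{R}$-module. Standard Hom-finiteness of the stable category of MCMs over an isolated Gorenstein singularity (stable Homs are controlled by $\Ext$ groups supported at the singular point) then forces $\underline{\End}_{\widehat{R}}(N_1)$, and hence $\CA$, to be finite-dimensional over $\K$.

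The main obstacle is the representability step in the first part: matching the Laudal--Eriksen noncommutative deformation functor of $S_1$ with the specific quotient $\CA$, including constructing the universal family as a $(\CA, \AB)$-bimodule and verifying naturality of the bijection between deformations and algebra homomorphisms. Everything else is then a translation through the derived equivalence and an application of the standard theory of MCM modules over isolated Gorenstein singularities.
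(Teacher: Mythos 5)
Your first half follows essentially the paper's own route: reduce to the complete local algebra side and identify noncommutative deformations of the simple with algebra homomorphisms out of $\AB$ that kill the idempotent $e$ corresponding to $\widehat{R}$ (an idempotent landing in a nilpotent ideal is zero), hence factor through $\CA=\AB/\AB e\AB$; this is exactly \ref{local to global deformations the same}, \ref{local to alg deformations the same}, \ref{local to very local deformations the same} and \ref{complete is representable}. Two remarks. First, the paper makes your "flat base change and descent to $\widehat{U}$" step precise on the algebra side, via finite-length $\Lambda$-modules and \ref{useful finite length stuff}, rather than comparing $\Qcoh U$ with the formal fibre directly; as written your reduction is plausible but unargued, and the module-theoretic route is the clean way to do it. Second, $\DA$ is \emph{defined} as $\End_X(\cE)$, not as the representing algebra, so representability by $\CA$ does not by itself give $\DA\cong\CA$: you still need to transport endomorphism algebras of the universal families back along the comparison functors (full faithfulness of $\Ri_*$ with $\Ri_*\cEU\cong\cE$, \ref{highers vanish 2}, support at $\m$, morita, and finally $\End_{\AB}(\AB/\AB e\AB)\cong\AB/\AB e\AB$), which is the content of \ref{contraction alg isos}. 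This is bookkeeping along the same equivalences you already use, but it is a step, not a formality of representability.

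The genuine gap is in your finiteness argument. The identification $\CA\cong\End_{\widehat{R}}(N_1)/[\widehat{R}]$ is fine, but the input "for a flipping or flopping curve, $\Spec\widehat{R}$ is an isolated Gorenstein singularity and $N_1$ is MCM" is not available in the generality of the theorem, which explicitly covers flips and makes no isolatedness assumption (the setup of \S\ref{setup section 2.1} allows CM canonical, possibly non-isolated, singularities; Gorenstein is only imposed from \S\ref{strategy} onwards, for the twist results). For flipping contractions $\widehat{R}$ is in general not Gorenstein (the Francia flip base in the paper is cut out by the three $2\times 2$ minors, a codimension-two non-complete intersection), and $N_1\in\CM\widehat{R}$ is tied to crepancy ($\Lambda\in\CM R$ holds in the flops sections precisely because the contraction is crepant), so it is not guaranteed for flips; more importantly, without isolatedness the Hom-finiteness of the stable category that you invoke simply does not apply, and that hypothesis is the one carrying your argument. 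The fix is the one used in \ref{cont is fd for flop}, and it needs none of these hypotheses: since $f$ is an isomorphism away from $\m$, the module $N$ (resp.\ $N_1$) is free after localizing at any prime $\p\neq\m$, so every endomorphism factors through $\add R_\p$ there; hence $\Lambda_{\con}$ and $\CA$ are finitely generated $R$-modules supported only at the closed point $\m$, therefore of finite length, therefore finite dimensional over $\K$. Your stable-endomorphism picture is compatible with this, but the local freeness on the punctured spectrum must come from the geometry of the contraction, not from isolatedness or Gorensteinness of the base.
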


We refer the reader to \ref{Pagoda example} for the calculation of $\CA$ in the case of the Pagoda flop, in which case $\CA\cong\K[x]/x^n$, as expected.   More generally, our approach allows us to make calculations beyond Dynkin type $A$.  We illustrate this in the following example.

\begin{example}\label{intro example}The simplest non-type-$A$ flopping contraction is obtained by setting
\[
\widehat{R}:=\frac{\mathbb{C}[[u,v,x,y]]}{u^2+v^2y=x(x^2+y^3)},
\]
and considering the morphism $X\to\Spec \widehat{R}$ obtained by blowing up a generator of the class group.  There is a single floppable $(-3,1)$-curve above the origin, and this is a length two $D_4$ flop, known as the Laufer flop \cite{Pagoda,AM}. In this case, by \cite{AM}, $\AB:=\End_{\widehat{R}}(\widehat{R}\oplus N_1)$ can be presented as the completion of the quiver with relations
\[
\begin{array}{cc}
\begin{array}{c}
\begin{tikzpicture}[>=stealth]
\node (C1) at (0,0) {$\scriptstyle \widehat{R}$};
\node (C1a) at (-0.1,0)  {};
\node (C2) at (1.75,0) {$\scriptstyle N_1$};
\node (C2a) at (1.85,0.05) {};
\node (C2b) at (1.85,-0.05) {};
\draw [->,bend left=20,looseness=1,pos=0.5] (C1) to node[gap]  {$\scriptstyle a$} (C2);
\draw [->,bend left=20,looseness=1,pos=0.5] (C2) to node[gap]  {$\scriptstyle b$} (C1);
\draw[->]  (C2b) edge [in=-80,out=-10,loop,looseness=12,pos=0.5] node[below] {$\scriptstyle y$} (C2b);
\draw[->]  (C2a) edge [in=80,out=10,loop,looseness=12,pos=0.5] node[above] {$\scriptstyle x$} (C2a);
\end{tikzpicture}
\end{array}
&
{\scriptsize{
\begin{array}{l}
ay^{2}=-aba\\
y^{2}b=-bab\\
xy=-yx\\
x^2+yba+bay=y^{3}.
\end{array}}}
\end{array}
\] 
It follows immediately that
\[
\CA\cong \frac{\mathbb{C}\langle\langle x,y\rangle\rangle}{xy=-yx\mbox{, } x^2=y^{3}}=\frac{\mathbb{C}\langle x, y\rangle}{xy=-yx\mbox{, } x^2=y^{3}}.
\] 
We call this algebra a {\em quantum cusp}. From this presentation, it can be seen that $\width(C)=9$ (for details, see \ref{D4 flop general example}).  Furthermore, factoring by commutators it follows that the commutative deformation algebra is given by
\[
\CA^{\ab}\cong \frac{\mathbb{C}[x,y]}{xy\mbox{, } x^2=y^{3}}
\] 
and so $\cwidth(C)=\dim_{\mathbb{C}}\CA^{\ab}=5$.  

The deformations are sketched in Figure~\ref{fig.deformations2}.  In the commutative case on the left, the original curve moves infinitesimally, fibred over $\Spec \CAab$, with the dots in the base representing the vector space basis $\{ 1,x,x^2, y,y^2\}$ of $\CAab$. The noncommutative deformations on the right are harder to draw, but roughly speaking they thicken the universal sheaf $\cF$ to a sheaf $\cE\in\coh X$ over $\CA$, where the nine dots represent a basis of $\CA$.

\opt{a4}{\vspace{-5mm}}
\def\noncommShift{\size*6}
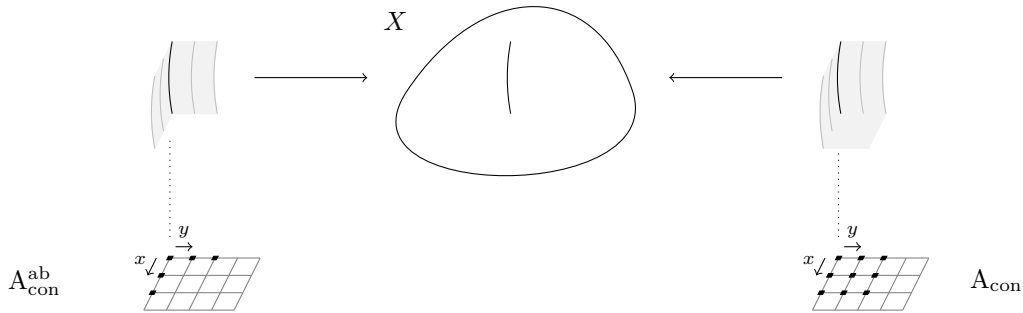
\begin{figure}[h]
\centering
\opt{12pt}{\hspace*{-3mm}}
\begin{tikzpicture}[scale=2,node distance=6]
\node at (0,-0.6) {$
  \begin{tikzpicture}[xscale=0.3,yscale=0.6]
    \defThreeFold{\size*3}{0}{black};
    \defCurve{\size*3}{0}{black};
    \node at (\size*2,\size/4) {$\small X$};
    \defGridLines{0}{0}{4}{3};
    \defUniversalComm{0}{0}{2}{2};
    \draw[->] (0.75*\size,0) -- (\size*1.75,0);
    \draw[->] (-0.6,\gridShift,0) -- node[left,pos=0.1] {$\scriptstyle x$} (-0.6,\gridShift,1);
    \draw[->] (0.25,\gridShift+0.25,0) -- node[above,pos=0.5] {$\scriptstyle y$} (1,\gridShift+0.25,0);
\node at (-6,-4.5) {$\small \CAab$};
\end{tikzpicture}$};

\node at (\size*0.68,-0.6) {$
  \begin{tikzpicture}[xscale=0.3,yscale=0.6]
    \defThreeFold{\noncommShift}{0}{white};
    \defGridLines{\noncommShift}{0}{4}{3};
    \defUniversal{\noncommShift}{0}{2}{2};
    \draw[->] (\noncommShift-0.5*\size,0) -- (\noncommShift-\size*1.5,0);
    \draw[->] (\noncommShift-0.6,\gridShift,0) -- node[left,pos=0.1] {$\scriptstyle x$} (\noncommShift-0.6,\gridShift,1);
    \draw[->] (\noncommShift+0.25,\gridShift+0.25,0) -- node[above,pos=0.5] {$\scriptstyle y$} (\noncommShift+1,\gridShift+0.25,0);
\node at (\noncommShift+7,-4.5) {$\small \CA$};
\end{tikzpicture}$};
\end{tikzpicture} 
\caption{Comparing commutative and noncommutative deformations of the Laufer $D_4$ floppable curve in \ref{intro example}.}
\label{fig.deformations2}
\end{figure}
\end{example}
\opt{a4}{\vspace{-2mm}}

Using this example, and its generalization \ref{D4 flop general example}, we are able to extend the previous known invariants in Table~\ref{table compare invts}.  This is summarised in Table~\ref{table 2}. We feel that the further study of the structure of these newly-arising algebras $\CA$, and of their representations, is of interest in itself.  Furthermore, we conjecture that the contraction algebra distinguishes the analytic type of the flop:

\begin{conj}
Suppose that $Y\to Y_{\con}$ and $Z\to Z_{\con}$ are flopping contractions of an irreducible rational curve in smooth projective $3$-folds, to points $p$ and $q$ respectively.  To these, associate the contraction algebras $\CA$ and $\CB$.  Then the completions of the stalks at $p$ and $q$ are isomorphic if and only if $\CA\cong\CB$.
\end{conj}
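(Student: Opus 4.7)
The forward implication is immediate from the construction. If $\widehat{\cO}_{Y_\con,p}\cong\widehat{\cO}_{Z_\con,q}$ as local $\K$-algebras, then the formal fibres $\widehat{Y}\to\Spec\widehat{R}$ and $\widehat{Z}\to\Spec\widehat{S}$ are isomorphic as flopping contractions, because on the Gorenstein terminal base the flop is intrinsically the blow-up of a generator of the class group. Under this isomorphism the Van den Bergh summand $N_1$ corresponds (uniquely up to isomorphism), so the tilting algebras $\AB$ and $\BB$ are isomorphic, and hence so are the quotients $\CA=\AB/[\widehat{R}]$ and $\CB=\BB/[\widehat{S}]$.

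For the converse, I would attempt to reconstruct $\widehat{R}$ from $\CA$. The natural target is $\widehat{R}$ itself, accessed via the centre $\widehat{R}=Z(\AB)$ of the tilting algebra, using that $\AB$ is an NCCR of $\widehat{R}$. So the main task is to recover $\AB$ from $\CA$. Some partial data is visible directly: by \ref{contraction alg isos} the abelianization $\CAab$ prorepresents the commutative deformation functor of $\cO_C(-1)$, and so describes the scheme-theoretic infinitesimal locus of $C$ in $\widehat{Y}$, giving a first-order picture of the singularity. To go further, I would try to exhibit $\AB$ as a canonical thickening of $\CA$ in the category of module-finite symmetric orders (reflecting the Calabi--Yau-3 structure of the formal contraction), with $[\widehat{R}]\triangleleft\AB$ appearing as a distinguished projective ideal corresponding to the extra summand $\widehat{R}$.

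The main obstacle is to prove that this thickening is unique: a priori, distinct analytic $\widehat{R}$'s could yield the same quotient $\AB/[\widehat{R}]\cong\CA$, and ruling this out appears to require controlling Hochschild cohomology $\mathrm{HH}^\ast(\CA)$ carefully, to show that pro-deformations of $\CA$ as such an order are rigid in the appropriate sense. A reasonable way to begin is a case analysis along the Katz--Morrison Dynkin stratification of Table~\ref{table compare invts}. Type $A$ reduces to Reid's classification, since $\CA\cong\K[x]/x^n$ in that case, and the Pagoda $n$-fold is determined by $n$. The length-two $D_4$ Laufer flop of \ref{intro example} is the crucial first nontrivial test: the quantum cusp relations $xy=-yx$ and $x^2=y^3$ already point directly at Laufer's hypersurface $u^2+v^2y=x(x^2+y^3)$, suggesting that from $\CA$ one can essentially read off a presentation of $\widehat{R}$. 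Working through the remaining $D_n$, $E_6$, $E_7$, $E_8$ types with their known normal forms, and checking that the algebras $\CA$ are pairwise non-isomorphic across distinct analytic families, should either provide strong evidence or, more ambitiously, a uniform reconstruction procedure recovering $\widehat{R}$ from $\CA$.
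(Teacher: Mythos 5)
This statement is a conjecture in the paper: the authors give no proof of it, remarking only that it holds in the Type $A$ and Type $D_4$ cases where $\CA$ is known explicitly (\ref{Pagoda example}, \ref{D4 flop general example}), and that the general case is deferred to future work. Your proposal is therefore not something that can be checked against a proof in the paper, and on its own terms it is not a proof either. The entire content of the conjecture is the converse direction, and there your argument stops exactly at the open point: you propose to recover $\widehat{R}$ as the centre of the NCCR $\AB$, and $\AB$ as a ``canonical thickening'' of $\CA$ by a distinguished projective ideal, but you offer no construction of this thickening and no argument for its uniqueness --- you explicitly flag this as the main obstacle. The fallback of a case-by-case analysis along the Dynkin stratification also does not close the gap: within a fixed Dynkin type there are infinitely many analytic isomorphism classes (the Pagoda and Laufer families already depend on a parameter $n$), no normal forms or presentations of $\CA$ are known beyond Types $A$ and $D_4$ (cf.\ Remark~\ref{rem lower width bounds}, which only gives dimension bounds), so ``checking that the algebras are pairwise non-isomorphic across distinct analytic families'' is precisely the conjecture restated, not a method for proving it.

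The forward direction in your proposal also needs repair. An isomorphism $\widehat{\cO}_{Y_{\con},p}\cong\widehat{\cO}_{Z_{\con},q}$ does not by itself identify the formal fibres: over a fixed complete local base a single flopping curve admits two small resolutions in general (the contraction and its flop), so the Van den Bergh summand $N_1$ is not determined by the base alone --- the two resolutions correspond to $N_1$ and its mutation $\nu(\mathfrak{R}\oplus N_1)$ of \ref{setup2}. What saves the statement is that the contraction algebra is invariant under this mutation, i.e.\ $\CA\cong\CB$ as algebras for the mutated module (this is exactly the fact quoted at the start of the proof of \ref{track Lambda J 1}, via \cite[6.20]{IW4}), so the forward implication does hold, but by this mutation-invariance rather than by the claim that ``$N_1$ corresponds uniquely.'' With that correction the easy direction is fine; the converse remains genuinely open and your proposal, as written, does not establish it.
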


In the Type $A$ and $D$ cases for which we know the algebra $\CA$ explicitly, the conjecture is true.  We will return to the general case in future work.

\begin{table}[!h]
\centering
\opt{12pt}{\hspace*{-6mm}}
\begin{tabular}{*6c}
\toprule
& {\scriptsize (3)} & {\scriptsize (6)} & {\scriptsize (7)} & {\scriptsize ($8$)} & {\scriptsize ($9$)} \\
{\bf Example}& {\bf Dynkin type} & $\CA$ & $\CAab$ & $\width(C)$ & $\cwidth(C)$ \\
\midrule
Atiyah & \multirow{2}*{$A_1$} & $\K$ & $\K$ & 1 & 1 \\\cmidrule(l){1-1} \cmidrule(l){3-6}
Pagoda (\ref{Pagoda example}) & & $\frac{\K[x]}{x^{n}}$ & $\frac{\K[x]}{x^{n}} $ & $n$ & $n$ \\\cmidrule(l){1-6}
\multirow{2}*{Laufer (\ref{D4 flop general example})} & \multirow{2}*{$D_4$} & $\frac{\K\langle x,y\rangle}{xy=-yx,\,x^2=y^{2n+1}}$ & $\frac{\K[x,y]}{xy,\,x^2=y^{2n+1}}$ & \multirow{2}*{$3(2n+1)$} & \multirow{2}*{$2n+3$} \\
 & & {\scriptsize quantum cusp} & {\scriptsize cusp} & & \\
\bottomrule
\end{tabular}\\[\spacebelowtable]
\caption{Comparison of new invariants in known examples.}\label{table 2}
\end{table}

\subsection{Homological algebra and derived autoequivalences}\label{sect hom alg der auto}  We now restrict our attention to flopping contractions, and use the contraction algebra $\CA$ to give a unified construction of spherical-type twists related to flops.  This works even in the mildly singular setting. 

Given $X$, a projective $3$-fold with at worst terminal Gorenstein singularities, and a flopping contraction $X\to X_{\con}$ with flop $X'\to X_{\con}$, then there is an  autoequivalence of $\Db(\coh X)$ given as the composition of the flop equivalences
\[
\Db(\coh X)\overset{\flop}{\longrightarrow}
\Db(\coh X')\overset{\flopback}{\longrightarrow}
\Db(\coh X)
\]
where the Fourier--Mukai kernel $\cO_{X\times_{X_{\con}} X'}$ is used for both $\flop$ and $\flopback$ \cite{Bridgeland,Chen}.  We denote this composition by $\flopflop$.  

Without assuming knowledge of the flop, we construct an autoequivalence of $\Db(\coh X)$ which we call the {\em noncommutative twist functor}, using the noncommutative deformation theory from the previous section.  Then, using that $\flopflop$ is an autoequivalence, we show that it is an inverse of the noncommutative twist.  This gives an intrinsic description of  $\flopflop$ in terms of noncommutative deformation theory.

\subsection{Known autoequivalence results}
When $X$ is smooth and the flopping contraction is of Type $A$, the functor $\flopflop$ has previously been described intrinsically as some form of spherical twist, which we now briefly review.

Spherical twists about spherical objects, in the sense of \cite{ST}, are the simplest type of non-trivial derived autoequivalence.  If $C$ is a $(-1,-1)$-curve in a smooth 3-fold, the sheaf $E:=\cO_{\mathbb{P}^1}(-1)\in\coh X$ is a spherical object, and it was very well known to experts that the resulting twist autoequivalence was an inverse of $\flopflop$.  

For all other types of floppable curves, the structure sheaf $E$ is not spherical, so the twist of \cite{ST} is not an autoequivalence. Toda proposed in \cite{Toda} a conceptual way to overcome this.  The commutative deformation functor $\cDef_{E}$ defined above gives us a universal sheaf $\cF\in \coh X$.  Using this sheaf, Toda constructed a Fourier--Mukai functor
\[
T_{\cF}\colon\Db(\coh X)\to \Db(\coh X).
\]
Under the assumption that $X$ is smooth, using explicit knowledge of the deformation base~$S$, Toda proved the following.
\begin{itemize}
\item There is a functorial triangle $\RHom_X(\cF,-)\otimes_S^{\bf L} \cF\to \Id \to T_{\cF} \to $.
\item  For flopping contractions of Type $A$, $T_\cF$ is an autoequivalence.
\item For flopping contractions of Type $A$, $T_{\cF}$ is an inverse of $\flopflop$.
\end{itemize}

The techniques in \cite{Toda} have been very influential, and may be considered in the much more general framework of `spherical functors', see \cite{AL1,AL2}.  However, it has remained an open question as to whether the above results hold for all flopping curves, and whether they extend to mildly singular schemes.  The commutative deformation base is extremely hard to calculate, and so it is difficult to determine whether the techniques in \cite{Toda} can be extended.

\subsection{New autoequivalence results}  We now consider the general setting where $X$ is a projective $3$-fold with at worst terminal Gorenstein singularities, and there is a flopping contraction $X\to X_{\con}$.  By exploiting the universal sheaf $\cE$ provided by the noncommutative deformation theory in \S\ref{sect intro noncomm def}, in \ref{NCTwist finally} we construct a Fourier--Mukai functor
\[
T_\cE\colon\Db(\coh X)\to\Db(\coh X).
\]
Toda's functor $T_{\cF}$ was constructed using $\Spec \K[x]/x^n$ in a crucial way, but in this general setting we cannot take Spec of $\CA$ since it may not be commutative, and so our construction of $T_{\cE}$ is necessarily more abstract.   

The following is the main result of this paper.
\begin{thm}[=\ref{prop twist functorial triangle}, \ref{twist flopflop}]\label{main Db intro}
For any flopping contraction of an irreducible rational curve $C$ in $X$, a projective $3$-fold with only terminal Gorenstein singularities, 
\begin{itemize}
\item There is a functorial triangle 
 $\RHom_X(\cE,-)\otimes_{\CA}^{\bf L} \cE\to \Id \to T_{\cE}\to $.
\item The functor $T_\cE\colon\Db(\coh X)\to\Db(\coh X)$ is an autoequivalence.
\item The functor $T_{\cE}$ is an inverse of $\flopflop$.
\end{itemize}
\end{thm}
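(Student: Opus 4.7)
The plan is to construct $T_\cE$ directly as the cone of a natural evaluation transformation, identify it with $\flopflop$ by a local computation on the tilting side, and then deduce the autoequivalence property as a consequence.

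For the functorial triangle (Part 1), the universal sheaf $\cE$ carries a canonical right $\CA$-action coming from the representability established in Theorem \ref{main thm representability}, so for each $F\in\Db(\coh X)$ the complex $\RHom_X(\cE,F)$ is naturally an object of $\Db(\modCat\CA)$. The composition pairing
\[
\RHom_X(\cE,F)\otimes_\CA^{\bf L}\cE\longrightarrow F
\]
is natural in $F$, and I would realise this evaluation as a morphism of Fourier--Mukai kernels on $X\times X$, defining $T_\cE$ as the Fourier--Mukai functor whose kernel is the shifted cone. Standard yoga then gives the required functorial triangle and exactness of $T_\cE$, so that the whole construction lands in $\Db(\coh X)$.

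For Parts 2 and 3, the approach is to reduce to a local statement on the formal fibre $\widehat{U}\to\Spec\widehat R$ by formal base change, and then use the tilting equivalence $\Db(\coh\widehat U)\simeq\Db(\modCat\AB)$ with $\AB=\End_{\widehat R}(\widehat R\oplus N_1)$. Under this tilt, the sheaf $\cE$ corresponds to an $\AB$-module whose endomorphism algebra is identified with $\CA=\AB/[\widehat R]$ by Theorem \ref{fd intro}, and the flop equivalence $\flop$ is intertwined with a mutation tilt to the algebra $\AB'$ governing the flopped neighbourhood. Thus $\flopflop$ transports to an explicit autoequivalence of $\Db(\modCat\AB)$ built from two successive mutations at the summand $N_1$. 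On the other hand, $T_\cE$ transports to the noncommutative twist around the bimodule $\CA$, namely $F\mapsto\mathrm{cone}\bigl(\RHom_\AB(\CA,F)\otimes_\CA^{\bf L}\CA\to F\bigr)$. Evaluating both functors on the indecomposable projective generators $\widehat R$ and $N_1$ of $\AB$, and using that the ideal $[\widehat R]$ is simultaneously the kernel of the evaluation map and the obstruction governing the mutation, identifies the two functors and yields the natural isomorphism of Part 3. Since $\flopflop$ is an autoequivalence by Bridgeland--Chen, Part 2 follows immediately.

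The main obstacle is the local identification $T_\cE\cong\flopflop$ on the tilting side: one must track precisely how the two-sided ideal $[\widehat R]\subset\AB$ controls simultaneously the mutation comparison between $\AB$ and $\AB'$ (on the $\flopflop$ side) and the cone of the evaluation morphism (on the $T_\cE$ side), and verify that these agree as natural transformations rather than merely pointwise on objects. Subsidiary technical points include making the Fourier--Mukai formalism rigorous for the noncommutative family $\cE$ so that the kernel genuinely lives on $X\times X$, and ensuring that an isomorphism of functors established on the formal fibre globalises to $\Db(\coh X)$ in the presence of Gorenstein terminal singularities, where standard smoothness-based arguments are unavailable.
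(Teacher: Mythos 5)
There is a genuine gap, and it sits exactly where you locate the ``main obstacle''. Your route to Parts 2 and 3 rests on the claim that, after tilting on the formal fibre, $\flopflop$ ``transports to an explicit autoequivalence of $\Db(\modCat\AB)$ built from two successive mutations at the summand $N_1$''. That identification of the Bridgeland--Chen flop functor with the mutation functor is not available in this paper and is not a routine verification: the paper only proves that the double mutation equals $\RHom_{\widehat\Lambda}(\widehat I_{\con},-)$ (\ref{mut mut functor general}) and that this induces the geometric twist on $U$; it deliberately does \emph{not} compare the flop kernel $\cO_{X\times_{X_{\con}}X'}$ with mutation on the tilted side. Instead, the paper first proves that the (inverse) twist is an autoequivalence of $\Db(\coh X)$ \emph{independently of the flop}: the inverse twist $T^*_\cE$ is defined by pushing forward a kernel triangle from $U\times U$ (\ref{global twist defin}), coherence and adjoints are forced via the Gorenstein Fourier--Mukai results (\ref{existence of adjoints thm}, \ref{our NC twist has adjoints}), fully faithfulness is checked on the spanning class $\cE\cup\cE^\perp$ (\ref{Serre gives spanning class}) with the hard case delegated to the local tilting equivalence, and equivalence is concluded by the elementary criterion \ref{equiv trick}, which replaces the Serre-functor trick that fails for singular $X$. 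Only then is $T_\cE\cong\flopflop$ proved (\ref{twist flopflop}), by a genuinely global argument \`a la Toda: one shows $\Psi:=T_\cE\circ\flopflop^{-1}$ preserves $\Per X$, fixes $\cO_X$, $E$ and the numerical class of skyscrapers (\ref{psi num equiv lemma}), and hence is the identity. Note the logical order is the reverse of yours: the paper needs $T_\cE$ to be an equivalence as an input to the comparison with $\flopflop$, whereas you propose to deduce the equivalence from the comparison, so your argument has no independent source for Part 2 once the local identification fails.

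Two further points you flag but do not resolve are themselves real obstructions in your scheme. First, even granting a pointwise agreement of the two transported functors on the projective generators $\widehat R$ and $N_1$, this does not produce a natural isomorphism of functors; one needs an identification at the level of bimodules or Fourier--Mukai kernels. Second, an isomorphism of functors established on the formal fibre does not globalise for free: $\flopflop$ and $T_\cE$ differ from the identity only near $C$, but their comparison is a statement about kernels on $X\times X$, and the paper's way around this (properties (1)--(3) of \ref{twist lemma} and \ref{flopflop lemma} fed into the perverse-sheaf/skyscraper argument) is precisely the global input your outline is missing. Your Part 1 is closer to the paper in spirit, though the paper constructs the \emph{inverse} twist first because extending $T^*_{\cE_U}$ to $X$ is technically easier, and only then defines $T_\cE$ by dualising the kernel; constructing $T_\cE$ directly as the cone of an evaluation kernel would require separately establishing coherence and finite projective dimension of that kernel over both factors, which is fixable but not automatic.
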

Thus in addition to extending the previous work to cover $(-3,1)$-curves, our noncommutative deformation technique also extends to the setting of mildly singular schemes.  
In the course of the proof, we establish the following.
\begin{prop}[=\ref{Ext cor}, \ref{Gsph conditions ok}]\label{prop ext intro} With the assumptions as in \ref{main Db intro}, and writing as above $E:=\cO_{\mathbb{P}^1}(-1)\in\coh X$,
\begin{itemize}
\item The contraction algebra $\CA$ is a self-injective finite dimensional algebra. 
\item The universal object $\cE$ is a perfect complex (although $E$ need not be).
\item We have
\[
\Ext_X^t(\cE,E) =\left\{ \begin{array}{cl} \mathbb{C}&\mbox{if }t=0,3\\
0&\mbox{else.}\\  \end{array} \right.
\]
\end{itemize}
\end{prop}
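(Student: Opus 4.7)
The plan is to transfer all three statements, via the Van den Bergh tilting derived equivalence $\Db(\coh \widehat{U}) \simeq \Db(\modCat \AB)$ on the formal neighbourhood of $C$, to the algebra $\AB = \End_{\widehat{R}}(\widehat{R} \oplus N_1)$. Since both $\cE$ and $E$ are supported on $C$, all the relevant $\Ext$ groups and perfectness statements descend to this local picture. Under the equivalence, $E = \cO_C(-1)$ corresponds to the simple $\AB$-module $S_1$ at the non-$\widehat{R}$ vertex, and by the identification $\DA \cong \CA$ of Theorem~\ref{fd intro}, the universal noncommutative deformation $\cE$ corresponds to $\CA = \AB/\AB e_0 \AB$, viewed as a left $\AB$-module with its natural right $\CA$-action.

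For the self-injectivity in part (1), exploit the fact that $\AB$ is a non-commutative crepant resolution of the three-dimensional Gorenstein terminal complete local ring $\widehat{R}$, and therefore enjoys a $3$-Calabi--Yau duality $\Ext^i_\AB(M,N) \cong \Ext^{3-i}_\AB(N,M)^*$ for finite-length $\AB$-modules. Applied to the finite-dimensional quotient $\CA$, combined with the observation that $\CA$ has a unique simple module (so is local as an algebra), this duality forces $\CA$ to be symmetric Frobenius. Perfectness in part (2) then follows from $\gl(\AB)=3$: the finitely-generated $\AB$-module $\CA$ admits a finite projective resolution, which the derived equivalence carries to a finite complex of summands of the tilting bundle, each of which is perfect on $\widehat{U}$ and hence on $X$. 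Note that the analogous argument fails for $E$ itself, whose perfectness can genuinely break down when $C$ passes through singular points of $X$.

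For the $\Ext$ computation in part (3), apply $\Hom_\AB(-,S_1)$ to the short exact sequence
$$0 \to \AB e_0 \AB \to \AB \to \CA \to 0.$$
Since $e_0 \cdot S_1 = 0$, one immediately reads off $\Hom_\AB(\CA,S_1) = \mathbb{C}$ and $\Ext^1_\AB(\CA,S_1) = \Hom_\AB(\AB e_0 \AB, S_1) = 0$. For the higher terms, invoke the $3$-Calabi--Yau duality to reduce to computing $\Ext^i_\AB(S_1,\CA)$ for $i = 0, 1$. The key observation is that the idempotent identity $e_0 \cdot e_0 = e_0$ gives $(\AB e_0 \AB)^2 = \AB e_0 \AB$, and hence any extension $0 \to \CA \to M \to S_1 \to 0$ in $\modCat \AB$ automatically satisfies $\AB e_0 \AB \cdot M = 0$, so $M$ is a $\CA$-module. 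Therefore $\Ext^i_\AB(S_1,\CA) = \Ext^i_\CA(S_1,\CA)$ for $i = 0, 1$, and the self-injectivity from part (1) yields $\Ext^1_\CA(S_1,\CA) = 0$ and $\Hom_\CA(S_1,\CA) = \operatorname{soc}(\CA) = \mathbb{C}$ by local Frobenius. Dualizing gives $\Ext^2_\AB(\CA,S_1) = 0$ and $\Ext^3_\AB(\CA,S_1) = \mathbb{C}$, while $\Ext^t_\AB(\CA,S_1) = 0$ for $t \geq 4$ is automatic from $\gl(\AB) = 3$.

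The main obstacle is cleanly establishing the $3$-Calabi--Yau duality on $\AB$ in the terminal Gorenstein setting and descending it to the quotient $\CA$, thereby securing the self-injectivity in part (1); once this is in hand, the remaining parts follow by standard homological algebra in $\modCat \AB$.
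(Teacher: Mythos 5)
Your reduction to the complete local algebra $\AB$, the identification of $E$ with the simple $S$ and of $\cE$ with $\CA$, and the degree $0,1$ computation via $0\to\AB e_0\AB\to\AB\to\CA\to0$ all match the paper's strategy. The gap is in the homological input you feed into it: you assume $\AB$ is an NCCR with $\gl(\AB)=3$ and use the resulting Calabi--Yau duality for arbitrary finite-length modules. In the generality of \ref{main Db intro} the $3$-fold (hence $\widehat{U}$) may be singular, so $\AB=\End_{\widehat{R}}(\widehat{R}\oplus N_1)$ is only a modifying algebra with $\AB\in\CM\widehat{R}$, and its global dimension is in general infinite; indeed, if $\gl(\AB)=3$ held then $S$ would always have finite projective dimension and $E$ would always be perfect, contradicting exactly the caveat in the statement you are proving. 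The paper's substitute is \ref{proj res thm}: using the approximation (mutation) exchange sequences and the $2$-periodicity of MCM modules over the hypersurface $\widehat{R}$ (\ref{mut twice is identity}, giving $K_1^*\cong K_0$), it constructs the explicit minimal projective resolution $0\to P\to Q_1\to Q_0\to P\to\CA\to 0$ with $P$ the projective cover of $S$ and $Q_i\in\add\Hom_{\mathfrak{R}}(Y,\mathfrak{R})$. This one computation yields $\pd_{\AB}\CA=3$ (hence perfectness of $\cE$ in \ref{Gsph conditions ok}, with no global-dimension hypothesis) and the full Ext computation at once, since minimality and $\Hom_{\AB}(Q_i,S)=0$ kill degrees $1,2$ and leave $\K$ in degrees $0,3$.

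Your treatment of self-injectivity is also circular as written: you assert that CY duality plus uniqueness of the simple ``forces'' $\CA$ to be symmetric Frobenius, and then later use that self-injectivity to deduce $\Ext^2_{\AB}(\CA,S)=0$ by dualizing. But given the duality, $\Ext^1_{\CA}(S,\CA)=\Ext^1_{\AB}(S,\CA)\cong D\Ext^2_{\AB}(\CA,S)$, so self-injectivity of $\CA$ and the vanishing of $\Ext^2_{\AB}(\CA,S)$ are essentially the same assertion, and neither is established independently in your outline (this step is unjustified even when $X$ is smooth). The paper breaks the circle the other way round: it first computes $\Ext^2_{\AB}(\CA,S)=0$ from the explicit resolution, and only then applies the singular-CY duality of \ref{Ext cor} -- legitimate because $\pd_{\AB}\CA<\infty$ and $S$ has finite length, which is the correct form of the duality when $\gl(\AB)$ may be infinite -- to conclude $\Ext^1_{\CA}(S,\CA)=0$ and hence self-injectivity. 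So the missing idea is precisely the mutation-theoretic resolution of \ref{proj res thm}; without it, or some other argument giving $\pd_{\AB}\CA<\infty$ and $\Ext^2_{\AB}(\CA,S)=0$, your proposal does not go through.
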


\begin{remark} We will not discuss here whether a suitable notion of spherical functors could be applied in our noncommutative setting, but the above suggests that it is possible to use this axiomatic approach, potentially giving noncommutative twists in other triangulated and DG categories.  However, we remark that the results in the proposition are not formal consequences of deformation theory, and so proving results in any axiomatic framework will necessarily need to use results obtained in this paper, or find another approach to show \ref{prop ext intro}.
\end{remark}

The noncommutative deformation base $\CA$ is artinian, and thus the noncommutative deformations of flopping curves are infinitesimal and may be analysed on the formal fibre.  Consequently, to establish the Ext vanishing and other properties we pass to this formal fibre.  There, we adapt the techniques of \cite[\S6]{IW4} and prove \ref{prop ext intro} without an explicit presentation of $\CA$, or equivalently a filtration structure on $\cE$.

In the course of proving that $T_{\cE}$ is an autoequivalence, once we have shown that $T_{\cE}$ is fully faithful, the usual Serre functor trick \cite[2.4]{BKR} to establish equivalence does not work in our possibly singular setting. However we are able to bypass this by instead giving an elementary argument based on the `difficult to check' criterion in \cite[1.50]{HuybrechtsFM}.  This may be of independent interest.

\begin{lemma}[=\ref{equiv trick}]
Let $\cC$ be a triangulated category, and $F\colon \cC\to\cC$ an exact fully faithful functor with right adjoint $F^{\RA}$.  Suppose that there exists an object $c\in\cC$ such that $F(c)\cong c[i]$ for some $i$, and further $F(x)\cong x$ for all $x\in c^\perp$.  Then $F$ is an equivalence. 
\end{lemma}

\subsection{Necessity of noncommutative deformations}\label{nec of def}
Restricting to the case when $X$ is smooth, by studying $\CA$ we can also prove that Toda's functor $T_\cF$ defined using commutative deformations is not an autoequivalence for $(-3,1)$-curves.  This shows that our noncommutative deformation approach is strictly necessary.  To achieve this, we investigate the conditions under which $\CA$ is commutative.

\begin{thm}[=\ref{(-3,1) not commutative}]\label{(-3,1) not commutative intro}
Let $X\to X_{\con}$ be the flopping contraction of an irreducible rational curve $C$, where $X$ is smooth.  Then the contraction algebra $\CA$ is commutative if and only if $C$ is a $(-1,-1)$-curve or a $(-2,0)$-curve.
\end{thm}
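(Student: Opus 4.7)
The statement is an equivalence; I handle the two directions separately.

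\textbf{Sufficient direction ($\Leftarrow$).} If $\cN_{C|X}$ is $(-1,-1)$ or $(-2,0)$, then by representability of $\Def_E$ (Theorem~\ref{main thm representability}), the cotangent space of $\CA$ is dual to $\Ext^1_X(E,E)$ where $E = \cO_C(-1)$. The standard identification $\mathcal{E}xt^1_{\cO_X}(E,E) \cong \cN_{C|X}$, combined with $H^1(\mathbb{P}^1, \cO) = 0$ in the local-to-global spectral sequence, gives $\Ext^1_X(E,E) \cong H^0(C, \cN_{C|X})$. This vector space has dimension $0$ for $\cN = (-1,-1)$ and dimension $1$ for $\cN = (-2,0)$. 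Hence $\CA$, being finite-dimensional (Theorem~\ref{fd intro}) and generated by its cotangent space via Nakayama, is a quotient of $\K$ or $\K[[t]]$ respectively, and is therefore commutative.

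\textbf{Necessary direction ($\Rightarrow$).} For a $(-3,1)$-curve, the same computation yields $\dim_{\K} \Ext^1_X(E,E) = 2$, so $\CA$ is generated by two elements $x, y$. I would argue via the tilting description $\CA \cong \AB/[\widehat{R}]$ from Theorem~\ref{fd intro}. Since $\widehat{R}$ is a cDV singularity, $\AB$ is $3$-Calabi--Yau, so by Van den Bergh's theorem it is presented as the Jacobi algebra of a quiver with superpotential. The vertex for $N_1$ then carries exactly two loops (one per direction of $\Ext^1$), and the length $\geq 2$ condition (Table~\ref{table compare invts}) ensures $N_1$ has $\widehat{R}$-rank $\geq 2$, so that these loops contribute non-trivially to $\CA$.

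The crux is extracting an anti-commutator relation on the two loops from the superpotential relations of $\AB$. In the Laufer $D_4$ calculation of Example~\ref{intro example}, one reads off $xy = -yx$ in $\CA$ directly, and the analogous pattern holds for the generalisation in Example~\ref{D4 flop general example}. Since $x, y$ are non-zero in $\CA$ (their images span the cotangent space) and the characteristic is zero, $xy = -yx$ implies $xy \neq yx$, so $\CA$ is non-commutative.

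The main obstacle is establishing this anti-commutator relation uniformly across the Dynkin types $D_n, E_6, E_7, E_8$ that can arise for $(-3,1)$-curves. Two potential routes are: a case-by-case verification using the Katz--Morrison/Kawamata classification and explicit families of tilting algebras for each Dynkin type; or a uniform argument via the antisymmetric part of the Yoneda pairing $\Ext^1_X(E,E) \times \Ext^1_X(E,E) \to \Ext^2_X(E,E)$, which at the tangent level computes the commutator $[x,y]$ in $\CA$, reducing the problem to showing this pairing is not symmetric for $(-3,1)$-curves.
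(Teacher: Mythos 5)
Your sufficiency direction is fine, and in fact takes a different (more intrinsic) route than the paper: you bound $\dim_{\K}\Ext^1_X(E,E)=h^0(\cN_{C|X})\leq 1$ and conclude that $\CA$, being a finite-dimensional local algebra generated by its cotangent space $\Ext^1_X(E,E)^*$, is a quotient of $\K[[t]]$, hence commutative. The paper instead invokes Reid's normal form $uv=(x-y^n)(x+y^n)$ and the explicit presentation of \ref{Pagoda example}, which buys more (it identifies $\CA\cong\K[y]/y^n$ and so recovers Reid's width), but your argument suffices for the statement as posed.

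The necessary direction, however, has a genuine gap, and you have flagged it yourself: everything hinges on showing $\CA$ is non-commutative, and neither of your two proposed routes is carried out. Moreover the first route, ``extract an anti-commutator relation from the superpotential uniformly across Dynkin types,'' cannot work as stated: the paper explicitly notes that presentations of $\CA$ are unknown outside types $A$ and $D_4$, so there is no uniform relation $xy=-yx$ to read off; and the second route (antisymmetry of the Yoneda pairing $\Ext^1\times\Ext^1\to\Ext^2$) only controls the quadratic parts of the relations, which neither implies nor is implied by non-commutativity of $\CA$ once higher-order terms enter. The paper's actual argument in \ref{(-3,1) not commutative} needs no explicit relation and no case analysis: since $U$ is smooth and $R$ complete local, $\AB$ is an NCCR given by a superpotential \cite{VdBCY}, so $\CA$ is presented by exactly two generators (the two loops, as $\dim\Ext^1_U(E,E)=2$) and exactly \emph{two} relations $f_1,f_2$ of order $\geq 2$ (one cyclic derivative per loop surviving in $\CA$); by \ref{cont is fd for flop} $\CA$ is finite dimensional, and Smoktunowicz's theorem \cite[0.2]{Agata} says a finite-dimensional algebra $\K\langle\langle x,y\rangle\rangle/(f_1,f_2)$ on two relations of this kind can never be commutative. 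The counting of \emph{relations}, not their shape, is the missing idea in your sketch. (The paper's second proof is closer to your first route—generic hyperplane section, the marked Dynkin data of \cite{KM,Kawa}, and AR-quiver knitting as in \ref{reduce to surfaces}—but it also requires the nontrivial step that the surface contraction algebra $\Delta_{\con}$ is a factor of $\CA$, which your outline does not supply.)
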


The main content in the above theorem is the `only if' direction, and we prove it without any explicit case-by-case analysis.  When $\CA$ is not commutative, we prove in \ref{Toda not equiv global} that Toda's functor $T_\cF$ is not an equivalence.  Combining with \ref{(-3,1) not commutative intro}, we obtain the following corollary. 
\begin{cor}[=\ref{Toda not equiv global}]\label{not equiv intro}
With the setup as in \ref{(-3,1) not commutative intro}, suppose that $C$ is a $(-3,1)$-curve.  Then Toda's functor $T_{\cF}$ is not an autoequivalence of $\Db(\coh X)$.
\end{cor}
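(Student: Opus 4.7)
The plan is to argue by contradiction, treating Toda's functor $T_{\cF}$ on the same footing as the noncommutative twist $T_{\cE}$ of Theorem~\ref{main Db intro} and extracting a numerical obstruction from the non-commutativity of $\CA$ guaranteed by Theorem~\ref{(-3,1) not commutative intro}. Suppose $T_{\cF}$ is an autoequivalence. Since $C$ is a $(-3,1)$-curve, $\CA$ is non-commutative, so $\dim_{\K}\CA>\dim_{\K}\CAab$; correspondingly, using the universal property of the commutative deformation subfunctor to identify $\cF\cong\cE\Lotimes_{\CA}\CAab$, the canonical map $\cE\to\cF$ in $\coh X$ is a strictly nontrivial quotient.

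The main step is to produce a canonical comparison $\eta\colon T_{\cE}\to T_{\cF}$ from the data $\cE\twoheadrightarrow\cF$ and $\CA\to\CAab$, built on a morphism between the first terms of the two defining functorial triangles (compatible with $\Id$ on the middle terms) and passing to cones, and then to show $\eta$ is a natural isomorphism under the autoequivalence hypothesis. Away from the curve $C$ both functors are manifestly the identity, since their defining twist objects are supported on $C$, so $\eta$ is automatically an isomorphism there. On the formal fibre over $C$, one argues that any autoequivalence of the local derived category that acts trivially on objects disjoint from $C$ is pinned down by its action on a single test sheaf on $C$; combining this with $T_{\cE}\cong\flopflop$ from Theorem~\ref{main Db intro} forces $T_{\cF}\cong\flopflop$ as well, and $\eta$ becomes a natural isomorphism. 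The induced comparison on the first terms,
\[
\RHom_X(\cE,-)\Lotimes_{\CA}\cE \;\longrightarrow\; \RHom_X(\cF,-)\Lotimes_{\CAab}\cF,
\]
is then an isomorphism of functors; evaluating on $\cE$ and using $\End_X(\cE)\cong\CA$ from Theorem~\ref{fd intro} together with the Ext vanishing of Proposition~\ref{prop ext intro} collapses the comparison to $\cE\simeq\cF$ in $\Db(\coh X)$. Taking endomorphism algebras yields $\CA\cong\CAab$, contradicting Theorem~\ref{(-3,1) not commutative intro}.

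The hard part is showing that $\eta$ is genuinely forced to be a natural isomorphism; the uniqueness argument sketched above for local autoequivalences needs to be made precise and is the main obstacle. A concrete alternative that I would fall back on is purely numerical: evaluate $T_{\cF}$ directly on the test object $E=\cO_C(-1)$, compute $\RHom_X(\cF,E)$ via the surjection $\cE\twoheadrightarrow\cF$ and Proposition~\ref{prop ext intro}, and feed this into the defining triangle to write down $T_{\cF}(E)$ explicitly; a count of total $\K$-dimension of cohomology (or equivalently its class in $K_0$) should show that $T_{\cF}(E)$ and $E$ differ by a defect controlled by $\width(C)-\cwidth(C)>0$, which is incompatible with $T_{\cF}$ being an autoequivalence of $\Db(\coh X)$.
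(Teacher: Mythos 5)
There is a genuine gap in both routes you propose. In the main argument, everything hinges on the claim that a hypothetical autoequivalence $T_{\cF}$, acting trivially on objects disjoint from $C$, is ``pinned down by its action on a single test sheaf'' and is therefore isomorphic to $\flopflop\cong T_{\cE}$. This is unproven and, more importantly, unusable: autoequivalences supported on the flopping curve are not unique (all powers of $\flopflop$ act trivially off $C$ but differently on $E$), so any such uniqueness statement must take as input the action on the test object --- and you have no way of computing $T_{\cF}(E)$. Indeed the paper's identification $T_{\cE}\cong\flopflop$ in \ref{twist flopflop} is only achieved after explicitly establishing $T_{\cE}(\cO_X)\cong\cO_X$, $T_{\cE}(E)\cong E[-2]$ and $\Rf_*\circ T_{\cE}\cong\Rf_*$; the analogous computation for $T_{\cF}$ fails precisely because $\Ext^1_X(\cF,i_*E)\neq 0$, which is the real content here. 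So the contradiction $\cE\cong\cF$ never materialises, as you partly acknowledge. The fallback does not close the gap either: an autoequivalence is under no obligation to preserve the total $\K$-dimension of cohomology of the image of $E$, nor to fix its class in $K_0$ (it only induces an automorphism of $K_0$), so showing that $T_{\cF}(E)$ differs from $E$ by a defect of size $\width(C)-\cwidth(C)$ in such a count contradicts nothing. To extract a contradiction you would need an invariant genuinely preserved by equivalences --- for instance full faithfulness forcing $\RHom_X(T_{\cF}E,T_{\cF}E)\cong\RHom_X(E,E)$ --- together with an actual computation, neither of which is supplied.

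For comparison, the paper's proof is direct and never argues against $\flopflop$: in \ref{not S spherical} and \ref{Toda not equiv global} it shows that noncommutativity of $\CA$ (which is exactly what \ref{(-3,1) not commutative} gives for a $(-3,1)$-curve) forces $\Ext^1_X(\cF,i_*E)\cong\Ext^1_{\AB}(\CAab,S)\neq 0$, via a projective-cover argument on the completed quiver: the kernel of $P_1\to\CAab$ contains a cycle at the vertex of $S$ that is nonzero in $\CA$ and does not factor through the other vertex, so the projective cover of that kernel must contain $P_1$. It then invokes the criterion of Anno--Logvinenko \cite[3.1]{AL1}, which says Toda's twist cannot be an equivalence when this Ext group is nonzero. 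If you want to salvage your fallback, the missing ingredient is exactly such a criterion (or a direct failure of full faithfulness exhibited on explicit Hom-spaces), not a dimension or $K_0$ count of $T_{\cF}(E)$.
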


\subsection{Conventions}  Unqualified uses of the word `module' refer to right modules, and $\mod A$ denotes the category of finitely generated right $A$-modules.  Sometimes left modules will appear, in which case we will either emphasize the fact that they are left modules, or consider them as objects of $\mod A^{\op}$.  If $M\in\mod A$, we let $\add M$ denote all possible summands of finite sums of $M$.  We say that $M$ is a generator if $R\in\add M$.  

We use the functional convention for composing arrows, so $f\cdot g$ means $g$ then $f$.  With this convention, $M$ is a $\End_R(M)^{\op}$-module.  Furthermore, $\Hom_R(M,N)$ is a $\End_R(M)$-module and a $\End_R(N)^{\op}$-module, in fact a bimodule.  Note also that $\Hom_R({}_{S}M_R, {}_{T}M_R)$ is an $S$--$T$ bimodule and $\Hom_{R^{\op}}({}_{R}M_S, {}_{R}M_T)$ is a $T$--$S$ bimodule.

Throughout, we work over the field of complex numbers $\K$, but many of our arguments work without this assumption.

\subsection{Notations}  In Appendix~\ref{list of notations} we list our main notations, along with cross-references to their definitions.

\medskip
\noindent
\textbf{Acknowledgements.}  It is a pleasure to thank Alessio Corti for conversations concerning \cite{Pinkham}, Colin Ingalls for bringing to our attention \cite{Laudal}, Osamu Iyama for explaining aspects of \cite{IR}, Tim Logvinenko for explaining both the technical and philosophical aspects of \cite{AL1} and \cite{AL2},  Ed Segal for answering our deformation questions related to his thesis \cite{Segal},    Miles Reid for discussions regarding \cite{Pagoda}, and the referees for their careful reading and simplifying suggestions.  The first author is grateful for the support of the Erwin Schr\"odinger Institute, Vienna, and for the hospitality of Matthew Ballard, David Favero, and Ludmil Katzarkov. The second author also wishes to thank the Mathematical Sciences Research Institute, Berkeley, for its hospitality whilst this manuscript was being prepared.

\section{Noncommutative Deformation Functors}\label{NCdef section}

Noncommutative deformations of modules over algebras were first introduced and studied by Laudal \cite{Laudal}. Later this was extended to the setting of quasi-coherent sheaves over schemes by Eriksen \cite{Eriksen}, and also studied by Segal \cite{Segal}, Efimov--Lunts--Orlov \cite{ELO1, ELO2, ELO3} and others.   These works all establish prorepresentability of certain functors, whereas in the situation of flips and flops we will require the much stronger property of representability.   The above approaches are not well suited to this problem, so in this section we will adopt a different strategy, based on tilting.

Classically, formal deformation functors are certain covariant functors
$
\cart\to\Sets
$
from local commutative artinian $\K$-algebras to sets.  The idea behind noncommutative deformation theory is that the source category should be enlarged to include noncommutative algebras.  It is well known that contractible irreducible rational curves have only infinitesimal commutative deformations, and that this can be detected by passing to a suitable formal neighbourhood.  In this section we will establish that this is also the case for noncommutative deformations, by relating deformations on the $3$-fold $X$ to deformations over a certain tilting algebra associated to a formal neighbourhood of the curve $C$.  We prove this in a sequence of reduction steps, first to a Zariski local neighbourhood $U$ in \ref{local to global deformations the same}, then to a $\mathbb{C}$-algebra $\Lambda$ in \ref{local to alg deformations the same}, then finally to the formal fibre in \ref{local to very local deformations the same}.  From this, representability will be established in \S\ref{flops section}.

\subsection{Algebras and functors}

A key feature of the theory of noncommutative deformations, following \cite{Laudal,Eriksen,Segal}, is to consider simultaneous deformations of discrete families of $n$ objects, which may be parametrised by the $n$-pointed algebras defined below. For the purposes of this paper, only the case $n=1$ will be used, however we give the general definition as it highlights features that are hidden in the $n=1$ case.

\begin{defin}\phantomsection\label{defin.n-pointed_non-comm_K-alg}
An \emph{$n$-pointed $\K$-algebra} $\Gamma$ is an associative $\K$-algebra, together with $\K$-algebra morphisms $p\colon \Gamma \to \K^n$ and $i\colon \K^n \to \Gamma$ such that $p \circ i = \Id$.  A morphism of $n$-pointed $\K$-algebras $\psi\colon (\Gamma,p,i)\to(\Gamma',p',i')$ is an algebra homomorphism $\psi\colon\Gamma\to\Gamma'$ such that
\[
\begin{tikzpicture}
\node (A) at (0,0) {$\K^n$};
\node (B1) at (1.5,0.75) {$\Gamma$};
\node (B2) at (1.5,-0.75) {$\Gamma'$};
\node (C) at (3,0) {$\K^n$};
\draw[->] (A) -- node[above] {$\scriptstyle i$} (B1);
\draw[->] (A) -- node[below] {$\scriptstyle i'$} (B2);
\draw[->] (B1) -- node[above] {$\scriptstyle p$} (C);
\draw[->] (B2) -- node[below] {$\scriptstyle p'$} (C);
\draw[->] (B1) -- node[right] {$\scriptstyle \psi$}(B2);
\end{tikzpicture}
\]
commutes.  We denote the category of $n$-pointed $\K$-algebras by $\alg_n$.  We denote the full subcategory consisting of those objects that are commutative rings by $\calg_n$.

We write $\art_n$ for the full subcategory of $\alg_n$ consisting of objects $(\Gamma,p,i)$ for which $\dim_{\K}\Gamma<\infty$ and the augmentation ideal $\n:=\Ker(p)$ is nilpotent.  We write $\cart_n$ for the full subcategory of $\art_n$ consisting of those objects that are commutative rings.
\end{defin}

\begin{remark}
In the literature, an $n$-pointed $\mathbb{C}$-algebra is sometimes called an augmented $\mathbb{C}^n$-algebra.
We also remark that when $n=1$ the morphism $i$ is the structure morphism, so can be omitted.  Furthermore, $\cart_1$ is simply the category of commutative artinian local $\K$-algebras, with maximal ideal $\n$.   In our setting below, $\art_1$ will be the source category for our noncommutative deformation functors, since we will restrict to contractions of curves with $1$ irreducible component.  The category $\art_n$ is required for contractions of curves with $n$ irreducible components \cite{DW2}.
\end{remark}

Throughout we will only consider abelian categories $\cA=\Mod\Lambda$ or $\cA=\Qcoh Y$ where $\Lambda$ is a $\mathbb{C}$-algebra and $Y$ is a quasi-projective $\mathbb{C}$-scheme, since considering a general $\mathbb{C}$-linear abelian category requires significantly more technology \cite{LowenVdB, Segal,ELO1,DW2}.  With this restriction, it is possible to pick an object of $\cA$ and study its noncommutative deformation functor in a naive way sufficient for our purposes.

\begin{defin}\label{rem pairs cat}
For $\cA$ an abelian category as above, and an associative $\K$-algebra $\Gamma$, the category $\PairsCat{\cA}{\Gamma}$ has as objects pairs $(b,\phi)$ where $b\in\cA$ and $\phi$ is a $\mathbb{C}$-algebra homomorphism $\phi\colon \Gamma \to \End_{\cA}(b)$.
Morphisms in $\PairsCat{\cA}{\Gamma}$ are defined to intertwine the action maps $\phi$: explicitly, a morphism $f\colon (b,\phi) \to (b',\phi')$ is given by a morphism $f\colon b \to b'$ in $\cA$ such that 
\[
\begin{array}{c}
\begin{tikzpicture}
\node (a1) at (0,0) {$b$};
\node (a2) at (1.25,0) {$b'$};
\node (b1) at (0,-1.25) {$b$};
\node (b2) at (1.25,-1.25) {$b'$};
\draw[->] (a1) to node[above] {$\scriptstyle f$} (a2);
\draw[->] (b1) to node[above] {$\scriptstyle f$} (b2);
\draw[->] (a1) to node[left] {$\scriptstyle \phi(r)$} (b1);
\draw[->] (a2) to node[right] {$\scriptstyle \phi'(r)$} (b2);
\end{tikzpicture}
\end{array}
\]
commutes for all $r\in\Gamma$.
\end{defin}

\begin{defin}\label{NC deformation definition}
Let $\cA$ be the abelian category  $\cA=\Mod\Lambda$ or $\cA=\Qcoh Y$ where $\Lambda$ is a $\mathbb{C}$-algebra and $Y$ is a quasi-projective $\mathbb{C}$-scheme.  Pick $a\in\cA$, then the \emph{noncommutative deformation functor} is defined
\[
\Def^{\cA}_{a}\colon\art_1\to\Sets
\]
by sending
\[
(\Gamma,\n)\mapsto \left. \left \{ ((b,\phi),\delta)
\left|\begin{array}{l}(b,\phi)\in\PairsCat{\cA}{\Gamma}\\ -\otimes_\Gamma b\colon\mod \Gamma\to \cA \t{ is exact}\\ \delta\colon (\Gamma/\n)\otimes_\Gamma b\xrightarrow{\sim}a\end{array}\right. \right\} \middle/ \sim \right.
\]
where $((b,\phi),\delta)\sim ((b',\phi'),\delta')$ if there exists an isomorphism $\tau\colon(b,\phi)\to(b',\phi')$ in $\PairsCat{\cA}{\Gamma}$ such that 
\[
\begin{tikzpicture}
\node (a1) at (0,0) {$(\Gamma/\n)\otimes_\Gamma b$};
\node (a2) at (3,0) {$(\Gamma/\n)\otimes_\Gamma b'$};
\node (b) at (1.5,-1) {$a$};
\draw[->] (a1) -- node[above] {$\scriptstyle 1\otimes \tau$} (a2);
\draw[->] (a1) -- node[pos=0.6,left,inner sep=8,anchor=east] {$\scriptstyle \delta$} (b);
\draw[->] (a2) -- node[pos=0.6,right,inner sep=8,anchor=west] {$\scriptstyle \delta'$} (b);
\end{tikzpicture}
\]
commutes.  The \emph{commutative deformation functor} is defined to be the restriction of the above functor to the category $\cart_1$, and is denoted by
\[
\cDef^{\cA}_a\colon \cart_1\to \Sets.
\]
\end{defin}

\begin{remark}
When $\cA=\Mod\Lambda$, the functor $-\otimes_\Gamma b$ in \ref{NC deformation definition} is the standard tensor functor of modules.  When $\cA=\Qcoh Y$, the functor $-\otimes_\Gamma b$ is defined in exactly the same way as in the case of tilting equivalences, and can be thought of as tensoring over the noncommutative sheaf of algebras $\cO_Y\otimes_{\underline{\mathbb{C}}}\underline{\Gamma}$, where $\underline{\mathbb{C}}$ and $\underline{\Gamma}$ are the constant sheaves on $Y$ associated to $\mathbb{C}$ and $\Gamma$ (see e.g.\ \S\ref{geom noncomm twist}).
\end{remark}

The fact that $\Def^{\cA}_a$ is a functor is routine, and we refer the reader to \cite{Laudal, Eriksen}.

\subsection{Contractions and deformation functors}\label{setup section 2.1}

Our setup is the contraction of an irreducible rational curve $C$ ($C^{\redu}\cong\mathbb{P}^1$) in a quasi-projective normal $3$-fold $X$ with only Cohen--Macaulay canonical singularities.  We remark that throughout the paper, there will be no extra  assumptions on the singularities of $X$ unless stated.  By a contraction, we mean a projective birational morphism $f\colon X\to X_{\con}$ contracting $C$ to a point $p$, satisfying $\Rf_*\cO_X=\cO_{X_{\con}}$, such that $X\backslash C\cong X_{\con}\backslash p$.  This incorporates both flips and flops.  

We are interested in deformations of the sheaf $\cO_{\mathbb{P}^1}(-1)$ viewed as a sheaf on $X$, so we choose an affine open neighbourhood $U_{\con} \cong \Spec R$ around $p$. Putting $U:=f^{-1}(U_{\con})$, we have a commutative diagram
\begin{eqnarray}
\begin{array}{c}
\begin{tikzpicture}
\node (C) at (-0.6,0) {$C^{}$}; 
\node (U) at (1,0) {$U$};
\node (X) at (3,0) {$X$};
\draw[right hook->] (C) to node[pos=0.6,above] {$\scriptstyle e$} (U);
\draw[right hook->] (U) to node[above] {$\scriptstyle i$} (X);

\node (x) at (-0.6,-1.5) {$\phantom{R}p\phantom{R}$}; 
\node (Uc) at (1,-1.5) {$U_{\con}$};
\node (Xc) at (3,-1.5) {$X_{\con}$};
\node at (0.1,-1.5) {$\in$}; 
\draw[right hook->] (Uc) to (Xc);

\node (m) at (-0.6,-2.5) {$\m$}; 
\node (R) at (1,-2.5) {$\Spec R$};
\node at (0.1,-2.5) {$\in$}; 

\draw[->] (X) --  node[right] {$\scriptstyle f$} (Xc);
\draw[->] (U) --  node[right] {$\scriptstyle f|_U$}  (Uc);
\draw[|->] (C) --  (x);
\node [rotate=-90] at (1, -2) {$\cong$};
\node [rotate=-90] at (-0.6, -2) {$=$};
\end{tikzpicture}
\end{array}\label{key comm local global diagram}
\end{eqnarray}
where $C^{\redu}\cong \mathbb{P}^1$,  $e$ is a closed embedding, and $i$ is an open embedding.  Since $X_{\con}$ is separated and $U_{\con}$ is affine, the morphism $U_{\con}\hookrightarrow X_{\con}$ is affine, and since affine morphisms are preserved under pullback it follows that  $i$ is an affine morphism. This, together with the fact that $i$ is an open embedding, implies that there is a fully faithful embedding of derived categories
\begin{eqnarray}
\Ri_*=i_*\colon\D(\Qcoh U)\hookrightarrow \D(\Qcoh X)\label{inclusion line}
\end{eqnarray}
with adjoints $i^!$ and $i^*$. 

 Throughout we write $E:=e_*\cO_{\mathbb{P}^1}(-1)\in\coh U$, so that $i_*E\in\coh X$ is  just the sheaf $\cO_{\mathbb{P}^1}(-1)$ viewed as a sheaf on $X$.  From \ref{NC deformation definition}, we thus have two deformation functors $\Def^{\Qcoh U}_{E}$ and $\Def^{\Qcoh X}_{i_*E}$, both of which take $\art_1\to\Sets$.  For brevity, we denote them by $\Def^U_E$ and $\Def^X_{i_*E}$ respectively.   The isomorphism of these deformation functors below is not surprising: it says that noncommutative deformations can be detected Zariski locally.  Although elementary, we give the proof in full, since we will need variations of it later.

\begin{prop}\label{local to global deformations the same}
There is a natural isomorphism $\Def^{ U}_{E}\xrightarrow{\sim}\Def^{ X}_{i_*E}$ induced by $i_*$, with inverse induced by $i^*$. By restriction, there is also a natural isomorphism $\cDef^{U}_{E}\xrightarrow{\sim}\cDef^{X}_{i_*E}$, with a corresponding inverse. 
\end{prop}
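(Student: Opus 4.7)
The plan is to check that $i_*$ and $i^*$ induce well-defined maps between the deformation functors, and that these maps are mutually inverse. First, given $(b,\phi,\delta)\in\Def^U_E(\Gamma)$, I would set $i_*(b,\phi,\delta):=(i_*b,\phi',\delta')$ where $\phi'\colon \Gamma\xrightarrow{\phi}\End_U(b)\xrightarrow{i_*}\End_X(i_*b)$, and $\delta'$ arises by applying $i_*$ to $\delta$ after identifying $(\Gamma/\n)\otimes_\Gamma i_*b$ with $i_*((\Gamma/\n)\otimes_\Gamma b)$. The substantive content is verifying that $(i_*b,\phi',\delta')$ actually lies in $\Def^X_{i_*E}(\Gamma)$, i.e.\ that the flatness condition survives pushforward, and that the identification needed for $\delta'$ holds. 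I expect this to be the main obstacle.

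The key tool is a filtration argument. Since $\Gamma\in\art_1$, its augmentation ideal $\n$ is nilpotent, giving a finite filtration $b\supset\n b\supset \n^2 b\supset\cdots\supset \n^N b=0$. By the flatness hypothesis on $b$, the subquotients are
\[ \n^k b / \n^{k+1} b \;\cong\; (\n^k/\n^{k+1})\otimes_{\mathbb{C}} E, \]
a finite direct sum of copies of $E$. Iteratively applying Lemma \ref{highers vanish 1} to the short exact sequences of this filtration shows $\RDerivedi{j}{i_*}(\n^k b)=0$ for all $j>0$ and all $k$, so $\Ri_*b=i_*b$ and $i_*$ is exact on the full subcategory of $\Qcoh U$ consisting of deformations of $E$ parametrised by $\Gamma$. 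Exactness of $i_*$ here immediately implies that $-\otimes_\Gamma i_*b$ is exact on $\mod\Gamma$, and that the canonical map $(\Gamma/\n)\otimes_\Gamma i_*b\to i_*((\Gamma/\n)\otimes_\Gamma b)$ is an isomorphism, so $\delta'$ is well-defined and is an isomorphism. Equivalent deformations are sent to equivalent ones since $i_*$ of an intertwining isomorphism is again an intertwining isomorphism.

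For the inverse, $i^*$ is exact since $i$ is an open embedding, and $i^*i_*\cong\Id$ on $\Qcoh U$, so $(i^*b',\phi'|_{i^*},i^*\delta')$ defines a deformation on $U$ and $i^*\circ i_*\cong\Id$ on $\Def^U_E$. For the other composite, given $(b',\phi',\delta')\in\Def^X_{i_*E}(\Gamma)$, the same filtration argument shows $b'$ admits a finite filtration whose subquotients are direct sums of $i_*E$. Since $i_*E$ is supported on $C$, so is $b'$. Now $C$ is closed in $X$ (as the fibre of the proper morphism $f$ over $p$) and is contained in $U$, so the unit $b'\to i_*i^*b'$ of the adjunction $i^*\dashv i_*$ is an isomorphism on stalks: on $U$ automatically, and outside $U$ both stalks vanish because $C\cap(X\setminus U)=\emptyset$. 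Hence $i_*\circ i^*\cong\Id$ on $\Def^X_{i_*E}$, compatibly with the isomorphisms $\delta'$. Naturality in $\Gamma$ is immediate from functoriality, and restricting everything from $\art_1$ to $\cart_1$ yields the corresponding statement for $\cDef$.
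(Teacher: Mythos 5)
Your proposal is correct and follows essentially the same route as the paper: both push forward and pull back the families, use the filtration of a $\Gamma$-flat family by copies of $E$ together with Lemma \ref{highers vanish 1} to get exactness of $i_*$ on these families, and use that a family on $X$ is supported inside $U$ so that the unit $b'\to i_*i^*b'$ is an isomorphism. The only point you pass over quickly is the verification that the framing isomorphisms $\delta$ are compatible under these identifications (the paper checks this with an explicit diagram via the triangular identity), but that step is routine.
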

\begin{proof}
Given $((\cF,\phi),\delta)\in\Def^U_E(\Gamma)$, composing $\Gamma\xrightarrow{\phi}\End_U(\cF)\xrightarrow{i_*}\End_X(i_*\cF)$ gives a pair $(i_*\cF,i_*\phi)\in\PairsCat{\Qcoh X}{\Gamma}$.  We check that this belongs to $\Def^{ X}_{i_*E}(\Gamma)$. Firstly, by the projection formula and \eqref{inclusion line}, the functor $-\otimes_\Gamma i_*\cF\colon \mod\Gamma\to\Qcoh X$ factorizes as
\begin{eqnarray}
-\otimes_\Gamma i_*\cF\colon \mod\Gamma\xrightarrow{-\otimes_\Gamma\cF} \Qcoh U\xrightarrow{i_*}\Qcoh X.\label{comp is exact}
\end{eqnarray}
The first functor is exact by definition, and the second functor is exact by \eqref{inclusion line}, so the composition $-\otimes_\Gamma i_*\cF$ is exact.  Next, we have that
\[
\Gamma/\n\otimes_\Gamma i_*\cF \overset{\sim}{\longrightarrow} i_*(\Gamma/\n\otimes_\Gamma \cF)\xrightarrow{i_*(\delta)}i_*E 
\]
is an isomorphism, so denoting the first isomorphism by $\alpha$, it follows that 
\[
\LargeFamily{(i_*\cF,i_*\phi)}{i_*(\delta)\cdot\alpha}\in\Def^X_{i_*E}(\Gamma).
\]
It is easy to check that this preserves the equivalence relation $\sim$ and is functorial, so gives a natural transformation
\[
i_*\colon \Def^U_E\to\Def^X_{i_*E}.
\]  

Conversely, given  $((\cG,\psi),\gamma)\in\Def^X_{i_*E}(\Gamma)$, composing $\Gamma\xrightarrow{\psi}\End_X(\cG)\xrightarrow{i^*}\End_U(i^*\cG)$ gives a pair $(i^*\cG,i^*\psi)\in\PairsCat{\Qcoh U}{\Gamma}$.   The functor 
\opt{10pt}{$-\otimes_\Gamma i^*\cG\colon \mod\Gamma\to\Qcoh U$}
\opt{12pt}{\[-\otimes_\Gamma i^*\cG\colon \mod\Gamma\to\Qcoh U\]}
factorizes as
\begin{eqnarray}
-\otimes_\Gamma i^*\cG\colon \mod\Gamma\xrightarrow{-\otimes_\Gamma\cG} \Qcoh X\xrightarrow{i^*}\Qcoh U,\label{comp is exact 22}
\end{eqnarray}
so as a composition of two exact functors it is exact.  Further we have that
\[
\Gamma/\n\otimes_\Gamma i^*\cG \overset{\sim}{\longrightarrow} i^*(\Gamma/\n\otimes_\Gamma \cG)\xrightarrow{i^*(\gamma)}i^*i_*E \overset{\sim}{\longrightarrow} E 
\]
is an isomorphism, so denoting the first isomorphism by $\beta$, it follows that 
\[
\LargeFamily{(i_*\cF,i_*\phi)}{\varepsilon_{E}\cdot i^*(\gamma)\cdot\beta}\in\Def^X_{i_*E}(\Gamma)
\]
where $\varepsilon_{E}$ is the counit morphism.  Again it is easy to check that this preserves the equivalence relation $\sim$ and is functorial, so gives a natural transformation
\[
i^*\colon \Def^X_{i_*E}\to\Def^U_{E}.
\]  

To show that these natural transformations are isomorphisms, we need to show that
\begin{align*}
((\cF,\phi),\delta)&\sim \LargeFamily{(i^*i_*\cF,i^*i_*\phi)}{\varepsilon_{E}\cdot i^*(i_*(\delta)\cdot\alpha)\cdot \beta}\\
 ((\cG,\psi),\gamma)&\sim \LargeFamily{(i_*i^*\cG,i_*i^*\psi)}{i_*(\varepsilon_{E}\cdot i^*(\gamma)\cdot\beta)\cdot\alpha}.
\end{align*}
We explain the second, the first being slightly easier.  Since $\cG$ is filtered by the sheaf $i_*E$, its support is contained in $U$ and so the unit map $\eta_{\cG}\colon \cG\to i_*i^*\cG$ is an isomorphism.  By functoriality of the adjunction it is clear that 
\[
\begin{array}{c}
\begin{tikzpicture}[yscale=1.25]
\node (G) at (-1,0) {$\cG$}; 
\node (iiG) at (1,0) {$i_*i^*\cG$};
\node (G2) at (-1,-1) {$\cG$}; 
\node (iiG2) at (1,-1) {$i_*i^*\cG$};
\draw[->] (G) to node[above] {$\scriptstyle \eta_{\cG}$} (iiG);
\draw[->] (G2) to node[above] {$\scriptstyle \eta_{\cG}$} (iiG2);
\draw[->] (G) --  node[left] {$\scriptstyle \psi(r)$}  (G2);
\draw[->] (iiG) --  node[right] {$\scriptstyle i_*i^*\psi(r)$}  (iiG2);
\end{tikzpicture}
\end{array}
\]
commutes for all $r\in\Gamma$, and so $(\cG,\psi)\cong(i_*i^*\cG,i_*i^*\psi)$ in $\PairsCat{\Qcoh X}{\Gamma}$.  Finally, we claim that
\[
\begin{tikzpicture}[scale=1.25]
\node (a1) at (0,0) {$(\Gamma/\n)\otimes_\Gamma \cG$};
\node (a2) at (3,0) {$(\Gamma/\n)\otimes_\Gamma i_*i^*\cG$};
\node (b) at (1.5,-1) {$i_*E$};
\draw[->] (a1) -- node[above] {$\scriptstyle 1\otimes \eta_{\cG}$} (a2);
\draw[->] (a1) -- node[pos=0.6,left,inner sep=8,anchor=east] {$\scriptstyle \gamma$} (b);
\draw[->] (a2) -- node[pos=0.6,right,inner sep=8,anchor=west] {$\scriptstyle (i_*\varepsilon_{E})\cdot (i_*i^*(\gamma))\cdot (i_*\beta)\cdot\alpha$} (b);
\end{tikzpicture}
\]
commutes.  We rewrite this diagram, adding in a unit morphism as the dotted arrow:
\[
\begin{tikzpicture}[xscale=1.5,yscale=1.25]
\node (a1) at (0,0) {$(\Gamma/\n)\otimes_\Gamma \cG$};
\node (a2) at (3,0) {$(\Gamma/\n)\otimes_\Gamma i_*i^*\cG$};
\node (b) at (0,-3) {$i_*E$};
\node (b1) at (3,-1) {$i_*((\Gamma/\n)\otimes_\Gamma i^*\cG)$};
\node (b2) at (3,-2) {$i_*i^*((\Gamma/\n)\otimes_\Gamma \cG)$};
\node (b3) at (3,-3) {$i_*i^*i_*E$};
\draw[->] (a1) -- node[above] {$\scriptstyle 1\otimes \eta_{\cG}$} (a2);
\draw[->] (a1) -- node[left] {$\scriptstyle \gamma$} (b);
\draw[->] (a2) -- node[right] {$\scriptstyle \alpha$} (b1);
\draw[->] (b1) -- node[right] {$\scriptstyle i_*\beta$} (b2);
\draw[->] (b2) -- node[right] {$\scriptstyle i_*i^*(\gamma)$} (b3);
\draw[->] (b3) -- node[above] {$\scriptstyle i_*\varepsilon_{E}$} (b);
\draw[densely dotted,->] (a1) -- node[gap] {$\scriptstyle \eta_{(\Gamma/\n\otimes\cG)}$} (b2);
\end{tikzpicture}
\]
The bottom half commutes by the functoriality of the unit, since $i_*\varepsilon_E=(\eta_{i_*E})^{-1}$ by the triangular identity.  It remains to show that the top half commutes. This just follows by inspection, since locally the maps send
\[
\begin{tikzpicture}[xscale=1.5,yscale=1.25]
\node (a1) at (0,0) {$1\otimes a$};
\node (a2) at (3,0) {$1\otimes i_*i^*a$};
\node (b1) at (3,-1) {$i_*(1\otimes i^*a)$};
\node (b2) at (3,-2) {$\phantom{.}i_*i^*(1\otimes a).$};
\draw[|->] (a1) -- (a2);
\draw[|->] (a2) -- (b1);
\draw[|->] (b1) --  (b2);
\draw[densely dotted,|->] (a1) -- (b2);
\end{tikzpicture}\qedhere
\]
\end{proof}

\subsection{From geometry to algebra}
\label{sect geom to alg}
In \ref{local to global deformations the same} we reduced the problem of deforming $i_*E$ in $X$ to deforming $E$ in $U$. Now we further reduce the problem to deforming a simple module over a certain noncommutative algebra, using a derived equivalence of Van den Bergh \cite{VdB1d}. With respect to our applications later, this step is crucial, since it yields a different description of the universal object, one which is homologically much easier to control.  

We keep the setup as in \eqref{key comm local global diagram}, so we have a projective birational map $f\colon U\to\Spec R$ of 3-folds, with at most one-dimensional fibres, such that $\Rf_*\cO_U=\cO_R$, where $R$ is a Cohen--Macaulay $\mathbb{C}$-algebra.  Since the fibre is at most one-dimensional, in this setup it is known \cite[3.2.8]{VdB1d} that there is a bundle $ \cV:=\cO_U\oplus\cN$  inducing a derived equivalence
\begin{eqnarray}
\begin{array}{c}
\begin{tikzpicture}
\node (a1) at (0,0) {$\Db(\coh U)$};
\node (a2) at (5,0) {$\Db(\mod \End_U(\cV))$};
\node (b1) at (0,-1) {$\Per U$};
\node (b2) at (5,-1) {$\mod\End_U(\cV) $};
\draw[->] (a1) -- node[above] {$\scriptstyle\RHom_U(\cV,-)$} node [below] {$\scriptstyle\sim$} (a2);
\draw[->] (b1) --  node [below] {$\scriptstyle\sim$} (b2);
\draw[right hook->] (b1) to (a1);
\draw[right hook->] (b2) to (a2);
\end{tikzpicture}
\end{array}\label{derived equivalence}
\end{eqnarray}
Here $\Per U$ denotes the category of perverse sheaves, defined to be
\[
\Per U:=\left\{ a\in\Db(\coh U)\left| \begin{array}{c}H^i(a)=0\mbox{ if }i\neq 0,-1\\
f_*H^{-1}(a)=0\mbox{, }\Rfi{1}_* H^0(a)=0\\ \Hom(c,H^{-1}(a))=0\mbox{ for all }c\in\cC \end{array}\right. \right\}
\]
where
\[
\cC:=\{ c\in\coh U\mid \Rf_*c=0\}.
\]
By birationality \cite[4.2.1]{VdB1d}, $\End_{U}(\cV)\cong \End_R(f_*\cV)$, hence we write $N:=f_*\cN$ and throughout this section we set
\[
\Lambda:=\End_U(\cV) = \End_U(\cO_U\oplus\cN)\cong\End_R(R\oplus N).
\]

Since it is clear that $E=e_*\cO_{\mathbb{P}^1}(-1)$, viewed as a complex in degree zero, belongs to $\Per U$, sending it across the derived equivalence \eqref{derived equivalence} yields a module in degree zero.
\begin{defin}\label{T definition}
We write $T:=\Hom_U(\cV,E)\cong\RHom_U(\cV,E)$.
\end{defin}

By \ref{NC deformation definition}, there is a deformation functor $\Def^{\Mod\Lambda}_T\colon\art_1\to \Sets$, which to ease notation we denote by $\Def^{\Lambda}_T$.  The following is an analogue of \ref{local to global deformations the same}, and is proved in a very similar way.

\begin{prop}\label{local to alg deformations the same}
There is a natural isomorphism $\Def^{ U}_{E}\cong\Def^{\Lambda}_{T}$ induced by $\Hom_U(\cV,-)$ and $-\otimes_\Lambda\cV$, and consequently by restriction a natural isomorphism $\cDef^{U}_{E}\cong\cDef^{\Lambda}_{T}$. 
\end{prop}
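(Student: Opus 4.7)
The plan is to transfer deformation families via the abelian equivalence $\Hom_U(\cV,-)\colon \Per U \xrightarrow{\sim} \mod\Lambda$ coming from the tilting bundle $\cV$, and check compatibility with $\Gamma$-actions and flatness.

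\textbf{Step 1 (Families live in the perverse heart).} First I would observe that for any $(\cF,\phi,\delta)\in\Def^U_E(\Gamma)$, the sheaf $\cF$ lies in $\Per U$. Indeed, since $\Gamma\in\art_1$ has nilpotent augmentation ideal $\n$, there is a finite filtration $\cF\supseteq \n\cdot\cF\supseteq\n^2\cdot\cF\supseteq\cdots=0$. Flatness of $-\otimes_\Gamma\cF$ combined with the isomorphism $\Gamma/\n\otimes_\Gamma\cF\cong E$ shows that each subquotient $\n^i\cF/\n^{i+1}\cF$ is a finite direct sum of copies of $E$. Since $E\in\Per U$ and the heart of a $t$-structure is closed under extensions, it follows inductively that $\cF\in\Per U$. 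In particular $\Hom_U(\cV,\cF)=\RHom_U(\cV,\cF)$ is a single $\Lambda$-module, of finite length over $\Gamma$.

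\textbf{Step 2 (Forward natural transformation).} Define $\Phi\colon\Def^U_E\to\Def^\Lambda_T$ by
\[
(\cF,\phi,\delta)\mapsto\bigl(\Hom_U(\cV,\cF),\,\Hom_U(\cV,-)\circ\phi,\,\Hom_U(\cV,\delta)\bigr).
\]
The reduction isomorphism becomes $\Hom_U(\cV,E)=T$ by definition. To verify flatness of $-\otimes_\Gamma\Hom_U(\cV,\cF)$, I would establish a natural isomorphism
\[
N\otimes_\Gamma\Hom_U(\cV,\cF)\;\cong\;\Hom_U\bigl(\cV,\,N\otimes_\Gamma\cF\bigr)
\]
for $N\in\mod\Gamma$. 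This reduces to the case $N=\Gamma$ by presenting $N$ via $\Gamma^m\to\Gamma^n\to N\to 0$ and using right exactness of both sides (noting that $N\otimes_\Gamma\cF$ remains in $\Per U$ by the same filtration argument as Step 1, so $\Hom_U(\cV,-)$ is exact on such sequences). Exactness in $N$ then follows from exactness of $-\otimes_\Gamma\cF$ and exactness of the equivalence $\Hom_U(\cV,-)\colon\Per U\xrightarrow{\sim}\mod\Lambda$.

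\textbf{Step 3 (Inverse natural transformation).} Define $\Psi\colon\Def^\Lambda_T\to\Def^U_E$ by $(M,\psi,\delta)\mapsto(M\otimes_\Lambda\cV,-,-)$, where $M\otimes_\Lambda\cV$ is the image of $M$ under the quasi-inverse equivalence $\mod\Lambda\to\Per U$. This is well-defined in $\coh U$ since $\mod\Lambda$ maps into $\Per U\subset\coh U$. The checks are dual to Step~2: a natural isomorphism $N\otimes_\Gamma(M\otimes_\Lambda\cV)\cong(N\otimes_\Gamma M)\otimes_\Lambda\cV$ transports flatness, and the reduction isomorphism $\Gamma/\n\otimes_\Gamma M\cong T$ yields $\Gamma/\n\otimes_\Gamma(M\otimes_\Lambda\cV)\cong T\otimes_\Lambda\cV\cong E$.

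\textbf{Step 4 (Mutually inverse).} Finally, apply the unit and counit of the equivalence $\Hom_U(\cV,-)\dashv(-\otimes_\Lambda\cV)$ between $\Per U$ and $\mod\Lambda$ pointwise to obtain natural equivalences $\Psi\circ\Phi\cong\Id$ and $\Phi\circ\Psi\cong\Id$, compatibly with the $\Gamma$-actions (which intertwine by construction) and the reduction data (which match by functoriality). Restricting to $\cart_1$ gives the commutative statement.

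The main obstacle is the compatibility between the two tensor actions in Steps 2--3: showing that forming $N\otimes_\Gamma(-)$ commutes with the tilting equivalence, which is what makes the flatness condition transport cleanly. Once Step 1 secures that all deformations remain in the perverse heart, this reduces to a general fact about right exact equivalences and presentations of $N$, with no geometric input beyond the tilting bundle $\cV$.
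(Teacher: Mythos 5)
Your proposal is correct and takes essentially the same route as the paper: the paper also works with the abelian-level adjoint pair $\Hom_U(\cV,-)$ and $-\otimes_\Lambda\cV$, uses the filtration of families by $E$ (resp.\ $T$) so that $\RHom_U(\cV,-)$ and $-\otimes^{\bf L}_\Lambda\cV$ are underived on them and the composite functors stay exact, and concludes via the unit/counit being isomorphisms there, exactly as in your Steps 1--4 (your $\Per U$ packaging is just a mild rephrasing of this). The only slip is notational: the adjunction goes the other way, $(-\otimes_\Lambda\cV)\dashv\Hom_U(\cV,-)$, which does not affect the argument.
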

\begin{proof}
The functors $\Hom_U(\cV,-)\colon\Qcoh U\to \Mod\Lambda$ and $-\otimes_\Lambda\cV\colon\Mod\Lambda\to\Qcoh U$ are adjoint.  Since $T$ is a module corresponding under the derived equivalence to a sheaf $E$, we know that $T\cong \Hom_U(\cV,T\otimes_{\Lambda}\cV)$ and $E\cong \Hom_U(\cV,E)\otimes_{\Lambda}\cV$ via the unit and counit morphisms. 

The proof is now identical to \ref{local to global deformations the same}, where in the analogue of \eqref{comp is exact} and \eqref{comp is exact 22}, the second functors are no longer exact, but they are exact out of the image of the first functor (using $\RHom_U(\cV,E)=\Hom_U(\cV,E)$ and  $T\otimes_\Lambda^{\bf L}\cV=T\otimes_\Lambda\cV$ respectively), which is sufficient for the remainder of the proof.
\end{proof}

\begin{defin}\label{contraction def lambda}
We define $[R]$ to be the two-sided ideal of $\Lambda=\End_R(R\oplus N)$ consisting of those morphisms $R\oplus N\to R\oplus N$ which factor 
\[
\begin{tikzpicture}
\node (A) at (0,0) {$R\oplus N$};
\node (B2) at (1.5,-0.75) {$P$};
\node (C) at (3,0) {$R\oplus N$};
\draw[densely dotted,->] (A) -- (B2);
\draw[densely dotted,->] (B2) -- (C);
\draw[->] (A) -- (C);
\end{tikzpicture}
\]
through an object $P\in\add R$. We set $I_{\con}:=[R]$ and define the {\em contraction algebra associated to $\Lambda$} to be $\Lambda_{\con}:=\Lambda/I_{\con}$.
\end{defin}

\begin{remark}\label{morita is an issue}
The contraction algebra is the fundamental object in our paper, but we remark that the algebra $\Lambda_{\con}$ defined above in \ref{contraction def lambda} depends on $\Lambda$, which in turn depends on the choice of derived equivalence in \eqref{derived equivalence}; different equivalences may be obtained due to the lack of Krull--Schmidt decompositions of tilting sheaves, see \cite[3.2.7]{VdB1d}. In \ref{def basic algebra} we will define a contraction algebra associated to $C$ which is intrinsic to the geometry of $X$, and does not involve choices, as shown in \ref{contraction alg isos}. It will turn out in \ref{morita lemma} that $\Lambda_{\con}$ will be morita equivalent, though not necessarily isomorphic, to the contraction algebra associated to $C$.
\end{remark}

\subsection{Complete local geometric setting}\label{complete local geometry summary}
Having in \ref{local to global deformations the same} and \ref{local to alg deformations the same} reduced the problem of deforming $i_*E$ in $X$ to the purely algebraic problem of deforming $T$ in $\mod \Lambda$, we next search for a candidate representing object.  This requires passing through a morita equivalence; indeed we would like to say that $\Lambda_{\con}$ belongs to $\art_1$, since then it is a natural candidate for the representing object.  However, the algebras in $\art_1$ have only one simple module, and it is one-dimensional: this is not in general true for $\Lambda_{\con}$ for the reasons in \ref{morita is an issue}.  Thus to obtain a candidate representing object in $\art_1$, we need to pass to the basic algebra associated to $\Lambda_{\con}$. The purpose of this subsection is to obtain this from  the formal fibre, and construct the contraction algebra $\CA$. We will show that $\CA$ is indeed the representing object in \S\ref{flops section}.

For $X\in\Mod \Lambda$ and $\n\in\Max R$, we write $X_\n:=X\otimes_RR_\n$, and for $Y\in\Mod\Lambda$ we often write $\widehat{Y}:=Y_\n\otimes_{R_\n}\widehat{R_\n}$.  We require a better description of $\widehat{\Lambda}$.  Completing the base with respect to $\m$ in \eqref{key comm local global diagram}, since endomorphism rings of finitely generated modules behave well under completion \cite[2.10]{EisenbudBook}, it is standard that
\[
\widehat{\Lambda}:=\Lambda\otimes_R\widehat{R}=\End_R(R\oplus N)\otimes_R\widehat{R}\cong \End_{\widehat{R}}(\widehat{R}\oplus \widehat{N}).
\]
On the other hand, considering $\Spec \widehat{R}$ and the formal fibre $f\colon \widehat{U}\to\Spec\widehat{R}$, it is also standard that the tilting bundle $\cV=\cO_U\oplus \cN$ on $U$ restricts to a tilting bundle $\cO_{\widehat{U}}\oplus\widehat{\cN}$ on $\widehat{U}$.  Then by birationality \cite[3.2.10]{VdB1d} 
\[
\End_{\widehat{U}}(\cO_{\widehat{U}}\oplus \widehat{\cN})\cong \End_{\widehat{R}}(f_*(\cO_{\widehat{U}}\oplus \widehat{\cN})),
\]
and this is clearly just $\End_{\widehat{R}}(\widehat{R}\oplus \widehat{N})$.  Hence passing to the formal fibre induces a derived equivalence
\[
\begin{array}{c}
\begin{tikzpicture}
\node (a1) at (0,0) {$\Db(\coh\widehat{U})$};
\node (a2) at (5,0) {$ \Db(\mod\widehat{\Lambda}).$};
\draw[->] (a1) -- node[above] {$\scriptstyle\RHom_{\widehat{U}}(\cO_{\widehat{U}}\oplus\widehat{\cN},-)$} node [below] {$\scriptstyle\sim$} (a2);
\end{tikzpicture}
\end{array}
\]

There is a more explicit tiling bundle on $\widehat{U}$, constructed in \cite{VdB1d}, whose associated tilting algebra, defined in \ref{def basic algebra} below, is the basic algebra morita equivalent to $\widehat{\Lambda}$.  For this, let $C=\pi^{-1}(\m)$ where $\m$ is the unique closed point of $\Spec \widehat{R}$, then giving $C$ the reduced scheme structure, we have $C^{\redu}\cong \mathbb{P}^1$.  Let $\cL$ denote the line bundle on $\widehat{U}$ such that $\cL\cdot C=1$.  If the multiplicity of $C$ is equal to one, set $\cM:=\cL$ and $\cN_1:=\cM^*$ \cite[3.5.4]{VdB1d}, else define $\cM$ to be given by the maximal extension
\[
0\to\cO_{\widehat{U}}^{\oplus(r-1)}\to\cM\to\cL\to 0
\]
associated to a minimal set of $r-1$ generators of $H^1(\widehat{U},\cL^{*})$ and set $\cN_1:=\cM^*$.  Then $\cO_{\widehat{U}}\oplus \cN_1$ is a tilting bundle on $\widehat{U}$ \cite[3.5.5]{VdB1d}.  Again by birationality 
\[
\End_{\widehat{U}}(\cO_{\widehat{U}}\oplus \cN_1)\cong \End_{\widehat{R}}(f_*(\cO_{\widehat{U}}\oplus \cN_1)),
\]
and we have $f_*(\cO_{\widehat{U}}\oplus \cN_1)=\widehat{R}\oplus N_1$ where $N_1:=f_*\cN_1$.   We remark that $\rank_{\widehat{R}}N_1$ is equal to the scheme-theoretic multiplicity of the curve $C$ \cite[3.5.4]{VdB1d}.  

\begin{lemma}\label{decomp into irred 1}
We can write $\widehat{R}\oplus\widehat{N}\cong \widehat{R}^{\oplus a_0}\oplus N_1^{\oplus a_1}$ for some $a_0,a_1\in\mathbb{N}$.
\end{lemma}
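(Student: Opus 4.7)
The plan is to reduce the statement to a Krull--Schmidt argument over the henselian complete local ring $\widehat{R}$, combined with Van den Bergh's classification of tilting bundles on $\widehat{U}$ for a single-curve flopping contraction.

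First, I would observe that both $\widehat{\cV}:=\cO_{\widehat{U}}\oplus\widehat{\cN}$ and $\cO_{\widehat{U}}\oplus\cN_1$ are tilting bundles on the formal fibre $\widehat{U}$: the former because the tilting property of $\cV=\cO_U\oplus\cN$ on $U$ is preserved under the flat base change to the completion, and the latter by the Van den Bergh construction recalled in \S\ref{complete local geometry summary}. Applying $f_*$ (which is fully faithful on additive closures of tilting summands, since $\End_{\widehat{U}}(-)\cong \End_{\widehat{R}}(f_*-)$) gives the finitely generated $\widehat{R}$-modules $\widehat{R}\oplus\widehat{N}$ and $\widehat{R}\oplus N_1$.

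Next, because $\widehat{R}$ is complete local it is henselian, and therefore Krull--Schmidt holds for finitely generated $\widehat{R}$-modules. Thus $\widehat{N}$ admits a unique (up to isomorphism and reordering) decomposition into indecomposable summands. The key step is to show that each such indecomposable summand is isomorphic to either $\widehat{R}$ or $N_1$. I would argue this by appealing to the uniqueness of the basic tilting bundle on $\widehat{U}$: since $C$ is a single irreducible curve, $\cO_{\widehat{U}}\oplus\cN_1$ is the basic tilting bundle on $\widehat{U}$, and in particular $\cN_1$ is itself indecomposable. Any tilting bundle on $\widehat{U}$ containing $\cO_{\widehat{U}}$ as a summand---in particular $\widehat{\cV}$---has its remaining indecomposable summands drawn from $\{\cO_{\widehat{U}},\cN_1\}$, so after transporting back to $\widehat{R}$-modules by $f_*$ we obtain $\widehat{R}\oplus\widehat{N}\cong \widehat{R}^{\oplus a_0}\oplus N_1^{\oplus a_1}$ for some $a_0,a_1\in\mathbb{N}$.

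I expect the main obstacle to be justifying this last claim: that in any tilting bundle on $\widehat{U}$ which contains $\cO_{\widehat{U}}$ as a summand, the remaining indecomposable summands must be isomorphic to $\cN_1$. I would establish this either by direct reference to Van den Bergh's classification of tilting bundles on $\widehat{U}$ for flopping contractions (where the indecomposable summands are in bijection with irreducible components of the exceptional fibre, together with $\cO_{\widehat{U}}$), or by invoking the theory of maximal modifying modules for Gorenstein complete local 3-folds, where the set of indecomposable summands of any modifying module is constrained by the geometry of the exceptional locus---here a single curve.
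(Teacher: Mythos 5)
There is a genuine gap at exactly the step you flag as the main obstacle. The claim that any tilting bundle on $\widehat{U}$ containing $\cO_{\widehat{U}}$ as a summand must have its remaining indecomposable summands isomorphic to $\cN_1$ is false: tilting bundles on $\widehat{U}$ are not unique up to additive closure, even for a single irreducible flopping curve. Already for the Atiyah flop, $\cN_1\cong\cO_{\widehat{U}}(-1)$, yet $\cO_{\widehat{U}}\oplus\cO_{\widehat{U}}(1)$ is also a tilting bundle containing $\cO_{\widehat{U}}$ whose non-trivial summand is not $\cN_1$; more generally the dual bundle and the mutated bundles (this is precisely the mechanism exploited in \S\ref{prelim}--\S\ref{alg NC twists section} of the paper, where $\nu Y$ has a different non-free summand $K_1^*$) give tilting objects with different indecomposable summands. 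For the same reason your fallback via modifying/maximal modification modules does not rescue the argument: the number of non-free indecomposable summands is constrained by the geometry, but their isomorphism classes are not, since mutation changes them. So ``tilting $+$ contains $\cO_{\widehat{U}}$'' is simply not a strong enough hypothesis to pin the summands down to $\{\cO_{\widehat{U}},\cN_1\}$.

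What is needed, and what the paper uses, is a sharper property of the specific bundle $\cO_{\widehat{U}}\oplus\widehat{\cN}$: it is not merely tilting, it is a progenerator of the perverse heart. By \cite[3.2.7]{VdB1d}, $\cO_U\oplus\cN$ is a projective generator of $\Per U$, hence $\cO_{\widehat{U}}\oplus\widehat{\cN}$ is a projective generator of $\Per\widehat{U}$, and by \cite[3.5.5]{VdB1d} the indecomposable projective objects of $\Per\widehat{U}$ are exactly $\cO_{\widehat{U}}$ and $\cN_1$ (for a single irreducible curve). Any progenerator is a direct sum of copies of the indecomposable projectives, which gives $\cO_{\widehat{U}}\oplus\widehat{\cN}\cong\cO_{\widehat{U}}^{\oplus a_0}\oplus\cN_1^{\oplus a_1}$, and pushing forward along $f$ yields the statement. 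Your Krull--Schmidt reduction over the complete local ring $\widehat{R}$ is correct and harmless, but the identification of the indecomposable summands must come from the progenerator property of the fixed perverse $t$-structure, not from any putative uniqueness of tilting bundles.
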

\begin{proof}
Since $\cO_{{U}}\oplus{\cN}\in\Per{U}$ is a progenerator \cite[3.2.7]{VdB1d}, $\cO_{\widehat{U}}\oplus\widehat{\cN}\in\Per\widehat{U}$ is a progenerator, so by \cite[3.5.5]{VdB1d}, there exist positive integers such that
\[
\cO_{\widehat{U}}\oplus\widehat{\cN}\cong\cO_{\widehat{U}}^{\oplus a_0}\oplus \cN_1^{\oplus a_1}.
\]
Pushing down, it follows that $\widehat{R}\oplus\widehat{N}\cong \widehat{R}^{\oplus a_0}\oplus N_1^{\oplus a_1}$.
\end{proof}

To ease notation with hats, we put $\mathfrak{R}:=\widehat{R}$.

\begin{defin}\label{def basic algebra}
We define $\AB:=\End_{\mathfrak{R}}(\mathfrak{R}\oplus N_1)$, the basic algebra morita equivalent to $\widehat{\Lambda}$, and define the {\em contraction algebra associated to $C$} to be $\CA:=\End_{\mathfrak{R}}(\mathfrak{R}\oplus N_1)/[\mathfrak{R}]$, where $[\mathfrak{R}]$ is defined similarly as in \ref{contraction def lambda}.
\end{defin}

We show later in \ref{contraction alg isos} that the contraction algebra $\CA$ is intrinsic to the geometry of $X$, and in particular does not depend on the choice of derived equivalence in \eqref{derived equivalence}.  It is not hard to see, and we will prove it later in \ref{morita lemma}, that $\CA$ is morita equivalent to $\widehat{\Lambda}_{\con}$.    However, the morita equivalence between $\AB$ and $\widehat{\Lambda}$ is easy to describe, and we do this now. We write
\[
Y:=\mathfrak{R}\oplus N_1, \qquad Z:=\mathfrak{R}^{\oplus a_0}\oplus N_1^{\oplus a_1},
\]
so that $\AB=\End_{\mathfrak{R}}(Y)$ and $\widehat{\Lambda}=\End_{\mathfrak{R}}(Z)$. Then we put
\[
P:=\Hom_{\mathfrak{R}}(Y,Z), \qquad Q:=\Hom_{\mathfrak{R}}(Z,Y).
\]
These have the structure of bimodules, namely ${}_{\widehat{\Lambda}}P_{\InSpaceOf{\widehat{\Lambda}}{\AB}}$ and ${}_{\InSpaceOf{\widehat{\Lambda}}{\AB}}Q_{\widehat{\Lambda}}$. It is clear that $P$ is a progenerator, and we have a morita context
\[
(\AB,\widehat{\Lambda},P,Q).
\]
In particular this implies that
\[
\begin{array}{c}
\begin{tikzpicture}[xscale=1]
\node (d1) at (3,0) {$\mod \AB$};
\node (e1) at (7.5,0) {${}_{}\mod\widehat{\Lambda}$};
\draw[->,transform canvas={yshift=+0.4ex}] (d1) to  node[above] {$\scriptstyle \mathbb{F}:=\Hom_{\AB}(P,-)=-\otimes_{\AB}Q $} (e1);
\draw[<-,transform canvas={yshift=-0.4ex}] (d1) to node [below]  {$\scriptstyle \Hom_{\widehat{\Lambda}}(Q,-)=-\otimes_{\widehat{\Lambda}}P $} (e1);
\end{tikzpicture}
\end{array}
\]
are equivalences.

We now show that, in our situation of flips and flops, the algebras $\Lambda_{\con}$ and $\CA$ are finite dimensional. This will be used later to obtain numerical invariants associated to the curve $C$.

\begin{prop}\label{cont is fd for flop}
With the setup as in \S\ref{setup section 2.1}, and with notation as above,
\begin{enumerate}
\item\label{cont is fd for flop 1} $\Lambda_{\con}$ and $\CA$ are finite dimensional algebras,  which as $R$-modules are supported only at $\m$.
\item\label{cont is fd for flop 2} As an $R$-module, $\FCA$ is supported only at $\m$.
\item\label{cont is fd for flop 3} $T$ is a simple $\Lambda$-module, which as an $R$-module is supported only at $\m$.
\end{enumerate}
\end{prop}
\begin{proof}
(1) The contraction $U\to\Spec R$ is an isomorphism away from a single point $\m\in\Max R$.  By base change, it is clear that $\add N_\p=\add R_\p$ for all $\p\neq\m$, so ${\Lambda_{\con}}_\p=0$ for all $\p\neq \m$.  It follows that $\Lambda_{\con}$ is supported only on $\m$, and so in particular is finite dimensional.  The proof for $\CA$ is identical, using the fact that $\widehat{U}\to\Spec\widehat{R}$ is an isomorphism away from the closed point.\\
(2) follows immediately from \eqref{cont is fd for flop 1}.\\
(3) The fact that $E$ is simple in $\Per\widehat{U}$ is \cite[3.5.8]{VdB1d}.  Thus $\widehat{T}$ is a simple $\widehat{\Lambda}$-module, since it corresponds to a simple module via a morita equivalence.    Now let $e$ denote the idempotent in $\Lambda$ corresponding to $R$, then
\[
\begin{array}{c}
\begin{tikzpicture}
\node (roof) at (0,0) {$\Db(\coh U)$};
\node (X) at (5,0) {$\Db(\mod\Lambda)$};
\node (X') at (0,-1.5) {$\Db(\coh \Spec R)$};
\node (base) at (5,-1.5) {$\Db(\mod R)$};
\draw[->] (roof) -- node[above] {$\scriptstyle \RHom_U(\cV,-)$} node[below] {$\scriptstyle \sim$} (X);
\draw[->] (roof) --  node[right] {$\scriptstyle \Rf_*$}  (X');
\draw[->] (X) --  node[right] {$\scriptstyle (-)e$} (base);
\draw[-,transform canvas={yshift=+0.15ex}] (X') -- (base);
\draw[-,transform canvas={yshift=-0.15ex}] (X') -- (base);
\end{tikzpicture}
\end{array}
\]
commutes. Since $\Rf_*E=0$, it follows that $Te=0$, and so $T$ is a finitely generated $\Lambda_{\con}$-module.  In particular, by \eqref{cont is fd for flop 1} $T$ is supported only at $\m$, so $T\cong\widehat{T}$.     It follows that $T$ is a simple $\Lambda$-module.
\end{proof}

\begin{defin}
We denote by $S$ the simple $\AB$-module corresponding across the morita equivalence to $\widehat{T}$.
\end{defin}

Note that since $\AB$ is basic, $\dim_{\mathbb{C}}S=1$.  It is also clear that $S$ can be viewed as a simple $\CA$-module, and it is the unique simple $\CA$-module.  Under the running flips and flops setting, $\CA$ is further finite dimensional by \ref{cont is fd for flop}\eqref{cont is fd for flop 1}.  Thus $\CA\in\art_1$, making it the candidate for the representing object.

\subsection{Reduction to complete local setting}\label{complete local DB section}
The proof that $\CA$ represents the deformation functors in \ref{local to global deformations the same} and \ref{local to alg deformations the same} requires one more reduction step, namely we must relate them to a similar deformation functor on the formal fibre.  This step is largely routine, although in this subsection we do introduce notation and known results that will be used later.

The following is well known, and will be used throughout.
\begin{lemma}\label{useful finite length stuff} Let $X\in\Db(\mod\Lambda)$, $Y\in\D(\Mod\Lambda)$, then 
\begin{enumerate}
\item\label{useful finite length stuff 1} $\RHom_{\Lambda}(X,Y)\otimes_RR_{\m}\cong \RHom_{\Lambda_{\m}}(X_\m,Y_\m)$.
\item\label{useful finite length stuff 2} $\RHom_{\Lambda}(X,Y)_{\m}\otimes_{R_\m}\widehat{R}_\m\cong \RHom_{\widehat{\Lambda}}(\widehat{X},\widehat{Y})$.
\item\label{useful finite length stuff 3} If $X\in\mod\Lambda$ with $\dim_{\mathbb{C}}X<\infty$, then $X\cong \bigoplus_{\m\in\Supp_R X}X_{\m}\cong \bigoplus_{\m\in\Supp_R X}\widehat{X}$ as $\Lambda$-modules.
\end{enumerate}
\end{lemma}
\begin{proof}
(1) and (2) are \cite[p1100]{IR}. \\
(3) The natural map  $\psi\colon X\to \bigoplus_{\m\in\Supp_R X}X_{\m}$ is clearly a $\Lambda$-module homomorphism.  Viewing $\psi$ as an $R$-module homomorphism, since $X$ has finite length, $\psi$ is bijective by \cite[2.13b]{EisenbudBook}.  It follows that $\psi$ is a $\Lambda$-module isomorphism.  The last isomorphism follows since if $X_\m$ has finite length, then $X_\m\cong\widehat{X}$ as $\Lambda_\m$-modules.
\end{proof}

Now $\Lambda=\End_R(R\oplus N)$ and $\Lambda_{\con}=\Lambda/[R]$.   Completing with respect to $\m$,  $
\widehat{\Lambda}\cong\End_{\widehat{R}}(\widehat{R}\oplus \widehat{N})$ and we already know from \S\ref{complete local geometry summary} that $\widehat{N}$ may decompose into many more summands than $N$ does.  However, the following states that the localization (resp.\ completion) of the contraction algebra associated to $\Lambda$ is the contraction algebra associated to the localization $\Lambda_\m$ (resp.\ completion $\widehat{\Lambda}$).  This will allow us to reduce many of our problems to the formal fibre.

\begin{lemma}\label{loc and comp of contraction} With notation as above,
\begin{enumerate}
\item\label{loc and comp of contraction 1} $[R]\otimes_RR_\m\cong[R_\m]$.  We denote this by ${I_{\m}}_{\con}$.
\item\label{loc and comp of contraction 2} $[R_\m]\otimes_R\widehat{R}\cong [\widehat{R}]$.  We denote this by $\widehat{I}_{\con}$.
\item\label{loc and comp of contraction 3} ${\Lambda_{\con}}\otimes_RR_{\m}\cong \Lambda_\m/[R_\m]$ and $\Lambda_{\con}\otimes_R\widehat{R}\cong \widehat{\Lambda}/[\widehat{R}]$.
\end{enumerate}
\end{lemma}
\begin{proof}
Since localization and completion are exact, \eqref{loc and comp of contraction 3} follows from the first two.  We prove \eqref{loc and comp of contraction 2}, with \eqref{loc and comp of contraction 1} being similar.  To ease notation, we temporarily write $M:=R\oplus N$.

It is clear that  $[R_\m]\otimes_R\widehat{R}\subseteq [\widehat{R}]$, so let $\psi\in [\widehat{R}]$.  Then $\psi\colon\widehat{M}\to\widehat{M}$ factors through $\add\widehat{R}$, so by definition $\psi$ factors through a summand of $\widehat{R}^{a}$ for some $a\in\mathbb{N}$.  This implies that $\psi$ factors through $\widehat{R}^{a}$, so there is a commutative diagram
\[
\begin{tikzpicture}
\node (A) at (0,0) {$\widehat{M}$};
\node (B2) at (1.5,-0.75) {$\widehat{R}^a$};
\node (C) at (3,0) {$\widehat{M}$};
\draw[->] (A) -- node[below] {$\scriptstyle \alpha$} (B2);
\draw[->] (B2) -- node[below] {$\scriptstyle \beta$} (C);
\draw[->] (A) -- node[above] {$\scriptstyle \psi$}(C);
\end{tikzpicture}
\]
As in \ref{useful finite length stuff}\eqref{useful finite length stuff 2} $\Hom_{\Lambda_\m}(M_\m,R_\m^a)\otimes_{R_\m}\widehat{R}\cong\Hom_{\widehat{\Lambda}}(\widehat{M},\widehat{R}^a)$ etc, so we can find $\Lambda_\m$-module homomorphisms
\begin{eqnarray}
\begin{array}{c}
\begin{tikzpicture}
\node (A) at (0,0) {$M_\m$};
\node (B2) at (1.5,-0.75) {${R}_\m^a$};
\node (C) at (3,0) {$M_\m$};
\draw[->] (A) -- node[below] {$\scriptstyle g$} (B2);
\draw[->] (B2) -- node[below] {$\scriptstyle h$} (C);
\draw[->] (A) -- node[above] {$\scriptstyle f$}(C);
\end{tikzpicture}
\end{array}\label{comm for local}
\end{eqnarray}
such that $\widehat{f}=\psi$, $\widehat{g}=\alpha$ and $\widehat{h}=\beta$. It remains to show that \eqref{comm for local} commutes, since then $f\in[R_\m]$.  But since the completion is exact
\[
\Im(f-hg)\otimes \widehat{R}\cong\Im\widehat{f-hg}=0,
\]
so the completion of $\Im(f-hg)$ is zero, which implies that $\Im(f-hg)=0$, i.e.\ $f=hg$. 
\end{proof}

Below, we write ${\Lambda_{\m}}_{\con}$ for the contraction algebra associated to $\Lambda_\m$, and  $\widehat{\Lambda}_{\con}$ for the contraction algebra associated to $\widehat{\Lambda}$.  By the above, ${\Lambda_{\m}}_{\con}\cong(\Lambda_{\con})_\m$ and $\widehat{\Lambda}_{\con}\cong\widehat{(\Lambda_{\con})}$, so there is no ambiguity.

\begin{lemma}\label{contraction splits}
$\Lambda_{\con}\cong\widehat{\Lambda}_{\con}$ both as $\Lambda$-modules and as algebras.  \end{lemma}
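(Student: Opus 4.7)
The plan is to string together the results already established in \ref{cont is fd for flop} and \ref{loc and comp of contraction}, together with the finite-length observation in \ref{useful finite length stuff}(3), to obtain the isomorphism essentially for free.

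First, by \ref{cont is fd for flop}(1), $\Lambda_{\con}$ is finite dimensional over $\mathbb{C}$ and supported only at the maximal ideal $\m$ as an $R$-module. In particular $\Lambda_{\con}$ has finite length over $R$, so \ref{useful finite length stuff}(3), applied to the singleton support $\{\m\}$, gives that the natural map
\[
\Lambda_{\con} \longrightarrow \Lambda_{\con}\otimes_R \widehat{R}
\]
is an isomorphism of $\Lambda$-modules. This map is of course the completion map, which is a $\mathbb{C}$-algebra homomorphism, so it is automatically an isomorphism of $\mathbb{C}$-algebras as well.

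Next, by \ref{loc and comp of contraction}(3), we have a canonical isomorphism of $\mathbb{C}$-algebras (and of $\Lambda$-modules, since the $\Lambda$-action factors through $\widehat{\Lambda}$)
\[
\Lambda_{\con}\otimes_R \widehat{R} \;\cong\; \widehat{\Lambda}/[\widehat{R}] \;=\; \widehat{\Lambda}_{\con}.
\]
Composing the two isomorphisms yields $\Lambda_{\con}\cong \widehat{\Lambda}_{\con}$ as both $\Lambda$-modules and as $\mathbb{C}$-algebras, which is the required statement.

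There is essentially no obstacle: the content has already been packaged into the two preceding lemmas, and the only minor subtlety is to note that the isomorphism supplied by \ref{useful finite length stuff}(3) is the natural completion map, so that the $\Lambda$-module isomorphism and the algebra isomorphism coincide. No case analysis, no explicit knowledge of the structure of $\Lambda_{\con}$, and no use of the geometry beyond the fact that $U\to\Spec R$ is an isomorphism away from $\m$ (already used in the proof of \ref{cont is fd for flop}(1)) is required.
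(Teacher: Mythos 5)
Your proof is correct and follows essentially the same route as the paper: both use \ref{cont is fd for flop} to see that $\Lambda_{\con}$ has finite length supported only at $\m$, invoke \ref{useful finite length stuff}(3) (together with \ref{loc and comp of contraction}) to conclude the natural completion map $\Lambda_{\con}\to\widehat{\Lambda}_{\con}$ is bijective, and then note it is simultaneously a $\Lambda$-module and algebra homomorphism. No gaps.
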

\begin{proof}
Since $\Lambda_{\con}$ is supported only at $\m$ by \ref{cont is fd for flop}, it follows from \ref{loc and comp of contraction} and \ref{useful finite length stuff}\eqref{useful finite length stuff 3} that the natural map $\Lambda_{\con}\to\widehat{\Lambda}_{\con}$ is bijective.  Since it is both a $\Lambda$-module homomorphism and an algebra homomorphism, we have $\Lambda_{\con}\cong\widehat{\Lambda}_{\con}$ both as $\Lambda$-modules, and as algebras. 
\end{proof}

\begin{warning}
Often when studying the birational geometry of global 3-folds,  we are forced to flip or flop together several irreducible curves that don't intersect.  When this occurs, $\Lambda_{\con}\ncong\widehat{\Lambda}_{\con}$, instead $\Lambda_{\con}\cong\bigoplus_{\m\in\Supp\Lambda_{\con}}\widehat{\Lambda}_{\con}$. 
\end{warning}

Hence we have the following functors
\[
\begin{array}{c}
\begin{tikzpicture}[xscale=1]
\node (d1) at (3,0.8) {$\Db(\Mod \Lambda)$};
\node (e1) at (6,0.8) {$\Db(\Mod \Lambda_\m)$};
\node (f1) at (9,0.8) {$\Db(\Mod \widehat{\Lambda})$};
\node (g1) at (12,0.8) {$\Db(\Mod \AB)$};
\draw[->] (d1.5) to  node[above] {$\scriptstyle \otimes_RR_\m$} (e1.175);
\draw[<-] (d1.-5) to node [below]  {$\scriptstyle\rest$} (e1.-175);
\draw[->] (e1.5) to  node[above] {$\scriptstyle \otimes_{R_\m}\widehat{R}$} (f1.175);
\draw[<-] (e1.-5) to node [below]  {$\scriptstyle\rest$} (f1.-175);
\draw[<->]  (f1) -- node[above] {$\scriptstyle \mathbb{F}$} (g1);
\node (d2) at (3,0) {$T$};
\draw[|->] (d1.east |- 3.5,0) -- (e1.west |- 5.5,0);
\node (e2) at (6,0) {$T_\m$};
\draw[|->] (e1.east |- 6.5,0) -- (f1.west |- 8.5,0);
\node (f2) at (9,0.08) {$\widehat{T}$};
\draw[<->] (f1.east |- 9.5,0) -- (g1.west |- 11.25,0);
\node (g2) at (12,0) {$S$};
\node (d3) at (3,-0.5) {$\Lambda_{\con}$};
\draw[|->] (d1.east |- 3.5,-0.5) -- (e1.west |- 5.25,-0.5);
\node (e3) at (6.1,-0.5) {$(\Lambda_\m)_{\con}$};
\draw[|->] (e1.east |- 6.85,-0.5) -- (f1.west |- 8.5,-0.5);
\node (f3) at (9.1,-0.5) {$\widehat{\Lambda}_{\con}$};
\node (d4) at (3,-1) {$\FCA$};
\draw[<-|] (d1.east |- 3.5,-1) -- (e1.west |- 5.25,-1);
\node (e4) at (6.1,-1) {$\FCA$};
\draw[<-|] (e1.east |- 6.75,-1) -- (f1.west |- 8.4,-1);
\node (f4) at (9.1,-1) {$\FCA$};
\draw[<->] (f1.east |- 9.75,-1) -- (g1.west |- 11.25,-1);
\node (g4) at (12,-1) {$\CA$};
\end{tikzpicture}
\end{array}
\]
where $\rest$ denotes restriction of scalars along the natural ring homomorphisms. The line for $\Lambda_{\con}$ follows by \ref{loc and comp of contraction}. Note that the first two functors are not equivalences,  however, since $T$, $\Lambda_{\con}$ and $\CA$ are supported only at $\m$ we have $T\cong\rest(T_\m)$ etc, so the arrows in the bottom three lines in the above diagram can also be drawn in the opposite direction.

Given this, the following is not surprising, and is a standard application of the proofs and techniques developed so far.
\begin{prop}\label{local to very local deformations the same}
There are natural isomorphisms 
\[
\Def^{\Lambda}_{T}\cong\Def^{\Lambda_\m}_{T_\m}\cong\Def^{\widehat{\Lambda}}_{\widehat{T}}\cong\Def^{\AB}_{S},
\] 
and consequently by restriction natural isomorphisms
\[
\cDef^{\Lambda}_{T}\cong\cDef^{\Lambda_\m}_{T_\m}\cong\cDef^{\widehat{\Lambda}}_{\widehat{T}}\cong\cDef^{\AB}_{S}.
\] 
\end{prop}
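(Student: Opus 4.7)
The plan is to establish the three isomorphisms separately: the two geometric ones by reduction to the local/complete local setting via support considerations, and the Morita one by transport of structure.

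\textbf{Step 1 (finiteness of deformations).} Fix $\Gamma\in\art_1$ with nilpotent augmentation ideal $\n$, say $\n^k=0$, and let $\LargeFamily{(b,\phi)}{\delta}\in\Def^\Lambda_T(\Gamma)$. The $\n$-adic filtration
\[
b\supseteq \n\cdot b\supseteq\n^2\cdot b\supseteq\cdots\supseteq \n^k\cdot b=0
\]
has successive quotients $(\n^i/\n^{i+1})\otimes_\Gamma b$. Since tensoring against $b$ is exact and $\n^i/\n^{i+1}$ is a finite-dimensional $\Gamma/\n=\mathbb{C}$-module, each quotient is isomorphic to a finite direct sum of copies of $(\Gamma/\n)\otimes_\Gamma b\cong T$ as a $\Lambda$-module. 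Thus $b$ is a finite iterated extension of copies of $T$, and by \ref{cont is fd for flop}(3) it is a finite length $\Lambda$-module supported only at $\m$.

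\textbf{Step 2 (localization and completion).} Because $b$ is supported only at $\m$, \ref{useful finite length stuff}(3) gives canonical isomorphisms $b\cong b_\m\cong\widehat{b}$ of $\Lambda$-modules (which are automatically $\Gamma$-equivariant, as the $\Gamma$-action factors through the $R$-action by centrality of the structure map). The same argument applied to an object of $\Def^{\Lambda_\m}_{T_\m}(\Gamma)$ shows that every such deformation is supported only at $\m$, and hence arises from a $\Lambda$-module via restriction of scalars. This gives mutually inverse natural transformations
\[
\Def^\Lambda_T\;\xleftrightarrows{\;-\otimes_RR_\m,\;\rest\;}\;\Def^{\Lambda_\m}_{T_\m}\;\xleftrightarrows{\;-\otimes_{R_\m}\widehat{R},\;\rest\;}\;\Def^{\widehat{\Lambda}}_{\widehat{T}},
\]
where the exactness condition on $-\otimes_\Gamma b$ and the counit $\delta$ transport trivially because localization and completion are exact and commute with the tensor product over $\Gamma$ (which acts $R$-linearly). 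The equivalence relation $\sim$ also clearly transports.

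\textbf{Step 3 (Morita equivalence).} The Morita context $(\AB,\widehat{\Lambda},P,Q)$ gives a pair of inverse equivalences $\mathbb{F}=-\otimes_\AB Q$ and $\mathbb{F}^{-1}=-\otimes_{\widehat{\Lambda}} P$, with $\mathbb{F}(S)\cong\widehat{T}$. For $\Gamma\in\art_1$ and a pair $(c,\chi)\in\PairsCat{\mod\AB}{\Gamma}$, set $b:=\mathbb{F}(c)=c\otimes_\AB Q$; since $\Gamma$ acts by $\AB$-module endomorphisms, it acts on $b$ by $\widehat{\Lambda}$-module endomorphisms via $\mathbb{F}$. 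Because $Q$ is a ${}_\AB Q_{\widehat{\Lambda}}$-bimodule which is flat on the left and the $\Gamma$-action is $\mathbb{C}$-linear,
\[
(-\otimes_\Gamma b)\;\cong\;(-\otimes_\Gamma c)\otimes_\AB Q,
\]
so the exactness of $-\otimes_\Gamma c$ is equivalent to the exactness of $-\otimes_\Gamma b$. The residue isomorphism $\delta:(\Gamma/\n)\otimes_\Gamma c\xrightarrow{\sim}S$ is carried to $(\Gamma/\n)\otimes_\Gamma b\xrightarrow{\sim}\widehat{T}$, and isomorphism classes are preserved because $\mathbb{F}$ is an equivalence of categories. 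The inverse $\mathbb{F}^{-1}$ gives the reverse natural transformation, establishing $\Def^{\widehat{\Lambda}}_{\widehat{T}}\cong\Def^{\AB}_S$.

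The commutative statement follows by restriction to $\cart_1\subseteq\art_1$. The step most likely to require care is the bookkeeping in Step 1--2 showing that the $\Gamma$-equivariance, the flatness condition, and the counit $\delta$ all survive the passage from $\Lambda$ to $\Lambda_\m$ to $\widehat{\Lambda}$; the Morita step is formal once one observes that $\mathbb{F}$ commutes with tensoring over the central ring $\Gamma$.
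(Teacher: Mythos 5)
Your proposal is correct and follows essentially the same route as the paper: exact adjoint pairs for localization, completion and restriction, the observation that any deformation of $T$ is filtered by $T$ and hence of finite length supported only at $\m$ (so the relevant unit/counit maps are isomorphisms, exactly as in the template of \ref{local to global deformations the same}), and a formal Morita transport for the final isomorphism. One small quibble: the $\Gamma$-equivariance of $b\cong b_\m\cong\widehat{b}$ holds because these comparison maps are natural in $\Lambda$-module morphisms and therefore commute with each $\phi(r)$, not because the $\Gamma$-action ``factors through the $R$-action'', which it need not.
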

\begin{proof}
The functors $-\otimes_RR_\m\colon\Mod \Lambda\to \Mod\Lambda_\m$ and $\rest\colon\Mod\Lambda_\m\to\Mod\Lambda$ are adjoint, and are both exact.  Since, by \ref{cont is fd for flop}, $T$ is a finite length module supported only at $\m$, $T\cong \rest(T\otimes_RR_\m)$ and ${T_{\m}}\cong\rest({T_{\m}})\otimes_RR_\m$  via the unit and counit morphisms. 

The proof is now identical to \ref{local to global deformations the same}, where in the analogue of \eqref{comp is exact} and \eqref{comp is exact 22}, the second functors are replaced by $-\otimes_RR_\m$ and $\rest$, which are still exact. This establishes the first isomorphism.

For the second, again restriction and extension of scalars give an exact adjoint pair,  and since $T_\m$ is a finite length module, the necessary unit and counit maps are isomorphisms, since restriction and extension of scalars are an equivalence on finite length modules.  The proof is then identical to the above, establishing the second isomorphism. The third isomorphism also follows in an identical manner.
\end{proof}

Thus, in conclusion,  combining  \ref{local to global deformations the same}, \ref{local to alg deformations the same} and \ref{local to very local deformations the same} gives a natural isomorphism
\begin{eqnarray}
\Def^{X}_{i_*E}\cong\Def^{\AB}_{S}\colon \art_1\to \Sets,\label{all def functors iso 1}
\end{eqnarray}
and so to prove representability, we have reduced the problem to showing that $\Def^{\AB}_{S}$ is representable.  Since $\AB$ is basic (unlike $\Lambda$), by \ref{cont is fd for flop} $\CA\in\art_1$ and so this is the natural candidate for the representing object.

\section{Representability and Contraction Algebras}\label{flops section}

We keep the setup in \S\ref{setup section 2.1}.  In the previous section we established natural isomorphisms of functors
\begin{eqnarray}
\Def^{X}_{i_*E}\cong\Def^{U}_{E}\cong\Def^{\AB}_{S}\colon \art_1\to \Sets\label{all def functors iso}
\end{eqnarray}
and showed that $\CA\in\art_1$. In this section we will show that $\CA$ is a representing object for these functors, and describe the associated universal families.  This then allows us to define numerical invariants associated to the curve $C$, and show that Toda's commutative deformation functor is not an equivalence in general.

\subsection{Representability on the basic algebra $\AB$}

In this subsection we show that $\Def^{\AB}_{S}$ and $\cDef^{\AB}_S$ are representable.  This statement turns out to be an elementary consequence of the definitions, and is implicit in \cite{Eriksen2, Eriksen,Segal}, albeit in a slightly modified setting.
\begin{prop}\label{complete is representable} $
\Def^{\AB}_{S}\cong \Hom_{\art_1}(\CA,-)\colon\art_1\to\Sets$.
\end{prop}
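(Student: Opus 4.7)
The plan is to establish a natural bijection by first reducing from $\AB$-deformations to $\CA$-deformations, then exhibiting $\CA$ itself as the representing object via pushforward of a universal deformation.

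First, I would reduce to $\Def^{\CA}_S$ by showing any deformation of $S$ over $\AB$ is automatically annihilated by $[\mathfrak{R}]$. Let $(M,\phi,\delta)\in\Def^{\AB}_S(\Gamma)$. Since $M$ is $\Gamma$-flat with $M/\n M\cong S$ one-dimensional, Nakayama's lemma implies $M$ is generated by a single element as a left $\Gamma$-module (indeed free of rank one). Consider the submodule $M\cdot [\mathfrak{R}]$: by flatness of $M$ over $\Gamma$ and the exactness of $-\otimes_{\Gamma}M$, we have that $(M\cdot [\mathfrak{R}])\otimes_{\Gamma}\Gamma/\n$ embeds into $(M/\n M)\cdot [\mathfrak{R}]=S\cdot [\mathfrak{R}]=0$. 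Applying Nakayama again forces $M\cdot [\mathfrak{R}]=0$, so $M$ is a $\CA$-module. This yields a natural isomorphism $\Def^{\AB}_S\cong\Def^{\CA}_S$.

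Second, I would construct a natural transformation $\eta\colon \Hom_{\art_1}(\CA,-)\to\Def^{\CA}_S$ by pushforward of a universal deformation. Observe that $\CA\in\art_1$, with unique simple module $S$ and augmentation ideal $J=\ker(\CA\to\K)$. The pair $(\CA,\id,\mathrm{can})$, where $\CA$ is viewed as a $(\CA,\CA)$-bimodule and $\mathrm{can}\colon \CA/J\xrightarrow{\sim} S$ is the canonical isomorphism, plays the role of a universal deformation of $S$ over $\CA$. For $\psi\colon \CA\to\Gamma$ in $\art_1$, its pullback is the deformation $M_\psi:=\Gamma$ viewed as a $(\Gamma,\CA)$-bimodule with left action by multiplication and right action via $\psi$, together with the induced isomorphism $\Gamma/\n\cong\K\xrightarrow{\sim} S$.

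Third, I would establish that $\eta$ is a natural isomorphism by constructing an inverse. Given $(M,\phi,\delta)\in\Def^{\CA}_S(\Gamma)$, the freeness of $M$ over $\Gamma$ combined with the normalization provided by $\delta$ permits the choice of a generator $m\in M$ lifting $\delta^{-1}(1_S)$, which yields a ring homomorphism $\psi\colon \CA\to\Gamma$ by the rule $m\cdot a=\psi(a)\cdot m$ (this is well-defined as a ring hom by associativity of the bimodule action). Augmentation compatibility of $\delta$ forces $\psi\in\Hom_{\art_1}(\CA,\Gamma)$, and the assignment is patently natural in $\Gamma$. Verifying that $(M,\phi,\delta)\mapsto\psi$ respects the equivalence relation defining $\Def^{\CA}_S(\Gamma)$, and is inverse to $\eta$ on representatives, completes the argument.

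The most delicate step is the third: the rigidifying role of $\delta$, together with the one-dimensional residue field of the pointed algebra $\Gamma$, is essential in pinning down $\psi$ unambiguously from the equivalence class of $(M,\phi,\delta)$. The combinatorics of lifting generators is a standard feature of representability arguments for noncommutative deformation functors, developed in \cite{Laudal, Eriksen, Segal}; here the finite-dimensionality of $\CA$ combined with its membership in $\art_1$ promotes the usual prorepresentability to genuine representability.
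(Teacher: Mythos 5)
Your route is essentially the paper's: identify a deformation of $S$ over $\Gamma$ with a left $\Gamma$-module free of rank one, read off the right action as a $\K$-algebra homomorphism into $\Gamma$ compatible with the augmentations, and observe that such homomorphisms are exactly $\Hom_{\art_1}(\CA,\Gamma)$. The only organisational difference is that you pass from $\AB$ to $\CA$ at the start, by showing the family is annihilated by $[\mathfrak{R}]$, whereas the paper does this at the level of homomorphisms at the end (a homomorphism $\AB\to\Gamma$ compatible with $q$ automatically kills $[\mathfrak{R}]$).

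However, your justification of that annihilation step fails as written, even though its conclusion is true. Tensoring the inclusion $M\cdot[\mathfrak{R}]\subseteq M$ with $\Gamma/\n$ need not be injective: flatness of $M$ does not give this (you would need flatness of the quotient $M/M[\mathfrak{R}]$). What you genuinely obtain is only that the image of $M[\mathfrak{R}]$ in $M/\n M\cong S$ is $S\cdot[\mathfrak{R}]=0$, i.e.\ $M[\mathfrak{R}]\subseteq \n M$; this is not the hypothesis of Nakayama, which requires $M[\mathfrak{R}]\subseteq\n\cdot (M[\mathfrak{R}])$. The repair is to use the idempotent $e\in\AB$ corresponding to $\mathfrak{R}$, so that $[\mathfrak{R}]=\AB e\AB$ and hence $[\mathfrak{R}]^2=[\mathfrak{R}]$: then $M[\mathfrak{R}]=M[\mathfrak{R}]\cdot[\mathfrak{R}]\subseteq(\n M)[\mathfrak{R}]=\n\,(M[\mathfrak{R}])$, and nilpotence of $\n$ finishes. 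Equivalently, once $\psi$ is constructed, $\psi(e)$ is an idempotent lying in the nilpotent ideal $\n$, hence zero, so $\psi$ kills $\AB e\AB$; this is also the implicit mechanism by which the paper's proof replaces homomorphisms out of $\AB$ compatible with $q$ by homomorphisms out of $\CA$. Finally, your ``delicate'' third step --- that the equivalence class of $(M,\phi,\delta)$ pins down $\psi$ itself, i.e.\ that replacing the chosen generator by another lift (equivalently applying a bimodule isomorphism reducing to the identity modulo $\n$) does not alter the resulting homomorphism --- is asserted rather than argued; the paper's proof is exactly as brief at the corresponding point (where the quotient by $\sim$ disappears from its chain of equalities), so you are not below its level of detail, but be aware that this is where the substance of genuine representability, as opposed to the existence of a prorepresenting hull, is concentrated.
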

\begin{proof}
Since $S=\mathbb{C}$ is an $\AB$-module, this induces a homomorphism $\AB\to\mathbb{C}$ which we denote by $q$.   Since objects in $\PairsCat{\Mod\AB}{\Gamma}$ are bimodules ${}_{\Gamma}M_{\AB}$,
\begin{align*}
\Def^{\AB}_{S}(\Gamma)
&=\left. \left \{ ({}_{\Gamma}M_{\AB},\delta)
\left|\begin{array}{l}  -\otimes_\Gamma M\colon\mod \Gamma\to \mod\AB \t{ is exact}\\ \delta\colon (\Gamma/\n)\otimes_\Gamma M\xrightarrow{\sim}S\mbox{ as $\AB$-modules}\end{array}\right. \right\} \middle/ \sim \right. \\
&=\left. \left \{ \begin{array}{cl} \bullet & \mbox{a right $\AB$-module structure on $\Gamma\otimes_{\mathbb{C}}S$ such that}\\
&\mbox{$\Gamma\otimes_{\mathbb{C}}S$ becomes a $\Gamma$-$\AB$ bimodule.}\\
\bullet & \delta\colon (\Gamma/\n)\otimes_\Gamma (\Gamma\otimes_{\mathbb{C}}S)\xrightarrow{\sim}S\mbox{ as $\AB$-modules}
\end{array} \right\} \middle/ \sim \right. \\
&=\left. \left \{ \begin{array}{cl} \bullet & \mbox{a $\mathbb{C}$-algebra homomorphism $\AB\to\Gamma$ such that}\\
&\mbox{the composition $\AB\to\Gamma\to\Gamma/\n=\mathbb{C}$ is $q$}\end{array} \right\} \middle/  \sim  \right.
\end{align*}
which is just $\Hom_{\alg_1}(\CA,\Gamma)$ since $q$ factors through $\CA$. The result then follows since $\CA\in\art_1$ which is by definition a full subcategory of $\alg_1$.
\end{proof}

\begin{cor}\label{complete is representable comm}
$
\cDef^{\AB}_{S}\cong \Hom_{\cart_1}(\CA^{\ab},-)\colon\cart_1\to\Sets$.
\end{cor}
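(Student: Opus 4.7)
The plan is to reduce the commutative statement to the noncommutative one already established in \ref{complete is representable}, and then invoke the universal property of abelianization.

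First, by definition $\cDef^{\AB}_S$ is the restriction of $\Def^{\AB}_S$ to the subcategory $\cart_1\subset\art_1$, so for every $\Gamma\in\cart_1$ we have $\cDef^{\AB}_S(\Gamma)=\Def^{\AB}_S(\Gamma)$, and by \ref{complete is representable} this is naturally in bijection with $\Hom_{\art_1}(\CA,\Gamma)$.  Thus the task reduces to producing a natural isomorphism
\[
\Hom_{\art_1}(\CA,\Gamma)\xrightarrow{\sim}\Hom_{\cart_1}(\CA^{\ab},\Gamma)\quad\text{for }\Gamma\in\cart_1,
\]
and this is precisely the universal property of the abelianization $\CA^{\ab}:=\CA/[\CA,\CA]$.

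In more detail, given $\varphi\colon \CA\to\Gamma$ in $\art_1$, commutativity of $\Gamma$ forces $\varphi([\CA,\CA])=0$, so $\varphi$ factors uniquely as $\CA\twoheadrightarrow\CA^{\ab}\xrightarrow{\bar\varphi}\Gamma$ where $\bar\varphi$ is a morphism in $\cart_1$ (it automatically preserves the augmentations, since the quotient map $\CA\to\CA^{\ab}$ does).  Conversely, any $\psi\colon \CA^{\ab}\to\Gamma$ in $\cart_1$ pulls back along $\CA\twoheadrightarrow\CA^{\ab}$ to a morphism in $\art_1$.  These assignments are mutually inverse and are natural in $\Gamma$, giving the required isomorphism.

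There is no real obstacle here; the only thing worth noting is that $\CA^{\ab}$ still lies in $\cart_1$, i.e.\ is a finite-dimensional local commutative $\K$-algebra with nilpotent augmentation ideal, which is immediate from \ref{cont is fd for flop}(1) since quotients of such algebras remain in the class.  Combining the two natural isomorphisms yields $\cDef^{\AB}_S\cong\Hom_{\cart_1}(\CA^{\ab},-)$ as required.
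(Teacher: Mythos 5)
Your proof is correct and follows the same route as the paper: restrict to $\cart_1$, apply \ref{complete is representable}, and use the universal property of abelianization to identify $\Hom_{\art_1}(\CA,\Gamma)$ with $\Hom_{\cart_1}(\CA^{\ab},\Gamma)$ for commutative $\Gamma$. The extra remarks on preservation of augmentations and on $\CA^{\ab}$ lying in $\cart_1$ are fine but not needed beyond what the paper records.
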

\begin{proof}
Since any homomorphism from $\CA$ to a commutative ring $\Gamma$ must kill the two-sided ideal in $\CA$ generated by the commutators, the result follows since 
\[
\cDef^{\AB}_S(\Gamma)\stackrel{{\scriptsize\mbox{\ref{complete is representable}}}}{=}\Hom_{\alg_1}(\CA,\Gamma)\cong\Hom_{\alg_1}(\CAab,\Gamma)=\Hom_{\cart_1}(\CA^{\ab},\Gamma).\qedhere
\]
\end{proof}

\subsection{Chasing through the representing couple}
\label{sect chasing rep couple}

It follows from \ref{complete is representable} that all the functors in \eqref{all def functors iso} are representable.  In this subsection we chase the representing couples through the relevant isomorphisms, as we need the universal sheaves in geometric terms for our applications later.
 
If $F\colon\art_1\to\Sets$ is a deformation functor, recall that $(\Gamma,\xi)$ is a {\em couple} for $F$ if $\Gamma\in\art_1$ and $\xi\in F(\Gamma)$.  A couple $(\Gamma,\xi)$ induces a natural transformation
\[
\alpha_\xi\colon\Hom_{\art_1}(\Gamma,-)\to F
\]
which when applied to $\Gamma_2\in\art_1$ simply takes $\phi\in\Hom_{\art_1}(\Gamma,\Gamma_2)$ to the element $F(\phi)(\xi)$ of $F(\Gamma_2)$.  We say that a couple $(\Gamma,\xi)$ {\em represents} $F$ if $\alpha_\xi$ is a natural isomorphism of functors on $\art_1$.  If a couple $(\Gamma,\xi)$ represents $F$, then it is unique up to unique isomorphism of couples.

Now given the natural isomorphism $\beta\colon
\Hom_{\art_1}(\CA,-)\stackrel{\sim}{\to}\Def^{\AB}_{S} $ in \ref{complete is representable}, the couple $(\CA,\beta(\Id_{\CA}))=(\CA,{}_{\CA}({\CA})_{\AB})$ represents $\Def^{\AB}_{S}$.  Simply tracking the element ${}_{\CA}({\CA})_{\AB}\in\Def^{\AB}_S(\CA)$ through the isomorphisms
\[
\Def^{\AB}_{S}\xrightarrow{\mathbb{F}}\Def^{\widehat{\Lambda}}_{\widehat{T}}
\xrightarrow{\rest}\Def^{\Lambda}_{T}
\xrightarrow{-\otimes_\Lambda\cV}\Def^{U}_{E}
\xrightarrow{i_*}\Def^{X}_{i_*E}
\]
we obtain the following:
\begin{cor}\label{chase couples 1} In the setup of \S\ref{setup section 2.1},
\begin{enumerate}
\item\label{chase couples 1 1} The couple $(\CA,\FCA\otimes_{\Lambda}\cV)$ represents $\Def^{U}_{E}$.
\item\label{chase couples 1 2} The couple $(\CA,i_*(\FCA\otimes_{\Lambda}\cV))$ represents $\Def^{X}_{i_*E}$.
\end{enumerate}
\end{cor}

We now fix notation.

\begin{defin}\label{cE defin}
We define $\cEU:=\FCA\otimes_{\Lambda}\cV\in\coh U$, and $\cE:=i_*(\cEU)$.
\end{defin}

Thus $(\CA,\cE)$ is the representing couple for $\Def^{X}_{i_*E}$.   Similarly to the above, we can deduce from \ref{complete is representable comm} that all the commutative deformation functors have representing couples, with the left-hand term $\CAab$.  Chasing through, it is obvious that the representing couple for $\cDef^U_{E}$ is given by a coherent sheaf. We again fix notation.

\begin{defin}
We write $\cFU\in\coh U$ for the coherent sheaf such that $(\CAab,\cFU)$ represents $\cDef^U_{E}$, and write $\cF:=i_*\cFU$ so that $(\CAab,\cF)$ represents $\cDef^X_{i_*E}$.
\end{defin}

Since $i_*$ need not preserve coherence, we must prove that $\cE$ and $\cF$ are coherent.

\begin{lemma}\label{highers vanish 1} With assumptions and notation as above,
\begin{enumerate}
\item\label{highers vanish 1 1} $\cFU$ and $\cEU$ are filtered by the sheaf $E$.
\item\label{highers vanish 1 2} $\Ri_* \cEU=i_*\cEU=\cE$ and  $\Ri_* \cFU=i_*\cFU=\cF$.
\item\label{highers vanish 1 3} $\cF$ and $\cE$ are filtered by the sheaf $i_*E$, hence are coherent.
\end{enumerate}
\end{lemma}
\begin{proof}
Denote the radical of $\CA$ by $J$.\\
(1) Since $\cEU\in\Def^U_{E}(\CA)$, the functor $-\otimes_{\CA}\cEU\colon\mod\CA\to\Qcoh U$ is exact, and $(\CA/J)\otimes_{\CA}\cEU\cong E$.  Since $\CA$ is a finite dimensional algebra, filtering $\CA$ by its simple module $(\CA/J)$ and applying  $-\otimes_{\CA}\cEU$ yields the result.  The case of $\cFU$ is identical.\\
(2) follows from \eqref{inclusion line}, and (3) follows by combining \eqref{highers vanish 1 1} and \eqref{highers vanish 1 2}.
\end{proof}

In a similar vein, using the fact $\Hom_U(\cV,E)=\RHom_U(\cV,E)$ in \ref{T definition} together with a filtration argument gives the following, which will be used later.

\begin{lemma}\label{highers vanish 2} With assumptions and notation as above,
\begin{enumerate}
\item\label{highers vanish 2 1} $\RHom_U(\cV,\cEU)\cong \FCA$ and $\FCA\otimes_{\Lambda}^{\bf L}\cV\cong\cEU$.
\item\label{highers vanish 2 2} $\RHom_U(\cV,\cFU)\cong \FCA^{\ab}$ and $\FCA^{\ab}\otimes_{\Lambda}^{\bf L}\cV\cong\cFU$.
\end{enumerate}
\end{lemma}

\subsection{The contraction algebra of $C$ and $\width(C)$}

\begin{defin}\label{contraction def}
With the setup and assumptions as in \S\ref{setup section 2.1}, suppose that $X\to X_{\con}$ is the contraction of an irreducible rational curve $C$.    We define the {\em noncommutative deformation algebra} of $C$ to be $\End_X(\cE)$.
\end{defin}

The noncommutative deformation algebra is the fundamental object in our paper.  The above definition is intrinsic to the geometry of $X$, since $\cE$ arises from the representing couple of the noncommutative deformations of the sheaf $\cO_{\mathbb{P}^1}(-1)$ in $X$.  However, it is hard to calculate in the above form. It is the following description of $\End_X(\cE)$ as the contraction algebra $\CA=\End_{\widehat{R}}(\widehat{R}\oplus N_1)/[\widehat{R}]$ that both makes it possible to calculate $\End_X(\cE)$, and also to control it homologically.

\begin{lemma}\label{contraction alg isos} There are isomorphisms
\[
\End_X(\cE)\cong\End_U(\cEU)\cong\End_\Lambda(\FCA)\cong\End_{\widehat{\Lambda}}(\FCA)\cong \End_{\AB}(\CA)\cong \CA.
\]
Furthermore, $\End_X(\cF)\cong\End_X(\cE)^{\ab}$.
\end{lemma}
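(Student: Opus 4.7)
The plan is to verify the chain of isomorphisms one adjacency at a time, chasing the natural map in each case, then apply the same chain to the commutative side.

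First, I would establish $\End_X(\cE)\cong\End_U(\cEU)$ using that $i\colon U\hookrightarrow X$ is an open immersion and that $\cEU$ is supported on $C\subset U$ by \ref{highers vanish}(1); then by $i^*\Ri_*\cEU\cong\cEU$ together with the adjunction $(i^*,\Ri_*)$ on the derived category \eqref{inclusion line}, we get
\[
\End_X(\cE)=\Hom_X(\Ri_*\cEU,\Ri_*\cEU)\cong\Hom_U(i^*\Ri_*\cEU,\cEU)\cong\End_U(\cEU),
\]
and this map is a ring isomorphism by functoriality.

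Next, I would use the derived equivalence \eqref{derived equivalence} together with \ref{highers vanish 2}(1) to identify $\End_U(\cEU)\cong\End_\Lambda(\FCA)$. Then to pass to the completion I would use \ref{useful finite length stuff}: since $\FCA$ is $\m$-supported by \ref{cont is fd for flop}(2), $\End_\Lambda(\FCA)$ is a finitely generated $\m$-torsion $R$-module, so it is unchanged by localisation and completion, whence $\End_\Lambda(\FCA)\cong\End_{\widehat\Lambda}(\FCA)$ via \ref{useful finite length stuff}(1)--(2). For the next step, the Morita equivalence $\mathbb F$ between $\mod\AB$ and $\mod\widehat\Lambda$ described in \S\ref{complete local geometry summary} sends $\CA$ to $\FCA$ and is a ring isomorphism on endomorphism algebras, giving $\End_{\widehat\Lambda}(\FCA)\cong\End_{\AB}(\CA)$.

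The final identification $\End_\AB(\CA)\cong\CA$ is the cyclic-module computation: $\CA=\AB/[\widehat R]$ as a right $\AB$-module is cyclic, generated by $1+[\widehat R]$, with annihilator the two-sided ideal $[\widehat R]$. Since $[\widehat R]$ is two-sided, the standard formula
\[
\End_\AB(\AB/I)\cong\{a\in\AB\mid aI\subseteq I\}/I
\]
applied to $I=[\widehat R]$ gives $\End_\AB(\CA)\cong\CA$, where $a\in\CA$ acts by left multiplication; with the functional composition convention this yields a ring isomorphism. Composing all five isomorphisms finishes the first statement.

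For $\End_X(\cF)\cong\End_X(\cE)^{\ab}$, I would run the identical chain of four isomorphisms with $\cF$, $\cFU$, $\FCA^{\ab}$ in place of $\cE$, $\cEU$, $\FCA$, using \ref{highers vanish}--\ref{highers vanish 2} to arrive at $\End_X(\cF)\cong\End_\AB(\CAab)$. Then I would apply the cyclic-module argument again: $\CAab$ is $\AB/J$ where $J$ is the preimage in $\AB$ of the two-sided ideal of $\CA$ generated by commutators, which is itself a two-sided ideal of $\AB$ containing $[\widehat R]$. Hence $\End_\AB(\CAab)\cong\CAab$, and since the first chain already identified $\End_X(\cE)\cong\CA$, we conclude $\End_X(\cF)\cong\CAab=\End_X(\cE)^{\ab}$.

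The main subtlety I expect is bookkeeping sides: making sure each natural map is a genuine ring homomorphism (not merely a bimodule isomorphism) and consistent with the convention in \S\textit{Conventions} that $M$ is an $\End_R(M)^{\op}$-module. This is routine once one fixes, in the cyclic-module step, that the isomorphism sends $a\in\CA$ to left multiplication $\lambda_a$, so that the functional composition $\lambda_a\cdot\lambda_b=\lambda_{ab}$ agrees with multiplication in $\CA$.
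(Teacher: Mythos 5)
Your proposal is correct and follows essentially the same route as the paper: reduce to $U$ via $\Ri_*$ and \ref{highers vanish}, cross the tilting equivalence via \ref{highers vanish 2}, pass to the completion using the $\m$-supported/finite-length argument, apply the Morita equivalence, and finish with the cyclic-module computation $\End_{\AB}(\AB/I)\cong\AB/I$ for the two-sided ideal $I=[\widehat{R}]$. The only (inessential) variation is in the last statement, where you rerun the whole chain for $\cF$ and apply the cyclic-module argument to $\CAab\cong\AB/J$, whereas the paper gets $\End_{\AB}(\CAab)\cong\CAab$ via $\Hom_{\AB}(\AB,\CAab)\cong\Hom_{\AB}(\CA,\CAab)$ — the same computation in different packaging.
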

\begin{proof}
The first isomorphism uses the fully faithful embedding of derived categories $\Ri_*$, together with the fact that $\Ri_*\cEU\cong \cE$ by \eqref{inclusion line}.  The second isomorphism follows from the equivalence \eqref{derived equivalence} together with \ref{highers vanish 2}.  The third isomorphism follows since $\FCA$ is supported only at $\m$ (by \ref{cont is fd for flop}), and the fact that restriction and extension of scalars are an equivalence on these finite length subcategories.  The fourth isomorphism is just chasing across a morita equivalence, and the last is simply
\[
\End_{\AB}(\CA)=\End_{\AB}(\AB/\AB e\AB)\cong \AB/\AB e\AB=\CA
\] 
where $e$ is the idempotent in $\AB$ corresponding to $\widehat{R}$.

For the final statement that $\End_X(\cF)\cong\End_X(\cE)^{\ab}$, observe that
\opt{10pt}{\[
\CA^{\ab}\cong\Hom_{\AB}(\AB,\CA^{\ab})\cong\Hom_{\AB}(\CA,\CA^{\ab})\cong\End_{\AB}(\CA^{\ab})
\stackrel{{\scriptsize\mbox{\ref{highers vanish 2}}}}{\cong} \End_U(\cFU)
\stackrel{{\scriptsize\mbox{\ref{highers vanish 1}}}}{\cong} \End_X(\cF)
\]}
\opt{12pt}{\begin{align*}
\CA^{\ab}\cong\Hom_{\AB}(\AB,\CA^{\ab})\cong\Hom_{\AB}(\CA,\CA^{\ab})&\cong\End_{\AB}(\CA^{\ab})\\
&\stackrel{{\scriptsize\mbox{\ref{highers vanish 2}}}}{\cong} \End_U(\cFU)\\
&\stackrel{{\scriptsize\mbox{\ref{highers vanish 1}}}}{\cong} \End_X(\cF)
\end{align*}}
and chasing through this establishes that $\CA^{\ab}\cong\End_X(\cF)$ as rings. 
\end{proof}

Thus throughout the remainder of the paper, we use the terms `contraction algebra' and `noncommutative deformation algebra' interchangeably.

\begin{example}\label{Pagoda example} (Pagoda \cite{Pagoda})  Consider 
\[
\widehat{R}:=\frac{\mathbb{C}[[u,v,x,y]]}{uv=(x-y^n)(x+y^n)}
\]
for some $n\geq 1$.  It is well known that $N_1\cong(u,x+y^n)$.

We can present $\AB:=\End_{\widehat{R}}(\widehat{R}\oplus N_1)$ as the completion of the quiver with relations
\[
\begin{array}{c|c}
\begin{array}{c}
\begin{tikzpicture} [bend angle=45, looseness=1,>=stealth]
\node (C1) at (0,0)  {$\scriptstyle \widehat{R}$};
\node (C1a) at (-0.1,0)  {};
\node (C2) at (2,0)  {$\scriptstyle N_{1}$};
\node (C2a) at (2.1,0.05) {};
\draw [->,bend left] (C1) to node[gap]  {$\scriptstyle x+y^n$} (C2);
\draw [->,bend left=20,looseness=1] (C1) to node[gap]{$\scriptstyle u$} (C2);
\draw [->,bend left] (C2) to node[gap]{$\scriptstyle \frac{x-y^n}{u}$} (C1);
\draw [->,bend left=20,looseness=1] (C2) to node[gap,below=-5pt]{$\scriptstyle inc$} (C1);
\draw[->]  (C1a) edge [in=150,out=220,loop,looseness=10,pos=0.5] node[left] {$\scriptstyle y$} (C1a);
\draw[->]  (C2a) edge [in=30,out=-40,loop,looseness=10,pos=0.5] node[right] {$\scriptstyle y$} (C2a);
\end{tikzpicture}
\end{array} 
&
\begin{array}{cc}
\begin{array}{c}
\begin{tikzpicture} [bend angle=45, looseness=1,>=stealth]
\node (C1) at (0,0) [cvertex] {};
\node (C2) at (2,0)  [vertex] {};
\node (C1a) at (-0.1,0.05)  {};
\node (C1b) at (-0.1,-0.05)  {};
\node (C2a) at (2.1,0.05) {};
\node (C2b) at (2.1,-0.05) {};
\draw [->,bend left] (C1) to node[gap]  {$\scriptstyle a_{1}$} (C2);
\draw [->,bend left=20,looseness=1] (C1) to node[gap]  {$\scriptstyle a_{2}$} (C2);
\draw [->,bend left] (C2) to node[gap]  {$\scriptstyle b_{2}$} (C1);
\draw [->,bend left=20,looseness=1] (C2) to node[gap,below=-5pt]  {$\scriptstyle b_{1}$} (C1);
\draw[->]  (C1) edge [in=150,out=220,loop,looseness=10,pos=0.5] node[left] {$\scriptstyle y_1$} (C1);
\draw[->]  (C2) edge [in=30,out=-40,loop,looseness=10,pos=0.5] node[right] {$\scriptstyle y_2$} (C2);
\end{tikzpicture}
\end{array}
&
{\scriptsize{
\begin{array}{l}
y_1a_1=a_1y_2 \\
y_1a_2=a_2y_2\\
y_2b_1=b_1y_1\\
y_2b_2=b_2y_1\\
2y_1^n=a_1b_1-a_2b_2\\
2y_2^n=b_1a_1-b_2a_2.
\end{array}}}
\end{array} 
\end{array}
\]
To obtain $\CA=\End_{\widehat{R}}(\widehat{R}\oplus N_1)/[\widehat{R}]$ we factor by all arrows that factor through the vertex $\widehat{R}$, and so $\CA\cong \mathbb{C}[[y_2]]/2y_2^n\cong \mathbb{C}[y]/y^n$.
\end{example}

\begin{defin}\label{width def}
Let $X\to X_{\con}$ be a contraction of an irreducible rational curve $C$ and let $\cE\in\coh X$ be as defined in \ref{cE defin}.   We define
\begin{enumerate}
\item the {\em width} of $C$ to be $\width(C):=\dim_{\mathbb{C}}\End_X(\cE)$.
\item the {\em commutative width} of $C$ to be $\cwidth(C):=\dim_{\mathbb{C}}\End_X(\cE)^{\ab}$.
\end{enumerate}
\end{defin}
We remark that the width of a $(-3,1)$-curve is not defined \cite{Pagoda}, and so in this case (as well as in the singular setting, and the flipping setting) the invariants in \ref{width def} are new.  In contrast to determining an explicit presentation for $\End_X(\cE)$, which sometimes can be hard, to calculate $\width(C)$ is much simpler, and can be achieved by using just commutative algebra on the base singularity $\widehat{R}$.  We refer the reader to \ref{can calculate width} later.

\begin{example} (Pagoda)
In \ref{Pagoda example}, $\CA\cong\K[y]/y^n$ which is commutative, so
\[
\width(C)=\cwidth(C)=n.
\] 
\end{example}

\begin{example}
(Francia flip) Consider the ring $R:=\K[[a,b,c_1,c_2,d]]/I$ where $I$ is generated by the $2\times 2$ minors of the following matrix:
\[
\begin{pmatrix}
a&b&c_1\\
b&c_2&d
\end{pmatrix}
\]
There is a flip
\[
\begin{array}{c}
\begin{tikzpicture}[yscale=1.25]
\node (X) at (-1,0) {$X$};
\node (X') at (1,0) {$X^{\nef}$};
\node (base) at (0,-1) {$\Spec R$};
\draw[->] (X) --  node[left] {$\scriptstyle f$} (base);
\draw[->] (X') -- node[right] {$\scriptstyle g$} (base);
\end{tikzpicture}
\end{array}
\]
contracting a curve $C$ in $X$ and producing a nef curve $C^{\nef}$ in $X^{\nef}$.  The scheme $X$ is obtained by blowing up the ideal $S_1:=(c_2,d^2)$ whereas $X^{\nef}$ is obtained by blowing up the ideal $S_2:=(c_1,d)$.  In fact $X$ is derived equivalent to $\AB:=\End_R(R\oplus S_1)$, which can be presented as 
\[
\begin{array}{c}
\opt{10pt}{\begin{tikzpicture} [looseness=1,>=stealth]}
\opt{12pt}{\begin{tikzpicture} [scale=1.15,looseness=1,>=stealth]}
\node at (-1.25,0) {$\AB\cong$};
\node (C1) at (0,0)  {$\scriptstyle R$};
\node (C1a) at (-0.1,0.05)  {};
\node (C1b) at (-0.1,-0.05)  {};
\node (C2) at (3,0)  {$\scriptstyle S_{1}$};
\node (C2a) at (3.1,0.05) {};
\node (C2b) at (3.1,-0.05) {};
\draw [->,bend left=55,looseness=1.1] (C2) to node[gap]{$\scriptstyle \inc$} (C1);
\draw [->,bend left=35] (C2) to node[gap]{$\scriptstyle \scriptstyle \frac{b}{c_2}=\frac{c_1}{d}$} (C1);
\draw [->,bend left=15] (C2) to node[gap,above=-5pt]{$\scriptstyle \frac{a}{c_2}=\frac{c_1^2}{d^2}$} (C1);
\draw [->,bend left=40] (C1) to node[gap]  {$\scriptstyle c_2$} (C2);
\draw [->,bend left=20] (C1) to node[gap]{$\scriptstyle d^2$} (C2);
 \draw[<-]  (C1b) edge [in=-170,out=-100,loop,looseness=12] node[below] {$\scriptstyle d$} (C1b);
\draw[<-]  (C1a) edge [in=170,out=100,loop,looseness=12] node[above] {$\scriptstyle c_1$} (C1a);
\draw[->]  (C2b) edge [in=-80,out=-10,loop,looseness=12] node[below] {$\scriptstyle d$} (C2b);
\draw[->]  (C2a) edge [in=80,out=10,loop,looseness=12] node[above] {$\scriptstyle c_1$} (C2a);
\end{tikzpicture}
\end{array} 
\]
At the vertex $S_1$, the loops $c_1$ and $d$ commute since they belong to $\End_R(S_1)\cong R$, which is commutative.  Also, at the vertex $S_1$, the path $c_1^2$ factors through the vertex $R$ as the arrow $\frac{c_1^2}{d^2}$ followed by the arrow $d^2$.  Similarly $c_1d$ and $d^2$ factor through the vertex $R$, so
\[
\CA\cong \frac{\K[[c_1,d]]}{c_1^2\mbox{, }c_1d\mbox{, }d^2}\cong \frac{\K[x,y]}{x^2\mbox{, }xy\mbox{, }y^2}.
\]
In particular $\CA$ is commutative, thus
\[
\width(C)=\cwidth(C)=3.
\]
On the other hand, $X^{\nef}$ is derived equivalent to $\BB:=\End_R(R\oplus S_2)$ which can be presented as 
\[
\begin{array}{c}
\opt{10pt}{\begin{tikzpicture} [looseness=1,>=stealth]}
\opt{12pt}{\begin{tikzpicture} [scale=1.15,looseness=1,>=stealth]}
\node at (-1,0) {$\BB\cong$};
\node (C1) at (0,0)  {$\scriptstyle R$};
\node (C1a) at (-0.1,0.05)  {};
\node (C1b) at (-0.1,-0.05)  {};
\node (C2) at (3,0)  {$\scriptstyle S_{2}$};
\node (C2a) at (3.1,0.05) {};
\node (C2b) at (3.1,-0.05) {};
\draw [->,bend left=55,looseness=1.1] (C2) to node[gap]{$\scriptstyle \inc$} (C1);
\draw [->,bend left=35] (C2) to node[gap]{$\scriptstyle \scriptstyle \frac{b}{c_1}=\frac{c_2}{d}$} (C1);
\draw [->,bend left=15] (C2) to node[gap,above=-5pt]{$\scriptstyle \frac{a}{c_1}=\frac{b}{d}$} (C1);
\draw [->,bend left=40] (C1) to node[gap]  {$\scriptstyle c_1$} (C2);
\draw [->,bend left=20] (C1) to node[gap]{$\scriptstyle d$} (C2);
\end{tikzpicture}
\end{array} 
\]
Thus we see $\CB=\K$, which is commutative, and $\width(C^{\nef})=\cwidth(C^{\nef})=1$.
\end{example}

\begin{example}\label{D4 flop general example} (The Laufer $D_4$ flop) 
We consider the more general case than in the introduction, namely the family of $D_4$ flops given by 
\[
R_n:=\frac{\mathbb{C}[[u,v,x,y]]}{u^2+v^2y=x(x^2+y^{2n+1})}
\] 
where $n\geq 1$.  In this case $\AB:=\End_R(R\oplus N_1)$ where $N_1$ is a rank 2 Cohen--Macaulay module, and by \cite{AM}, $\AB$ can be written abstractly as the completion of the quiver with relations
\[
\begin{array}{cc}
\begin{array}{c}
\begin{tikzpicture}[>=stealth]
\node (C1) at (0,0) {$\scriptstyle R$};
\node (C1a) at (-0.1,0)  {};
\node (C2) at (1.75,0) {$\scriptstyle N_1$};
\node (C2a) at (1.85,0.05) {};
\node (C2b) at (1.85,-0.05) {};
\draw [->,bend left=20,looseness=1,pos=0.5] (C1) to node[gap]  {$\scriptstyle a$} (C2);
\draw [->,bend left=20,looseness=1,pos=0.5] (C2) to node[gap]  {$\scriptstyle b$} (C1);
\draw[->]  (C1a) edge [in=150,out=220,loop,looseness=10,pos=0.5] node[left] {$\scriptstyle x$} (C1a);
\draw[->]  (C2b) edge [in=-80,out=-10,loop,looseness=12,pos=0.5] node[below] {$\scriptstyle y$} (C2b);
\draw[->]  (C2a) edge [in=80,out=10,loop,looseness=12,pos=0.5] node[above] {$\scriptstyle x$} (C2a);
\end{tikzpicture}
\end{array}
&
{\scriptsize{
\begin{array}{l}
ay^{2}=-xa\\
y^{2}b=-bx\\
ab=-x^n\\
xy=-yx\\
x^2+yba+bay=(-1)^{n+1}y^{2n+1}.
\end{array}}}
\end{array}
\] 
Thus we see immediately that
\[
\CA\cong \frac{\mathbb{C}\langle\langle x,y\rangle\rangle}{xy=-yx\mbox{, }x^2=(-1)^{n+1}y^{2n+1}}\cong \frac{\mathbb{C}\langle\langle x,y\rangle\rangle}{xy=-yx\mbox{, }x^2=y^{2n+1}}.
\] 
The contraction algebra $\CA$ is finite dimensional by \ref{cont is fd for flop}.  To see this explicitly, post- and premultiplying the second equation by $x$ we obtain $x^3=y^{2n+1}x=xy^{2n+1}$.  But using the first equation repeatedly gives $xy^{2n+1}=-y^{2n+1}x$, thus $x^3=0$.  Consequently the algebra $\CA$ is spanned by the monomials
\[
\begin{array}{ccccc}
1&y&y^2&\hdots& y^{2n}\\
x&xy&xy^2&\hdots& xy^{2n}\\
x^2&x^2y&x^2y^2&\hdots& x^2y^{2n}
\end{array}
\]
In fact this is a basis (which for example can be checked using the Diamond Lemma) and so $\width(C)=\dim_{\mathbb{C}}\CA=3(2n+1)$.  Furthermore
\[
\CA^{\ab}\cong \frac{\mathbb{C}[[ x,y]]}{xy=0\mbox{, } x^2=y^{2n+1}}\cong\frac{\mathbb{C}[x,y]}{xy=0\mbox{, } x^2=y^{2n+1}}
\] 
and so $\cwidth(C)=\dim_{\mathbb{C}}\CA^{\ab}=2n+3$.
\end{example}

\subsection{Commutativity of $\CA$ and flops}
When $C$ is an irreducible rational flopping curve and $U$ is smooth, necessarily $R$ is Gorenstein.  Below in \ref{(-3,1) not commutative} we show that in this flopping setup, $\width(C)=\cwidth(C)$ if and only if $C$ has normal bundle $(-1,-1)$ or $(-2,0)$, and furthermore in this case this invariant recovers Reid's notion of width from \cite{Pagoda}.  

\begin{thm}\label{(-3,1) not commutative}
Let $U\to\Spec R$ be the flopping contraction of an irreducible rational curve $C$, where $U$ is smooth.  Then $\End_U(\cEU)$ is commutative if and only if $C$ is a $(-1,-1)$ or $(-2,0)$-curve.
\end{thm}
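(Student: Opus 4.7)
By \ref{contraction alg isos}, $\End_U(\cEU) \cong \CA$, so the statement reduces to characterising when the contraction algebra $\CA = \End_{\widehat R}(\widehat R \oplus N_1)/[\widehat R]$ is commutative. Since by \cite[Prop.~2]{Pinkham} the normal bundle of $C$ is one of $(-1,-1)$, $(-2,0)$, or $(-3,1)$, it suffices to show that the first two give commutative $\CA$ and the third gives noncommutative $\CA$.

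\emph{If direction.} Both $(-1,-1)$- and $(-2,0)$-curves have length $\ell(C) = 1$, i.e.\ $\rank_{\widehat R} N_1 = 1$. Over the normal complete local Gorenstein domain $\widehat R$, any reflexive rank-one module satisfies $\End_{\widehat R}(N_1) \cong \widehat R$ (as $\End$ is reflexive, and is equal to $\widehat R$ in codimension one). Collapsing the $\widehat R$-idempotent in $\AB$ then realises $\CA$ as a quotient of the corner algebra $\End_{\widehat R}(N_1) = \widehat R$ by the trace ideal of $N_1$, so $\CA$ is commutative. Alternatively, the Pagoda computation in \ref{Pagoda example} gives $\CA \cong \K[y]/y^n$ directly in both cases.

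\emph{Only if direction.} Assume $C$ is a $(-3,1)$-curve; we show $\CA$ is noncommutative. Two uniform numerical inputs drive the argument. First, from $N_{C|U} = \cO(-3)\oplus\cO(1)$ one computes $\dim_{\mathbb{C}} \Ext^1_U(E,E) = \dim H^0(N_{C|U}) = 2$ and $\dim_{\mathbb{C}} \Ext^2_U(E,E) = \dim H^1(N_{C|U}) = 2$; by \ref{chase couples 1} and \ref{complete is representable} these control the cotangent space and the minimal number of relations, so $\CA$ admits a presentation $\mathbb{C}\langle\langle x,y\rangle\rangle/(r_1, r_2)$ whose cotangent space is $2$-dimensional. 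Second, since $\widehat R$ is $3$-dimensional Gorenstein terminal, $\AB$ carries a CY$_3$ structure and the relations $r_1, r_2$ are cyclic derivatives of a superpotential $W$, a structure that descends to $\CA$. The goal is then to show that the commutator $[x,y]=xy-yx$ does not lie in $(r_1,r_2)$, which rules out commutativity of $\CA$.

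The principal obstacle is carrying out this last step uniformly across the Dynkin types $D_4,\ldots,E_8$, rather than via case-by-case presentations of $\CA$. The intended route exploits the CY$_3$/superpotential formalism: the symmetric pairing on $\mathrm{span}\{x,y\}$ induced by $W$ can only account for symmetric quadratic combinations such as $xy+yx$, and is inherently incompatible with the antisymmetric commutator $[x,y]$ being a consequence of $r_1,r_2$ once $\rank_{\widehat R} N_1 \geq 2$. Combined with the self-injectivity of $\CA$ from \ref{Gsph conditions ok}, which ensures a nondegenerate Frobenius pairing, this forces $[x,y]\notin (r_1,r_2)$, hence $\CA$ is noncommutative. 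This sidesteps the need to exhibit explicit presentations of $\CA$ for each Dynkin type, completing the characterisation.
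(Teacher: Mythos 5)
Your ($\Leftarrow$) direction is fine: observing that $(-1,-1)$- and $(-2,0)$-curves have length one, so $N_1$ has rank one and $\End_{\widehat R}(N_1)\cong\widehat R$, exhibits $\CA$ as a quotient of a commutative ring; this is a slightly different (and classification-free) route than the paper, which instead quotes Reid's normal form for the base and the explicit computation of \ref{Pagoda example}. The presentation $\CA\cong\mathbb{C}\langle\langle x,y\rangle\rangle/(r_1,r_2)$ with two relations of order $\geq 2$ is also essentially the paper's step, though the correct justification is the admissible superpotential structure of the NCCR $\AB$ from \cite{VdBCY} (relations at the vertex of $S$ correspond to the two loops), not a count of $\Ext^2_U(E,E)$: the minimal number of relations of $\CA$ itself is governed by $\Ext^2_{\CA}(S,S)$, which you have not computed.

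The genuine gap is the final step of ($\Rightarrow$). What must be shown is that the two-sided closed ideal $(r_1,r_2)$ cannot contain the commutator $[x,y]$ while the quotient is finite dimensional, and your proposed mechanism does not establish this. The assertion that the superpotential ``symmetric pairing on $\mathrm{span}\{x,y\}$ can only account for symmetric quadratic combinations'' is not a well-formed argument: the CY$_3$ pairing relates $\Ext^1$ to $\Ext^2$, not $\Ext^1$ to itself, and in the Laufer example the relation $xy+yx$ already mixes symmetric and commutator-type terms, so no general symmetry constraint on the quadratic parts of $r_1,r_2$ is available. Likewise, self-injectivity of $\CA$ (which is \ref{Ext cor}, not \ref{Gsph conditions ok}) cannot force noncommutativity, since commutative finite-dimensional local Frobenius algebras such as $\mathbb{C}[x,y]/(x^2,y^2)$ exist in abundance; the point is that such algebras require three relations in the \emph{free} algebra, and ruling out two is a growth-theoretic statement. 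This is exactly where the paper invokes the nontrivial theorem of Smoktunowicz \cite[0.2]{Agata} (a finite-dimensional algebra $\mathbb{C}\langle\langle x,y\rangle\rangle/(f_1,f_2)$ with $f_i$ of order $\geq 2$ is never commutative), together with finite dimensionality of $\CA$ from \ref{cont is fd for flop}; its alternative proof instead does a case-by-case AR-quiver computation over the Dynkin types. Your sketch supplies neither ingredient, so as written the ``only if'' direction is not proved.
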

\begin{proof}
Since $\End_U(\cEU)\cong \CA$ by \ref{contraction alg isos}, we assume throughout that $R$ is complete local. We check commutativity on $\CA$.\\
($\Leftarrow$) Suppose that $C$ is not a $(-3,1)$-curve, then it is well known that 
\[
R\cong\frac{\mathbb{C}[[u,v,x,y]]}{uv=(x-y^n)(x+y^n)}
\]
\cite{Pagoda} for some $n\geq 1$, and that $\AB$ is isomorphic to the completion of the quiver with relations in \ref{Pagoda example}.  As shown in \ref{Pagoda example}, $\CA\cong \mathbb{C}[y]/y^n$, which is evidently commutative and has dimension $n$, which is the same as Reid's width.\\
($\Rightarrow$) By contraposition. Suppose that $C$ is a $(-3,1)$-curve, then since $U$ is smooth, 
\[
\dim_{\K}\Ext_{U}^1(E,E)=\dim_{\K}\Ext_{U}^2(E,E)=2
\]
by CY duality.  Hence by the standard deformation theory argument \cite[1.1]{Laudal} 
\[
\CA\cong\frac{\mathbb{C}\langle\langle x,y\rangle\rangle}{(f_1,f_2)}
\]
where $(f_1,f_2)$ is the closure of the ideal generated by $f_1$ and $f_2$, and further when we write both $f_1$ and $f_2$ as a sum of words, each word has degree two or higher.  We know $\CA$ is finite dimensional by \ref{cont is fd for flop}, hence it follows using the argument of \cite[1.2]{Agata} that $\CA\cong\End_U(\cEU)$ cannot be commutative.
\end{proof}

The above is somewhat remarkable, since it does not use the classification of flops, and in fact it turns out later (\ref{Toda not equiv global}) that the above is the key step in showing that Toda's commutative deformation twist functor is not an equivalence.  However, below we give a second proof of ($\Rightarrow$) in \ref{(-3,1) not commutative} that does use the classification of flops, since this proof gives us extra information regarding lower bounds of the possible widths of curves.  We do not use this second proof (and thus the classification) anywhere else in this paper.

\begin{proof}
As in the previous proof, we can assume that $R$ is complete local.  Since $R$ is a compound Du Val singularity and $C$ is a $(-3,1)$-curve, a generic hyperplane section $g\in R$ gives  a Du Val surface singularity $R/gR$ of type $D_4$, $E_6$, $E_7$ or $E_8$.  In fact, taking the pullback
\[
\begin{array}{c}
\begin{tikzpicture}
\node (a1) at (0,0) {$X$};
\node (a2) at (2.5,0) {$U$};
\node (b1) at (0,-1) {$\Spec (R/gR)$};
\node (b2) at (2.5,-1) {$\Spec R$};
\draw[->] (a1) to (a2);
\draw[->] (b1) to (b2);
\draw[->] (a1) to (b1);
\draw[->] (a2) to (b2);
\end{tikzpicture}
\end{array}
\]
then $X$ is a partial resolution of the Du Val singularity where there is only one curve above the origin, and that curve must correspond to a marked curve in one of the following diagrams
\begin{eqnarray}
\begin{array}{c}
\begin{array}{ccc}
\begin{array}{c}
\begin{tikzpicture}[xscale=0.6,yscale=0.6]
 \node (0) at (0,0) [DB] {};
 \node (1) at (1,0) [DW] {};
 \node (1b) at (1,1) [DB] {};
 \node (2) at (2,0) [DB] {};
 \draw [-] (0) -- (1);
\draw [-] (1) -- (2);
\draw [-] (1) -- (1b);
 \node at (1,-0.5) {$\scriptstyle D_4$};
\end{tikzpicture}
\end{array}
&
\begin{array}{c}
\begin{tikzpicture}[xscale=0.6,yscale=0.6]
\node (0) at (0,0) [DB] {};
\node (1) at (1,0) [DB] {};
\node (2) at (2,0) [DW] {};
\node (2b) at (2,1) [DB] {};
\node (3) at (3,0) [DB] {};
\node (4) at (4,0) [DB] {};
\draw  (0) -- (1);
\draw  (1) -- (2);
\draw  (2) -- (3);
\draw  (3) -- (4);
\draw  (2) -- (2b);
 \node at (2,-0.5) {$\scriptstyle E_6$};
\end{tikzpicture}
\end{array}
&
\begin{array}{c}
\begin{tikzpicture}[xscale=0.6,yscale=0.6]
\node (0) at (0,0) [DB] {};
\node (1) at (1,0) [DB] {};
\node (2) at (2,0) [DW] {};
\node (2b) at (2,1) [DB] {};
\node (3) at (3,0) [DB] {};
\node (4) at (4,0) [DB] {};
\node (5) at (5,0) [DB] {};
\draw  (0) -- (1);
\draw  (1) -- (2);
\draw  (2) -- (3);
\draw  (3) -- (4);
\draw  (4) -- (5);
\draw  (2) -- (2b);
 \node at (2,-0.5) {$\scriptstyle E_7$};
\end{tikzpicture}
\end{array}
\end{array}\\
\\
\begin{array}{cc}
\begin{array}{c}
\begin{tikzpicture}[xscale=0.6,yscale=0.6]
\node (0) at (0,0) [DB] {};
\node (1) at (1,0) [DB] {};
\node (2) at (2,0) [DB] {};
\node (2b) at (2,1) [DB] {};
\node (3) at (3,0) [DW] {};
\node (4) at (4,0) [DB] {};
\node (5) at (5,0) [DB] {};
\node (6) at (6,0) [DB] {};
\draw  (0) -- (1);
\draw  (1) -- (2);
\draw  (2) -- (3);
\draw  (3) -- (4);
\draw  (5) -- (6);
\draw  (4) -- (5);
\draw  (2) -- (2b);
 \node at (3,-0.5) {$\scriptstyle E_8(5)$};
\end{tikzpicture}
\end{array}
&
\begin{array}{c}
\begin{tikzpicture}[xscale=0.6,yscale=0.6]
\node (0) at (0,0) [DB] {};
\node (1) at (1,0) [DB] {};
\node (2) at (2,0) [DW] {};
\node (2b) at (2,1) [DB] {};
\node (3) at (3,0) [DB] {};
\node (4) at (4,0) [DB] {};
\node (5) at (5,0) [DB] {};
\node (6) at (6,0) [DB] {};
\draw  (0) -- (1);
\draw  (1) -- (2);
\draw  (2) -- (3);
\draw  (3) -- (4);
\draw  (4) -- (5);
\draw  (5) -- (6);
\draw  (2) -- (2b);
 \node at (2,-0.5) {$\scriptstyle E_8(6)$};
\end{tikzpicture}
\end{array}
\end{array}
\end{array}\label{Dynkin marked}
\end{eqnarray}
\cite{KM,Kawa}.  Now set $Y:=R\oplus N_1$ as in \S\ref{complete local geometry summary}, so $\AB=\End_R(Y)$, and let $e$ be the idempotent corresponding to $R$, so $\CA=\AB/\AB e\AB$.  We denote the category of maximal Cohen--Macaulay $R$-modules by $\CM R$.  Since flopping contractions are crepant, $\AB\in\CM R$ \cite[3.2.10]{VdB1d}, so since $R$ is an isolated singularity, by the depth lemma $\Ext^1_R(Y,Y)=0$.  Also, since $Y$ is a generator, necessarily $Y\in \CM R$.  

Thus applying $\Hom_R(Y,-)$ to
\[
0\to Y\xrightarrow{g}Y\to Y/gY\to 0
\]
gives
\[
0\to \Hom_R(Y,Y)\xrightarrow{g\cdot}\Hom_R(Y,Y)\to\Hom_R(Y,Y/gY)\to 0,  
\]
which implies that 
\[
\AB/g\AB\cong \Hom_R(Y,Y/gY)\cong  \End_{R/gR}(Y/gY)
\]
where the last isomorphism arises from the extension--restriction of scalars adjunction for the ring homomorphism $R\to R/gR$. Since $ \End_{R/gR}(Y/gY)\in \CM R/gR$ with $\rank_R N_1=\rank_{R/gR}(N_1/gN_1)$, in the McKay correspondence $N_1/gN_1$ is the CM module corresponding to one of the marked curves in (\ref{Dynkin marked}).

Now let $e^\prime$ be the idempotent in $\Delta:=\End_{R/gR}(Y/gY)$ corresponding to $R/gR$, so $\Delta_{\con}=\Delta/\Delta e^\prime\Delta$.  We claim that $\Delta_{\con}$ is a factor of $\CA$.  This follows since setting $I=\AB e\AB$ we have
\begin{eqnarray}
\frac{\CA}{g\CA}\cong \frac{\AB/I}{(g\AB+I)/I}\cong \frac{\AB}{g\AB+I}\cong \frac{\AB/g\AB}{(g\AB+I)/g\AB}\label{factor sequence}
\end{eqnarray}
and it is easy to show that as a $\Delta$-ideal, $(g\AB+I)/g\AB$ is equal to $\Delta e^\prime \Delta$.  Hence (\ref{factor sequence}) is isomorphic to $\Delta_{\con}$, and so indeed   $\Delta_{\con}$ is a factor of $\CA$.  

It follows that if we can show $\Delta_{\con}$ is not commutative, then $\CA$ is not commutative.  But $\Delta_{\con}\cong\uHom_{R/gR}(N_1/gN_1,N_1/gN_1)$, and it is well known that we can calculate this using the AR quiver of $\uCM{R/gR}$ (see e.g.\ \cite[Thm 4.5, Example 4.6]{IW}).  Below in \ref{reduce to surfaces} we show by a case-by-case analysis of the five possible $N_1$ appearing in (\ref{Dynkin marked}) that each $\Delta_{\con}$ is not commutative, and so each $\CA$ is not commutative.
\end{proof}

\begin{lemma}\label{reduce to surfaces}
With notation as in the proof of \ref{(-3,1) not commutative}, the $\Delta_{\con}$ corresponding to partial resolutions of the Du Val surfaces involving only the marked curves in (\ref{Dynkin marked}) are never commutative.
\end{lemma}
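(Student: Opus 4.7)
The plan is to reduce each of the five cases in (\ref{Dynkin marked}) to a uniform computation. By \cite[Thm 4.5, Example 4.6]{IW} already cited in the preceding proof, $\Delta_{\con}\cong\uHom_{R/gR}(N_1/gN_1,N_1/gN_1)$ may be read off from the AR quiver of $\uCM(R/gR)$. Since $R/gR$ is a Du Val surface singularity of ADE type, this AR quiver is the double of the corresponding Dynkin diagram with mesh relations matching those of the associated preprojective algebra $\Pi$. Consequently $\Delta_{\con}\cong e_v\Pi e_v$, where $e_v$ is the idempotent at the vertex $v$ that is marked in (\ref{Dynkin marked}).

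The main technique is then to produce two loops $\alpha,\beta\in e_v\Pi e_v$ satisfying $\alpha\beta=-\beta\alpha$, and to verify that $\alpha\beta\neq 0$. The anticommutation is forced by the preprojective mesh relation at a nearby trivalent vertex $w$, which is a three-term expression of the form $aa^*+bb^*+cc^*=0$: squaring the eliminated loop together with the (partial) nilpotency of the other two generators that comes from the mesh relations at the leaf neighbours produces $\alpha\beta+\beta\alpha=0$. This precisely mirrors the quantum cusp relation $xy=-yx$ appearing in the Laufer example \ref{D4 flop general example}. In the four cases $D_4$, $E_6$, $E_7$, $E_8(6)$ the marked vertex $v$ is itself the trivalent vertex, so one takes $\alpha=aa^*$ and $\beta=bb^*$ for two of the three branches incident to $v$. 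In the remaining case $E_8(5)$ the marked vertex is adjacent to the trivalent vertex $w$, so one pulls back the analogous loops at $w$ along the unique edge from $v$ to $w$.

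The main obstacle is checking that $\alpha\beta\neq 0$ in each case, since this does not follow formally from the anticommutation. In type $D_4$ this is a brief direct computation in the finite-dimensional algebra $\Pi(D_4)$, which has a known explicit basis. For $E_6$, $E_7$ and $E_8$ I would avoid writing out bases of the (much larger) preprojective algebras by localising the calculation to a $D_4$-subquiver containing $v$ and $w$: the preprojective relations along the complementary $A$-type tails of the Dynkin diagram involve only loops that are longer than $\alpha$, $\beta$ and $\alpha\beta$, and so cannot kill $\alpha\beta$. This reduces the non-vanishing statement to the $D_4$ case in every instance. The $E_8(5)$ case is the most delicate because the pullback step lengthens the relevant loops and one must verify that the pulled-back loops remain non-zero after the $v$-relation, but the same subquiver reduction applies and the check is finite and routine.
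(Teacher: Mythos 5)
Your reduction has the right object in view (the paper also works with $\uHom_{R/gR}(N_1/gN_1,N_1/gN_1)$ via the AR quiver with mesh relations, which is the same thing as $e_v\Pi e_v$ up to the standard identification of the stable Auslander algebra of a Kleinian singularity with the preprojective algebra), but the two key steps are not established, and one case genuinely fails. First, the anticommutation $\alpha\beta=-\beta\alpha$ for two branch loops $\alpha=aa^*$, $\beta=bb^*$ at the trivalent vertex does not follow from the mesh relations except in type $D_4$: writing $\gamma=cc^*$ for the third branch loop, the relation at $v$ gives $\gamma=-(\alpha+\beta)$ and hence $\alpha\beta+\beta\alpha=\gamma^2-\alpha^2-\beta^2$; the leaf relations only make the loop along a length-one branch square to zero, so in $E_6$, $E_7$, $E_8$ at most one of the three squares vanishes and there is no reason for the right-hand side to be zero (indeed, in $E_7$ the single linear relation in the three-dimensional degree-four piece is exactly $\alpha^2+\beta^2+\alpha\beta+\beta\alpha=0$, which is incompatible with anticommutation unless one imposes a further relation). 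Second, your justification for the "localisation to a $D_4$-subquiver" — that relations along the $A$-type tails involve only longer loops and so cannot kill $\alpha\beta$ — is not a valid argument: the two-sided ideal generated by a relation at a neighbouring vertex contains length-four elements at $v$ (e.g.\ $a(a^*a+dd^*)a^*$). What is true, and salvages the cases $D_4$, $E_6$, $E_7$, $E_8(6)$ where the marked vertex is trivalent, is that killing the idempotents at all vertices outside the star of $v$ gives a surjection of algebras $e_v\Pi(Q)e_v\twoheadrightarrow e_v\Pi(D_4)e_v$, and noncommutativity (not merely nonvanishing of a product) pulls back along a surjection; the $D_4$ target is the four-dimensional algebra $\K\langle x,y\rangle/(x^2,y^2,xy+yx)$, which is visibly noncommutative.

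The genuine gap is $E_8(5)$, the one case where the marked vertex $v$ is not trivalent. Under the quotient to the $D_4$-star of the adjacent trivalent vertex $w$, the vertex $v$ becomes a leaf, and $e_{\mathrm{leaf}}\Pi(D_4)e_{\mathrm{leaf}}$ is two-dimensional and commutative, so "the same subquiver reduction" detects nothing; moreover your pulled-back loops $c\alpha c^\ast$, $c\beta c^\ast$ at $v$ neither anticommute nor obviously fail to commute, since computing their products reinserts the loop at $w$ through $v$ via the mesh relation and the resulting length-six expressions involve the vertices further out along the arms. Deciding noncommutativity here requires an actual computation in (a larger subquotient of) $\Pi(E_8)$, which is exactly what the paper's argument supplies uniformly: it knits out the graded dimensions of the stable endomorphism algebra, exhibits a degree-two loop $y$ with $y^2=0$ coming from the mesh relation at the adjacent leaf, and then reads off $xy\neq yx$ from the dimension of the relevant graded piece. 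As it stands your proposal does not prove the lemma for $E_8(5)$, and the anticommutation claims for the other exceptional types need to be replaced by the quotient-algebra argument above (or by the paper's dimension count).
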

\begin{proof}
We give details for the $E_7$ flop, the remaining cases being very similar.   
We wish to calculate $\uHom_{R/gR}(N_1/gN_1,N_1/gN_1)$, which is graded by path length in the AR quiver of $\uCM R/gR$.   To calculate the dimension of each graded piece, we begin by placing a $1$ (corresponding to the identity) in the place of $N_1/gN_1$, and proceed by knitting:
\[
\begin{array}{c}
\begin{tikzpicture}[xscale=0.85,yscale=0.85]
\node (A0) at (0,0) {0};
\node (A1) at (1,0) {1};
\node (A2) at (2,0) {1};
\node (A3) at (3,0) {1};
\node (A4) at (4,0) {2};
\node (A5) at (5,0) {1};
\node (A6) at (6,0) {1};
\node (A7) at (7,0) {1};
\node (A8) at (8,0) {0};
\node (B0) at (0.5,-0.5) {1};
\node (B1) at (1.5,-0.5) {2};
\node (B2) at (2.5,-0.5) {2};
\node (B3) at (3.5,-0.5) {3};
\node (B4) at (4.5,-0.5) {3};
\node (B5) at (5.5,-0.5) {2};
\node (B6) at (6.5,-0.5) {2};
\node (B7) at (7.5,-0.5) {1};
\node (B8) at (8.5,-0.5) {0};
\node (C0) at (0,-1) {1};
\node (C1) at (0.5,-1) {1};
\node (C2) at (1,-1) {2};
\node (C3) at (1.5,-1) {1};
\node (C4) at (2,-1) {3};
\node (C5) at (2.5,-1) {2};
\node (C6) at (3,-1) {4};
\node (C7) at (3.5,-1) {2};
\node (C8) at (4,-1) {4};
\node (C0) at (4.5,-1) {2};
\node (C1) at (5,-1) {4};
\node (C2) at (5.5,-1) {2};
\node (C3) at (6,-1) {3};
\node (C4) at (6.5,-1) {1};
\node (C5) at (7,-1) {2};
\node (C6) at (7.5,-1) {1};
\node (C7) at (8,-1) {1};
\node (C8) at (8.5,-1) {0};
\node (D0) at (0.5,-1.5) {1};
\node (D1) at (1.5,-1.5) {2};
\node (D2) at (2.5,-1.5) {3};
\node (D3) at (3.5,-1.5) {3};
\node (D4) at (4.5,-1.5) {3};
\node (D5) at (5.5,-1.5) {3};
\node (D6) at (6.5,-1.5) {2};
\node (D7) at (7.5,-1.5) {1};
\node (D8) at (8.5,-1.5) {0};
\node (E0) at (0,-2) {0};
\node (E1) at (1,-2) {1};
\node (E2) at (2,-2) {2};
\node (E3) at (3,-2) {2};
\node (E4) at (4,-2) {2};
\node (E5) at (5,-2) {2};
\node (E6) at (6,-2) {2};
\node (E7) at (7,-2) {1};
\node (E8) at (8,-2) {0};
\node (F0) at (0.5,-2.5) {0};
\node (F1) at (1.5,-2.5) {1};
\node (F2) at (2.5,-2.5) {1};
\node (F3) at (3.5,-2.5) {1};
\node (F4) at (4.5,-2.5) {1};
\node (F5) at (5.5,-2.5) {1};
\node (F6) at (6.5,-2.5) {1};
\node (F7) at (7.5,-2.5) {0};
\node (F8) at (8.5,-2.5) {0};
\draw (0,-1) circle (7.5pt);
\draw (1,-1) circle (7.5pt);
\draw (2,-1) circle (7.5pt);
\draw (3,-1) circle (7.5pt);
\draw (4,-1) circle (7.5pt);
\draw (5,-1) circle (7.5pt);
\draw (6,-1) circle (7.5pt);
\draw (7,-1) circle (7.5pt);
\draw (8,-1) circle (7.5pt);
\end{tikzpicture}
\end{array}
\]
For details, see for example \cite[\S4]{IW}.  The above shows that the degree zero morphism set is one-dimensional (spanned by the identity), the degree two morphism set is two-dimensional, the degree four morphism set is three-dimensional, etc.  Summing up, we see that $\dim_{\mathbb{C}}\Delta_{\con}=1+2+3+4+4+4+3+2+1=24$.  Further, if we set
\[
\begin{array}{c}
\begin{tikzpicture}[xscale=0.85,yscale=0.85]
\node (A0) at (0,0) {0};
\node (A1) at (2,0) {1};
\node (B0) at (1,-0.5) {1};
\node (B1) at (3,-0.5) {2};
\node (C0) at (0,-1) {1};
\node (C1) at (1,-1) {1};
\node (C2) at (2,-1) {2};
\node (C3) at (3,-1) {1};
\node (D0) at (1,-1.5) {1};
\node (D1) at (3,-1.5) {2};
\node (E0) at (0,-2) {0};
\node (E1) at (2,-2) {1};
\node (F0) at (1,-2.5) {0};
\node (F1) at (3,-2.5) {1};
\draw (0,-1) circle (7.5pt);
\draw (2,-1) circle (7.5pt);
\draw[->] (C0) -- node[below, pos=0.7] {$\scriptstyle y$} (C2);
\draw[->] (C0) -- (B0) -- node[above, pos=0.5] {$\scriptstyle x$} (C2);
\end{tikzpicture}
\end{array}
\]
then both $x$ and $y$ have degree two, and by the mesh relations $y^2=0$.  Since the degree four morphism set is three-dimensional, necessarily $xy\neq yx$.
\end{proof}

\begin{remark}\label{rem lower width bounds}
Although we do not know explicitly all the contraction algebras   $\End_X(\cE)\cong\CA$ for all flopping contractions (except for some of type $A$ and $D_4$),  a more detailed analysis of the proof of \ref{reduce to surfaces} gives a precise value for $\dim_{\mathbb{C}}(\Delta_{\con})$, and thus gives a lower bound for the possible $\width(C)$:
\[
\begin{tabular}{*2c}
\toprule
{\bf Dynkin type}&$\width(C)$\\
\midrule
$A$ & $\geq1$\\
$D_4$ & $\geq4$\\
$E_6$ & $\geq12$\\
$E_7$ & $\geq24$\\
$E_8 (5)$ & $\geq40$\\
$E_8 (6)$ &$\geq 60$\\
\bottomrule
\end{tabular}
\]
\end{remark}

\subsection{On Toda's commutative deformation functor}

In this subsection we show that Toda's functor defined in \cite{Toda} using commutative deformations is never an equivalence for floppable $(-3,1)$-curves.  In fact, we work much more generally, without any assumptions on the singularities of $X$, then specialize down to establish the result.  We first establish the results locally on $U$, then lift to $X$.

\begin{thm}\label{not S spherical}
Let $U\to\Spec R$ be a contraction of an irreducible rational curve $C$.  Then
\begin{enumerate}
\item\label{not S spherical 1} If $\End_U(\cEU)$ is not commutative, then $\Ext^1_U(\cFU,E)\neq 0$.
\item\label{not S spherical 2} If $U$ is smooth and $C$ is a flopping $(-3,1)$-curve, then Toda's functor does not give an autoequivalence of $\Db(\coh U)$.
\end{enumerate}
\end{thm}
\begin{proof}
(1)  Since the Ext group is supported only on $\m$,
\[
\Ext^1_U(\cFU,E)\cong  \Ext_{\AB}^1(\CAab,S).
\]
We can present $\AB=\End_{\widehat{R}}(\widehat{R}\oplus N_1)$ as the (completion of) a quiver with relations, where there are two vertices $0$ and $1$, corresponding to projectives as follows:
\[
P_0:=\Hom_{\widehat{R}}(\widehat{R}\oplus N_1,\widehat{R}),
\qquad
P_1:=\Hom_{\widehat{R}}(\widehat{R}\oplus N_1,N_1).
\]
We remark that $P_1$ is the projective cover of $S$, and that $S$ is the vertex simple at $1$. 

We know that $\End_U(\cEU)\cong\CA$ by \ref{contraction alg isos}.  Since $\CA$ is not commutative, $\CA\neq \CAab$. Hence in the minimal projective resolution of $\CAab$ the kernel $K$ of the natural map 
$P_1\to \CAab$
contains some non-zero element $x$ which is a composition of cycles at vertex $1$, such that $x$ is zero in $\CAab$ but not in $\CA$.  Since this element $x$ does not factor through other vertices, there cannot be a surjective map $P_0^{a}\to K$, as we need some $P_1\to K$ in order to surject onto the element $x$. Consequently the projective cover of $K$ must contain $P_1$, and hence $\Ext^1_{\AB}(\CAab, S)\neq 0$.  \\
(2) By \ref{(-3,1) not commutative}, $\End_U(\cEU)$ is not commutative, so this follows by \eqref{not S spherical 1} \cite[Theorem~3.1]{AL1}.
\end{proof}

\begin{cor}\label{Toda not equiv global}
Suppose that $X\to X_{\con}$ is a contraction of an irreducible rational curve $C$.
\begin{enumerate}
\item\label{Toda not equiv global 1} If $\End_X(\cE)$ is not commutative, then $\Ext_X^1(\cF,i_*E) \neq 0$.
\item\label{Toda not equiv global 2} If $X$ is smooth and $C$ is a flopping $(-3,1)$-curve, Toda's functor does not give an autoequivalence of $\Db(\coh X)$.
\end{enumerate}
\end{cor}
\begin{proof}
(1) Since $\End_X(\cE)\cong\End_U(\cEU)$ is not commutative, we know that $\Ext^1_{U}(\cFU,E)\neq 0$ by \ref{not S spherical}.  The result then follows since
\[
0\neq \Ext_U^1(\cFU,E)
\stackrel{\t{\scriptsize{\eqref{inclusion line}}}}{\cong}
\Ext_X^1(\cF,i_*E).
\]
(2) By \ref{(-3,1) not commutative}, $\End_X(\cE)\cong\End_U(\cEU)$ is not commutative, so this follows by \eqref{Toda not equiv global 1}.
\end{proof}

\section{Strategy for Noncommutative Twists}\label{strategy} 

In the remainder of the paper we restrict to the setting of a flopping contraction $X\to X_{\con}$ of an irreducible rational  curve $C$, where $X$ is projective and has only Gorenstein terminal singularities.  Our aim is to overcome \ref{Toda not equiv global} and use the universal sheaf $\cE$ to produce a noncommutative twist functor, then prove that it is an autoequivalence which furthermore gives an intrinsic description of the  flop--flop functor of Bridgeland and Chen.

\subsection{Notation}
As before, and so as to fix notation for the remainder of the paper, our setup is the contraction of a irreducible rational floppable curve $C$ in a projective normal $3$-fold $X$ with at worst Gorenstein terminal singularities (e.g. $X$ is smooth).  We denote the contraction map $f\colon X\to X_{\con}$ and remark that necessarily $\Rf_*\cO_X=\cO_{X_{\con}}$ and $X_{\con}$ has only Gorenstein terminal singularities.  As before, there is a commutative diagram
\begin{eqnarray}
\begin{array}{c}
\begin{tikzpicture}
\node (C) at (-0.6,0) {$C^{}$}; 
\node (U) at (1,0) {$U$};
\node (X) at (3,0) {$X$};
\draw[right hook->] (C) to node[pos=0.6,above] {$\scriptstyle e$} (U);
\draw[right hook->] (U) to node[above] {$\scriptstyle i$} (X);

\node (x) at (-0.6,-1.5) {$\phantom{R}p\phantom{R}$}; 
\node (Uc) at (1,-1.5) {$U_{\con}$};
\node (Xc) at (3,-1.5) {$X_{\con}$};
\node at (0.1,-1.5) {$\in$}; 
\draw[right hook->] (Uc) to (Xc);

\node (m) at (-0.6,-2.5) {$\m$}; 
\node (R) at (1,-2.5) {$\Spec R$};
\node at (0.1,-2.5) {$\in$}; 

\draw[->] (X) --  node[right] {$\scriptstyle f$} (Xc);
\draw[->] (U) --  node[right] {$\scriptstyle f|_U$}  (Uc);
\draw[|->] (C) --  (x);
\node [rotate=-90] at (1, -2) {$\cong$};
\node [rotate=-90] at (-0.6, -2) {$=$};
\end{tikzpicture}
\end{array}\label{assumptions diagram}
\end{eqnarray}
where $C^{\redu}\cong\mathbb{P}^1$,  $e$ is a closed embedding and $i$ is an open embedding.    As at the beginning of \S\ref{flops section},  $U$ is derived equivalent to an algebra $\Lambda\cong\End_R(R\oplus N)$, and we set $\Lambda_{\con}:=\Lambda/I_{\con}$ where $I_{\con}=[R]$.  Since $X_{\con}$ has only terminal Gorenstein singularities, $R$ has only isolated hypersurface singularities.  Furthermore, since a single irreducible curve has been contracted to a point $\m$, as an $R$-module $\Lambda_{\con}$ is supported only on $\m$ and $\Lambda_{\con}\cong\Lambda_{\con}\otimes_R\widehat{R}\cong\widehat{\Lambda}_{\con}$.  To ease notation, as before we set $\mathfrak{R}:=\widehat{R}$.

\subsection{Mutation}\label{strategy mutation}
Our strategy is to first work on the formal fibre $\mathfrak{R}$, and here our main new tool is that of mutation, an algebraic operation developed in \cite[\S6]{IW4} that extended other theories of mutation to the setting of quivers with loops, 2-cycles, and no superpotential, all of which occur for general flops.

One of the key properties of mutation exploited below is that it always gives rise to derived equivalences as follows (see \S\ref{sect on the mutation functor} for more details). As input, we consider a \emph{modifying} $\mathfrak{R}$-module $Y$, chosen as in \S\ref{sect geom to alg} so that $\End_\mathfrak{R}(Y)$ is derived equivalent to the formal fibre. The output is another $\mathfrak{R}$-module, the \emph{mutation} $\upnu Y$, together with a tilting bimodule inducing an equivalence
\[
\Phi\colon \Db(\mod \End_\mathfrak{R}(Y)) \to \Db(\mod \End_\mathfrak{R}(\upnu Y)).
\]
It turns out that $\Phi$ is isomorphic to the inverse of the flop functor on the formal fibre \cite[4.2]{HomMMP}, but we will not need this fact. Instead, we compose with a similar functor in the opposite direction to obtain a \emph{mutation--mutation} autoequivalence of $\Db(\mod \End_\mathfrak{R}(Y))$, denoted by $\Phi \circ \Phi$.  This will be easier to study, and forms our algebraic model for the inverse of the (analytic) flop--flop functor.  Writing $\AB=\End_\mathfrak{R}(Y)$, 
\begin{enumerate}
\item \label{enum mutn mutn twist}
For the $\AB$-bimodule $I_{\AB}$ defined by the natural exact sequence
\begin{equation}\label{crucial SES}
I_{\AB} \to \AB \to \CA,
\end{equation}
we will show in \ref{mut mut functor general}\eqref{mut mut functor general 1} that
\begin{equation}\label{mutn mutn twist}
\Phi\circ\Phi = \RHom_{\AB}(I_{\AB},-).
\end{equation}
\setcounter{tempenum}{\theenumi}
\end{enumerate}

Although we do not give the details here, the functor $\RHom_{\AB}(I_{\AB},-)$ may be interpreted as an (inverse) twist around the noncommutative deformation family of the simple $\AB$-module $S$, and so \eqref{mutn mutn twist} gives a twist description of $\Phi \circ \Phi$.  In the process of proving \eqref{enum mutn mutn twist}, we further obtain:
\begin{list}{(${\theenumi}$)}{\usecounter{enumi}}
\setcounter{enumi}{\thetempenum}
\item \label{enum min proj res} A minimal projective resolution of $\CA$ by $\AB$-modules in \ref{proj res thm}\eqref{proj res thm 2}.
\item \label{enum action on CA} A description of the action of $\Phi \circ \Phi$ on $\CA$ in \ref{track Lambda J 1}\eqref{track Lambda J 1 2}.
\setcounter{tempenum}{\theenumi}
\end{list}
Point \eqref{enum action on CA} is needed to produce a global spanning class later, and point \eqref{enum min proj res} will establish that $\cE$ is a perfect complex such that
\begin{eqnarray*}
\Ext_X^t(\cE,i_*E) =\left\{ \begin{array}{cl} \mathbb{C}&\mbox{if }t=0,3\\
0&\mbox{else},\\  \end{array} \right.
\end{eqnarray*}
generalising \cite{Toda}.

\subsection{Strategy}\label{section twist strategy}
The remainder of the paper involves lifting the above formal fibre results to algebraic flops.  From the complete local algebraic model in \S\ref{strategy mutation}, a twist autoequivalence around the universal sheaf $\cE$ on $X$ is produced using the following strategy:

\begin{description}
\item[Tilting algebra $\Lambda$]  Using an algebraic idempotent trick, together with a morita equivalence, we are able to lift the tilting bimodule $I_{\AB}$ giving rise to the complete local equivalence $\Phi\circ\Phi$ to a $\Lambda$-bimodule $I_{\con}$.  We prove in \ref{I tilts not complete local} that this bimodule gives rise to a derived autoequivalence of $\Lambda$, and we thus obtain an algebraic local model $\RHom_{\Lambda}(I_{\con},-)$ for our twist functor. 

\item[Local geometry $U$] Since $U$ is derived equivalent to $\Lambda$, the above algebraic $\Lambda$-bimodule $I_{\con}$ induces a Fourier--Mukai kernel on $U\times U$. Since $I_{\con}$ is a tilting module, this kernel gives a twist autoequivalence on $U$, in \S\ref{geom noncomm twist}.

\item[Global geometry $X$] A gluing construction then produces a twist functor $T_{\cE}$ on $X$ in \S\ref{global section}. It remains to show that $T_{\cE}$ is an autoequivalence.  We are able to conclude that $\cE$ is perfect from the minimal projective resolution of $\CA$ in point~\eqref{enum min proj res} above, then, by inferring the action of $T_{\cE}$ on the universal sheaf $\cE$ from the knowledge of $\Phi \circ \Phi$ in \eqref{enum action on CA}, we are able to construct (in \S\ref{global proof subsection}) a spanning class on which the action of $T_{\cE}$ is known.  A spanning class argument establishes that $T_{\cE}$ is fully faithful, and from there a straightforward lemma, \ref{equiv trick}, allows us to conclude that $T_{\cE}$ is an autoequivalence in \S\ref{newsubsection}.
\end{description}

Furthermore, in \ref{prop twist functorial triangle}\eqref{prop twist functorial triangle 2} we show that $T_\cE$ fits into a functorial triangle
\[
\RHom_X(\cE,-)\otimes_{\CA}^{\bf L} \cE\to \Id \to T_{\cE}\to \]
as a consequence of the short exact sequence \eqref{crucial SES} above. We finally prove in \ref{twist flopflop} that $T_{\cE}$ is an inverse of the flop--flop functor $\flopflop$, giving the global geometric version of \eqref{mutn mutn twist}.

\section{Complete Local Mutation and Ext Vanishing}\label{prelim}

We keep the notation and assumptions as in \S\ref{strategy}.  There, $U\to\Spec R$ is a crepant morphism, which in fact is equivalent to the condition $\Lambda=\End_R(R\oplus N)\in \CM R$ \cite{IW5}, where recall $\CM R$ denotes the category of maximal Cohen--Macaulay $R$-modules.   Since $N\cong\Hom_R(R,N)$ is a summand of $\Lambda$, this in turn forces $N\in\CM R$. Writing $\refl R$ for the category of reflexive $R$-modules, modules $Y\in\refl R$ satisfying $\End_R(Y)\in \CM R$ are thus of interest, and are called \emph{modifying} $R$-modules \cite{IW4}.

\subsection{Mutation}\label{mut subsection}

Now $\Lambda=\End_R(R\oplus N)$ and completing with respect to the maximal ideal $\m$ in \eqref{assumptions diagram}
\[
\widehat{\Lambda}\cong\End_{\widehat{R}}(\widehat{R}\oplus\widehat{N}).
\]
As in \ref{decomp into irred 1}, we decompose 
\[
\widehat{R}\oplus\widehat{N}\cong \widehat{R}^{\oplus a_0}\oplus N_1^{\oplus a_1}.
\]
Throughout the section, as in \S\ref{complete local geometry summary}, to ease notation with hats we write $\mathfrak{R}:=\widehat{R}$, and
\[
Y:=\mathfrak{R}\oplus N_1, \qquad Z:=\mathfrak{R}^{\oplus a_0}\oplus N_1^{\oplus a_1},
\]
so that $\AB=\End_\mathfrak{R}(Y)$ and $\widehat{\Lambda}=\End_\mathfrak{R}(Z)$.  We write $(-)^{*}$ for the duality functor \[\Hom_{\mathfrak{R}}(-,\mathfrak{R})\colon\refl \mathfrak{R}\to \refl \mathfrak{R}.\]

\begin{defin}\label{setup2} With the setup as above,
\begin{enumerate}
\item We take a minimal right $(\add \mathfrak{R})$-approximation  
\[
\mathfrak{R}_0\xrightarrow{a}N_1\label{approx 1}
\]
of $N_1$, which by definition means that
\begin{enumerate}
\item $\mathfrak{R}_0\in\add \mathfrak{R}$ and $(a\cdot)\colon\Hom_\mathfrak{R}(\mathfrak{R},\mathfrak{R}_0)\to\Hom_\mathfrak{R}(\mathfrak{R},N_1)$ is surjective,
\item if $g\in\End_\mathfrak{R}(\mathfrak{R}_0)$ satisfies $a=ag$, then $g$ is an automorphism.
\end{enumerate}
In other words, $a$ is a projective cover of $N_1$.  Necessarily $a$ is surjective.  Since $\mathfrak{R}$ is complete, such an $a$ exists and is unique up to isomorphism.  We put $K_0:=\Ker a$, so we have an exact sequence
\begin{eqnarray}
0\to K_0\xrightarrow{c} \mathfrak{R}_0\xrightarrow{a}N_1\to 0.\label{K0}
\end{eqnarray}
Trivially
\[
0\to \Hom_\mathfrak{R}(\mathfrak{R},K_0)\xrightarrow{c\cdot} \Hom_\mathfrak{R}(\mathfrak{R},\mathfrak{R}_0)\xrightarrow{a\cdot}\Hom_\mathfrak{R}(\mathfrak{R},N_1)\to 0\label{K0a}
\]
is exact, thus since by definition $\CA=\End_\mathfrak{R}(Y)/[\mathfrak{R}]=\End_\mathfrak{R}(N_1)/[\mathfrak{R}]$, applying $\Hom_\mathfrak{R}(Y,-)$ to \eqref{K0} yields an exact sequence
\begin{eqnarray}
0\to \Hom_\mathfrak{R}(Y,K_0)\xrightarrow{c\cdot} \Hom_\mathfrak{R}(Y,\mathfrak{R}_0)\xrightarrow{a\cdot}\Hom_\mathfrak{R}(Y,N_1)\to\CA\to 0\label{begin lambdacon}
\end{eqnarray}
of $\AB$-modules.

\item We define the \emph{right mutation} of $Y$ as
\[
\upmu Y:=\mathfrak{R}\oplus K_0,
\]
that is we remove the summand $N_1$ and replace it with $K_0$.
\item Dually, we consider a minimal right $(\add \mathfrak{R}^*)$-approximation
\[
\mathfrak{R}_1^*\xrightarrow{b}N_1^*
\]
of $N_1^*$, and we put $K_1:=\Ker b$.  Thus again we have an exact sequence 
\begin{eqnarray}
0\to K_1\xrightarrow{d} \mathfrak{R}_1^*\xrightarrow{b}N_1^*\to 0.\label{K1}
\end{eqnarray}
\item We define the \emph{left mutation} of $Y$ as
\[
\upnu Y:=\mathfrak{R}\oplus K_1^*.
\]
\end{enumerate}
\end{defin}
In fact $\upnu Y=(\upmu Y^*)^*$, so really we only need to define right mutation.

\begin{remark}\label{can calculate width}
Applying $\Hom_\mathfrak{R}(Y,-)$ to \eqref{K0} and observing the resulting long exact sequence in Ext gives
\[
0\to \Hom_\mathfrak{R}(Y,K_0)\xrightarrow{c\cdot} \Hom_\mathfrak{R}(Y,\mathfrak{R}_0)\xrightarrow{a\cdot}\Hom_\mathfrak{R}(Y,N_1)\to\Ext^1_{\mathfrak{R}}(Y,K_0)\to 0.
\]
Combining this with \eqref{begin lambdacon} shows that, as $\mathfrak{R}$-modules
\[
\CA\cong \Ext^1_{\mathfrak{R}}(Y,K_0)\cong \Ext^1_{\mathfrak{R}}(N_1,\Omega N_1)\cong D\Ext_\mathfrak{R}^1(\Omega N_1,N_1)\cong D\Ext^2_\mathfrak{R}(N_1,N_1)
\]
where the third isomorphism is just AR duality, since $\mathfrak{R}$ is an isolated singularity. Taking dimensions of both sides shows that
\[
\width(C)=\dim_{\mathbb{C}}\Ext^2_{\widehat{R}}(N_1,N_1).
\]
This may be calculated by using computer algebra on the base $\widehat{R}$, and gives a very easy way to calculate $\width(C)$.  In contrast, computing the algebra structure on $\CA$ is always a little harder, and this is needed to calculate $\CAab$ and hence $\cwidth(C)$.  We remark that there does not seem to be any easy description of $\cwidth(C)$ in terms of Ext groups.  
\end{remark}

\begin{remark}
Mutation is defined in \cite[\S6]{IW4} for any modifying module, so in particular also for $Z=\mathfrak{R}^{\oplus a_0}\oplus N_1^{\oplus a_1}$. Summing the  exact sequence \eqref{K0} gives an exact sequence
\begin{eqnarray}
0\to K_0^{\oplus a_1}\xrightarrow{c^{\oplus a_1}} \mathfrak{R}_0^{\oplus a_1}\xrightarrow{a^{\oplus a_1}}N_1^{\oplus a_1}\to 0\label{K0NL}
\end{eqnarray}
where $a^{\oplus a_1}$ is a minimal $(\add \mathfrak{R})$-approximation of $N_1^{\oplus a_1}$.   Thus 
\[
\upmu Z=\mathfrak{R}_0^{\oplus a_0}\oplus K_0^{\oplus a_1}.
\]
Similarly, summing \eqref{K1} gives a minimal $(\add \mathfrak{R}^*)$-approximation of $(N_1^{\oplus a_1})^*$ and so
\[
\upnu Z=\mathfrak{R}_0^{\oplus a_0}\oplus (K_1^*)^{\oplus a_1}.
\]
From this, it is clear that $\End_\mathfrak{R}(\upnu Y)$ is morita equivalent to $\End_\mathfrak{R}(\upnu Z)$.
\end{remark}

Recall that $\mathbb{F}=\Hom_{\AB}(P,-)\colon\mod\AB\to\mod\widehat{\Lambda}$ is the morita equivalence from \S\ref{complete local geometry summary}, where $P=\Hom_\mathfrak{R}(Y,Z)$.

\begin{lemma}\label{morita lemma}
$\widehat{\Lambda}_{\con}\cong(\FCA)^{\oplus a_1}$ as $\widehat{\Lambda}$-modules. In particular 
\[
\Lambda_{\con}\cong\widehat{\Lambda}_{\con}\cong M_{a_1}(\CA),
\]
where $M_{a_1}(\CA)$ denotes the ring of $a_1\times a_1$ matrices over $\CA$.  Consequently $\Lambda_{\con}\cong\widehat{\Lambda}_{\con}$ and $\CA$ are morita equivalent. 
\end{lemma}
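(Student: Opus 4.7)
The plan is to realise $\widehat{\Lambda}_{\con}$ concretely as the matrix ring $M_{a_1}(\CA)$, obtain the module decomposition from the Peirce structure, and identify each Peirce summand with $\FCA$ via a direct Morita computation.

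Fix the idempotent $e\in\AB$ onto the $\mathfrak{R}$ summand of $Y$, and let $\varepsilon_0^{(1)},\ldots,\varepsilon_0^{(a_0)},\varepsilon_1^{(1)},\ldots,\varepsilon_1^{(a_1)}$ be the primitive idempotents of $\widehat{\Lambda}=\End_{\mathfrak{R}}(Z)$ corresponding to the summands of $Z=\mathfrak{R}^{a_0}\oplus N_1^{a_1}$; put $e':=\sum_{j=1}^{a_0}\varepsilon_0^{(j)}$. First I would verify $[\mathfrak{R}]_{\widehat{\Lambda}}=\widehat{\Lambda}e'\widehat{\Lambda}$, since both sides comprise endomorphisms of $Z$ factoring through some object of $\add\mathfrak{R}$. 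Writing $\widehat{\Lambda}$ as a $2\times 2$ block matrix with respect to the decomposition $Z=\mathfrak{R}^{a_0}\oplus N_1^{a_1}$ and modding out by $\widehat{\Lambda}e'\widehat{\Lambda}$ kills every block except possibly the $(N_1^{a_1},N_1^{a_1})$-block, where it reduces to
\[
\End_{\mathfrak{R}}(N_1^{a_1})/[\mathfrak{R}]\cong M_{a_1}\bigl(\End_{\mathfrak{R}}(N_1)\bigr)/M_{a_1}([\mathfrak{R}])\cong M_{a_1}(\CA),
\]
yielding the ring isomorphism $\widehat{\Lambda}_{\con}\cong M_{a_1}(\CA)$.

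For the module statement, since $e'=0$ in $\widehat{\Lambda}_{\con}$ the Peirce decomposition collapses to
\[
\widehat{\Lambda}_{\con}=\bigoplus_{j=1}^{a_1}\varepsilon_1^{(j)}\widehat{\Lambda}_{\con}
\]
as right $\widehat{\Lambda}$-modules; under the matrix identification above, each summand $\varepsilon_1^{(j)}\widehat{\Lambda}_{\con}$ is the $j$-th row of $M_{a_1}(\CA)$, which as a right $M_{a_1}(\CA)$-module is the column module $\CA^{\oplus a_1}$. To identify this with $\FCA$, I would use the decomposition $P=\Hom_{\mathfrak{R}}(Y,Z)=(e\AB)^{a_0}\oplus((1-e)\AB)^{a_1}$ of right $\AB$-modules together with the standard $\Hom_{\AB}(e\AB,M)\cong Me$ to obtain
\[
\FCA=\Hom_{\AB}(P,\CA)\cong(\CA\cdot e)^{a_0}\oplus\bigl(\CA(1-e)\bigr)^{a_1}\cong\CA^{a_1},
\]
since $\CA\cdot e=0$ and $\CA=\CA(1-e)$. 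Tracking the right $\widehat{\Lambda}$-action inherited from the left $\widehat{\Lambda}$-action on $P$ through this computation shows it matches the right action on the $j$-th row of $M_{a_1}(\CA)\cong\widehat{\Lambda}_{\con}$, so $\FCA\cong\varepsilon_1^{(j)}\widehat{\Lambda}_{\con}$ for every $j$, and hence $\widehat{\Lambda}_{\con}\cong(\FCA)^{\oplus a_1}$ as right $\widehat{\Lambda}$-modules.

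Combining with \ref{contraction splits} then gives $\Lambda_{\con}\cong\widehat{\Lambda}_{\con}\cong M_{a_1}(\CA)$, and the standard Morita equivalence of $\CA$ with $M_{a_1}(\CA)$ supplies the final morita-equivalence statement. The most delicate point is the compatibility check between the right $\widehat{\Lambda}$-action on $\FCA$ coming from the Morita functor $\mathbb{F}=\Hom_{\AB}(P,-)$ and the matrix-row action on $\CA^{a_1}$: because of the functional composition convention, idempotents act on Peirce components from the opposite side to that usually expected, and one must keep the row/column identifications straight throughout.
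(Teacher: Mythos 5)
Your argument is correct, but it follows a genuinely different route from the paper. The paper never touches idempotents directly: it applies $\Hom_{\mathfrak{R}}(Z,-)$ to the summed approximation sequence \eqref{K0NL} to present $\widehat{\Lambda}_{\con}$ as the cokernel of $(a\cdot)^{\oplus a_1}$, applies $\mathbb{F}$ to the presentation \eqref{begin lambdacon} of $\CA$ and uses reflexive equivalence to see $\FCA\cong\Cok(a\cdot)$, whence $\widehat{\Lambda}_{\con}\cong\FCA^{\oplus a_1}$; the matrix description then drops out by computing $\End_{\widehat{\Lambda}}(\widehat{\Lambda}_{\con})\cong\End_{\AB}(\CA^{\oplus a_1})\cong M_{a_1}(\CA)$. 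You instead work purely ring-theoretically: $[\mathfrak{R}]=\widehat{\Lambda}e'\widehat{\Lambda}$ (note this uses $a_0\geq 1$, which \ref{decomp into irred 1} guarantees), the corner-ring computation $\widehat{\Lambda}/\widehat{\Lambda}e'\widehat{\Lambda}\cong\End_{\mathfrak{R}}(N_1^{\oplus a_1})/[\mathfrak{R}]\cong M_{a_1}(\CA)$, the Peirce decomposition into rows, and the identification $\FCA\cong(\CA e)^{\oplus a_0}\oplus(\CA(1-e))^{\oplus a_1}\cong\CA^{\oplus a_1}$ via $\Hom_{\AB}(e\AB,M)\cong Me$. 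Your approach is more elementary and makes the matrix structure transparent from the start; the paper's approach buys the cokernel presentation $\FCA\cong\Hom_{\mathfrak{R}}(Z,N_1)/[\mathfrak{R}]$ explicitly, which is in the spirit of the approximation sequences reused throughout \S\ref{prelim}. The one step you assert rather than prove is precisely the delicate point you flag: that the right $\widehat{\Lambda}$-action on $\Hom_{\AB}(P,\CA)$ matches the row action on $\varepsilon_1^{(j)}\widehat{\Lambda}_{\con}$. This does hold, and the cleanest way to close it is to write the map down: composition induces $g\mapsto\big(p\mapsto\overline{g\circ p}\big)$ from $\Hom_{\mathfrak{R}}(Z,N_1)$ to $\Hom_{\AB}(\Hom_{\mathfrak{R}}(Y,Z),\CA)=\FCA$, which kills maps factoring through $\add\mathfrak{R}$, is visibly right $\widehat{\Lambda}$-linear, and identifies $\varepsilon_1^{(j)}\widehat{\Lambda}_{\con}\cong\Hom_{\mathfrak{R}}(Z,N_1)/[\mathfrak{R}]$ with $\FCA$ (this is the same object the paper exhibits as $\Cok(a\cdot)$); with that inserted your proof is complete.
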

\begin{proof}
Applying $\Hom_\mathfrak{R}(Z,-)$ to \eqref{K0NL} yields the exact sequence
\[
\Hom_\mathfrak{R}(Z,\mathfrak{R}_0^{\oplus a_1})\xrightarrow{(a\cdot)^{\oplus a_1}}\Hom_\mathfrak{R}(Z,N_1^{\oplus a_1})\to \widehat{\Lambda}_{\con}\to 0
\]
of $\widehat{\Lambda}$-modules. On the other hand applying $\mathbb{F}$ to \eqref{begin lambdacon} shows that the top sequence in the following commutative diagram is exact
\[
\opt{10pt}{\def\seqspace{6}}
\opt{12pt}{\def\seqspace{7}\hspace*{-5mm}}
\begin{tikzpicture}[scale=1,node distance=1]
\node (A1) at (0,0) {$\Hom_{\AB}(\Hom_\mathfrak{R}(Y,Z),\Hom_\mathfrak{R}(Y,\mathfrak{R}_0))$};
\node (A2) at (\seqspace,0) {$\Hom_{\AB}(\Hom_\mathfrak{R}(Y,Z),\Hom_\mathfrak{R}(Y,N_1))$};
\opt{12pt}{\node (A3') at (4+\seqspace,0) {$\ldots$};}
\opt{10pt}{\node (A3) at (9.8,0) {$\mathbb{F}(\CA)$};
\node (A4) at (11,0) {$0$};}
\opt{12pt}{\node (A2') at (4,-2.5) {$\ldots$};
\node (A3) at (6,-2.5) {$\mathbb{F}(\CA)$};
\node (A4) at (8,-2.5) {$0$};
\draw[->] (A2) -- (A3');
\draw[->] (A2') -- (A3);}
\node (B1) at (0,-1.5) {$\Hom_{\mathfrak{R}}(Z,\mathfrak{R}_0)$};
\node (B2) at (\seqspace,-1.5) {$\Hom_{\mathfrak{R}}(Z,N_1)$};
\draw[->] (A1) -- (A2);
\opt{10pt}{\draw[->] (A2) -- (A3);}
\draw[->] (A3) -- (A4);
\draw[->] (B1) -- node[above] {$\scriptstyle a\cdot$} (B2);
\draw[->] (B1) -- node[sloped,left,anchor=south] {$\scriptstyle \sim$} (A1);
\draw[->] (B2) -- node[sloped,left,anchor=south] {$\scriptstyle \sim$} (A2);
\end{tikzpicture}
\]
where the vertical isomorphisms are just reflexive equivalence (see e.g. \cite[2.5]{IW4}).  It follows that $\FCA\cong\Cok(a\cdot)$ and thus
\[
\FCA^{\oplus a_1}\cong\Cok(a^{\oplus a_1}\cdot)\cong\widehat{\Lambda}_{\con}.
\]
The last statement follows by \ref{contraction splits} since
\opt{10pt}{\[
\widehat{\Lambda}_{\con}\cong \End_{\widehat{\Lambda}}(\widehat{\Lambda}_{\con})\cong \End_{\widehat{\Lambda}}(\FCA^{\oplus a_1})\cong \End_{\AB}(\CA^{\oplus a_1})\cong  M_{a_1}(\End_{\AB}(\CA))\cong M_{a_1}(\CA).
\]}
\opt{12pt}{\begin{align*}
\widehat{\Lambda}_{\con}\cong \End_{\widehat{\Lambda}}(\widehat{\Lambda}_{\con})\cong \End_{\widehat{\Lambda}}(\FCA^{\oplus a_1})&\cong \End_{\AB}(\CA^{\oplus a_1})\\
&\cong  M_{a_1}(\End_{\AB}(\CA))\\
&\cong M_{a_1}(\CA).\qedhere
\end{align*}}
\end{proof}

The following is important, and is a consequence of the fact that $\mathfrak{R}$ is a hypersurface singularity.

\begin{prop}\label{mut twice is identity} With assumptions and notation as above,
\begin{enumerate}
\item\label{mut twice is identity 1} $\upnu(\upnu Y)\cong Y$.
\item\label{mut twice is identity 2} $K_1^*\cong K_0$, so that $\upnu Y\cong\upmu Y$.
\end{enumerate}
\end{prop}
\begin{proof}
(1) By definition of mutation, $\upnu=\Omega^{-1}$, where $\Omega^{-1}$ denotes the cosyzygy functor on $\CM \mathfrak{R}$.  But $N_1\in\CM \mathfrak{R}$ since $A\in\CM \mathfrak{R}$, and it is well known that on hypersurfaces $\Omega^{-2}(Z)\cong Z$ for all $Z\in\CM \mathfrak{R}$ \cite{Eisenbud}.\\
(2)   To calculate $\upnu(\upnu Y)$, we take a minimal right ($\add(\frac{\upnu Y}{K_1^*})^*$)-approximation, equivalently a minimal right ($\add \mathfrak{R}^*$)-approximation, of  $(K_1^*)^*\cong K_1$ and by doing so obtain an exact sequence
\[
0\to K_3\to \mathfrak{R}_2^*\to K_1\to 0\label{second approx}
\]  
with $\mathfrak{R}^*_2\in \add \mathfrak{R}^*$.  Since we have $\upnu(\upnu Y):=\mathfrak{R}\oplus K_3^*$,  by \eqref{mut twice is identity 1} and Krull--Schmidt it follows that $K_3^*\cong N_1$.  Now dualizing the above sequence gives  an exact sequence
\[
0\to K_1^*\to \mathfrak{R}_2\to N_1\to 0
\]
where the last map is a minimal right ($\add \mathfrak{R}$)-approximation.  By uniqueness of minimal approximations, this implies that $\mathfrak{R}_2\cong \mathfrak{R}_0$, $K_1^*\cong K_0$ and further $\upnu Y=\mathfrak{R}\oplus K_1^*\cong \upmu Y$.
\end{proof}

Since all modules $\mathfrak{R}, N_1, K_0, K_1\in\CM \mathfrak{R}$ and $(-)^*$ is an exact duality on $\CM \mathfrak{R}$, we can dualize \eqref{K1} and obtain an exact sequence
\begin{align}
&0\to N_1\stackrel{b^{*}}\to \mathfrak{R}_1\stackrel{d^{*}}\to K_{1}^{*}\to 0. \label{K1D}
\end{align}

\subsection{$\Ext$ vanishing}  With regards to our applications, the point of this subsection is to prove \ref{Ext cor}.  The following is an application of \cite[\S 6]{IW4}.

\begin{prop}\label{proj res thm} With assumptions and notation as above,
\begin{enumerate}
\item\label{proj res thm 1} Applying $\Hom_\mathfrak{R}(Y,-)$ to the sequence (\ref{K1D}) gives an exact sequence
\begin{eqnarray}
0\to\Hom_\mathfrak{R}(Y,N_1)\xrightarrow{b^*\cdot}\Hom_\mathfrak{R}(Y,\mathfrak{R}_1)\xrightarrow{d^*\cdot} \Hom_\mathfrak{R}(Y,K_1^*)\to 0.\label{tilting key sequence}
\end{eqnarray}
\item\label{proj res thm 2} The minimal projective resolution of $\CA$ as an $\AB$-module has the form
\[
0\to P\to Q_1\to Q_0\to P\to \CA\to 0
\]
where $P:=\Hom_\mathfrak{R}(Y,N_1)$, and $Q_i\in\add Q$ for $Q:=\Hom_\mathfrak{R}(Y,\mathfrak{R})$.\smallskip
\item\label{proj res thm 3} $\pd_{\widehat{\Lambda}}\widehat{\Lambda}_{\con}=3$ and $\pd_{\widehat{\Lambda}}\widehat{I}_{\con}=2$.
\end{enumerate}
\end{prop}
\begin{proof}
(1) Since $\CA$ is finite dimensional by \ref{cont is fd for flop}, this follows from the argument exactly as in \cite[(6.Q)]{IW4}.\\
(2) Since $K_1^*\cong K_0$ by \ref{mut twice is identity}, splicing \eqref{tilting key sequence} and \eqref{begin lambdacon} gives the minimal projective resolution
\opt{10pt}{\[
0\to \Hom_\mathfrak{R}(Y,N_1)\to \Hom_\mathfrak{R}(Y,\mathfrak{R}_1)\to \Hom_\mathfrak{R}(Y,\mathfrak{R}_0)\to \Hom_\mathfrak{R}(Y,N_1)\to \CA\to 0.
\]}
\opt{12pt}{\begin{align*}
0\to \Hom_\mathfrak{R}(Y,N_1)\to \Hom_\mathfrak{R}(Y,\mathfrak{R}_1)\to \Hom_\mathfrak{R}(Y,\mathfrak{R}_0)&\to \Hom_\mathfrak{R}(Y,N_1)\\&\to \widehat{\Lambda}_{\con}\to 0.
\end{align*}}
(3) We have $\pd_{\AB}\CA=3$ by \eqref{proj res thm 2}, and so chasing across the morita equivalence using \ref{morita lemma}, $\pd_{\widehat{\Lambda}}\widehat{\Lambda}_{\con}=3$.  The final statement follows.
\end{proof}

\begin{cor}\label{Ext cor}  There are isomorphisms
\[
\Ext_{\Lambda}^t(\FCA,T) 
\cong\Ext_{\widehat{\Lambda}}^t(\FCA,\widehat{T}) 
\cong\Ext_{\AB}^t(\CA,S) 
\cong\left\{ \begin{array}{cl} \mathbb{C}&\mbox{if }t=0,3\\
0&\mbox{else,}\\  \end{array} \right.
\]
and further $\CA$ is a self-injective algebra.
\end{cor}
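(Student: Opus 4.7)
The two outer isomorphisms reduce everything to $\Ext^t_\AB(\CA,S)$: by \ref{cont is fd for flop} both $\FCA$ and $T$ are finite length modules supported only at $\m$, so Lemma~\ref{useful finite length stuff}(2) gives $\Ext^t_\Lambda(\FCA,T)\cong\Ext^t_{\widehat\Lambda}(\FCA,\widehat T)$, and the Morita equivalence $\mathbb{F}$ from \S\ref{complete local geometry summary}, which carries $\CA\mapsto\FCA$ and $S\mapsto\widehat T$, identifies the remaining Ext groups over $\widehat\Lambda$ with Ext over $\AB$.

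To compute $\Ext^t_\AB(\CA,S)$ I would apply $\Hom_\AB(-,S)$ to the minimal projective resolution
\[
0\to P\to Q_1\to Q_0\to P\to \CA\to 0
\]
from \ref{proj res thm}(2). Let $e_0,e_1\in\AB$ denote the primitive idempotents corresponding to the summands $\mathfrak{R}$ and $N_1$ of $Y$, so that $P=e_1\AB$ is the projective cover of $S$ while each $Q_i\in\add(e_0\AB)$. Then $\Hom_\AB(P,S)=Se_1=S=\mathbb{C}$, whereas $\Hom_\AB(Q_i,S)=Se_0=0$ because $\CA=\AB/\AB e_0\AB$ annihilates $e_0$. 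The complex computing Ext collapses to $\mathbb{C}\to 0\to 0\to \mathbb{C}$, giving $\Ext^0=\Ext^3=\mathbb{C}$ and $\Ext^1=\Ext^2=0$; vanishing in degrees $\ge 4$ is immediate from $\pd_\AB\CA=3$ in \ref{proj res thm}(3).

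For self-injectivity the plan is to invoke a Serre-type duality for finite length $\AB$-modules, available for an endomorphism ring of a modifying module over the 3-dimensional Gorenstein isolated singularity $\mathfrak{R}$ via the techniques of \cite[\S6]{IW4}: this supplies a non-degenerate pairing $\Ext^t_\AB(M,N)\times\Ext^{3-t}_\AB(N,M)\to\mathbb{C}$ for finite length $M$ and $N$. Specialising to $M=\CA$, $N=S$ and using the computation above,
\[
\Hom_\AB(S,\CA)\cong D\,\Ext^3_\AB(\CA,S)\cong\mathbb{C}.
\]
Since any $\AB$-linear map between two $\CA$-modules is automatically $\CA$-linear, the left-hand side coincides with $\Hom_\CA(S,\CA)$, which records the multiplicity of the simple $S$ in the socle of $\CA$. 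Thus $\CA$ is a finite dimensional local algebra whose socle is one-dimensional and isomorphic to its top $S$, and a standard argument then forces $\CA$ to be injective as a module over itself.

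The principal obstacle I anticipate is pinning down the correct form of Serre duality in the singular setting, where $\AB$ need not have finite global dimension and so is not literally 3-Calabi--Yau; the mutation machinery of \cite[\S6]{IW4} highlighted in \S\ref{strategy} is designed precisely for this situation and is the natural source of the pairing above.
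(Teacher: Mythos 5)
Your computation of the Ext groups follows the paper's proof almost verbatim: both reduce to $\AB$ using that the Ext groups are supported only at $\m$ (via \ref{cont is fd for flop} and \ref{useful finite length stuff}) together with the Morita equivalence, and both then read off $\Ext^t_{\AB}(\CA,S)$ from the minimal projective resolution of \ref{proj res thm}, the whole point being $\Hom_{\AB}(Q_i,S)=0$. For self-injectivity you take a mild variant of the paper's route: the paper applies the singular Calabi--Yau duality in degree two, getting $\Ext^1_{\CA}(S,\CA)=\Ext^1_{\AB}(S,\CA)\cong D\Ext^2_{\AB}(\CA,S)=0$, which gives injectivity of $\CA$ over itself at once since $S$ is the unique simple; you apply the same duality in degree three to get $\Hom_{\CA}(S,\CA)\cong D\Ext^3_{\AB}(\CA,S)\cong\mathbb{C}$, i.e.\ a simple socle, and then conclude by the local finite-dimensional algebra argument (embed $\CA$ into the injective hull of $S$, which is $D(\CA)$ of the same dimension). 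Both arguments are correct and rest on exactly the same input, so the difference is cosmetic.

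One correction to the step you flagged as uncertain: the duality is not a pairing for arbitrary pairs of finite length modules. What the paper invokes, via $\AB$ being 3-sCY \cite[2.22(2)]{IW4}, requires one argument to be a perfect complex and the other to have finite length, i.e.\ $\Ext^{3-t}_{\AB}(M,N)\cong D\Ext^{t}_{\AB}(N,M)$ for $M\in\Db(\fl\AB)$ and $N\in\Kb(\proj\AB)$. Stated for all finite length pairs it would be false: taking $M=N=S$ with $\pd_{\AB}S=\infty$, the groups $\Ext^t_{\AB}(S,S)$ are nonzero in arbitrarily high degrees. In your application this is harmless, since $N=\CA$ is perfect precisely by \ref{proj res thm}, which is exactly the hypothesis the paper records ($\pd_{\AB}\CA<\infty$ and $S$ of finite length); but the general pairing you proposed should be replaced by this version.
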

\begin{proof}
The first two isomorphisms are consequences of the fact that the Ext groups are supported only at $\m$.  The third isomorphism is an immediate consequence of the minimal projective resolution in \ref{proj res thm}, since $\Hom_{\AB}(Q_i,S)=0$.  

Now if the outer two terms in a short exact sequence are annihilated by an idempotent, so is the middle term.  Hence $\mod\CA$ is extension-closed in $\mod\AB$, and it follows that
\begin{eqnarray}
\Ext^1_{\CA}(S,\CA)=\Ext^1_{\AB}(S,\CA)\cong D\Ext^2_{\AB}(\CA,S)\label{selfinj sequence}
\end{eqnarray}
where the last isomorphism holds since $\AB$ is 3-sCY \cite[2.22(2)]{IW4}, $\pd_{\AB}\CA<\infty$ and $S$ has finite length.   Thus \eqref{selfinj sequence} shows that $\Ext^1_{\CA}(S,\CA)=0$.  Since $\CA$ is finite dimensional with unique simple $S$, it follows that $\CA$ is self-injective.
\end{proof}

\subsection{On the mutation functor $\Phi$}\label{sect on the mutation functor}  We retain the setup and notation from \S\ref{mut subsection}, namely $\AB=\End_\mathfrak{R}(Y)$ and $\widehat{\Lambda}=\End_\mathfrak{R}(Z)$.  For the case of left mutation $\upnu Y$, there is a derived equivalence between $\End_\mathfrak{R}(Y)$ and $\End_\mathfrak{R}(\upnu Y)$ \cite[\S 6]{IW4} given by a tilting $\End_\mathfrak{R}(Y)$-module $V$ constructed as follows.  We consider the sequence \eqref{K1D}
\[
0\to N_1\xrightarrow{b^*} \mathfrak{R}_1\to K_1^*\to 0
\]
obtained by dualizing (\ref{K1}).  It is trivial that $b^*$ is a minimal left $(\add \mathfrak{R})$-approximation of $N_1$.  Applying $\Hom_\mathfrak{R}(Y,-)$, gives an induced map \opt{10pt}{$(b^*\cdot )\colon\Hom_\mathfrak{R}(Y,N_1)\to\Hom_\mathfrak{R}(Y,\mathfrak{R}_1),$}\opt{12pt}{\[(b^*\cdot )\colon\Hom_\mathfrak{R}(Y,N_1)\to\Hom_\mathfrak{R}(Y,\mathfrak{R}_1),\]} and our tilting $\AB$-module is by definition given by
\[
V:=\Hom_\mathfrak{R}(Y,\mathfrak{R})\oplus\Cok(b^*\cdot).
\]

By \eqref{tilting key sequence} we already know that $\Cok(b^*\cdot)\cong \Hom_\mathfrak{R}(Y,K_1^*)$, so in fact  
\[
V\cong \Hom_\mathfrak{R}(Y,\mathfrak{R})\oplus \Hom_\mathfrak{R}(Y,K_1^*)=\Hom_\mathfrak{R}(Y,\upnu Y).
\]
This gives rise to an equivalence \cite[6.8]{IW4}
\[
\Phi:=\RHom_{\AB}(V,-)\colon\Db(\mod\AB)\to \Db(\mod\End_{\mathfrak{R}}(\upnu Y)),
\]
which we call the {\em mutation functor}.   By \ref{mut twice is identity} we can mutate $\End_{\mathfrak{R}}(\upnu Y)$ back to obtain $\AB$, and in an identical way 
\[
W:=\Hom_\mathfrak{R}(\upnu Y,Y)
\]
is a tilting module giving rise to an equivalence which by abuse of notation we also denote
\[
\Phi:=\RHom(W,-)\colon\Db(\mod\End_{\mathfrak{R}}(\upnu Y))\to \Db(\mod\AB).
\]

Similarly 
\[
V':=\Hom_\mathfrak{R}(Z,\mathfrak{R}^{\oplus a_0})\oplus\Hom_\mathfrak{R}(Z,(K_1^*)^{\oplus a_1})=\Hom_\mathfrak{R}(Z,\upnu Z)
\]
is a tiling $\widehat{\Lambda}$-module, giving rise to an equivalence
\[
\Phi':=\RHom_{\widehat{\Lambda}}(V',-)\colon\Db(\mod\widehat{\Lambda})\to \Db(\mod\End_{\mathfrak{R}}(\upnu Z))
\]
and we mutate back via the tilting module $W':=\Hom_\mathfrak{R}(\upnu Z,Z)$.

The following is easily seen, and is an elementary application of morita theory.

\begin{lemma}\label{mut com diagram}
The following diagram commutes:
\[
\begin{array}{c}
\begin{tikzpicture}
\node (A1) at (0,0) {$\Db(\mod\AB)$};
\node (A2) at (0,-1.5) {$\Db(\mod\End_\mathfrak{R}(\upnu Y))$};
\node (A3) at (0,-3) {$\Db(\mod\AB)$};
\node (B1) at (5,0) {$\Db(\mod\widehat{\Lambda})$};
\node (B2) at (5,-1.5) {$\Db(\mod\End_\mathfrak{R}(\upnu Z))$};
\node (B3) at (5,-3) {$\Db(\mod\widehat{\Lambda})$};
\draw[->] (A1) -- node[above] {$\scriptstyle \mathbb{F}$} (B1);
\draw[->] (A2) -- node[above] {$\scriptstyle morita$} (B2);
\draw[->] (A3) -- node[above] {$\scriptstyle \mathbb{F}$} (B3);
\draw[->] (A1) -- node[left] {$\scriptstyle \Phi$} (A2);
\draw[->] (A2) -- node[left] {$\scriptstyle \Phi$} (A3);
\draw[->] (B1) -- node[right] {$\scriptstyle \Phi'$} (B2);
\draw[->] (B2) -- node[right] {$\scriptstyle \Phi'$} (B3);
\end{tikzpicture}
\end{array}
\]
\end{lemma}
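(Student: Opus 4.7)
The plan is to realize every arrow in the diagram as derived hom against a $\Hom_\mathfrak{R}(-,-)$-bimodule built from the four reflexive $\mathfrak{R}$-modules $Y, Z, \nu Y, \nu Z$, and then check commutativity by identifying the bimodules that represent the two composite functors in each square. Explicitly, writing $P := \Hom_\mathfrak{R}(Y,Z)$, $P_\nu := \Hom_\mathfrak{R}(\nu Y,\nu Z)$, $V := \Hom_\mathfrak{R}(Y,\nu Y)$ and $V' := \Hom_\mathfrak{R}(Z,\nu Z)$, we have $\mathbb{F} = \RHom_{\AB}(P,-)$, the middle morita equivalence is $\RHom$ against $P_\nu$, $\Phi = \RHom_{\AB}(V,-)$ and $\Phi' = \RHom_{\widehat{\Lambda}}(V',-)$. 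Because $P$ and $P_\nu$ are progenerators as one-sided modules and $V, V'$ are tilting, it suffices to check the commutativity at the level of the underlying (underived) bimodules.

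For the top square, applying the hom-hom adjunction to the composite $\Phi' \circ \mathbb{F}$ identifies its representing bimodule with $\Hom_{\AB}(P,V) = \Hom_{\AB}(\Hom_\mathfrak{R}(Y,Z),\Hom_\mathfrak{R}(Y,\nu Y))$, and by reflexive equivalence \cite[2.5]{IW4} this is canonically isomorphic to $\Hom_\mathfrak{R}(Z,\nu Y)$ as a $\widehat{\Lambda}$-$\End_\mathfrak{R}(\nu Z)$-bimodule. Running the other way around, the composite of $\Phi$ followed by the middle morita equivalence is represented by $\Hom_{\End_\mathfrak{R}(\nu Y)}(P_\nu,V) \cong \Hom_\mathfrak{R}(Z,\nu Y)$ by the same reflexive equivalence. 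Tracing through the bimodule actions shows the two canonical isomorphisms agree, and the bottom square is handled identically, with both composites representing the bimodule $\Hom_\mathfrak{R}(\nu Z,Y)$ obtained from $W := \Hom_\mathfrak{R}(\nu Y,Y)$ and $W' := \Hom_\mathfrak{R}(\nu Z,Z)$.

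The only real obstacle is book-keeping: one must verify that the natural isomorphisms between representing bimodules respect \emph{both} the left and right actions, not merely the underlying $\mathfrak{R}$-module structures. This amounts to unpacking the definitions of the morita bimodules $P, P_\nu$, noting that the decomposition $Z \cong \mathfrak{R}^{\oplus a_0}\oplus N_1^{\oplus a_1}$ forces the compatible decomposition $\nu Z \cong \mathfrak{R}^{\oplus a_0}\oplus (K_1^*)^{\oplus a_1}$ with matching multiplicities, so that the morita and mutation operations commute at the level of $\mathfrak{R}$-modules before passing to endomorphism rings. Once this compatibility is in place, the natural isomorphisms assemble into an isomorphism of derived functors, and commutativity of the diagram follows.
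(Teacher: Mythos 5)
Your overall plan --- realise every arrow as derived Hom or tensor against a bimodule of the form $\Hom_\mathfrak{R}(-,-)$, reduce to underived bimodules using that $P$ and $P_\nu$ are progenerators, and then compare the bimodules representing the two composites --- is exactly the kind of ``elementary application of morita theory'' the paper intends (it offers no further proof), and it does work. However, the bookkeeping as you have written it is not correct, and the step you call the hom--hom adjunction fails as stated. The composite $\Phi'\circ\mathbb{F}$ is a functor $\Db(\mod\AB)\to\Db(\mod\End_\mathfrak{R}(\nu Z))$, so a bimodule representing it as $\RHom_{\AB}(-,-)$ must be an $(\End_\mathfrak{R}(\nu Z),\AB)$-bimodule. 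Your candidate $\Hom_{\AB}(P,V)\cong\Hom_\mathfrak{R}(Z,\nu Y)$ carries a left $\End_\mathfrak{R}(\nu Y)$-action and a right $\widehat{\Lambda}$-action (post- and pre-composition); it has no natural right $\AB$- or $\End_\mathfrak{R}(\nu Z)$-structure, so it cannot represent $\Phi'\circ\mathbb{F}$, and no adjunction produces it from $\RHom_{\widehat{\Lambda}}(V',\RHom_{\AB}(P,-))$. Likewise $\Hom_{\End_\mathfrak{R}(\nu Y)}(P_\nu,V)$ does not typecheck as a Hom of right $\End_\mathfrak{R}(\nu Y)$-modules (with the paper's conventions $V=\Hom_\mathfrak{R}(Y,\nu Y)$ is a \emph{left} $\End_\mathfrak{R}(\nu Y)$-module), and the reflexive-equivalence pattern you invoke does not apply to it, since its second argument consists of maps into $\nu Y$ rather than out of $\nu Y$.

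The correct computation is the tensor version of your idea. By the standard adjunction, $\RHom_{\widehat{\Lambda}}(V',\RHom_{\AB}(P,-))\cong\RHom_{\AB}(V'\otimes^{\bf L}_{\widehat{\Lambda}}P,-)$ and $\Hom_{\End_\mathfrak{R}(\nu Y)}(P_\nu,\RHom_{\AB}(V,-))\cong\RHom_{\AB}(P_\nu\otimes^{\bf L}_{\End_\mathfrak{R}(\nu Y)}V,-)$; the tensors are underived because $P$ and $P_\nu$ are projective on the relevant sides, and since $\add Y=\add Z$ and $\add \nu Y=\add\nu Z$ (this is where your multiplicity remark enters), the composition maps give isomorphisms $V'\otimes_{\widehat{\Lambda}}P\cong\Hom_\mathfrak{R}(Y,\nu Z)\cong P_\nu\otimes_{\End_\mathfrak{R}(\nu Y)}V$ of $(\End_\mathfrak{R}(\nu Z),\AB)$-bimodules. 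This identifies both composites of the top square with $\RHom_{\AB}(\Hom_\mathfrak{R}(Y,\nu Z),-)$, and the bottom square is handled identically using $W$ and $W'$. (Alternatively, your bimodule $\Hom_\mathfrak{R}(Z,\nu Y)$ does represent both composites of the equivalent square $\Phi\circ\mathbb{F}^{-1}\cong(\text{morita})^{-1}\circ\Phi'$, so your conclusion can be salvaged by inverting the horizontal equivalences --- but as written your identifications do not match the functors appearing in the diagram.)
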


For our purposes later, we need to be able to track the object $\CA$ through the derived equivalence $\Phi$. This is taken care of in the following lemma.

\begin{lemma}\label{track Lambda J 1}
Write $\BB:=\End_{\mathfrak{R}}(\upnu Y)$ and $\CB:=\BB/[\mathfrak{R}]$, then
\begin{enumerate}
\item\label{track Lambda J 1 1} $\Phi(\CA)\cong \CB[-1]$.
\item\label{track Lambda J 1 2} $\Phi\circ\Phi(\CA)\cong\CA[-2]$.
\item\label{track Lambda J 1 3} $\Phi'\circ\Phi'(\FCA)\cong \FCA[-2]$.
\end{enumerate}
\end{lemma} 
\begin{proof}
Note first that in fact $\CA\cong\CB$ as algebras \cite[6.20]{IW4}, so $\dim_{\mathbb{C}}\CB<\infty$.  \\
(1) We already know that $V=\Hom_\mathfrak{R}(Y,\upnu Y)$.  Now since $K_1^*\cong K_0$ by \ref{mut twice is identity}, splicing \eqref{K0} and \eqref{K1D} gives us an exact sequence of $R$-modules
\[
0\to K_1^*\stackrel{c}{\to} \mathfrak{R}_0\xrightarrow{b^{*}\cdot a} \mathfrak{R}_1\stackrel{d^{*}}\to K_{1}^{*} \to 0 
\]
to which applying $\Hom_\mathfrak{R}(Y,-)$ gives a complex of $\AB$-modules
\opt{10pt}{\begin{eqnarray}
0\to \Hom_\mathfrak{R}(Y,K_1^*)\xrightarrow{c\cdot} \Hom_\mathfrak{R}(Y,\mathfrak{R}_0)\xrightarrow{(b^{*}\cdot a)\cdot} \Hom_\mathfrak{R}(Y,\mathfrak{R}_1)\xrightarrow{d^{*}\cdot} \Hom_\mathfrak{R}(Y,K_{1}^{*})\to 0 \label{base Lam sequence}
\end{eqnarray}}
\opt{12pt}{\begin{equation}\begin{split}
0\to \Hom_\mathfrak{R}(Y,K_1^*)\xrightarrow{c\cdot} \Hom_\mathfrak{R}(Y,\mathfrak{R}_0)\xrightarrow{(b^{*}\cdot a)\cdot} \Hom_\mathfrak{R}(Y,\mathfrak{R}_1)\xrightarrow{d^{*}\cdot} \\\xrightarrow{d^{*}\cdot}\Hom_\mathfrak{R}(Y,K_{1}^{*})\to 0 \end{split}\label{base Lam sequence}\end{equation}}
whereas applying $\Hom_\mathfrak{R}(\upnu Y,-)$ gives a complex of $\BB$-modules
\opt{10pt}{\begin{eqnarray}
0\to \Hom_\mathfrak{R}(\upnu Y,K_1^*)\xrightarrow{c\cdot} \Hom_\mathfrak{R}(\upnu Y,\mathfrak{R}_0)\xrightarrow{(b^{*}\cdot a)\cdot} \Hom_\mathfrak{R}(\upnu Y,\mathfrak{R}_1)\xrightarrow{d^{*}\cdot} \Hom_\mathfrak{R}(\upnu Y,K_{1}^{*})\to 0 \label{base Del sequence}
\end{eqnarray}}
\opt{12pt}{\begin{equation}\begin{split}
0\to \Hom_\mathfrak{R}(\upnu Y,K_1^*)\xrightarrow{c\cdot} \Hom_\mathfrak{R}(\upnu Y,\mathfrak{R}_0)\xrightarrow{(b^{*}\cdot a)\cdot} \Hom_\mathfrak{R}(\upnu Y,\mathfrak{R}_1)\xrightarrow{d^{*}\cdot}\\\xrightarrow{d^{*}\cdot} \Hom_\mathfrak{R}(\upnu Y,K_{1}^{*})\to 0 \end{split}\label{base Del sequence}
\end{equation}}
In both \eqref{base Lam sequence} and \eqref{base Del sequence}, we consider the last Hom term to be in degree zero.  Since every term in \eqref{base Lam sequence} is a summand of $V$, by reflexive equivalence it follows that under the derived equivalence $\Phi$, \eqref{base Lam sequence} gets sent to \eqref{base Del sequence}.  Hence the statement in the lemma follows if we can show that \eqref{base Lam sequence} is quasi-isomorphic to $\CA[1]$ and that \eqref{base Del sequence} is quasi-isomorphic to $\CB[0]$.

Consider first \eqref{base Lam sequence}.  Combining the exact sequences \eqref{tilting key sequence} and \eqref{begin lambdacon}
\begin{gather*}
0\to\Hom_\mathfrak{R}(Y,N_1)\xrightarrow{b^*\cdot}\Hom_\mathfrak{R}(Y,\mathfrak{R}_1)\xrightarrow{d^*\cdot} \Hom_\mathfrak{R}(Y,K_1^*)\to 0\\
0\to \Hom_\mathfrak{R}(Y, K_1^*)\xrightarrow{c\cdot} \Hom_\mathfrak{R}(Y,\mathfrak{R}_0)\xrightarrow{a\cdot} \Hom_\mathfrak{R}(Y, N_1)\to\CA\to 0
\end{gather*}
shows that \eqref{base Lam sequence} is quasi-isomorphic to $\CA[1]$.

Next, consider \eqref{base Del sequence}.  Applying \ref{proj res thm}\eqref{proj res thm 1} with the module $\upnu Y$ (instead of $Y$), it follows that
\[
0\to \Hom_\mathfrak{R}(\upnu Y,K_1^*)\xrightarrow{c\cdot} \Hom_\mathfrak{R}(\upnu Y,\mathfrak{R}_0)\xrightarrow{a\cdot}\Hom_\mathfrak{R}(\upnu Y,N_1)\to 0
\] 
is exact.  Further, since trivially $d^*$ is a minimal ($\add \mathfrak{R}$)-approximation, applying the functor $\Hom_\mathfrak{R}(\upnu Y,-)$ to \eqref{K1D} gives an exact sequence
\[
0\to \Hom_\mathfrak{R}(\upnu Y,N_1)\xrightarrow{b^*\cdot} \Hom_\mathfrak{R}(\upnu Y,\mathfrak{R}_1)\xrightarrow{d^*\cdot}\Hom_\mathfrak{R}(\upnu Y,K_1^*)\to \CB\to 0.
\]
Splicing the last two exact sequences shows that \eqref{base Del sequence} is quasi-isomorphic to $\CB[0]$.\\
(2) Since as above $\dim_{\mathbb{C}}\CB<\infty$, this follows since we can apply \eqref{track Lambda J 1 1} twice.\\
(3) This follows from \eqref{track Lambda J 1 2} using the commutativity of the diagram in \ref{mut com diagram}.
\end{proof}

We write $I_{\AB}$ for the ideal $[\mathfrak{R}]$ of $\AB=\End_\mathfrak{R}(\mathfrak{R}\oplus N_1)$.  The following gives a description of the mutation--mutation functors in terms of the bimodules $I_{\AB}$ and $\widehat{I}_{\con}$.

\begin{thm}\label{mut mut functor general}  There are functorial isomorphisms
\begin{enumerate}
\item\label{mut mut functor general 1} $\Phi\circ\Phi\cong\RHom_{\AB}(I_{\AB},-)$.
\item\label{mut mut functor general 2} $\Phi'\circ\Phi'\cong\RHom_{\widehat{\Lambda}}(\widehat{I}_{\con},-)$.
\end{enumerate}
Thus $\RHom_{\widehat{\Lambda}}(\widehat{I}_{\con},-)$ is an autoequivalence of $\Db(\mod\widehat{\Lambda})$, with inverse $-\otimes^{\bf L}_{\widehat{\Lambda}} \widehat{I}_{\con}$, and so  $\widehat{I}_{\con}$ is a tilting $\widehat{\Lambda}$-module.
\end{thm}
\begin{proof}
(1) To avoid confusion we will write $\BB=\End_\mathfrak{R}(\upnu Y)$, and also denote
\[
\Db(\mod\AB)\xrightarrow{\Phi}\Db(\mod\BB) \mbox{ by } \Phi^1=\RHom_{\AB}(V,-),
\]
and 
\[
\Db(\mod\BB)\xrightarrow{\Phi}\Db(\mod\AB) \mbox{ by } \Phi^2=\RHom_{\BB}(W,-).
\]
The composition $\Phi^2\circ\Phi^1\cong\RHom_{\AB}(W\otimes^{\bf L}_{\BB}V,-)$, so we show that $W\otimes^{\bf L}_{\BB}V\cong I_{\AB}$.

As above, $W:=\Hom_\mathfrak{R}(\upnu Y, Y)$.  Hence $W$ is quasi-isomorphic to its projective resolution
\begin{equation}\label{W proj res}
\hdots\to 0\to \Hom_\mathfrak{R}(\upnu Y,K_1^*)\xrightarrow{{ c\cdot \choose 0}} \Hom_\mathfrak{R}(\upnu Y,\mathfrak{R}_0)\oplus \Hom_\mathfrak{R}(\upnu Y,\mathfrak{R})\to 0\to \hdots
\end{equation}
But $\Phi^1$ is an equivalence that sends $V$ to $\BB$.  In fact, $\Phi^1$ sends
\begin{align*} \Hom_\mathfrak{R}(Y,\mathfrak{R}) & \mapsto \Hom_\mathfrak{R}(\upnu Y,\mathfrak{R})  \\
\Hom_\mathfrak{R}(Y,K_1^*) & \mapsto \Hom_\mathfrak{R}(\upnu Y,K_1^*). \end{align*} Hence applying the inverse of $\Phi^1$, namely $-\otimes^{\bf L}_{\BB} V$, to \eqref{W proj res} gives the complex
\begin{eqnarray}
\hdots\to 0\to \Hom_\mathfrak{R}(Y,K_1^*)\xrightarrow{{c\cdot \choose 0}} \Hom_\mathfrak{R}(Y,\mathfrak{R}_0)\oplus \Hom_\mathfrak{R}(Y,\mathfrak{R})\to 0\to \hdots\label{key obs}
\end{eqnarray}
Then using $K_1^*\cong K_0$ and the fact that \eqref{begin lambdacon}
\[
0\to \Hom_\mathfrak{R}(Y, K_1^*)\xrightarrow{c\cdot} \Hom_\mathfrak{R}(Y,\mathfrak{R}_0)\to \Hom_\mathfrak{R}(Y, N_1)\to \CA\to 0
\]
is exact, the complex (\ref{key obs}) is quasi-isomorphic to
\[
\hdots\to 0\to \AB=\Hom_\mathfrak{R}(Y,\mathfrak{R})\oplus\Hom_\mathfrak{R}(Y,N_1) \to \CA\to \hdots
\]
which is clearly quasi-isomorphic to $I_{\AB}$.\\
(2) is similar.  The final statements follow since the mutation functors $\Phi'$ are always derived equivalences \cite[6.8]{IW4}.
\end{proof}

\begin{cor}\label{track simple S}
$\Phi\circ\Phi(S)\cong S[-2]$.
\end{cor}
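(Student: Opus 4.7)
The plan is to invoke Theorem \ref{mut mut functor general}(1), which identifies $\Phi \circ \Phi \cong \RHom_{\AB}(I_{\AB}, -)$, thereby reducing the claim to showing that $\RHom_{\AB}(I_{\AB}, S) \cong S[-2]$ in $\Db(\mod \AB)$.

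First I would apply $\RHom_{\AB}(-, S)$ to the defining short exact sequence $0 \to I_{\AB} \to \AB \to \CA \to 0$ and examine the resulting distinguished triangle
\[ \RHom_{\AB}(\CA, S) \to \RHom_{\AB}(\AB, S) \to \RHom_{\AB}(I_{\AB}, S) \to \]
in $\Db(\mod \AB)$. Since $\AB$ is projective, $\RHom_{\AB}(\AB, S) \cong S$ is concentrated in degree zero. By Corollary \ref{Ext cor}, $\Ext^t_{\AB}(\CA, S) = \mathbb{C}$ for $t = 0, 3$ and vanishes otherwise; since $S$ is annihilated by $I_{\AB}$, the right $\AB$-action on these $\Ext$ groups factors through $\CA$, so each is isomorphic to $S$ as a right $\AB$-module. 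The induced map $\Hom_{\AB}(\CA, S) \to \Hom_{\AB}(\AB, S)$ is an isomorphism for the same reason. Chasing the long exact sequence then forces $\Ext^i_{\AB}(I_{\AB}, S) = 0$ for $i \neq 2$ and yields $\Ext^2_{\AB}(I_{\AB}, S) \cong \Ext^3_{\AB}(\CA, S) \cong S$ via the connecting map.

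Since $\RHom_{\AB}(I_{\AB}, S)$ therefore has cohomology concentrated in a single degree, it is quasi-isomorphic to $S[-2]$ in $\Db(\mod \AB)$, completing the proof. As a cross-check, one may directly apply $\Hom_{\AB}(-, S)$ to the length-two projective resolution of $I_{\AB}$ obtained by truncating the resolution of $\CA$ in Proposition \ref{proj res thm}(2): the terms $\Hom_{\AB}(Q_i, S)$ vanish since $S$ is the vertex simple at $N_1$, leaving only $\Hom_{\AB}(P, S) \cong S$ in degree two. The essential technical work has already been done in Theorem \ref{mut mut functor general} and Corollary \ref{Ext cor}, so the argument here is effectively formal; the only point meriting mild attention is identifying the one-dimensional answer as $S$ rather than the other vertex simple $S_{\mathfrak{R}}$, which both methods above resolve cleanly.
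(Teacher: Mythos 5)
Your proof is correct, but it takes a genuinely different route from the paper's. Both arguments begin with \ref{mut mut functor general}(1), reducing the claim to $\RHom_{\AB}(I_{\AB},S)\cong S[-2]$; the paper then writes $S\cong(\CA/J)\otimes^{\bf L}_{\CA}\CA$ for $J=\Rad\CA$, invokes the already-established tracking $\RHom_{\AB}(I_{\AB},\CA)\cong\CA[-2]$ from \ref{track Lambda J 1}(2), and finishes with a standard tensor--Hom compatibility $(\CA/J)\otimes^{\bf L}_{\CA}\RHom_{\AB}(I_{\AB},\CA)\cong\RHom_{\AB}(I_{\AB},(\CA/J)\otimes^{\bf L}_{\CA}\CA)$ (citing \cite[2.12(2)]{IR}). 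You instead compute $\RHom_{\AB}(I_{\AB},S)$ directly from the long exact sequence of $0\to I_{\AB}\to\AB\to\CA\to 0$ together with \ref{Ext cor}; this is more elementary and avoids both \ref{track Lambda J 1}(2) and the bimodule compatibility, at the cost of having to identify the one-dimensional $\Ext^2$ with the simple $S$ rather than the other vertex simple. That identification does go through, but the cleanest justification differs slightly from yours: the connecting maps are right $\AB$-linear because the short exact sequence is one of bimodules, and the right $\AB$-action on $\Ext^t_{\AB}(\CA,S)$ factors through $\CA$ because left multiplication by $I_{\AB}$ is zero on $\CA$ (your reason, $S\cdot I_{\AB}=0$, is exactly what is needed for $\Hom_{\AB}(\CA,S)\to\Hom_{\AB}(\AB,S)$ being an isomorphism, but for the module structure on the Ext groups the left structure of $\CA$ is the relevant point); since $S$ is the unique simple $\CA$-module, any one-dimensional $\CA$-module is $S$. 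One small inaccuracy in your cross-check: truncating the resolution of \ref{proj res thm}(2) at $P\to\CA$ resolves only $\ker(P\to\CA)$, which is the summand $e_1I_{\AB}$ of $I_{\AB}$ (for $e_1$ the idempotent at $N_1$), not $I_{\AB}$ itself; one must add the projective $Q=\Hom_{\mathfrak{R}}(Y,\mathfrak{R})$ in degree zero, which changes nothing since $\Hom_{\AB}(Q,S)=0$, so your conclusion is unaffected.
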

\begin{proof}
Let $J:=\Rad(\CA)$ denote the Jacobson radical, then $\CA/J\cong S$ and so
\[
S\cong 
(\CA/J)\otimes_{\CA}\CA\cong 
(\CA/J)\otimes^{\bf L}_{\CA}\CA.
\]
By \ref{track Lambda J 1}\eqref{track Lambda J 1 2} and \ref{mut mut functor general} it follows that $S[-2]\cong (\CA/J)\otimes^{\bf L}_{\CA}\RHom_{\AB}(I_{\AB},\CA)$, so the result follows from the fact that
\begin{align*}
(\CA/J)\otimes^{\bf L}_{\CA}\RHom_{\AB}(I_{\AB},\CA)
& \cong 
\RHom_{\AB}(I_{\AB},(\CA/J)\otimes^{\bf L}_{\CA}\CA) \\
& \cong
\RHom_{\AB}(I_{\AB},S),
\end{align*}
where the first canonical isomorphism is  standard (see e.g.\  \cite[2.10(2)]{IR}).
\end{proof}

The equivalence $\RHom_{\widehat{\Lambda}}(\widehat{I}_{\con},-)$ in \ref{mut mut functor general} above is the complete local version of our  noncommutative twist functor.  The following is a corollary of the results in this section, and will be used later.

\begin{cor}\label{track Lambda J} With the setup as above,
\begin{enumerate}
\item $\RHom_{\widehat{\Lambda}}(\widehat{I}_{\con},\FCA)\cong\FCA[-2]$.
\item $\RHom_{\widehat{\Lambda}}(\widehat{I}_{\con},\widehat{T})\cong \widehat{T}[-2]$.
\end{enumerate}
\end{cor}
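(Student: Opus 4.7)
The plan is to derive both statements directly by combining Theorem~\ref{mut mut functor general}(2), which identifies the functor $\RHom_{\widehat{\Lambda}}(\widehat{I}_{\con},-)$ with the double mutation functor $\Phi'\circ\Phi'$, with the tracking results already established for the mutation functor on the relevant objects. No new homological input should be required.

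For part (1), I would simply invoke Lemma~\ref{track Lambda J 1}(3), which states $\Phi'\circ\Phi'(\FCA)\cong\FCA[-2]$. Rewriting the left-hand side using \ref{mut mut functor general}(2) gives
\[
\RHom_{\widehat{\Lambda}}(\widehat{I}_{\con},\FCA)\cong \Phi'\circ\Phi'(\FCA)\cong\FCA[-2],
\]
which is the desired isomorphism.

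For part (2), the plan is to transport the analogous statement for $S$ across the morita equivalence $\mathbb{F}$ of \S\ref{complete local geometry summary}. Recall that $\widehat{T}$ corresponds to $S$ under $\mathbb{F}$, and Corollary~\ref{track simple S} gives $\Phi\circ\Phi(S)\cong S[-2]$. By the commuting square of Lemma~\ref{mut com diagram}, the composition $\Phi'\circ\Phi'$ is intertwined with $\Phi\circ\Phi$ via $\mathbb{F}$, so applying $\mathbb{F}$ to $\Phi\circ\Phi(S)\cong S[-2]$ yields $\Phi'\circ\Phi'(\widehat{T})\cong \widehat{T}[-2]$. Reinterpreting the left-hand side via \ref{mut mut functor general}(2) gives $\RHom_{\widehat{\Lambda}}(\widehat{I}_{\con},\widehat{T})\cong\widehat{T}[-2]$.

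There is essentially no obstacle here: the whole content has been packaged into \ref{track Lambda J 1}, \ref{mut mut functor general}, \ref{mut com diagram}, and \ref{track simple S}. The only small point to be careful about is that the diagram of \ref{mut com diagram} commutes up to the morita identification, so one should check (or remark) that under $\mathbb{F}$ the simple $S$ corresponds to $\widehat{T}$ and hence \ref{track simple S} translates verbatim to the statement for $\widehat{T}$; this is immediate from the construction of $S$ in \S\ref{complete local geometry summary}.
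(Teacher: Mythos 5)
Your proposal is correct and follows essentially the same route as the paper: part (1) is exactly \ref{mut mut functor general}(2) combined with \ref{track Lambda J 1}(3), and part (2) is exactly the paper's chain $\RHom_{\widehat{\Lambda}}(\widehat{I}_{\con},\widehat{T})\cong\Phi'\circ\Phi'(\mathbb{F}S)\cong\mathbb{F}\circ\Phi\circ\Phi(S)\cong\mathbb{F}S[-2]=\widehat{T}[-2]$ using \ref{mut com diagram} and \ref{track simple S}.
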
 
\begin{proof}
(1) We have $\RHom_{\widehat{\Lambda}}(\widehat{I}_{\con},\FCA)
\stackrel{{\scriptsize\mbox{\ref{mut mut functor general}}}}{\cong}
\Phi'\circ\Phi'(\FCA)
\stackrel{{\scriptsize\mbox{\ref{track Lambda J 1}}}}{\cong}
\FCA[-2]$.\\
(2) $\RHom_{\widehat{\Lambda}}(\widehat{I}_{\con},\widehat{T})
\stackrel{{\scriptsize\mbox{\ref{mut mut functor general}}}}{\cong}
\Phi'\circ\Phi'(\widehat{T})
=
\Phi'\circ\Phi'(\mathbb{F}S)
\stackrel{{\scriptsize\mbox{\ref{mut com diagram}}}}{\cong}
\mathbb{F}\circ\Phi\circ\Phi(S)
\stackrel{{\scriptsize\mbox{\ref{track simple S}}}}{\cong}
\mathbb{F}S[-2]
=\widehat{T}[-2]$.
\end{proof}

\section{Zariski Local Twists}\label{alg NC twists section}

We keep our running setup of a flopping contraction as in \S\ref{strategy}, with an irreducible rational curve contracting to a point $\m$. Since $R$ is Gorenstein, $R$ is a canonical $R$-module, but in this non-local setting canonical modules are not unique.  Throughout we set $\omega_R:=g^!\mathbb{C}[-3]$, where $g\colon \Spec R\to \Spec\mathbb{C}$ is the structure morphism.  This may or may not be isomorphic to $R$.

Throughout, as in \S\ref{strategy},  $\Lambda:=\End_R(R\oplus N)$ and we already know that $\Lambda\in\CM R$.  As before $I_{\con}:=[R]$ and $\Lambda_{\con}:=\Lambda/ I_{\con}$.  Since the category $\CM R$ is no longer Krull--Schmidt, we must be careful.

\subsection{Local tilting}\label{local tilting} In this subsection, we show that the $\Lambda$-module $I_{\con}$ is still a tilting module, with endomorphism ring $\Lambda$. First, we need the following lemma.

\begin{lemma}\label{bimod structures match} Put $I:=I_{\con}$, then
\begin{enumerate}
\item\label{bimod structures match 1} Viewing $I$ as a right $\Lambda$-module, we have $\Lambda\cong \End_\Lambda(I)$ under the map which sends $\lambda\in\Lambda$ to the map $(\lambda\cdot)\colon I\to I$ given by $i\mapsto \lambda i$.
\item\label{bimod structures match 2} Under the isomorphism in \t{(1)}, the bimodule ${}_{\End_\Lambda(I)}I_\Lambda$ coincides with the natural bimodule structure ${}_{\Lambda}I_\Lambda$. 
\end{enumerate}
\end{lemma}
\begin{proof}
(1) First recall that $\Lambda\cong\End_\Lambda(\Lambda_\Lambda)$ via the ring homomorphism which sends $\lambda\in\Lambda$ to $(\lambda\cdot)\colon\Lambda\to\Lambda$ sending $x\mapsto \lambda x$, left multiplication by $\lambda$. Now consider the short exact sequence
\begin{eqnarray}
0\to I\to\Lambda\to \Lambda_{\con}\to 0\label{can bimod}
\end{eqnarray}
of $\Lambda$-bimodules.  Applying $\Hom_\Lambda(-,\Lambda)$ gives an exact sequence
\begin{eqnarray}
0\to \Hom_\Lambda(\Lambda_{\con},\Lambda)\to\Hom_\Lambda(\Lambda,\Lambda)\to \Hom_\Lambda(I,\Lambda)\to \Ext^1_{\Lambda}(\Lambda_{\con},\Lambda).\label{can bimod cor 1}
\end{eqnarray}
Since $\Lambda\in\CM R$,  it follows that $\Lambda$ is singular $d$-CY by \cite[2.22(2)]{IW4}, hence 
\[
\dim_R\Lambda_{\con}=d-\inf\{i\geq 0\mid \Ext^i_{\Lambda}(\Lambda_{\con},\Lambda)\neq 0\}
\] 
by \cite[3.4(5)(ii)]{IR}.  But $R$ is normal, which implies that $\dim_R\Lambda_{\con}\leq d-2$ (see e.g.\ \cite[6.19]{IW4}), thus combining we see that $\Hom_{\Lambda}(\Lambda_{\con},\Lambda)\cong\Ext^1_{\Lambda}(\Lambda_{\con},\Lambda)=0$.

On the other hand, applying $\Hom_\Lambda(I,-)$ to \eqref{can bimod} gives
\begin{eqnarray}
0\to \Hom_\Lambda(I,I)\to \Hom_\Lambda(I,\Lambda)\to \Hom_\Lambda(I,\Lambda_{\con})=0.\label{can bimod cor 2}
\end{eqnarray}
Combining \eqref{can bimod cor 1} and \eqref{can bimod cor 2} shows that $\End_\Lambda(\Lambda)\cong \End_\Lambda(I)$.  Chasing through the sequences, this isomorphism is given by \opt{10pt}{$(\lambda\cdot)\in\End_\Lambda(\Lambda)\mapsto (\lambda\cdot)\in\End_\Lambda(I),$}\opt{12pt}{\[(\lambda\cdot)\in\End_\Lambda(\Lambda)\mapsto (\lambda\cdot)\in\End_\Lambda(I),\]} so the isomorphism is in fact a ring isomorphism.\\
(2) $I$ is naturally a left $\End_\Lambda(I)$-module via $f\cdot i:=f(i)$.  By the isomorphism in \eqref{bimod structures match 1}, this is just left multiplication by $\lambda$.
\end{proof}

We can now extend \ref{mut mut functor general}.

\begin{thm}\label{I tilts not complete local}
$I_{\con}$ is a tilting $\Lambda$-module with $\End_\Lambda(I_{\con})\cong\Lambda$.  Furthermore
\begin{enumerate}
\item\label{I tilts not complete local 1} $\pd_\Lambda I_{\con}=2$, so  $\pd_\Lambda \Lambda_{\con}=3$.
\item\label{I tilts not complete local 2} $\pd_{\Lambda^{\op}} \Lambda_{\con}=3$.
\end{enumerate}
\end{thm}
\begin{proof}
The fact that $\End_\Lambda(I)\cong\Lambda$ is \ref{bimod structures match}.  Let $\n\in\Max R$, then the short exact sequence $0\to I_{\con}\to\Lambda\to\Lambda_{\con}\to 0$ shows that $(I_{\con})_\n\cong\Lambda_\n$ if $\n\notin\Supp\Lambda_{\con}=\{\m\}$.  Further, 
\[
\Ext^i_{\Lambda}(I_{\con},I_{\con})_\m\otimes_{R_\m}\widehat{R}_\m\cong\Ext^i_{\widehat{\Lambda}}(\widehat{I}_{\con},\widehat{I}_{\con}) \stackrel{\scriptsize\mbox{\ref{mut mut functor general}}}{=} 0
\]
for all $i>0$.  Thus, in either case, the completion of $\Ext^i_{\Lambda}(I_{\con},I_{\con})$ at each maximal ideal is zero, hence $\Ext^i_{\Lambda}(I_{\con},I_{\con})=0$ for all $i>0$.  Similarly
\[
\pd_{\Lambda}I_{\con}=\sup\{\pd_{\widehat{\Lambda}}\widehat{\Lambda}_{\con}\mid {\n\in\Supp\Lambda_{\con}=\{\m\}}\}-1
\]
which is $2$ by \ref{proj res thm}\eqref{proj res thm 3}.  For generation, suppose that $Y\in\D(\Mod\Lambda)$ with \opt{10pt}{$\RHom_{\Lambda}(I_{\con},Y)=0$.}\opt{12pt}{\[\RHom_{\Lambda}(I_{\con},Y)=0.\]}  Then
\[
0=\RHom_{\Lambda}(I_{\con},Y)_\n\otimes\widehat{R}
\stackrel{\scriptsize\mbox{\ref{useful finite length stuff}}}{\cong}
\RHom_{\widehat{\Lambda}}(\widehat{I}_{\con}, \widehat{Y})
\]  
and so since $\widehat{I}_{\con}$ is tilting by \ref{mut mut functor general}, $\widehat{Y}=0$ for all $\n\in\Max R$. Hence $Y=0$, proving generation.  Part \eqref{I tilts not complete local 2} is shown similarly.
\end{proof}

\subsection{Tracking the contraction algebra} In this subsection we track the objects $\FCA$ and $T$ across the derived equivalence induced by $I_{\con}$ (extending \ref{track Lambda J}), and also under the action of a certain Serre functor.   

\begin{prop}\label{track contraction2} Under the setup above,
\begin{enumerate}
\item\label{track contraction2 1} $\RHom_\Lambda(I_{\con},\FCA)\cong \FCA[-2]$ as $\Lambda$-modules.
\item\label{track contraction2 2} $\RHom_\Lambda(I_{\con},T)\cong T[-2]$ as $\Lambda$-modules.
\end{enumerate}
\end{prop}
\begin{proof}
(1) We have 
\begin{align*}
\RHom_\Lambda(I_{\con},\FCA)_\n\otimes\widehat{R}
&\stackrel{\phantom{\scriptsize\mbox{\ref{track Lambda J}}}}{\cong} \RHom_{\widehat{\Lambda}}(\widehat{I}_{\con},\widehat{\FCA})\\
&\stackrel{\scriptsize\mbox{\ref{track Lambda J}}}{\cong}
\left\{ \begin{array}{cc} 0 & \mbox{if }\n\notin\Supp\FCA=\{\m\}\phantom{.}\\ \FCA[-2] &\mbox{if }\n\in\Supp\FCA=\{\m\}.\end{array}\right.
\end{align*}
It follows that $C:=\RHom(I_{\con},\FCA)$ is a stalk complex in homological degree two, and further that the degree two piece is supported only on $\m$.  Thus
\[
C\cong H^{2}(C)[-2]\stackrel{\scriptsize\mbox{\ref{useful finite length stuff}}}{\cong}\widehat{H^2(C)}[-2]
\cong\FCA[-2]
\]
as required.\\
(2) is similar, using $\RHom_{\widehat{\Lambda}}(\widehat{I}_{\con},\widehat{T})\cong \widehat{T}[-2]$ by \ref{track Lambda J}.
\end{proof}

In our possibly singular setting, we need the following RHom version of Serre functors.

\begin{defin}\label{Serre def for alg}
We say that a functor $\mathbb{S}\colon\Kb(\proj\Lambda)\to\Kb(\proj\Lambda)$ is a Serre functor relative to $\omega_R$ if there are functorial isomorphisms 
\begin{eqnarray}
\RHom_R(\RHom_\Lambda(a,b),\omega_R)\cong \RHom_\Lambda(b,\mathbb{S}(a))\label{Serre alg funct}
\end{eqnarray}
in $\D(\Mod R)$ for all $a\in\Kb(\proj\Lambda)$ and all $b\in\Db(\mod\Lambda)$.  
\end{defin}

Since under our assumptions in this section $R$ is Gorenstein, the module $\omega_R$ is a dualizing module of $R$, i.e.\
\[
\RHom_R(-,\omega_R)\colon\D(\mod R)\to\D(\mod R) 
\]
gives a duality.  Since $\Hom_R(-,\omega_R)\colon \mod\Lambda\to\mod\Lambda^{\op}$, this induces a duality
\[
\RHom_R(-,\omega_R)\colon \D^{\pm}(\mod\Lambda)\to\D^\mp(\mod\Lambda),
\]
and we define the functor $\mathbb{S}_\Lambda$ to be the composition
\[
\D^-(\mod\Lambda)\xrightarrow{\RHom_\Lambda(-,\Lambda)}\D^+(\mod\Lambda^{\op})\xrightarrow{\RHom_R(-,\omega_R)}\D^-(\mod\Lambda).
\]
As in the proof of \cite[3.5]{IR}, functorially $\mathbb{S}_\Lambda(-)\cong \RHom_R(\Lambda,\omega_R)\otimes^{\bf L}_{\Lambda}-$.  The following is known by \cite[7.2.14]{Ginz} and \cite[\S3]{IR}. We include the proof for completeness.

\begin{prop}\label{Serre alg side}
$\mathbb{S}_\Lambda$ is a Serre functor relative to $\omega_R$.
\end{prop}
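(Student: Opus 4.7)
The plan is to establish the functorial isomorphism by a standard dévissage on $a$, reducing to the case $a=\Lambda$ where it becomes tensor--Hom adjunction. The essential input is the identification, noted just before the statement, that $\mathbb{S}_\Lambda(-)\cong \RHom_R(\Lambda,\omega_R)\otimes^{\bf L}_\Lambda -$; with this in hand, the right-hand side of \eqref{Serre alg funct} reads
\[
\RHom_\Lambda\bigl(b,\,\RHom_R(\Lambda,\omega_R)\otimes^{\bf L}_\Lambda a\bigr),
\]
and when $a=\Lambda$ this collapses to $\RHom_\Lambda(b,\RHom_R(\Lambda,\omega_R))$, which is isomorphic to $\RHom_R(b,\omega_R)$ by the usual adjunction $\RHom_\Lambda(b,\RHom_R(\Lambda,\omega_R))\cong \RHom_R(b\otimes^{\bf L}_\Lambda\Lambda,\omega_R)$. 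The left-hand side, in the same case $a=\Lambda$, is $\RHom_R(\RHom_\Lambda(\Lambda,b),\omega_R)\cong \RHom_R(b,\omega_R)$, so the two sides visibly agree.

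Next I would check that both sides of \eqref{Serre alg funct}, viewed as contravariant functors of $a\in\Kb(\proj\Lambda)$ with $b\in\Db(\mod\Lambda)$ fixed, are triangulated and commute with finite direct sums. For the right-hand side this is clear from the formula $\mathbb{S}_\Lambda(a)=\RHom_R(\Lambda,\omega_R)\otimes^{\bf L}_\Lambda a$, since tensor product and $\RHom_\Lambda(b,-)$ both preserve triangles and coproducts. For the left-hand side one uses that $\RHom_\Lambda(-,b)$ sends triangles to triangles and that $\RHom_R(-,\omega_R)$ is triangulated. Since $\Kb(\proj\Lambda)$ is the smallest thick subcategory containing $\Lambda$, the isomorphism at $a=\Lambda$ propagates to all of $\Kb(\proj\Lambda)$.

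The one piece of bookkeeping worth doing carefully is functoriality and naturality of the adjunction isomorphism, including compatibility with the various bimodule structures on $\Lambda$ and on $\RHom_R(\Lambda,\omega_R)$: the tensor--Hom adjunction
\[
\RHom_\Lambda\bigl(b,\RHom_R(M,\omega_R)\bigr)\;\cong\;\RHom_R\bigl(M\otimes^{\bf L}_\Lambda b,\,\omega_R\bigr)
\]
must be applied with $M$ regarded as an $R$-$\Lambda$-bimodule, so that the outcome is an isomorphism in $\D(\Mod R)$. This is standard, and the main obstacle is simply assembling the isomorphism naturally in $a$ so that the verification for $a=\Lambda$ transfers through cones and summands. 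Since $\omega_R$ is dualizing and $\Lambda\in\CM R$, no finiteness issues arise, and the whole argument is the well-known principle \cite[\S3]{IR}, \cite[7.2.14]{Ginz} that Serre functors for noncommutative crepant-type algebras are computed by tensoring with the relative canonical bimodule $\RHom_R(\Lambda,\omega_R)$.
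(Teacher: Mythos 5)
Your argument for the isomorphism \eqref{Serre alg funct} itself is fine: constructing a natural transformation for all $a$, checking it at $a=\Lambda$ via tensor--Hom adjunction, and propagating through the thick subcategory generated by $\Lambda$ is exactly the standard mechanism behind the references the paper cites (\cite[7.2.8]{Ginz}, \cite[\S3]{IR}), so that half of your proposal is essentially a written-out version of what the paper invokes.

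However, you have omitted half of the statement. By Definition \ref{Serre def for alg}, a Serre functor relative to $\omega_R$ is in particular a functor $\Kb(\proj\Lambda)\to\Kb(\proj\Lambda)$, so you must show that $\mathbb{S}_\Lambda$ carries perfect complexes to perfect complexes; your remark that ``no finiteness issues arise'' does not address this, and your d\'evissage only shows that $\mathbb{S}_\Lambda(a)$ lies in the thick subcategory generated by $\mathbb{S}_\Lambda(\Lambda)\cong\RHom_R(\Lambda,\omega_R)$, not that this subcategory is contained in $\Kb(\proj\Lambda)$. The point is genuinely non-formal: one needs $\RHom_R(\Lambda,\omega_R)$ to be perfect as a one-sided $\Lambda$-module, which amounts to $\Lambda$ being Gorenstein as an $R$-order. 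This is where the paper does its actual work, first establishing $\id_\Lambda\Lambda=\dim R=\id_{\Lambda^{\op}}\Lambda$ from $\Lambda=\End_R(M)\in\CM R$ via \cite[2.22]{IW4} and \cite[3.1]{IR}, and then quoting \cite[7.2.14(i)]{Ginz} for preservation of $\Kb(\proj\Lambda)$. (Alternatively, in this specific setting one could argue that $\omega_R$ is invertible since $R$ is Gorenstein and that $\Lambda$ is locally a symmetric $R$-algebra by \cite[2.4(3)]{IR}, so that $\Hom_R(\Lambda,\omega_R)$ is locally $\Lambda$ itself as a bimodule; but some such argument must be supplied, and your proposal contains none.)
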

\begin{proof}
First, since $\Lambda=\End_R(M)\in\CM R$, it follows that
\[
\id_\Lambda\Lambda=\dim R=\id_{\Lambda^{\op}}\Lambda
\]
by combining \cite[2.22]{IW4} and \cite[3.1(6), 3.4(1)]{IR}.  Given this, the fact that $\mathbb{S}_\Lambda$ maps $\Kb(\proj\Lambda)$ to $\Kb(\proj\Lambda)$ is then \cite[7.2.14(i)]{Ginz}, and the fact that \eqref{Serre alg funct} holds is \cite[7.2.8]{Ginz} (see also \cite[\S3]{IR}).
\end{proof}

The object $\FCA$ does not change under the action of this Serre functor.

\begin{prop}\label{Serre on contraction alg local}
$\mathbb{S}_{\Lambda}(\FCA)\cong\FCA$.
\end{prop}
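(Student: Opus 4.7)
The plan is to reduce the statement to the complete local fibre at $\m$, and then to recognise the required isomorphism as an incarnation of the $3$-singular Calabi--Yau property of $\widehat{\Lambda}$ already used in the proof of \ref{Ext cor}.

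First, since $\FCA$ is supported only on $\m$ by \ref{cont is fd for flop}, so is the complex $\mathbb{S}_\Lambda(\FCA) \cong \RHom_R(\Lambda,\omega_R) \otimes^{\bf L}_\Lambda \FCA$. Combining \ref{useful finite length stuff}(3) with $\Lambda \in \CM R$ and $\omega_R \otimes_R \widehat{R} \cong \widehat{R}$ at $\m$ (valid since $R$ is Gorenstein), the desired isomorphism may be checked after completing at $\m$. This reduces the claim to $\mathbb{S}_{\widehat{\Lambda}}(\FCA) \cong \FCA$, where $\mathbb{S}_{\widehat{\Lambda}}(-) \cong \widehat{\Lambda}^* \otimes^{\bf L}_{\widehat{\Lambda}} (-)$ with $(-)^* = \Hom_{\widehat{R}}(-,\widehat{R})$, the $\RHom$ collapsing to $\Hom$ because $\widehat{\Lambda}\in\CM\widehat{R}$.

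Second, $\widehat{\Lambda}$ is $3$-sCY by \cite[2.22(2)]{IW4}, which, in the language of Serre functors relative to $\omega_{\widehat{R}}$ as in \ref{Serre def for alg}, is equivalent to the statement that $\mathbb{S}_{\widehat{\Lambda}}$ acts as the identity on finite-length objects of $\Kb(\proj \widehat{\Lambda})$: the shift $[3]$ from Calabi--Yau duality cancels the shift $[-3]$ produced by $\RHom_{\widehat{R}}(-,\widehat{R})$ on finite-length modules over the $3$-dimensional Gorenstein ring $\widehat{R}$ (cf.\ \cite[\S3]{IR}). Since $\FCA$ has finite length and satisfies $\pd_{\widehat{\Lambda}} \FCA = 3$ by \ref{proj res thm}, it belongs to $\Kb(\proj \widehat{\Lambda})$, and so $\mathbb{S}_{\widehat{\Lambda}}(\FCA) \cong \FCA$, as required.

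The main subtlety is the translation between the $3$-sCY property of $\widehat{\Lambda}$ and the statement that $\mathbb{S}_{\widehat{\Lambda}}$ acts as the identity on finite-length perfect complexes; this is essentially bookkeeping, stemming from the fact that $\mathbb{S}$ is defined relative to $\omega_R$ rather than the full dualising complex, so the expected Calabi--Yau shift is absorbed into the definition. Once this translation is in place, the proposition is immediate.
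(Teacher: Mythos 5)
Your overall strategy is the same as the paper's: observe that $\mathbb{S}_\Lambda(\FCA)\cong\RHom_R(\Lambda,\omega_R)\otimes^{\bf L}_\Lambda\FCA$ is supported only at $\m$, reduce to the (complete) local situation using \ref{useful finite length stuff}, use $\omega_R\cong R$ locally (Gorenstein) and $\RHom_R(\Lambda,R)\cong\Hom_R(\Lambda,R)$ (since $\Lambda\in\CM R$), and then argue that the relative Serre functor is trivial on $\FCA$. The reduction step is fine. The gap is in the second step: you assert that the $3$-sCY property of $\widehat{\Lambda}$ is ``equivalent to $\mathbb{S}_{\widehat{\Lambda}}$ acting as the identity on finite-length objects of $\Kb(\proj\widehat{\Lambda})$'' and that the translation is mere shift bookkeeping. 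But the sCY property as used in \ref{Ext cor} is a duality of Hom-spaces, i.e.\ functorial isomorphisms of $R$-modules; counting the shifts $[3]$ and $[-3]$ only identifies the cohomologies of $\mathbb{S}_{\widehat{\Lambda}}(\FCA)$ as $R$-modules (equivalently their dimensions), and does not by itself produce the isomorphism $\mathbb{S}_{\widehat{\Lambda}}(\FCA)\cong\FCA$ as complexes of $\widehat{\Lambda}$-modules. What is needed is a bimodule-level statement, and that is exactly where the content of the proposition lies.

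The paper supplies this via the fact that $\Lambda_\n\cong\End_{R_\n}(M_\n)$ is a \emph{symmetric} $R_\n$-algebra, i.e.\ $\Hom_{R_\n}(\Lambda_\n,R_\n)\cong\Lambda_\n$ as $\Lambda_\n$-bimodules (citing [IR, 2.4(3)]), whence
\[
\mathbb{S}_\Lambda(\FCA)_\n\cong\Hom_{R_\n}(\Lambda_\n,R_\n)\otimes^{\bf L}_{\Lambda_\n}\FCA_\n\cong\Lambda_\n\otimes^{\bf L}_{\Lambda_\n}\FCA_\n\cong\FCA_\n,
\]
with no sCY duality or shift cancellation required. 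Your argument becomes correct if you replace the shift-cancellation remark by this bimodule isomorphism (or, equivalently, by the characterisation of $d$-sCY endomorphism algebras as symmetric $R$-orders, which is how the equivalence you invoke is actually established); as written, the claimed equivalence is precisely the unproved step.
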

\begin{proof}
We have 
\begin{align*}
\mathbb{S}_{\Lambda}(\FCA)_\n&\stackrel{\phantom{\scriptsize\mbox{\ref{useful finite length stuff}}}}{=}\RHom_R(\RHom_\Lambda(\FCA,\Lambda),\omega_R)_\n\\
&\stackrel{\scriptsize\mbox{\ref{useful finite length stuff}}}{\cong}\RHom_{R_\n}(\RHom_{\Lambda_\n}({\FCA}_\n,\Lambda_\n),(\omega_{R})_\n)
\end{align*}
which as above is isomorphic to $\RHom_{R_{\n}}(\Lambda_\n,(\omega_R)_\n)\otimes^{\bf L}_{\Lambda_\n}{\FCA}_\n$.  Now since $R$ is Gorenstein $(\omega_R)_\n\cong R_\n$, and further $\RHom_{R_{\n}}(\Lambda_\n,R_\n)\cong\Hom_{R_{\n}}(\Lambda_\n,R_\n)$ since $\Lambda\in\CM R$.  But since $\Lambda_\n$ is a symmetric $R_\n$-algebra \cite[2.4(3)]{IR}, $\Hom_{R_{\n}}(\Lambda_\n,R_\n)\cong\Lambda_{\n}$ as $\Lambda_\n$-bimodules.  Thus
\[
\mathbb{S}_{\Lambda}(\FCA)_\n\cong \Hom_{R_{\n}}(\Lambda_\n,R_\n)\otimes^{\bf L}_{\Lambda_\n} {\FCA}_\n\cong\left\{ \begin{array}{cc} 0 & \mbox{if }\n\notin\Supp\FCA=\{\m\}\phantom{.}\\ \FCA &\mbox{if }\n\in\Supp\FCA=\{\m\}.\end{array}\right.
\]
It follows that $\mathbb{S}_{\Lambda}(\FCA)$ is a stalk complex  in homological degree zero, and that degree zero piece has finite length, supported only on $\m$.  Thus
\[
\mathbb{S}_{\Lambda}(\FCA)\cong H^0(\mathbb{S}_{\Lambda}(\FCA))\stackrel{\scriptsize\mbox{\ref{useful finite length stuff}}}{\cong}H^0(\mathbb{S}_{\Lambda}(\FCA))_\m
\cong \FCA.\qedhere
\]
\end{proof}

\subsection{Algebraic noncommutative twist functors}
We can now give our definition of algebraic noncommutative twist functors.  The ring homomorphism $\Lambda\to\Lambda_{\con}$ gives rise to the standard extension--restriction of scalars adjunction.  To ease notation, we temporarily write $M:={}_{\Lambda}{(\Lambda_{\con})}_{\Lambda_{\con}}$ and $N:={}_{\Lambda_{\con}}{(\Lambda_{\con})}_{\Lambda}$. The adjunctions then read
\begin{eqnarray}
\begin{array}{c}
\begin{tikzpicture}[looseness=1,bend angle=15]
\node (a1) at (0,0) {$\Db(\mod \Lambda_{\con})$};
\node (a2) at (8,0) {$\Db(\mod \Lambda)$};
\draw[->] (a1) -- node[gap] {$\scriptstyle F_{\con}:=\RHom_{\Lambda_{\con}}(M,-)=-\otimes^{\bf L}_{\Lambda_{\con}}N$} (a2);
\draw[->,bend right] (a2) to node[above] {$\scriptstyle F_{\con}^{\LA}:=-\otimes^{\bf L}_{\Lambda}M$} (a1);
\draw[->,bend left] (a2) to node[below] {$\scriptstyle F_{\con}^{\RA}:=\RHom_{\Lambda}(N,-)$} (a1);
\end{tikzpicture}
\end{array}\label{one sided adjunction}
\end{eqnarray}
where we make use of the following:
\begin{lemma}
$F_{\con}$, $F_{\con}^{\RA}$ and $F_{\con}^{\LA}$ all preserve bounded complexes of finitely generated modules.
\end{lemma}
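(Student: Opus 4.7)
The plan is to treat each of the three functors separately. Two ingredients suffice: first, $\Lambda_{\con}$ is finite-dimensional over $\mathbb{C}$ by \ref{cont is fd for flop}(1); second, $\Lambda_{\con}$ has finite projective dimension over $\Lambda$ on both sides. On the right we have $\pd_{\Lambda}\Lambda_{\con}=3$ by \ref{I tilts not complete local}, and an entirely symmetric argument (applying the mutation analysis of \ref{proj res thm} over $\Lambda^{\op}$) yields the analogous finite bound on the left. Because $\Lambda$ is noetherian, this means that $M={}_{\Lambda}(\Lambda_{\con})$ and $N=(\Lambda_{\con})_{\Lambda}$ both admit bounded resolutions by finitely generated projective $\Lambda$-modules.

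First I would handle $F_{\con}$, which is simply restriction of scalars along the surjection $\Lambda\twoheadrightarrow \Lambda_{\con}$. Any finitely generated right $\Lambda_{\con}$-module is finite-dimensional over $\mathbb{C}$ by \ref{cont is fd for flop}(1), hence certainly finitely generated over $\Lambda$, so $F_{\con}$ obviously maps $\Db(\modCat\Lambda_{\con})$ to $\Db(\modCat\Lambda)$ on the nose.

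For $F_{\con}^{\LA}=-\otimes^{\bf L}_{\Lambda}M$, I would pick a bounded resolution of $M$ by finitely generated projective left $\Lambda$-modules. For any $X\in\Db(\modCat\Lambda)$, the derived tensor product can then be computed by a bounded complex whose terms are finite direct sums of summands of $X$ shifted by at most the projective dimension of $M$. Hence $X\otimes^{\bf L}_{\Lambda}M$ lies in $\Db(\modCat\Lambda_{\con})$. The argument for $F_{\con}^{\RA}=\RHom_{\Lambda}(N,-)$ is formally dual: the perfectness of $N$ as a right $\Lambda$-module allows $\RHom_{\Lambda}(N,-)$ to be computed by a bounded projective resolution of $N$, so it too preserves boundedness and finite generation.

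The only step that warrants any thought is extracting the finite projective dimension of $\Lambda_{\con}$ as a \emph{left} $\Lambda$-module from the right-module statement in \ref{I tilts not complete local}, but this is not a real obstacle: the Zariski-to-complete-local reduction in the proof of \ref{I tilts not complete local} passes verbatim to $\Lambda^{\op}$, and the underlying mutation computation in \ref{proj res thm} is symmetric under the $\Hom_{\mathfrak{R}}(-,\mathfrak{R})$ duality on $\CM\mathfrak{R}$.
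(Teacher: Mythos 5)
Your proposal is correct and follows essentially the same route as the paper: $F_{\con}$ preserves everything because it is restriction of scalars along $\Lambda\twoheadrightarrow\Lambda_{\con}$, boundedness for the two adjoints comes from the finite projective (hence flat) dimension of $\Lambda_{\con}$ over $\Lambda$ supplied by \ref{I tilts not complete local}, and finite generation is routine. The only difference is that you explicitly justify finite projective dimension of $\Lambda_{\con}$ as a \emph{left} $\Lambda$-module (which is what $-\otimes^{\bf L}_{\Lambda}\Lambda_{\con}$ actually requires) by dualizing the mutation computation of \ref{proj res thm}, a side issue the paper's one-line appeal to the right-module statement \ref{I tilts not complete local} leaves implicit; this is a sensible extra precaution rather than a different argument, though your description of the terms of the resulting complex for $F_{\con}^{\LA}$ is loose about the $\Lambda_{\con}$-structure and is more cleanly phrased as boundedness and finite generation of the $\Tor$ groups.
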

\begin{proof}
$F_{\con}$ preserves boundedness since restriction of scalars is exact on the abelian level.  We remark that if $Y\in\Mod\Lambda$ then $F_{\con}^{\RA}(Y)$ and $F_{\con}^{\LA}(Y)$ are both bounded since $\Lambda_{\con}$ has finite projective, hence finite flat, dimension as both a left and right $\Lambda$-module by \ref{I tilts not complete local}.  The fact that $F_{\con}^{\RA}$ and $F_{\con}^{\LA}$ both preserve boundedness then follows by induction on the length of the bounded complex.  The preservation of finite generation  is obvious.
\end{proof}

Now $\Lambda_{\con}\cong\widehat{\Lambda}_{\con}$ as algebras by \ref{contraction splits}, and there is a morita equivalence 
\begin{eqnarray}
\begin{array}{c}
\begin{tikzpicture}[xscale=1]
\node (d1) at (3,0) {$\mod \CA$};
\node (e1) at (7.5,0) {${}_{}\mod\widehat{\Lambda}_{\con}$};
\draw[->,transform canvas={yshift=+0.4ex}] (d1) to  node[above] {$\scriptstyle -\otimes_{\CA}\FCA $} (e1);
\draw[<-,transform canvas={yshift=-0.4ex}] (d1) to node [below]  {$\scriptstyle \Hom_{\widehat{\Lambda}_{\con}}(\FCA,-) $} (e1);
\end{tikzpicture}
\end{array}\label{morita for CA}
\end{eqnarray}
induced from \ref{morita lemma}.

\begin{lemma}
Composing \eqref{one sided adjunction} with \eqref{morita for CA} gives adjunctions
\begin{eqnarray}
\begin{array}{c}
\begin{tikzpicture}[looseness=1,bend angle=15]
\node (a1) at (0,0) {$\Db(\mod \CA)$};
\node (a2) at (7,0) {$\Db(\mod \Lambda)$};
\draw[->] (a1) -- node[gap] {$\scriptstyle F:=-\otimes^{\bf L}_{\CA}\FCA$} (a2);
\draw[->,bend right] (a2) to node[above] {$\scriptstyle F^{\LA}$} (a1);
\draw[->,bend left] (a2) to node[below] {$\scriptstyle F^{\RA}:=\RHom_{\Lambda}(\FCA,-)$} (a1);
\end{tikzpicture}
\end{array}\label{one sided adjunction morita}
\end{eqnarray}
\end{lemma}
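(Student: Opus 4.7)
The plan is to verify that the claimed adjoint triple arises as the composition of the two adjunctions stated just above, since composing adjoint functors automatically yields an adjoint composite. The morita equivalence in \eqref{morita for CA} is genuinely an equivalence, so $-\otimes_{\CA}\FCA$ and $\Hom_{\widehat{\Lambda}_{\con}}(\FCA,-)$ serve simultaneously as left and right adjoints of each other, and they extend to derived equivalences $\Db(\mod\CA)\simeq\Db(\mod\widehat{\Lambda}_{\con})=\Db(\mod\Lambda_{\con})$ (using $\Lambda_{\con}\cong\widehat{\Lambda}_{\con}$ from \ref{contraction splits}). Stringing this equivalence onto both sides of the adjoint triple $(F_{\con}^{\LA},F_{\con},F_{\con}^{\RA})$ produces an adjoint triple $(F^{\LA},F,F^{\RA})$ between $\Db(\mod\CA)$ and $\Db(\mod\Lambda)$, with $F^{\LA}$ existing for free as the composite $\Hom_{\widehat{\Lambda}_{\con}}(\FCA,-)\circ F_{\con}^{\LA}$.

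Next, I would verify the two explicit formulas. For $F$: the functor $F_{\con}=-\otimes^{\bf L}_{\Lambda_{\con}}N$ with $N={}_{\Lambda_{\con}}(\Lambda_{\con})_{\Lambda}$ is just restriction of scalars along the surjection $\Lambda\to\Lambda_{\con}$, so composing with $-\otimes^{\bf L}_{\CA}\FCA$ sends $X\in\Db(\mod\CA)$ to the object $X\otimes^{\bf L}_{\CA}\FCA$ regarded as a right $\Lambda$-module via the natural $\Lambda$-action on $\FCA$ (which factors through $\Lambda_{\con}$). For $F^{\RA}$: composing $F_{\con}^{\RA}=\RHom_{\Lambda}(\Lambda_{\con},-)$ with the morita functor $\Hom_{\widehat{\Lambda}_{\con}}(\FCA,-)$ and applying the standard derived tensor--hom adjunction gives
\begin{equation*}
\Hom_{\Lambda_{\con}}\bigl(\FCA,\RHom_{\Lambda}(\Lambda_{\con},Y)\bigr)\cong \RHom_{\Lambda}\bigl(\FCA\otimes^{\bf L}_{\Lambda_{\con}}\Lambda_{\con},\,Y\bigr)\cong \RHom_{\Lambda}(\FCA,Y),
\end{equation*}
which is the asserted formula.

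There is no real obstacle here; the only minor point to watch is that all three functors preserve the bounded derived category of finitely generated modules, which is exactly what the preceding lemma guarantees (using that $\Lambda_{\con}$ has finite projective dimension over $\Lambda$ by \ref{I tilts not complete local}, and that $\FCA$ is a projective summand of $\widehat{\Lambda}_{\con}$ by \ref{morita lemma}, so it has finite projective dimension over $\Lambda$ as well). The rest is just bookkeeping of bimodule structures to check that the identifications above are genuinely $\Lambda$-linear or $\CA$-linear on the nose.
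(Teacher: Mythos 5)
Your argument is correct and is essentially the paper's proof, just written out in more detail: the paper disposes of the lemma in one line by noting that $\FCA\otimes_{\Lambda_{\con}}\Lambda_{\con}\cong\FCA$ as $\CA$--$\Lambda$ bimodules, which is exactly the identification underlying both your computation of $F$ and your tensor--hom manipulation for $F^{\RA}$. Your extra remarks on boundedness and the existence of $F^{\LA}$ are fine but already covered by composing the equivalence \eqref{morita for CA} with the preceding lemma on \eqref{one sided adjunction}.
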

\begin{proof}
This follows since $\FCA\otimes_{\Lambda_{\con}}\Lambda_{\con}\cong \FCA$ as $\CA$--$\Lambda$ bimodules.
\end{proof}

Now since $M\otimes_{\Lambda_{\con}}^{\bf L}N\cong M\otimes_{\Lambda_{\con}}N\cong {}_{\Lambda}(\Lambda_{\con})_{\Lambda}$ as $\Lambda$-bimodules, we have functorial isomorphisms 
\[
F\circ F^{\RA}\cong F^{\phantom{R}}_{\con}\circ F_{\con}^{\RA}\cong \RHom_\Lambda(M\otimes_{\Lambda_{\con}}^{\bf L}N,-)\cong \RHom_\Lambda({}_{\Lambda}(\Lambda_{\con})_{\Lambda},-)
\]
and 
\[
F\circ F^{\LA}\cong F^{\phantom{R}}_{\con}\circ F_{\con}^{\LA}\cong -\otimes^{\bf L}_{\Lambda}M\otimes^{\bf L}_{\Lambda_{\con}}N\cong -\otimes^{\bf L}_{\Lambda}{}_{\Lambda}(\Lambda_{\con})_{\Lambda}.
\]

\begin{defin}\label{def twist alg local}
Using the bimodule structure ${}_{\Lambda}I_\Lambda$ we define
\begin{enumerate}
\item The twist functor $T_{\con}\colon\Db(\mod\Lambda)\to\Db(\mod\Lambda)$ by $T_{\con}=\RHom_\Lambda(I_{\con},-)$.
\item The dual twist functor $T^*_{\con}\colon\Db(\mod\Lambda)\to\Db(\mod\Lambda)$ by $T_{\con}^*=-\otimes^{\bf L}_\Lambda I_{\con}$. 
\end{enumerate}
\end{defin}
The twist is an equivalence, and its inverse is the dual twist, by \ref{I tilts not complete local}.  The terminology `twist' is justified by the following lemma.
\begin{lemma}\label{funct triangles}
We have functorial triangles as follows 
\begin{enumerate}
\item $F\circ F^{\RA} \to \Id_{\Lambda} \to T_{\con} \to$
\item $T^*_{\con} \to \Id_{\Lambda} \to F\circ F^{\LA} \to$
\end{enumerate}
\end{lemma}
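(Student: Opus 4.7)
The plan is to extract both triangles from a single short exact sequence of $\Lambda$-bimodules, namely
\[
0 \to I_{\con} \to \Lambda \to \Lambda_{\con} \to 0,
\]
which is tautological from the definition $\Lambda_{\con} = \Lambda/I_{\con}$. For triangle (1), I apply $\RHom_\Lambda(-,X)$ for $X\in\Db(\mod\Lambda)$, using only the right $\Lambda$-module structure on each of the three bimodules; the remaining left $\Lambda$-actions make the resulting distinguished triangle live in $\Db(\mod\Lambda)$. By definition, the third term is $T_{\con}(X) = \RHom_\Lambda(I_{\con},X)$; the middle term $\RHom_\Lambda(\Lambda,X)$ identifies canonically with $X$ via evaluation at $1$; and the first term $\RHom_\Lambda({}_\Lambda(\Lambda_{\con})_\Lambda,X)$ is precisely $F\circ F^{\RA}(X)$, using the computation $F\circ F^{\RA}\cong\RHom_\Lambda({}_\Lambda(\Lambda_{\con})_\Lambda,-)$ recorded just before Definition~\ref{def twist alg local}.

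For triangle (2), I apply $X\otimes^{\bf L}_\Lambda(-)$ to the same short exact sequence, yielding a functorial triangle
\[
X\otimes^{\bf L}_\Lambda I_{\con} \to X\otimes^{\bf L}_\Lambda\Lambda \to X\otimes^{\bf L}_\Lambda\Lambda_{\con} \to .
\]
The first term is $T^*_{\con}(X)$ by definition; the middle is canonically $X$; and the third is $F\circ F^{\LA}(X)$ under the identification $F\circ F^{\LA}\cong -\otimes^{\bf L}_\Lambda{}_\Lambda(\Lambda_{\con})_\Lambda$ established in the same paragraph. Functoriality in $X$ is automatic from functoriality of $\RHom$ and $\otimes^{\bf L}$, since the bimodule sequence is fixed.

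The only residual point to verify is that the two middle arrows $F\circ F^{\RA}(X)\to X$ and $X\to F\circ F^{\LA}(X)$ produced above agree with the counit of $F\dashv F^{\RA}$ and the unit of $F^{\LA}\dashv F$, respectively. This is formal: for the extension--restriction of scalars adjunction along the surjection $\Lambda\twoheadrightarrow\Lambda_{\con}$, the counit $\RHom_\Lambda(\Lambda_{\con},X)\to X$ and unit $X\to X\otimes^{\bf L}_\Lambda\Lambda_{\con}$ are by construction induced by this very bimodule quotient map, which is exactly the middle arrow in our short exact sequence. Passing through the morita equivalence between $\Lambda_{\con}$ and $\CA$ preserves both natural transformations, since $\FCA\otimes_{\Lambda_{\con}}\Lambda_{\con}\cong\FCA$ as bimodules. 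I do not anticipate a genuine obstacle: the lemma amounts to applying two standard functors to a canonical bimodule short exact sequence and reading off the named functors on each vertex.
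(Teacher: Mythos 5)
Your proposal is correct and follows essentially the same route as the paper: both extract the two triangles by applying $\RHom_\Lambda(-,X)$ and $X\otimes^{\bf L}_\Lambda-$ to the bimodule short exact sequence $0\to I_{\con}\to\Lambda\to\Lambda_{\con}\to 0$, and identify the outer terms via the isomorphisms $F\circ F^{\RA}\cong\RHom_\Lambda(\Lambda_{\con},-)$ and $F\circ F^{\LA}\cong-\otimes^{\bf L}_\Lambda\Lambda_{\con}$ established just before the definition of the twists. Your extra check that the middle arrows agree with the adjunction counit and unit is harmless additional detail not required by the statement.
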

\begin{proof}
(1) The short exact sequence \eqref{can bimod} of $\Lambda$-bimodules gives rise to a triangle
\begin{eqnarray}
I_{\con}\to \Lambda\to \Lambda_{\con}\to \label{bimodtri1}
\end{eqnarray}
in the derived category of $\Lambda$-bimodules.   For $X\in\Db(\mod\Lambda)$, applying $\RHom_\Lambda(-,X)$ to  (\ref{bimodtri1}) gives a triangle
\[
\RHom_\Lambda(\Lambda_{\con},X)\to\RHom_\Lambda(\Lambda,X)\to \RHom_\Lambda(I_{\con},X)\to
\]
in $\Db(\mod\Lambda)$, which is simply
\[
F\circ F^{\RA}(X)\to \Id_{\Lambda}(X)\to T_{\con}(X)\to.
\]
(2) This follows by applying $X\otimes^{\bf L}_{\Lambda}-$ to  (\ref{bimodtri1}). 
\end{proof}

\subsection{Geometric local noncommutative twists}
\label{geom noncomm twist}

Again we keep the setup of \S\ref{strategy}. In this subsection we define the geometric noncommutative twist purely in terms of the geometry of $U$, and use it as the local model for our global twist functor later.

To fix notation, as in \eqref{derived equivalence} we write $\cV:=\cO_U\oplus\cN$ and thus we have a derived equivalence 
\begin{eqnarray}
\begin{array}{c}
\begin{tikzpicture}[looseness=1,bend angle=15]
\node (a1) at (0,0) {$\Db(\mod \Lambda)$};
\node (a2) at (6,0) {$\Db(\coh U)$};
\draw[->] (a1) -- node[gap] {$\scriptstyle H:=-\otimes^{\bf L}_{\Lambda}\cV$} (a2);
\draw[->,bend right] (a2) to node[above] {$\scriptstyle H^{\LA}:=\RHom_{U}(\cV,-)$} (a1);
\draw[->,bend left] (a2) to node[below] {$\scriptstyle H^{\RA}:=\RHom_{U}(\cV,-)$} (a1);
\end{tikzpicture}
\end{array}\label{db equiv diagram}
\end{eqnarray}
given by a tilting bundle $\cV$ and $\Lambda\cong\End_Y(\cV)$.  We know by \ref{highers vanish 2} that $\cEU$ (in degree zero) corresponds to $\FCA$ across the derived equivalence.

\begin{lemma}\label{adjoints Db compose lemma}
With notation and setup as in \S\ref{strategy}, composing \eqref{one sided adjunction morita} with \eqref{db equiv diagram} gives
\begin{eqnarray}\label{sph functor and adjoints defn}
\begin{array}{c}
\begin{tikzpicture}[looseness=1,bend angle=15]
\node (a1) at (0,0) {$\Db(\mod \CA)$};
\node (a2) at (8,0) {$\Db(\coh U)$};
\draw[->] (a1) -- node[gap] {$\scriptstyle G_U:=-\otimes^{\bf L}_{\CA}\cEU$} (a2);
\draw[->,bend right] (a2) to node[above] {$\scriptstyle G_U^{\LA}:=\RHom_{\CA}(\RHom_{U}(-,\cEU),\CA)$} (a1);
\draw[->,bend left] (a2) to node[below] {$\scriptstyle G_U^{\RA}:=\RHom_{U}(\cEU,-)$} (a1);
\end{tikzpicture}
\end{array}\label{EU intrinsic}
\end{eqnarray}
\end{lemma}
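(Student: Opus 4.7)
The plan is to identify each of the three functors in \eqref{EU intrinsic} as the appropriate composition of a functor from \eqref{one sided adjunction morita} with the corresponding functor from \eqref{db equiv diagram}, and then simplify using the key identification $\FCA \otimes^{\bf L}_{\Lambda} \cV \cong \cEU$ from \ref{highers vanish 2}(1), together with standard (derived) tensor-Hom adjunctions. I will use repeatedly that if $L \dashv R$ are adjoint pairs between pairs of categories, then composites of left adjoints are left adjoint to the reversed composite of right adjoints, so from $G_U = H \circ F$ one obtains $G_U^{\RA} = F^{\RA} \circ H^{\RA}$ and $G_U^{\LA} = F^{\LA} \circ H^{\LA}$.

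First I would dispatch $G_U$. By definition $G_U(-) = (-\otimes^{\bf L}_{\CA}\FCA)\otimes^{\bf L}_{\Lambda}\cV$, and associativity of derived tensor product rewrites this as $-\otimes^{\bf L}_{\CA}(\FCA \otimes^{\bf L}_{\Lambda} \cV)$, which equals $-\otimes^{\bf L}_{\CA}\cEU$ by \ref{highers vanish 2}(1). Next I would handle $G_U^{\RA}$: expanding gives $G_U^{\RA}(-) = \RHom_{\Lambda}(\FCA, \RHom_U(\cV,-))$, and the derived tensor-Hom adjunction turns this into $\RHom_U(\FCA \otimes^{\bf L}_{\Lambda} \cV, -) \cong \RHom_U(\cEU,-)$, again by \ref{highers vanish 2}(1).

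The most delicate step is $G_U^{\LA}$. My approach is to first produce an explicit formula for $F^{\LA}$ by factoring $F$ through the morita equivalence \eqref{morita for CA}: $F$ decomposes as the morita functor $-\otimes_{\CA}\FCA\colon \mod\CA \to \mod\Lambda_{\con}$ followed by restriction of scalars along $\Lambda \twoheadrightarrow \Lambda_{\con}$. The left adjoint is then extension of scalars $-\otimes^{\bf L}_{\Lambda}\Lambda_{\con}$ followed by the inverse morita equivalence $\RHom_{\Lambda_{\con}}(\FCA,-)$. Composing with $H^{\LA} = \RHom_U(\cV,-)$ and moving $\cV$ through the derived equivalence using $H(\CA) \cong \cEU$ and the identification $\End_U(\cEU) \cong \CA$ from \ref{contraction alg isos} should produce $\RHom_{\CA}(\RHom_U(-,\cEU),\CA)$ after rewriting the inner $\RHom_{\Lambda_{\con}}$ in terms of $\RHom_U(-,\cEU)$ across the derived equivalence.

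The main obstacle will be the bookkeeping in the last step: one must verify that the morita identification, the derived equivalence between $\mod\Lambda$ and $\coh U$, and the various tensor-Hom adjunctions are all compatible with the bimodule structures, so that the resulting expression genuinely lands in $\Db(\mod \CA)$ and agrees with the displayed formula. The key inputs that make this go through are the finiteness results collected in \ref{cont is fd for flop} and \ref{contraction alg isos} (so $\FCA$ is perfect as a $\CA$-module, being a projective generator under morita), and the identification of $\cEU$ as the image of $\FCA$ under the tilting equivalence.
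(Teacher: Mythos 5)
Your identifications of $G_U$ and $G_U^{\RA}$ are fine and essentially follow the paper (for $G_U^{\RA}$ the paper substitutes $\FCA\cong\RHom_U(\cV,\cEU)$ and uses full faithfulness of $\RHom_U(\cV,-)$, whereas you use the tensor--Hom adjunction; both work). The genuine gap is in the $G_U^{\LA}$ step. The formula $F^{\LA}\circ H^{\LA}(-)\cong\RHom_{\CA}(\RHom_U(-,\cEU),\CA)$ is \emph{not} a formal consequence of the morita identification, the tilting equivalence and tensor--Hom bookkeeping, and none of the inputs you list (\ref{cont is fd for flop}, \ref{contraction alg isos}, projectivity of $\FCA$ over $\CA$) can force it. To see the obstruction, run your proposed identifications in the degenerate situation $\Lambda=\Lambda_{\con}=\CA$, $\FCA=\CA$ (trivial morita, trivial tilting): your recipe would then assert $\Id\cong\RHom_{\CA}(\RHom_{\CA}(-,\CA),\CA)$ on $\Db(\mod\CA)$, i.e.\ reflexivity with respect to $\RHom_{\CA}(-,\CA)$ for arbitrary bounded complexes. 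This fails for a general finite dimensional algebra of infinite global dimension which is not self-injective (e.g.\ the simple module over $\K[x,y]/(x^2,xy,y^2)$, whose $\CA$-bidual is not even bounded). So some non-formal input about $\CA$ is unavoidable, and your argument never supplies it; "bookkeeping" alone cannot close this.

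The missing ingredient is precisely the self-injectivity of $\CA$, established in \ref{Ext cor} via the mutation calculation of the projective resolution of $\CA$ over $\AB$. The paper's proof uses it in a short dualization trick rather than computing $F^{\LA}$ explicitly: by the adjunction $(F^{\LA}\circ H^{\LA})\dashv(H\circ F)$ one has
\[
\RHom_{\CA}\bigl(F^{\LA}\circ H^{\LA}(-),\CA\bigr)\cong\RHom_U\bigl(-,H\circ F(\CA)\bigr)\cong\RHom_U(-,\CA\otimes^{\bf L}_{\CA}\cEU)\cong\RHom_U(-,\cEU),
\]
and then, since $\CA$ is self-injective, $\RHom_{\CA}(-,\CA)=\Hom_{\CA}(-,\CA)$ is a duality on $\Db(\mod\CA)$, so applying it once more recovers $F^{\LA}\circ H^{\LA}(-)\cong\RHom_{\CA}(\RHom_U(-,\cEU),\CA)$. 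If you want to salvage your direct route, you must at minimum invoke \ref{Ext cor} (or an equivalent reflexivity statement for the complexes $\RHom_U(x,\cEU)$ over $\CA$); as written, the claimed formula for the left adjoint does not follow.
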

\begin{proof}
The composition $H\circ F$ is clear since $\FCA\otimes_{\Lambda}^{\bf L}\cV\cong\cEU$ by \ref{highers vanish 2}.  Also by \ref{highers vanish 2} $\RHom_U(\cV, \cEU)\cong \FCA$, so the composition $F^{\RA}\circ H^{\RA}$  is functorially isomorphic to 
\begin{align*}
\RHom_\Lambda(\FCA,\RHom_U(\cV,-))&\cong \RHom_\Lambda(\RHom_U(\cV,\cEU),\RHom_U(\cV,-))\\ 
&\cong \RHom_U(\cEU,-).
\end{align*}
Lastly, 
\begin{align*}
\RHom_{\CA}(F^{\LA}\circ H^{\LA}(-),\CA)&\cong
\RHom_{U}(-,H\circ F(\CA))\\
&\cong 
\RHom_{U}(-,\CA\otimes_{\CA}^{\bf L}\cEU)\\
&\cong
\RHom_{U}(-,\cEU).
\end{align*}
But since $\CA$ is self-injective $\RHom_{\CA}(-,\CA)=\Hom_{\CA}(-,\CA)$ is a duality, so dualizing both sides gives
\begin{align*}
F^{\LA}\circ H^{\LA}(-)&\cong
\RHom_{\CA}(\RHom_{U}(-,\cEU), \CA),
\end{align*}
as required.
\end{proof}

\begin{remark}
We remark that \eqref{EU intrinsic} is intrinsic to the geometry of $U$, and does not depend on the choice of $\Lambda$.
\end{remark}

\begin{defin}\phantomsection\label{local twist geom defin}
\begin{enumerate}
\item\label{local twist geom defin 1} We define $T_{\cEU}$ as the composition 
\[
\Db(\coh U)
\xrightarrow{\RHom_U(\cV,-)}
\Db(\mod \Lambda)
\xrightarrow{T_{\con}}
\Db(\mod \Lambda)
\xrightarrow{-\otimes^{\bf L}_{\Lambda}\cV}
\Db(\coh U)
\]
and call it the {\em geometric noncommutative twist functor}.
\item\label{local twist geom defin 2} We obtain an inverse twist $T^*_{\cEU}$ by composing similarly with $T^*_{\con}.$
\end{enumerate}
\end{defin}
Being a composition of equivalences, the geometric noncommutative twist is also an equivalence.  We remark that the definition of the geometric noncommutative twist functor relies on the tilting equivalence, but below in \ref{is FM} we show that it can be formulated as a Fourier--Mukai transform, by making use of the following setup.

Consider the commutative diagram
\begin{eqnarray}
\begin{array}{c}
\begin{tikzpicture}
\node (a1) at (0,1.5) {$U\times U$};
\node (a2) at (2,1.5) {$U$};
\node (b1) at (0,0) {$U$};
\node (b2) at (2,0) {$\pt$};
\draw[->] (a1) -- node[above] {$\scriptstyle p_1$} (a2);
\draw[->] (a1) -- node[left] {$\scriptstyle p_2$} (b1);
\draw[->] (a2) -- node[right] {$\scriptstyle \pi_U$} (b2);
\draw[->] (b1) -- node[above] {$\scriptstyle \pi_U$} (b2);
\draw[->] (a1) -- node[gap] {$\scriptstyle \pi_{U\times U}$} (b2);
\end{tikzpicture}
\end{array}\label{BC 1}
\end{eqnarray}
and set $\cV^{\vee}\boxtimes \cV=p_1^*\cV^{\vee}\otimes_{\cO_{U\times U}}^{\bf L}p_2^*\cV$.  Then there is an induced derived equivalence 
\begin{eqnarray}
\begin{array}{c}
\begin{tikzpicture}[xscale=1]
\node (d1) at (3,0) {$\Db(\coh U\times U)$};
\node (e1) at (9,0) {$\Db(\mod \Lambda\otimes_{\mathbb{C}}\Lambda^{\op})$};
\draw[->,transform canvas={yshift=+0.4ex}] (d1) to  node[above] {$\scriptstyle \RHom_{U\times U}(\cV^{\vee}\boxtimes \cV,-)$} (e1);
\draw[<-,transform canvas={yshift=-0.4ex}] (d1) to node [below]  {$\scriptstyle -\otimes^{\bf L}_{\Lambda^{\rm e}}(\cV^{\vee}\boxtimes \cV)$} (e1);
\end{tikzpicture}
\end{array}\label{bimodule adj geom alg}
\end{eqnarray}
as in \cite{BH}, where we denote the enveloping algebra by $\Lambda^{\rm e}:=\Lambda\otimes_{\mathbb{C}}\Lambda^{\op}$. 

\begin{notation}\label{notn O-Gamma modules} If $Y$ is a $\K$-scheme and $\Gamma$ is a $\K$-algebra, we define \[\cO_Y^\Gamma:=\cO_Y\otimes_{\underline{\mathbb{C}}}\underline{\Gamma},\] where $\underline{\mathbb{C}}$ and $\underline{\Gamma}$ are the constant sheaves associated to $\mathbb{C}$ and $\Gamma$.\end{notation}

To be slightly more pedantic, and armed with the notation above, we think of $\cV^{\vee}\boxtimes \cV$ as a left $\cO_{U\times U}^{\Lambda^{\rm e}}$-module, and the lower functor in \eqref{bimodule adj geom alg} is the derived version of the composition
\[
\mod \Lambda^{\rm e}\xrightarrow{\pi_{U\times U}^*(-)}\mod \cO_{U\times U}^{\Lambda^{\rm e}}
\xrightarrow{-\otimes_{\cO_{U\times U}^{\Lambda^{\rm e}}}(\cV^{\vee}\boxtimes \cV)}\mod \cO_{U\times U}.
\]

Now, we may view the tilting equivalences in \ref{local twist geom defin}\eqref{local twist geom defin 1} as Fourier--Mukai functors between $\Db(\coh U)$ and $\Db(\mod \Lambda)$, given by $\cV^{\vee}$ and $\cV$ respectively, considered as objects in $\Db(\mod \cO_U^{\Lambda^{(\op)}})$. The lower functor in \eqref{bimodule adj geom alg} should then be seen as taking a $\Lambda$-bimodule, and composing it with these Fourier--Mukai functors as in \ref{local twist geom defin}\eqref{local twist geom defin 1}. Hence, taking $\cW'$ to be the object in $\Db(\coh U\times U)$ corresponding to the $\Lambda$-bimodule $\RHom_\Lambda(I_{\con},\Lambda)$, we will find in \ref{is FM}\eqref{is FM 1} below that our twist $T_{\cEU}$ is itself a Fourier--Mukai functor with kernel $\cW'$. This motivates the following definition and lemma.

\begin{defin}\phantomsection\label{defin twist kernels}
\begin{enumerate}
\item\label{defin twist kernels 1} We define a {\em geometric twist kernel} $\cW'$ as follows
\[
\cW':=\pi_{U\times U}^*(\RHom_\Lambda(I_{\con},\Lambda))\otimes^{\bf L}_{\cO^{\Lambda^{\rm e}}_{U\times U}}(p_1^*\cV^{\vee}\otimes_{\cO_{U\times U}}^{\bf L}p_2^*\cV).
\]
\item\label{defin twist kernels 2} We obtain an inverse twist kernel $\cW$ as follows
\begin{eqnarray}
\cW:=\pi_{U\times U}^* I_{\con} \otimes^{\bf L}_{\cO^{\Lambda^{\rm e}}_{U\times U}}(p_1^*\cV^{\vee}\otimes_{\cO_{U\times U}}^{\bf L}p_2^*\cV).\label{WJ' expression}
\end{eqnarray}
\end{enumerate}
\end{defin}

The following lemma describes the twist and inverse twist.

\begin{lemma}\label{is FM}
There are functorial isomorphisms
\begin{enumerate}
\item\label{is FM 1} $T_{\cEU} \cong\FM{\cW'}:={\Rp_2}_*\big(p_1^*(-)\otimes^{\bf L}_{\cO_{U\times U}}\cW'\big),$
\item\label{is FM 2} $T^*_{\cEU} \cong\FM{\cW}.$
\end{enumerate}
\end{lemma}
\begin{proof}
Put $I:=I_{\con}$. We prove \eqref{is FM 2} first. We have
\begin{align*}
T^*_{\cEU}&=\pi_U^*(\RHom_{\cO_U}(\cV,-)\otimes^{\bf L}_{\Lambda}I)
\otimes^{\bf L}_{\cO_U^\Lambda}\cV\tag{\ref{local twist geom defin}\eqref{local twist geom defin 2}}\\
&\cong\pi_U^*(\RGamma(-\otimes^{\bf L}_{\cO_U}\cV^{\vee})\otimes^{\bf L}_{\Lambda}I)
\otimes^{\bf L}_{\cO_U^\Lambda}\cV\tag{definition of $\RHom$}\\
&\cong{\Rp_2}_*p_1^*(-\otimes^{\bf L}_{\cO_U}\cV^{\vee})\otimes^{\bf L}_{\cO_U^\Lambda}\pi_U^*I
\otimes^{\bf L}_{\cO_U^\Lambda}\cV\tag{\mbox{base change for \eqref{BC 1}}}\\
&\cong{\Rp_2}_*\left(p_1^*(-\otimes^{\bf L}_{\cO_U}\cV^{\vee})\otimes^{\bf L}_{\cO_{U\times U}^\Lambda}p_2^*(\pi_U^*I
\otimes^{\bf L}_{\cO_{U}^\Lambda}\cV)\right)\tag{\mbox{projection formula}}\\
&\cong{\Rp_2}_*\left(p_1^*(-)\otimes_{\cO_{U\times U}}^{\bf L}\left(p_1^*\cV^{\vee}\otimes^{\bf L}_{\cO_{U\times U}^\Lambda}p_2^*\pi_U^*I\otimes^{\bf L}_{\cO_{U\times U}^\Lambda}p_2^*\cV\right)\right)\\
&\cong{\Rp_2}_*\left(p_1^*(-)\otimes_{\cO_{U\times U}}^{\bf L}\left(\pi_{U\times U}^*I
\otimes^{\bf L}_{\cO_{U\times U}^{\Lambda^{\rm e}}}(p_1^*\cV^{\vee}\otimes^{\bf L}_{\cO_{U\times U}} p_2^*\cV)\right)\right)\\
&\cong{\Rp_2}_*\big(p_1^*(-)\otimes^{\bf L}_{\cO_{U\times U}}\cW\big).\tag{\mbox{by \eqref{WJ' expression}}}
\end{align*}Since $T_{\con}=\RHom_{\Lambda}(I,-)=(-)\otimes^{\bf L}_{\Lambda}\RHom_{\Lambda}(I,\Lambda)$, a similar calculation using \ref{local twist geom defin}\eqref{local twist geom defin 1} gives \eqref{is FM 1}.
\end{proof}

For our purposes later we must track the object $\cEU$ under a certain Serre functor.  Again, since $U$ is possibly singular, this involves the RHom version. Throughout, if $Y$ is a $\mathbb{C}$-scheme of dimension $d$, although canonical modules are not unique, we write $\cD_Y:=g^! \mathbb{C}$ for the dualizing complex of $Y$, where $g\colon Y\to\Spec \mathbb{C}$ is the structure morphism.  If further $Y$ is normal CM of dimension $d$, we will always write $\omega_Y$ for $\cD_Y[-d]$, and refer to $\omega_Y$ as the {\em geometric canonical}.

\begin{defin}\label{Serre def}
Suppose that $f\colon Y\to\Spec T$ is a morphism where $T$ is a normal CM $\mathbb{C}$-algebra with canonical module $\omega_T$. Typically
\begin{itemize}
\item $T=\mathbb{C}$ (in the case $Y=X$ is projective), or
\item $T=R$ with $\Spec R$ a 3-fold and $f$ is also birational (in the case $Y=U$ of the local model).
\end{itemize}
We say that a functor $\mathbb{S}\colon \Perf(Y)\to\Perf(Y)$ is a \emph{Serre functor relative to $\omega_T$} if there are functorial isomorphisms
\[
\RHom_T(\RHom_Y(\cF,\cG),\omega_T)\cong \RHom_Y(\cG,\mathbb{S}(\cF))
\]
in $\D(\Mod T)$ for all $\cF\in\Perf(Y)$, $\cG\in\Db(\coh Y)$. 
\end{defin}

\begin{lemma}\label{Serre for X}
Suppose that $f\colon Y\to\Spec T$ is a projective morphism, where $Y$ and $\Spec T$ are both Gorenstein varieties.  Then 
\[
\mathbb{S}_Y:=-\otimes_Y \omega_Y[\dim Y-\dim T]\colon \Perf(Y)\to\Perf(Y)
\]
is a Serre functor relative to $\omega_T$.
\end{lemma}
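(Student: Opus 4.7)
The plan is to deduce this from Grothendieck duality for the projective morphism $f\colon Y\to\Spec T$, together with the Gorenstein hypotheses that trivialize the relative dualizing complex. Write $d_Y=\dim Y$ and $d_T=\dim T$.

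First I would verify that $\mathbb{S}_Y$ does map $\Perf(Y)$ to $\Perf(Y)$: since $Y$ is Gorenstein, $\omega_Y$ is a line bundle, so tensoring with $\omega_Y[d_Y-d_T]$ preserves perfect complexes. Next, I would apply Grothendieck duality in the form
\[
\Rf_*\RsHom_Y(\cH,f^!\omega_T)\cong\RHom_T(\Rf_*\cH,\omega_T)
\]
with $\cH:=\RsHom_Y(\cF,\cG)$. Since $\Rf_*\RsHom_Y(\cF,\cG)\cong\RHom_Y(\cF,\cG)$, the right-hand side becomes $\RHom_T(\RHom_Y(\cF,\cG),\omega_T)$, which is exactly what we want to identify.

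The key computation is then $f^!\omega_T\cong\omega_Y[d_Y-d_T]$. For a projective morphism between Gorenstein schemes, the relative dualizing sheaf is the line bundle $\omega_f=\omega_Y\otimes f^*\omega_T^{-1}$, and $f^!(-)\cong\omega_f[d_Y-d_T]\otimes Lf^*(-)$. Applying this to $\omega_T$ gives $f^!\omega_T\cong\omega_Y[d_Y-d_T]$. Thus Grothendieck duality yields
\[
\RHom_T(\RHom_Y(\cF,\cG),\omega_T)\cong\RHom_Y(\RsHom_Y(\cF,\cG),\omega_Y[d_Y-d_T]).
\]

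Finally I would use the perfectness of $\cF$. Since $\cF\in\Perf(Y)$, there is a natural isomorphism $\RsHom_Y(\cF,\cG)\cong\cG\Ltimes\cF^{\vee}$ with $\cF^{\vee}:=\RsHom_Y(\cF,\cO_Y)$, and $(\cF^{\vee})^{\vee}\cong\cF$. Then by tensor-Hom adjunction
\[
\RHom_Y(\cG\Ltimes\cF^{\vee},\omega_Y[d_Y-d_T])\cong\RHom_Y(\cG,\RsHom_Y(\cF^{\vee},\omega_Y)[d_Y-d_T])
\]
and $\RsHom_Y(\cF^{\vee},\omega_Y)\cong\cF\otimes\omega_Y$ by reflexivity of perfect complexes and the line-bundle property of $\omega_Y$. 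Combining these gives the required functorial isomorphism with $\mathbb{S}_Y(\cF)=\cF\otimes\omega_Y[d_Y-d_T]$, and functoriality in $\cF$ and $\cG$ is inherited from that of each step. The only mild subtlety is checking that the Grothendieck duality isomorphism holds at the $\D(\Mod T)$ level (not just after applying $\RGamma$), which is standard since $\Rf_*$ lands there and all operations above are compatible.
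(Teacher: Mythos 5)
Your proposal is correct and follows essentially the same route as the paper: Gorensteinness makes $\omega_Y$ a line bundle so $\mathbb{S}_Y$ preserves $\Perf(Y)$, and the functorial isomorphism is Grothendieck duality applied to $\RsHom_Y(\cF,\cG)$ together with $f^!\omega_T\cong\omega_Y[\dim Y-\dim T]$ and the perfect-complex tensor--Hom manipulation (which the paper leaves implicit in its first displayed isomorphism). Your extra details, including deriving $f^!\omega_T$ from the relative dualizing sheaf, are fine and merely spell out what the paper asserts.
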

\begin{proof}
First, since $Y$ is Gorenstein, $\omega_Y$ is a line bundle, so tensoring does preserve perfect complexes.  The remainder is just Grothendieck duality: since $f^!\omega_T=\omega_Y[\dim Y-\dim T]$ we have
\opt{10pt}{\[
\RHom_Y(\cG,\mathbb{S}_Y(\cF))\cong\RHom_Y(\RsHom_Y(\cF,\cG),f^!\omega_T)\cong\RHom_T(\RHom_Y(\cF,\cG),\omega_T) 
\]}
\opt{12pt}{\begin{align*}
\RHom_Y(\cG,\mathbb{S}_Y(\cF))&\cong\RHom_Y(\RsHom_Y(\cF,\cG),f^!\omega_T)\\&\cong\RHom_T(\RHom_Y(\cF,\cG),\omega_T) 
\end{align*}}
in $\D(\Mod T)$ for all $\cF\in\Perf(Y)$, $\cG\in\Db(\coh Y)$. 
\end{proof}

We now revert back to the assumptions and setup of \S\ref{strategy}.  The following are geometric versions of \ref{Serre on contraction alg local} and \ref{track contraction2}.

\begin{lemma}\label{cE local tensor}
$\mathbb{S}_U(\cEU)\cong\cEU$, i.e.\ $\cEU\otimes\omega_U\cong\cEU$.
\end{lemma}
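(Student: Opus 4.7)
The plan is to deduce the geometric statement $\mathbb{S}_U(\cEU) \cong \cEU$ from the algebraic statement $\mathbb{S}_\Lambda(\FCA) \cong \FCA$ already proved in \ref{Serre on contraction alg local}, by showing that the tilting equivalence $H = -\otimes^{\bf L}_\Lambda \cV$ of \eqref{db equiv diagram} intertwines the two Serre functors relative to $\omega_R$. Since by \ref{Serre for X} we have $\mathbb{S}_U(-) = -\otimes \omega_U[\dim U - \dim R] = -\otimes \omega_U$ (as both dimensions equal $3$), this will imply the desired conclusion.

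First I would check that $\cEU \in \Perf(U)$, so that $\mathbb{S}_U$ is applicable. Since $H$ restricts to an equivalence $\Kb(\proj \Lambda) \xrightarrow{\sim} \Perf(U)$ and $\cEU = H(\FCA)$ by \ref{highers vanish 2}(1), it suffices to show $\FCA \in \Kb(\proj \Lambda)$. But $\FCA$ is finite-dimensional, supported only at $\m$, so $\FCA \cong \widehat{\FCA}$ by \ref{useful finite length stuff}(3), and flatness of completion at $\m$ gives $\pd_\Lambda \FCA = \pd_{\widehat{\Lambda}} \widehat{\FCA}$, which is finite by \ref{proj res thm}(3).

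Next, I would establish the intertwining $H \circ \mathbb{S}_\Lambda \cong \mathbb{S}_U \circ H$ on $\Kb(\proj \Lambda)$ by a Yoneda argument. Since $\cV$ is a tilting bundle on the $R$-scheme $U$, the equivalence $H$ is $R$-linear, so for $a \in \Kb(\proj \Lambda)$ and $b \in \Db(\mod \Lambda)$ there are functorial isomorphisms $\RHom_U(H(a), H(b)) \cong \RHom_\Lambda(a, b)$ in $\D(\Mod R)$. Applying $\RHom_R(-, \omega_R)$ and using \ref{Serre alg side},
\begin{align*}
\RHom_R(\RHom_U(H(a), H(b)), \omega_R)
&\cong \RHom_R(\RHom_\Lambda(a, b), \omega_R) \\
&\cong \RHom_\Lambda(b, \mathbb{S}_\Lambda(a)) \\
&\cong \RHom_U(H(b), H(\mathbb{S}_\Lambda(a))).
\end{align*}
On the other hand, since $H(a) \in \Perf(U)$, the defining property of $\mathbb{S}_U$ in \ref{Serre def} yields the same expression with $\mathbb{S}_U(H(a))$ in place of $H(\mathbb{S}_\Lambda(a))$. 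As $H(b)$ ranges over all of $\Db(\coh U)$, Yoneda forces $H(\mathbb{S}_\Lambda(a)) \cong \mathbb{S}_U(H(a))$.

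Applying this to $a = \FCA$ and invoking \ref{Serre on contraction alg local} gives
\[
\mathbb{S}_U(\cEU) = \mathbb{S}_U(H(\FCA)) \cong H(\mathbb{S}_\Lambda(\FCA)) \cong H(\FCA) = \cEU,
\]
which is the statement. The main technical point to verify carefully is the $R$-linearity of $H$ at the derived level, i.e.\ the existence of a natural $R$-module enhancement of $\RHom_U(H(a), H(b))$ identified with $\RHom_\Lambda(a, b)$; this is essentially the projection formula applied to the affine structure map $U \to \Spec R$, together with the fact that $\Lambda \cong \End_U(\cV)$ as $R$-algebras. Everything else is formal once this is in place.
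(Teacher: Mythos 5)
Your proposal is correct and follows essentially the same route as the paper: transport the problem across the tilting equivalence, use that this equivalence intertwines $\mathbb{S}_U$ and $\mathbb{S}_\Lambda$ relative to $\omega_R$, and conclude from $\mathbb{S}_\Lambda(\FCA)\cong\FCA$ (\ref{Serre on contraction alg local}) together with \ref{highers vanish 2}. Your explicit Yoneda argument is just an unwinding of the uniqueness of Serre functors relative to a fixed canonical, which is exactly what the paper invokes, and your preliminary check that $\FCA\in\Kb(\proj\Lambda)$ (hence $\cEU$ perfect) makes explicit a point the paper leaves implicit via \ref{proj res thm}.
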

\begin{proof}
If we temporarily denote the derived equivalence by $\Psi:=\RHom_U(\cV,-)$ with inverse $\Phi:=-\otimes^{\bf L}_{\Lambda}\cV$, then  on $\Lambda$,  $\Psi\circ \mathbb{S}_U\circ\Phi$ is a Serre functor relative to $\omega_R$, so since Serre functors are unique with respect to a fixed canonical,  $\mathbb{S}_\Lambda\cong \Psi\circ \mathbb{S}_U\circ\Phi$. Hence
\[
\mathbb{S}_U(\cEU)
\stackrel{\scriptsize\mbox{\ref{highers vanish 2}}}{\cong}
\mathbb{S}_U(\Phi \FCA)\cong\Phi\Psi\mathbb{S}_U(\Phi \FCA)\cong\Phi\mathbb{S}_\Lambda(\FCA)
\stackrel{\scriptsize\mbox{\ref{Serre on contraction alg local}}}{\cong}\Phi\FCA
\stackrel{\scriptsize\mbox{\ref{highers vanish 2}}}{\cong}
\cEU.\qedhere
\]
\end{proof}

\begin{prop}\label{track contraction2FM} With notation as above,
\begin{enumerate}
\item\label{track contraction2FM 1} $T_{\cE_U}(\cE_U)\cong \cE_U[-2]$ and $T_{\cE_U}(E)\cong E[-2]$.
\item\label{track contraction2FM 2} $T^*_{\cE_U}(\cE_U)\cong \cE_U[2]$ and $T^*_{\cE_U}(E)\cong E[2]$.
\end{enumerate}
\end{prop}
\begin{proof}
(1) By definition $T_{\cE_U}$ is the composition
\[
\Db(\coh U)
\xrightarrow{\RHom_U(\cV,-)}
\Db(\mod \Lambda)
\xrightarrow{T_{\con}}
\Db(\mod \Lambda)
\xrightarrow{-\otimes^{\bf L}_{\Lambda}\cV}
\Db(\coh U).
\]
We know that the first functor takes $\cE_U$ to $\FCA$ by \ref{highers vanish 2}\eqref{highers vanish 2 1}, the second functor takes $\FCA$ to $\FCA[-2]$ by \ref{track contraction2}\eqref{track contraction2 1}, and the third functor takes $\FCA[-2]$ to $\cE_U[-2]$, again by \ref{highers vanish 2}\eqref{highers vanish 2 1}.  Tracking $E$ is similar.\\
(2) This follows immediately from \eqref{track contraction2FM 1}, since as we have already observed,  $T^*_{\cE_U}$ is the inverse of $T_{\cE_U}$.
\end{proof}

\section{Noncommutative Twist Functors: Global Case}\label{global section}

In this section we globalize the previous sections to obtain noncommutative twists of projective varieties.  We keep the assumptions and setup of \S\ref{strategy}.

\subsection{Global $\Ext$ vanishing}\label{global flops}

Using \ref{highers vanish 1}, we can deduce Ext vanishing on $X$ by reducing the problem to Ext vanishing on $U$, which we have already solved in  \ref{Ext cor}.  Also, we remark that the following shows that although $i_*E$ need not be perfect, its noncommutative thickening $\cE$ automatically is.

\begin{thm}\label{Gsph conditions ok}  We keep the assumptions as above, in particular $X$ is a projective normal $3$-fold $X$ with at worst Gorenstein terminal singularities.  Then $\cE$ is a perfect complex and further
\[
\Ext_X^t(\cE,i_*E) =\left\{ \begin{array}{cl} \mathbb{C}&\mbox{if }t=0,3\\
0&\mbox{else}.\\  \end{array} \right.
\]
\end{thm}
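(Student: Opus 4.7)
The strategy is to reduce both statements to objects on $U$ and then to the algebra $\Lambda$, where everything has been prepared. The key input for the Ext calculation will be Corollary \ref{Ext cor}, and for perfectness will be the finite projective dimension computation in Proposition \ref{proj res thm}.

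For the Ext vanishing, I would first use \ref{highers vanish 1} and \ref{highers vanish}(2) to identify $\cE=\Ri_*\cE_U$ and $i_*E=\Ri_*E$. Since $i$ is an open embedding, $i^*\Ri_*=\Id$, so $\Ri_*\colon\D(\Qcoh U)\hookrightarrow\D(\Qcoh X)$ is fully faithful and hence
\[
\RHom_X(\cE, i_*E) \;\cong\; \RHom_U(\cE_U, E).
\]
Then I would transport the right-hand side across the tilting equivalence \eqref{derived equivalence}. By \ref{highers vanish 2}(1) the functor $\RHom_U(\cV,-)$ carries $\cE_U$ to $\FCA$, and by \ref{T definition} it carries $E$ to $T$. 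This yields
\[
\Ext^t_X(\cE,i_*E)\;\cong\;\Ext^t_U(\cE_U,E)\;\cong\;\Ext^t_\Lambda(\FCA,T),
\]
and the computation then follows directly from \ref{Ext cor}.

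For perfectness of $\cE$, I would argue that perfectness is a local property, so one only needs to check stalks. At points of $X\setminus U$ the stalk of $\cE=i_*\cE_U$ vanishes (since $\cE_U$ is supported on $C\subset U$, cf.\ \ref{highers vanish}(1) and \ref{cont is fd for flop}), while at points $x\in U$ we have $\cE_x=(\cE_U)_x$. Thus it suffices to prove $\cE_U\in\Perf(U)$. By \ref{proj res thm}(3) we have $\pd_{\widehat\Lambda}\widehat{\Lambda}_{\con}=3$, which combined with \ref{morita lemma} (morita equivalence between $\widehat\Lambda_{\con}$ and $\CA$) and the localization/completion formulae of \ref{useful finite length stuff} gives $\pd_\Lambda\FCA\leq 3$. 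Hence $\FCA\in\Kb(\proj\Lambda)$. Under the derived equivalence \eqref{derived equivalence}, projective $\Lambda$-modules correspond to summands of $\cV^n$, so the image $\FCA\otimes^{\bf L}_\Lambda\cV\cong\cE_U$ (see \ref{highers vanish 2}(1)) lies in $\Perf(U)$, as required.

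The main conceptual point that needs care is not any single step being hard, but rather ensuring that the full-faithfulness of $\Ri_*$ is being applied to the correct objects: one must know \emph{a priori} that $\Ri_*\cE_U$ and $\Ri_*E$ are concentrated in degree zero (so that the replacement by $i_*$ is valid). This is exactly the content of \ref{highers vanish 1}--\ref{highers vanish}, which hinges on the filtration of $\cE_U$ by copies of $E$. Once this is in hand, the rest is a bookkeeping exercise chasing the three isomorphisms that relate $\Db(\coh X)$, $\Db(\coh U)$ and $\Db(\mod\Lambda)$ via $\Ri_*$ and the tilting equivalence.
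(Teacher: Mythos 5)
Your proposal is correct and follows essentially the same route as the paper: identify $\cE\cong\Ri_*\cE_U$ and $i_*E\cong\Ri_*E$ via \ref{highers vanish 1}/\ref{highers vanish}, use full faithfulness of $\Ri_*$ and the tilting equivalence to reduce to $\Ext^t_\Lambda(\FCA,T)$, and conclude with \ref{Ext cor}; for perfectness, both arguments transfer finiteness of projective dimension of $\FCA$ (via support at $\m$, completion and \ref{proj res thm}) across the equivalence and note $\cE$ vanishes off $U$. The only cosmetic difference is that the paper quotes $\pd_{\AB}\CA<\infty$ directly where you pass through $\pd_{\widehat\Lambda}\widehat\Lambda_{\con}=3$ and the morita equivalence, which amounts to the same thing.
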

\begin{proof}  
Now the first assertion in the statement is local, so it suffices to check that $\cE_x$ has finite projective dimension for all closed points $x\in X$.  Restricting $\cE$ to $U$, by \ref{highers vanish 1} it is clear that $i^*\cE=\cEU$, which across the derived equivalence \eqref{derived equivalence} corresponds to $\FCA$ by \ref{highers vanish 2}.  Since $\FCA$ is supported only on the maximal ideal $\m\in\Max R$ by \ref{cont is fd for flop}, it follows that $\pd_\Lambda\FCA=\pd_{\widehat{\Lambda}}\FCA=\pd_{\AB}\CA$ which by \ref{proj res thm} is finite.  Hence back across the equivalence $i^*\cE$ is a perfect $\cO_U$-module.  Thus $\cE_u$ has finite projective dimension for all $u\in U$.  Since $\cE_x=0$ for all $x\notin U$, this implies that $\cE$ is perfect.

For the second assertion, we have
\[
 \Ext^t_X(\cE,i_*E)
 \stackrel{\t{\scriptsize{\eqref{inclusion line}}}}{\cong}
\Ext^t_U(\cEU,E)\cong \Ext^t_{\Lambda}(\FCA,T)
\]
and so the result follows from \ref{Ext cor}.
\end{proof}

The following result will be needed later.  Again \ref{highers vanish 1} reduces the proof to the local model, allowing us to use \ref{cE local tensor}.

\begin{prop}\label{EX tensor global}
$\cE\otimes_X\omega_X\cong\cE$.
\end{prop}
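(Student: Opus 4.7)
The plan is to reduce the global statement to the corresponding local statement \ref{cE local tensor} via the projection formula for the open embedding $i\colon U\hookrightarrow X$.

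First, recall from \ref{highers vanish} that $\cE = i_*\cEU = \Ri_*\cEU$. Since $X$ has only Gorenstein singularities, the canonical module $\omega_X$ is a line bundle (in particular a perfect complex), so tensoring with $\omega_X$ commutes with $\Ri_*$ via the projection formula. Concretely, I would write
\[
\cE\otimes_X\omega_X \;\cong\; \Ri_*\cEU\otimes_X\omega_X \;\cong\; \Ri_*\bigl(\cEU\otimes_U i^*\omega_X\bigr).
\]
Since $i$ is an open embedding, $i^*\omega_X \cong \omega_U$ canonically (the geometric canonicals restrict along open immersions).

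Now applying the local statement \ref{cE local tensor}, namely $\cEU\otimes\omega_U\cong\cEU$, we obtain
\[
\cE\otimes_X\omega_X \;\cong\; \Ri_*\bigl(\cEU\otimes_U \omega_U\bigr) \;\cong\; \Ri_*\cEU \;=\; \cE,
\]
using \ref{highers vanish} once more in the last equality. This is essentially the whole argument, and none of the steps present real obstacles since all the work has been done in the local setting: the only thing to verify carefully is that the projection formula applies, but this is automatic because $\omega_X$ is an invertible sheaf on $X$ (by the Gorenstein assumption) and $i$ is flat, so no higher Tor terms appear. The essence of the proof is thus simply the observation that the geometric twisting by $\omega_X$ is a local question on a neighbourhood of $C$, and was already settled by \ref{cE local tensor}.
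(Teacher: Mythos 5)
Your argument is correct and is essentially identical to the paper's proof: both write $\cE\cong\Ri_*\cEU$ via \ref{highers vanish}, use that $\omega_X$ is a line bundle (Gorenstein) so the projection formula applies, identify $i^*\omega_X\cong\omega_U$ along the open embedding, and then conclude by the local statement \ref{cE local tensor}. No gaps.
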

\begin{proof}
Since $X$ is Gorenstein, $\omega_X$ is a line bundle and so 
\opt{10pt}{\[
\cE\otimes_X\omega_X
\stackrel{\t{\scriptsize{\ref{highers vanish 1}}}}{\cong}
\Ri_*\cEU\otimes_X^{\bf L}\omega_X
\cong 
\Ri_*(\cEU\otimes_U^{\bf L} i^*\omega_X)
\cong
\Ri_*(\cEU\otimes_U^{\bf L} \omega_U)
\stackrel{\t{\scriptsize{\ref{cE local tensor}}}}{\cong}
\Ri_*(\cEU)
\stackrel{\t{\scriptsize{\ref{highers vanish 1}}}}{\cong}
\cE.
\]}
\opt{12pt}{\begin{align*}
\cE\otimes_X\omega_X
\stackrel{\t{\scriptsize{\ref{highers vanish 1}}}}{\cong}
\Ri_*\cEU\otimes_X^{\bf L}\omega_X
&\cong 
\Ri_*(\cEU\otimes_U^{\bf L} i^*\omega_X)\\
&\stackrel{\t{\scriptsize{\ref{shriek=pull}}}}{\cong}
\Ri_*(\cEU\otimes_U^{\bf L} \omega_U)\\
&\stackrel{\t{\scriptsize{\ref{cE local tensor}}}}{\cong}
\Ri_*(\cEU)\\
&\stackrel{\t{\scriptsize{\ref{highers vanish 1}}}}{\cong}
\cE.\qedhere
\end{align*}}\end{proof}

\subsection{Global inverse twist}\label{section global inverse twist}
We now extend the definition of the local inverse twist $T^*_{\cE_U}$ on the open neighbourhood $U$ (from \ref{local twist geom defin}) to a global inverse twist $T^*_{\cE}$ on $X$.  It turns out to be technically much easier to extend $T^*_{\cE_U}$ rather than to extend $T_{\cE_U}$, so we do this first.  We return to  the problem of defining the global noncommutative twist in \S\ref{newsubsection}.   

Throughout, we keep the setting and assumptions of \S\ref{strategy}.  We begin with a slight refinement of the results of \S\ref{geom noncomm twist}. Consider the triangle of $\Lambda$-bimodules \eqref{bimodtri1} 
\begin{align*}
I_{\con}\to & \Lambda\to \Lambda_{\con}\to ,
\end{align*}
then applying the functor  $- \otimes_{\Lambda^e}^{\bf L} (\cV^\vee \boxtimes \cV)$ we obtain an exact triangle
\begin{eqnarray}
\cW_U \to \cO_{\Delta,U}  \xrightarrow{\phi_U} \cQ_U\to\label{localFMkernel}
\end{eqnarray} 
of Fourier--Mukai kernels on $U\times U$.

\begin{defin}\label{global twist defin}
The \emph{inverse twist} $T^*_{\cE}$ is defined to be the Fourier--Mukai functor $\FM{\cW_X}$ given by the kernel $\cW_X := \operatorname{Cone}(\phi_X)[-1]$, where $\phi_X$ is the composition
\begin{equation}\label{inverse twist cone morphism}
\cO_{\Delta,X} \xrightarrow{\eta_\Delta} \RDerived(i \times i)_* \cO_{\Delta,U} \xrightarrow{\RDerived(i \times i)_* \phi_U} \RDerived(i \times i)_* \cQ_U=:\cQ_X.
\end{equation}
Here $\eta_\Delta$ is defined to be
\begin{align}
\label{unit for U to X inclusion} 
\RDerived\Delta_{X*}\cO_X\xrightarrow{\RDerived\Delta_{X*}\eta}\RDerived\Delta_{X*} \RDerived i_*i^*\cO_X\cong\RDerived(i\times i)_* \RDerived\Delta_{U*}\cO_U,
\end{align}
with $\eta\colon \cO_X\to\RDerived i_*i^*\cO_X$ the unit morphism. 
\end{defin}

Since $\RDerived (i\times i)_*$ does not in general preserve coherence, we must work hard to establish that $\cW_X, \cQ_X\in\Db(\coh X\times X)$.  To do this, we first establish the triangle representation for $T^*_{\cE}$ in \eqref{t formula for T*} below, and thence show that the Fourier--Mukai functors for $\cW_X$ and $\cQ_X$ preserve $\Db(\coh X)$.

Consider the adjunctions in \eqref{sph functor and adjoints defn}, which were used to describe the local inverse twist $T^*_{\cE_U}$. These adjunctions are induced from adjunctions on the unbounded level, and composing them with \eqref{inclusion line} yields the diagram of adjoints
\begin{eqnarray}\label{orig sph functor and adjoints defn global}
\begin{array}{c}
\begin{tikzpicture}[looseness=1,bend angle=15]
\node (a1) at (0,0) {$\D(\Mod \CA)$};
\node (a2) at (8,0) {$\D(\Qcoh X)$};
\draw[->] (a1) -- node[gap] {$\scriptstyle G_X:=\Ri_*(-\otimes^{\bf L}_{\CA}\cEU)$} (a2);
\draw[->,bend right] (a2) to node[above] {$\scriptstyle G_X^{\LA}:=G_U^{\LA}\circ i^*$} (a1);
\draw[->,bend left] (a2) to node[below] {$\scriptstyle G_X^{\RA}:=G_U^{\RA}\circ i^!$} (a1);
\end{tikzpicture}
\end{array}
\end{eqnarray}
By the projection formula we have $G_X\cong-\otimes^{\bf L}_{\CA}\Ri_*\cEU$, so by \ref{highers vanish 1} and adjunction, the above yields
\begin{eqnarray}
\begin{array}{c}
\begin{tikzpicture}[looseness=1,bend angle=15]
\node (a1) at (0,0) {$\D(\Mod \CA)$};
\node (a2) at (7,0) {$\D(\Qcoh X)$};
\draw[->] (a1) -- node[gap] {$\scriptstyle G_X\cong-\otimes^{\bf L}_{\CA}\cE$} (a2);
\draw[->,bend right] (a2) to node[above] {$\scriptstyle G_X^{\LA}$} (a1);
\draw[->,bend left] (a2) to node[below] {$\scriptstyle G_X^{\RA}\cong\RHom_X(\cE,-)$} (a1);
\end{tikzpicture}
\end{array}\label{thelastlabel!}
\end{eqnarray}
We remark that the above diagram is intrinsic to the geometry of $X$.  The following lemma establishes, amongst other things, that the inverse twist preserves the bounded derived category.

\begin{lemma}\label{funct triangles global} Under the setup above,
\begin{enumerate}
\item\label{funct triangles global 1} $\FM{\cQ_X}\cong G_X^{\phantom{L}}\circ G_X^{\LA}$, and so for all $x\in\D(\Qcoh X)$, there is a triangle
\begin{align}
T^*_{\cE}(x)\to x\to G^{\phantom{L}}_X\circ G_X^{\LA}(x)\to.\label{t formula for T*}
\end{align}
\item\label{funct triangles global 2} Both $\FM{\cQ_X}$ and the inverse twist $T^*_{\cE}$ preserve $\Db(\coh X)$.
\end{enumerate}
\end{lemma}
\begin{proof} 
(1) From the definition \ref{global twist defin}, there is a functorial triangle
\begin{align*}
T^*_{\cE} \to \Id_{X} \to \FM{\cQ_X} \to.
\end{align*}
Now since $\cQ_X=\RDerived(i\times i)_*\cQ_U$ by definition \eqref{inverse twist cone morphism}, necessarily
\[
\FM{\cQ_X} \cong \Ri_* \circ \FM{\cQ_U} \circ i^*
\]
by \cite[5.12]{HuybrechtsFM}.  By the argument of \ref{is FM}, we have that $\FM{\cQ_U}\cong G_U^{\phantom{L}}\circ G_U^{\LA}$, hence
\begin{equation}\label{equation action part of twist}
\FM{\cQ_X}\cong \Ri_* \circ \FM{\cQ_U} \circ i^*\cong \Ri_* \circ \, G^{\phantom{L}}_U\circ G_U^{\LA} \circ i^*= G^{\phantom{L}}_X\circ G_X^{\LA}
\end{equation}
and so the statement follows. \\
(2) Since $\CA$ is a finite dimensional algebra, its derived category $\Db(\mod \CA)$ is generated by its simple module $S$.  Note that $G_X$ sends $S$ to $i_*E\in\Db(\coh X)$, thus it follows that $G_X$ sends $\Db(\mod \CA)$ to $\Db(\coh X)$.   Since $G_X^{\LA}$ is the composition $G_U^{\LA}\circ i^*$, and by \eqref{EU intrinsic} $G_U^{\LA}$ takes $\Db(\coh X)$ to $\Db(\mod \CA)$, it follows that $G_X^{\LA}$ sends $\Db(\coh X)$ to $\Db(\mod \CA)$.   Combining, we see that $G_X^{\phantom{L}}\circ G_X^{\LA}$ takes $\Db(\coh X)$ to $\Db(\coh X)$, so the claim that the inverse twist preserves $\Db(\coh X)$ follows immediately from the triangle in \eqref{funct triangles global 1}, using the two-out-of-three property.
\end{proof}

\subsection{Existence of adjoints}\label{existence of adjoints}

The existence of left and right adjoints of our inverse twist follows directly from the following known theorem \cite{HMS07, HMS09}.  Recall that for projections $p_1,p_2\colon Y\times Y\to Y$, an object $\cG\in\D(\Qcoh Y\times Y)$ is said to have \emph{finite projective dimension} with respect to $p_i$, if $\RsHom_{\cO_{Y\times Y}}(\cG, p_i^!\cF)\in\Db(\Qcoh Y\times Y)$ for all $\cF\in\Db(\Qcoh Y)$.

\begin{thm}\label{existence of adjoints thm} Suppose that $Y$ is a projective variety with only Gorenstein singularities, and $\cG\in\Db(\Qcoh Y\times Y)$.  If $\FM{\cG}:=\RDerived p_{2 *}(\cG\otimes^{\bf L}_{\cO_{Y\times Y}}p_1^{*}(-))$ preserves $\Db(\coh Y)$, then
\begin{enumerate}
\item\label{existence of adjoints thm 1} $\cG\in\Db(\coh Y\times Y)$.
\item\label{existence of adjoints thm 2} $\cG$ has finite projective dimension with respect to $p_1$.
\end{enumerate}
If furthermore $\RDerived p_{1 *}(\cG\otimes^{\bf L}_{\cO_{Y\times Y}}p_2^{*}(-))$ also preserves $\Db(\coh Y)$, then
\begin{enumerate}
\setcounter{enumi}{2}
\item\label{existence of adjoints thm 3} $\cG$ has finite projective dimension with respect to both $p_1$ and $p_2$, and $\FM{\cG}$ admits left and right adjoints which also preserve $\Db(\coh Y)$.  The right adjoint of $\FM{\cG}$ is 
\[
\FM{\cG^\vee\otimes p_1^*\omega_Y[\dim Y]}.
\]
\end{enumerate}
\end{thm}
\begin{proof}
(1) Take a very ample bundle $\cO(1)$ on $Y$, then $p_1^*\cO(1)$ is very ample with respect to the morphism $p_2$.  Since by assumption $\RDerived p_{2 *}(\cG\otimes^{\bf L}_{\cO_{Y\times Y}}p_1^{*}\cO(r))$ has coherent cohomology for all $r$, it follows that $\cG\in\Db(\coh Y\times Y)$ by \cite[remark below 2.5]{HMS09}.\\
(2) By \eqref{existence of adjoints thm 1} $\cG\in\Db(\coh Y\times Y)$, so since by assumption its Fourier--Mukai functor $\FM{\cG}$ takes $\Db(\coh Y)$ to $\Db(\coh Y)$, by \cite[2.7]{HMS09} $\cG$ must have finite homological dimension with respect to $p_1$ (for the definition of this, see \cite[1.3]{HMS07}).  Hence by \cite[1.6]{HMS07} $\cG$ has finite projective dimension with respect to $p_1$.\\
(3) With the additional assumption, applying \eqref{existence of adjoints thm 2} with the roles of $p_1$ and $p_2$ exchanged we see that $\cG$ also has finite projective dimension with respect to $p_2$.  Thus by \cite[1.17]{HMS07} $\FM{\cG}$ now has both left and right adjoints, which also preserve $\Db(\coh Y)$, and the right adjoint is of the form above.
\end{proof}

The following is a consequence of \ref{funct triangles global} and \ref{existence of adjoints thm}.
\begin{cor}\label{our NC twist has adjoints}
We keep the assumptions as in \S\ref{strategy}, and consider the triangle 
\[
\cW_X\to\cO_{\Delta,X}\to\cQ_X\to\]
from \ref{global twist defin}.  Then
\begin{enumerate}
\item\label{our NC twist has adjoints 1} $\RDerived p_{2 *}(\cQ_X\otimes^{\bf L}_{\cO_{X\times X}}p_1^{*}(-))$ and $\RDerived p_{1 *}(\cQ_X\otimes^{\bf L}_{\cO_{X\times X}}p_2^{*}(-))$ both preserve $\Db(\coh X)$.
\item\label{our NC twist has adjoints 2} $\cW_X$, $\cO_{\Delta,X}$ and $\cQ_X$ all belong to $\Db(\coh X\times X)$ and have finite projective dimension over both factors.  Hence the corresponding Fourier--Mukai functors have right and left adjoints, which preserve $\Db(\coh X)$.  
\end{enumerate}
\end{cor}
\begin{proof}
(1) We have $\FM{\cQ_X}=\RDerived p_{2 *}(\cQ_X\otimes^{\bf L}_{\cO_{X\times X}}p_1^{*}(-))$, which we established preserves $\Db(\coh X)$ in \ref{funct triangles global}\eqref{funct triangles global 2}.   For the other, we note that by \cite[5.12]{HuybrechtsFM} 
\begin{eqnarray}
\RDerived p_{1 *}(\cQ_X\otimes^{\bf L}_{\cO_{X\times X}}p_2^{*}(-))\cong \RDerived i_*\circ \RDerived p_{1 *}(\cQ_U\otimes^{\bf L}_{\cO_{U\times U}}p_2^{*}(-))\circ i^*\label{cheeky way}
\end{eqnarray}
where by abuse of notation $p_i$ also denote the projections  $U\times U\to U$.  But again using the argument in \ref{is FM} it follows immediately that $\RDerived p_{1 *}(\cQ_U\otimes^{\bf L}_{\cO_{U\times U}}p_2^{*}(-))$ is the composition
\[
\Db(\coh U)\xrightarrow{\RHom_U(\cV^\vee,-)}\Db(\mod\Lambda^{\op})\xrightarrow{\Lambda_{\con}\otimes_\Lambda^{\bf L}-}\Db(\mod\Lambda^{\op})\xrightarrow{\cV^\vee\otimes_{\Lambda}^{\bf L}-}\Db(\coh U)
\]
so every object in the image is a bounded complex of coherent sheaves filtered by $E$.  Since $\RDerived i_* E=i_*E$ by \eqref{inclusion line} and is certainly bounded coherent, it follows that \eqref{cheeky way} preserves $\Db(\coh X)$.\\
(2) The fact that $\cW_X, \cQ_X\in\Db(\coh X\times X)$ is a consequence of \ref{funct triangles global}\eqref{funct triangles global 2} and \ref{existence of adjoints thm}\eqref{existence of adjoints thm 1}.  Thus by \eqref{our NC twist has adjoints 1} and \ref{existence of adjoints thm}\eqref{existence of adjoints thm 3}, $\cQ_X$ has finite projective dimension over both factors.  Since $\cO_{\Delta,X}$ also has this property, by two-out-of-three so does $\cW_X$.  The remaining statements now follow, again using \ref{existence of adjoints thm}\eqref{existence of adjoints thm 3}.
\end{proof}

\begin{remark}
If one could establish that the inverse twist $T^*_{\cE}$ has both left and right adjoints that preserve $\Db(\coh X)$ under the weaker assumption that $X$ is only quasi-projective, then the proof that $T^*_{\cE}$ is an autoequivalence in the next subsection goes through in that level of generality.
\end{remark}

\subsection{Proof of equivalence for global flops}\label{global proof subsection} In this subsection, keeping the assumptions and setting of \S\ref{strategy}, we give the proof that the inverse twist $T^*_{\cE}$ is a derived equivalence in the situation of a flopping contraction.  As above, our strategy is to reduce as much as possible to the local case and apply the results of \S\ref{alg NC twists section}.  With this in mind, the following is useful.

\begin{prop}\label{diagram commutes U to X}  The following diagram is naturally commutative:\\
\[
\begin{array}{c}
\opt{10pt}{\begin{tikzpicture}}
\opt{12pt}{\begin{tikzpicture}[scale=1.2]}
\node (a1) at (0,0) {$\D(\Qcoh U)$};
\node (a2) at (2.5,0) {$\D(\Qcoh X)$};
\node (b1) at (0,-1.5) {$\D(\Qcoh U)$};
\node (b2) at (2.5,-1.5) {$\D(\Qcoh X)$};
\draw[->] (a1) to node[above] {$\scriptstyle \Ri_*$} (a2);
\draw[->] (b1) to node[above] {$\scriptstyle \Ri_*$} (b2);
\draw[->] (a1) to node[left] {$\scriptstyle T^*_{\cEU}$} (b1);
\draw[->] (a2) to node[right] {$\scriptstyle T^*_{\cE}$} (b2);
\end{tikzpicture}
\end{array}
\]

\end{prop}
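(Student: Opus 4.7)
The plan is to exhibit both compositions $T^*_{\cE}\circ\Ri_*$ and $\Ri_*\circ T^*_{\cEU}$ as Fourier--Mukai functors $\D(\Qcoh U)\to\D(\Qcoh X)$ with a common kernel on $U\times X$, so that the required natural isomorphism of functors reduces to a natural isomorphism of kernels. Standard base change and projection-formula manipulations, using the open immersions $\Id\times i\colon U\times U\to U\times X$ and $i\times\Id\colon U\times X\to X\times X$, show
\[
\Ri_*\circ\FM{\cW'_U}\cong\FM{\RDerived(\Id\times i)_*\cW'_U}\quad\text{and}\quad\FM{\cW'_X}\circ\Ri_*\cong\FM{L(i\times\Id)^*\cW'_X},
\]
so it suffices to construct a natural isomorphism of kernels $L(i\times\Id)^*\cW'_X\cong\RDerived(\Id\times i)_*\cW'_U$ on $U\times X$.

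To produce this, I would apply the exact functor $L(i\times\Id)^*$ to the defining triangle $\cW'_X\to\cO_{\Delta,X}\xrightarrow{\phi'_X}\cQ'_X$ from \ref{global twist defin}, and compare it with $\RDerived(\Id\times i)_*$ applied to $\cW'_U\to\cO_{\Delta,U}\xrightarrow{\phi'_U}\cQ'_U$. Three ingredients enter. First, since $i\times\Id$ is an open immersion, $L(i\times\Id)^*\circ\RDerived(i\times\Id)_*\cong\Id$, and factoring $i\times i=(i\times\Id)\circ(\Id\times i)$ gives $L(i\times\Id)^*\cQ'_X\cong\RDerived(\Id\times i)_*\cQ'_U$. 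Second, flat base change for the Cartesian square
\[
\begin{array}{ccc}
U & \xrightarrow{(\Id,i)} & U\times X \\
\downarrow\, i & & \downarrow\, i\times\Id \\
X & \xrightarrow{\Delta_X} & X\times X
\end{array}
\]
yields $L(i\times\Id)^*\cO_{\Delta,X}\cong\RDerived(\Id,i)_*\cO_U\cong\RDerived(\Id\times i)_*\cO_{\Delta,U}$. Third, since $\eta_\Delta=\RDerived\Delta_{X*}(\eta\colon\cO_X\to\Ri_*\cO_U)$ and $Li^*\eta$ is the identity by the triangle identity for $i^*\dashv\Ri_*$, the morphism $L(i\times\Id)^*\eta_\Delta$ becomes the identity on $\RDerived(\Id\times i)_*\cO_{\Delta,U}$ under the above identifications. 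Consequently $L(i\times\Id)^*\phi'_X$ is identified with $\RDerived(\Id\times i)_*\phi'_U$, and taking $\operatorname{Cone}(\cdot)[-1]$ (which both exact functors preserve) produces the required isomorphism of kernels.

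The main obstacle will be the last ingredient: to verify that the two base change isomorphisms intertwine $L(i\times\Id)^*\eta_\Delta$ with the identity on $\RDerived(\Id\times i)_*\cO_{\Delta,U}$ in a coherent, natural way. This is a bookkeeping diagram chase combining the triangle identities for the open immersion adjunction $i^*\dashv\Ri_*$ with the natural transformations implementing flat base change. Once this is in place, the natural commutativity of the diagram of functors follows formally from the resulting natural isomorphism of Fourier--Mukai kernels on $U\times X$.
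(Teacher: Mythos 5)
Your proposal is correct and follows essentially the same route as the paper: reduce the commutativity to the kernel isomorphism $(i\times 1)^*\cW'_X\cong\RDerived(1\times i)_*\cW'_U$ via the standard Fourier--Mukai composition rules, then obtain it from the factorization $i\times i=(i\times 1)\circ(1\times i)$, the open-immersion identity $(i\times 1)^*\RDerived(i\times 1)_*\cong\Id$, and the fact that $(i\times 1)^*\eta_\Delta$ is invertible by the triangular identity plus flat base change around the same Cartesian square. The only simplification worth noting is that you do not need your ``main obstacle'': it suffices that the restricted unit is an \emph{isomorphism} (so precomposition with it does not change the cone up to isomorphism), rather than identifying it with the identity under the base-change identifications, which is exactly how the paper argues.
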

\begin{proof} Translating the question into Fourier--Mukai kernels, by \cite[5.12]{HuybrechtsFM} we require
\begin{equation}\label{equation.local_global_twist_kernels}
(i \times 1)^* \cW_X \cong \RDerived(1 \times i)_* \cW_U.
\end{equation}
We evaluate the left-hand side of \eqref{equation.local_global_twist_kernels}. By definition (\ref{global twist defin}), $\cW_X = \operatorname{Cone}(\phi_X)[-1]$, where $\phi_X$ factors through the morphism
\[ 
\cO_{\Delta,X} \xrightarrow{\eta_\Delta}  \RDerived(i \times i)_* \cO_{\Delta,U} .
\]
By definition \eqref{unit for U to X inclusion}, the morphism $\eta_\Delta$  is given (up to isomorphism) as a pushforward $\RDerived \Delta_{X*} \eta$ of a unit morphism $\eta$  associated to the adjunction $ i^* \dashv \RDerived i_*$. A triangular identity for this adjunction yields that $i^* \eta$ is an isomorphism, and so using flat base change around the following Cartesian square
\[
\begin{array}{c}
\begin{tikzpicture}
\node (a1) at (0,0) {$U$};
\node (a2) at (2.5,0) {$U\times X$};
\node (b1) at (0,-1.5) {$X$};
\node (b2) at (2.5,-1.5) {$X\times X$};
\draw[->] (a1) to node[above] {$\scriptstyle 1\times i$} (a2);
\draw[->] (b1) to node[above] {$\scriptstyle  \Delta_X$} (b2);
\draw[->] (a1) to node[left] {$\scriptstyle  i$} (b1);
\draw[->] (a2) to node[right] {$\scriptstyle  i\times 1$} (b2);
\end{tikzpicture}
\end{array}
\]
we find that $(i\times 1)^* \eta_\Delta$ is also an isomorphism.

It then follows, using the definition of $\phi_X$ \eqref{inverse twist cone morphism}, that 
\begin{align*}
(i \times 1)^* \cW_X & \cong (i \times 1)^* \operatorname{Cone}( \RDerived(i \times i)_* \phi_U )[-1] \\
& \cong (i \times 1)^* \RDerived(i \times i)_*  \operatorname{Cone}( \phi_U )[-1] \\
& \cong (i \times 1)^* \RDerived(i \times 1)_* \RDerived(1 \times i)_*  \operatorname{Cone}( \phi_U )[-1].
\end{align*}
Now $i \times 1$ is an open embedding and thence $(i \times 1)^* \RDerived(i \times 1)_*=\Id$, and $\cW_U \cong \operatorname{Cone}( \phi_U )[-1]$ by \eqref{localFMkernel}, so we have shown \eqref{equation.local_global_twist_kernels}, and the statement is proved. 
\end{proof}

We now build up our next main result in \ref{global flops via local methods} via a series of lemmas.  Recall that $\Omega\subseteq\Db(\coh X)$ is called a {\em spanning class} if
\begin{enumerate}
\item $\RHom_X(a,c)=0$ for all $c\in\Omega$ implies that $a=0$.
\item $\RHom_X(c,a)=0$ for all $c\in\Omega$ implies that $a=0$.
\end{enumerate}

To obtain a spanning class, usually one uses a Serre functor, which for us is $\mathbb{S}_X$ from \ref{Serre for X}.  However in our setting, since $E$ need not be perfect, obtaining a spanning class is a little delicate since the usual candidate $E\cup E^\perp$ will not do.  The solution is to use $\cE$, which we already know is perfect.

\begin{lemma}\label{Serre gives spanning class}
With the assumptions as in \S\ref{strategy}, define
\begin{align*}
\cE^\perp&:=\{ a\in\Db(\coh X)\mid \RHom(\cE,a)=0 \},\\
{}^{\perp}\cE&:=\{ a\in\Db(\coh X)\mid \RHom(a,\cE)=0 \}.
\end{align*}
Then $\cE^\perp={}^\perp\cE$, and $\Omega:=\cE\cup\cE^\perp$ is a spanning class in $\Db(\coh X)$.
\end{lemma}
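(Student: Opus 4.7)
The plan is to exploit two results already established: $\cE$ is a perfect complex (\ref{Gsph conditions ok}), and $\cE \otimes_X \omega_X \cong \cE$ (\ref{EX tensor global}). Combined with the Serre functor $\mathbb{S}_X = -\otimes_X\omega_X[3]$ on $\Perf(X)$ from \ref{Serre for X}, these yield Serre duality of the form
\[
\RHom_X(\cE,a)\cong \RHom_X(a,\cE\otimes\omega_X[3])^{\vee}\cong \RHom_X(a,\cE[3])^{\vee}
\]
for every $a\in\Db(\coh X)$. This isomorphism is the only non-formal ingredient in the argument; critically, it needs $\cE$ perfect (not merely $E$), since Serre duality for possibly-singular $X$ requires one side to be a perfect complex --- this is exactly why we must work with the noncommutative thickening $\cE$ rather than the sheaf $E$ itself, and why we proved \ref{Gsph conditions ok} and \ref{EX tensor global} in the preceding subsection.

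To show $\cE^\perp = {}^\perp\cE$, the displayed Serre duality immediately gives
\[
\RHom_X(\cE,a)=0 \iff \RHom_X(a,\cE[3])=0 \iff \RHom_X(a,\cE)=0,
\]
where the last equivalence uses that shifting a zero object leaves it zero. This proves the two subcategories coincide.

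For the spanning class property, suppose first $\RHom_X(c,a)=0$ for every $c\in\Omega=\cE\cup\cE^\perp$. Taking $c=\cE$ gives $a\in{}^\perp\cE=\cE^\perp$, and then taking $c=a\in\cE^\perp$ gives $\RHom_X(a,a)=0$, so $a=0$. Dually, if $\RHom_X(a,c)=0$ for all $c\in\Omega$, taking $c=\cE$ gives $a\in\cE^\perp$, and taking $c=a$ again forces $a=0$.

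The step I expect to be essentially routine here is the spanning argument itself, which is a standard trick once the perpendicular categories are known to agree; the real content --- and the main potential obstacle --- is ensuring that Serre duality applies, which in turn rests on the non-trivial perfectness of $\cE$ and on the line bundle isomorphism $\cE\otimes\omega_X\cong\cE$. Both of these have already been secured in \S\ref{global flops}, so the proof reduces to the short chain of implications above.
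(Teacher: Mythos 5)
Your proposal is correct and follows essentially the same route as the paper: both use the perfectness of $\cE$ together with the Serre functor relative to $\omega_{\mathbb{C}}$ and the isomorphism $\cE\otimes\omega_X\cong\cE$ to identify $\cE^\perp$ with ${}^\perp\cE$, and then run the standard spanning-class argument. (The only blemish is a harmless label swap --- from $\RHom_X(\cE,a)=0$ one lands directly in $\cE^\perp$ rather than ${}^\perp\cE$ --- which is immaterial once the equality of the two perpendiculars has been established.)
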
 
\begin{proof}
For the first statement, note that 
\[
\RHom_X(a,\cE)=0\stackrel{\scriptsize\mbox{\ref{EX tensor global}}}{\iff}\RHom_X(a,\cE\otimes\omega_X)=0 
\stackrel{\scriptsize\mbox{\ref{Serre for X}}}{\iff}
 \RHom_X(a,\mathbb{S}\cE)=0.
\]
On the other hand
\[
\RHom_X(\cE,a)=0\iff\RHom_{T}(\RHom_X(\cE,a),\omega_T)=0
\stackrel{\scriptsize\mbox{\ref{Serre def}}}{\iff}
 \RHom_X(a,\mathbb{S}\cE)=0
\]
where $T=\mathbb{C}$.  Combining gives $\cE^\perp={}^\perp\cE$.

Now we check that  $\Omega:=\cE\cup\cE^\perp$ is a spanning class.  If $\RHom_X(a,c)=0$ for all $c\in\Omega$, then $a\in{}^{\perp}\cE$ and so by the above $a\in\cE^{\perp}$. Thus taking $c:=a$ gives $\RHom_X(a,a)=0$ and hence $a=0$.  Also, if $\RHom_X(c,a)=0$ for all $c\in\Omega$, in particular  $\RHom_X(\cE,a)=0$ and so $a\in\cE^\perp$.  As above, this gives $a=0$.
\end{proof}

The following is expected from the standard theory of Seidel--Thomas twists.  We note here that in our setting the proof follows immediately from the local model, bypassing subtleties with the octahedral axiom in the `usual' proof \cite[p124]{Toda}.

\begin{lemma}\label{goodbye octahedral}
$T_{\cE}^*(\cE)\cong\cE[2]$, $T_{\cE}^*(i_*E)\cong i_*E[2]$ and functorially $\Id|^{\phantom *}_{\cE^{\perp}}\cong T^*_{\cE}|^{\phantom *}_{\cE^{\perp}}$. 
\end{lemma}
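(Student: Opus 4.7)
The plan is to reduce all three statements to inputs already established earlier in the paper, principally Proposition 6.6 (commutativity of the inverse twist with $\Ri_*$), Proposition 5.18 (the action of the local twist on $\cEU$ and $E$), self-injectivity of $\CA$ (Corollary 5.10), and the identity $\cE^\perp={}^\perp\cE$ from Lemma 6.9.

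For the first two isomorphisms, I will simply evaluate the commutative square of Proposition 6.6 at $\cEU$ and at $E$. Since $\Ri_*\cEU\cong\cE$ by Lemma 3.8 and $\Ri_*E\cong i_*E$ by Lemma 2.3, commutativity gives
\[
T^*_\cE(\cE)\cong T^*_\cE(\Ri_*\cEU)\cong \Ri_*T^*_{\cEU}(\cEU),\qquad T^*_\cE(i_*E)\cong \Ri_*T^*_{\cEU}(E).
\]
Plugging in $T^*_{\cEU}(\cEU)\cong\cEU[2]$ and $T^*_{\cEU}(E)\cong E[2]$ from Proposition 5.18(2), and pushing forward once more via Lemma 3.8 and Lemma 2.3, yields the required $\cE[2]$ and $i_*E[2]$.

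For the third assertion, I will start from the functorial triangle of Lemma 6.4(1),
\[
T^*_\cE(x)\to x\to G_X\circ G_X^{\LA}(x)\to,
\]
which is natural in $x$, so it suffices to show that $G_X\circ G_X^{\LA}$ vanishes functorially on $\cE^\perp$. For this I will show the stronger statement that $G_X^{\LA}(x)=0$ whenever $x\in\cE^\perp$. The adjunction $G_X^{\LA}\dashv G_X$ from \eqref{thelastlabel!}, applied with $y=\CA$, gives
\[
\RHom_\CA(G_X^{\LA}(x),\CA)\cong\RHom_X(x,G_X(\CA))\cong\RHom_X(x,\cE),
\]
and by Lemma 6.9 the right-hand side vanishes for $x\in\cE^\perp={}^\perp\cE$. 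Since $\CA$ is self-injective by Corollary 5.10, the duality $\RHom_\CA(-,\CA)$ is faithful on $\Db(\mod\CA)$, so this forces $G_X^{\LA}(x)=0$ as required.

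The main technical care needed is to confirm that $G_X^{\LA}(x)\in\Db(\mod\CA)$ with finitely generated cohomology so that self-injectivity can be applied; this is clear from the explicit description $G_X^{\LA}=G_U^{\LA}\circ i^*$ together with the formula $G_U^{\LA}(-)=\RHom_\CA(\RHom_U(-,\cEU),\CA)$, because $\cEU$ is perfect and the resulting Ext-groups are supported only at $\m$ and hence finite dimensional. Functoriality of the resulting isomorphism $\Id\cong T^*_\cE$ on $\cE^\perp$ then follows automatically from the naturality of the triangle in $x$.
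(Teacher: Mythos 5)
Your proof is correct and follows essentially the same route as the paper: the first two isomorphisms are obtained exactly as in the paper's proof, by chaining \ref{diagram commutes U to X} with \ref{track contraction2FM} and \ref{highers vanish}, \ref{highers vanish 1}. For the last assertion the paper likewise reduces to showing the correction term vanishes on ${}^{\perp}\cE=\cE^{\perp}$ via the natural transformation coming from the kernel morphism, and it does so by substituting the explicit formula $G_U^{\LA}(i^*a)\cong\RHom_{\CA}(\RHom_X(a,\cE),\CA)$ from \ref{adjoints Db compose lemma} (which already encodes the self-injectivity of $\CA$), whereas you re-derive the same vanishing through the adjunction $G_X^{\LA}\dashv G_X$ applied to $\CA$ together with the self-injective duality $\RHom_{\CA}(-,\CA)$ --- a repackaging of the same ingredients, with the boundedness point you raise already covered by \ref{funct triangles global}(2).
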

\begin{proof}
The first and second assertions follow from our local mutation calculation: 
\[
T_{\cE}^*(\cE)\stackrel{\mbox{\scriptsize\eqref{inclusion line}}}{\cong} 
T_{\cE}^*(\Ri_*\cEU)\stackrel{\mbox{\scriptsize\ref{diagram commutes U to X}}}{\cong} 
\Ri_*T^*_{\cEU}(\cEU)\stackrel{\mbox{\scriptsize\ref{track contraction2FM}}}{\cong} 
\Ri_* \cEU[2]\stackrel{\mbox{\scriptsize\eqref{inclusion line}}}{\cong}  
\cE[2]. 
\]
\[
T^*_{\cE}(i_*E)\stackrel{\mbox{\scriptsize\eqref{inclusion line}}}{\cong} 
T^*_{\cE}(\Ri_*E)\stackrel{\mbox{\scriptsize\ref{diagram commutes U to X}}}{\cong} 
\Ri_*T^*_{\cEU}(E)\stackrel{\mbox{\scriptsize\ref{track contraction2FM}}}{\cong} 
\Ri_* E[2]\stackrel{\mbox{\scriptsize\eqref{inclusion line}}}{\cong}  
i_*E[2]. 
\]

For the last assertion, since any morphism between kernels induces a natural transformation between the resulting Fourier--Mukai functors, the morphism $\cW_X\to\cO_{{\Delta},X}$ in \ref{global twist defin} induces a natural transformation $\beta\colon T^*_{\cE}\to\Id$.  Since $\FM{\cQ_X}=\RDerived i_*\circ G_U^{\phantom{L}}\circ G_U^{\LA}\circ i^*$ by \eqref{equation action part of twist}, and we have
\[
G_U^{\LA}(i^*a)
\stackrel{\scriptsize\mbox{\ref{adjoints Db compose lemma}}}{\cong}\RHom_{\CA}(\RHom_{U}(i^*a,\cEU), \CA)
\stackrel{\scriptsize\mbox{\eqref{inclusion line}}}{\cong}\RHom_{\CA}(\RHom_{X}(a,\cE), \CA)
\]
which is zero for all  $a\in{}^{\perp}\cE$, we see that $\beta_a\colon T^*_{\cE}(a)\to a$ is an isomorphism for all $a\in{}^{\perp}\cE$.  Thus functorially $\Id|^{\phantom *}_{{}^{\perp}\cE}\cong T^*_{\cE}|^{\phantom *}_{{}^{\perp}\cE}$.  Since $\cE^\perp={}^\perp\cE$ by 
\ref{Serre gives spanning class}, the result follows.
\end{proof}

We are almost ready to prove our main result.  The key point below is that we check that the functor is full and faithful using the spanning class from \ref{Serre gives spanning class}, where the hard case (case 1 in the proof of \ref{global flops via local methods} below) is already taken care of by the known derived equivalence on the local model (\S\ref{alg NC twists section}).  Once we know our functor is fully faithful, the problem then is that the usual Serre functor trick to establish equivalence does not work in our possibly singular setting. However we are able to bypass this with the following.

\begin{lemma}\label{equiv trick}
Let $\cC$ be a triangulated category, and $F\colon \cC\to\cC$ an exact fully faithful functor with right adjoint $F^{\RA}$.  Suppose that there exists an object $c\in\cC$ such that $F(c)\cong c[i]$ for some $i$, and further $F(x)\cong x$ for all $x\in c^\perp$.  Then $F$ is an equivalence. 
\end{lemma}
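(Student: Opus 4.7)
The plan is to invoke the criterion \cite[1.50]{HuybrechtsFM}: an exact fully faithful functor $F$ admitting a right adjoint $F^{\RA}$ is an equivalence if and only if $F^{\RA}$ is conservative, in the sense that $F^{\RA}(b)\cong 0$ forces $b\cong 0$. Thus the entire task reduces to showing that $F^{\RA}(b)=0$ implies $b=0$, and the two hypotheses of the lemma are designed to deliver exactly this via a two-step argument.

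First I would show that any such $b$ lies in $c^{\perp}$. Since $F^{\RA}$ is exact (being right adjoint to an exact functor of triangulated categories), $F^{\RA}(b[n])\cong F^{\RA}(b)[n]=0$ for all $n\in\mathbb{Z}$. Combining adjunction with the hypothesis $F(c)\cong c[i]$ yields
\[
\Hom_{\cC}(c, b[n])\cong \Hom_{\cC}(c[i],b[n+i])\cong \Hom_{\cC}(F(c),b[n+i])\cong \Hom_{\cC}(c, F^{\RA}(b)[n+i])=0
\]
for every $n$, so $b\in c^{\perp}$.

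Second, I would exploit the hypothesis that $F$ acts as the identity on $c^{\perp}$. Since $b\in c^{\perp}$ we have $F(b)\cong b$, and consequently
\[
\Hom_{\cC}(b,b)\cong \Hom_{\cC}(F(b),b)\cong \Hom_{\cC}(b,F^{\RA}(b))=0.
\]
Hence $\Id_b=0$, which forces $b=0$, and the conservativity criterion is verified.

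There is no real obstacle here: once one accepts the Huybrechts conservativity criterion, the argument is purely formal, and the only mildly subtle point is recognising that the vanishing of $F^{\RA}(b)$ automatically puts $b$ into $c^{\perp}$ so that the second hypothesis of the lemma can be applied.
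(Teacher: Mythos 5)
Your proposal is correct and follows essentially the same route as the paper: reduce to the conservativity criterion of \cite[1.50]{HuybrechtsFM}, use adjunction together with $F(c)\cong c[i]$ to place any $b$ with $F^{\RA}(b)=0$ into $c^{\perp}$, and then use the hypothesis on $c^{\perp}$ to force $b=0$. The only cosmetic difference is the last step, where the paper deduces $F^{\RA}(b)\cong b$ on $c^{\perp}$ from the unit isomorphism $\Id\xrightarrow{\sim}F^{\RA}\circ F$, while you instead compute $\Hom_{\cC}(b,b)\cong\Hom_{\cC}(F(b),b)\cong\Hom_{\cC}(b,F^{\RA}(b))=0$ directly; both are valid.
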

\begin{proof}
Since $F$ is fully faithful, by \cite[1.24]{HuybrechtsFM} we have
\begin{eqnarray}
\Id\xrightarrow{\sim} F^{\RA}\circ F.\label{nat isos}
\end{eqnarray}
Now we know that $F(a)\cong a$ for all $a\in c^\perp$, hence it follows from \eqref{nat isos} that $F^{\RA}(a)\cong a$ for all $a\in c^\perp$.

Now by \cite[1.50]{HuybrechtsFM}, $F$ is an equivalence provided that $F^{\RA}(x)=0$ implies that $x=0$.   Hence suppose that $F^{\RA}(x)=0$, then trivially \opt{10pt}{$\Hom_{\cC}(c,F^{\RA}(x)[j])=0$}\opt{12pt}{\[\Hom_{\cC}(c,F^{\RA}(x)[j])=0\]} for all $j\in\mathbb{Z}$.  By adjunction $\Hom_{\cC}(F(c),x[j])=0$ for all $j\in\mathbb{Z}$, so since $F(c)\cong c[i]$, we have $\Hom_{\cC}(c,x[j])=0$ for all $j\in\mathbb{Z}$, i.e.\  $x\in c^{\perp}$.  But then $F^{\RA}(x)=x$, so $x=0$, and we are done.
\end{proof}

\begin{thm}\label{global flops via local methods}
With the assumptions as in \S\ref{strategy}, the inverse  twist 
\[
T^*_\cE\colon\Db(\coh X)\to\Db(\coh X)\]
is an equivalence.
\end{thm}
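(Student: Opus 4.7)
The plan is to deduce the theorem from Lemma~\ref{equiv trick}, applied with $F = T^*_\cE$, $c = \cE$, and $i = 2$. All the inputs to that lemma are already in hand: the existence of a right adjoint preserving $\Db(\coh X)$ comes from Corollary~\ref{our NC twist has adjoints}, and both the shift $T^*_\cE(\cE) \cong \cE[2]$ and the functorial identity $T^*_\cE|_{\cE^\perp} \cong \Id|_{\cE^\perp}$ are supplied by Lemma~\ref{goodbye octahedral}. So the entire theorem reduces to showing that $T^*_\cE$ is fully faithful.

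To establish fully faithfulness, I would use the standard spanning-class criterion (e.g.\ \cite[1.49]{HuybrechtsFM}) applied to the spanning class $\Omega = \cE \cup \cE^\perp$ from Lemma~\ref{Serre gives spanning class}; the left and right adjoints required by that criterion are both provided by~\ref{our NC twist has adjoints}. Concretely, one must check that
\[
\Hom_X(a, b[j]) \longrightarrow \Hom_X(T^*_\cE a,\, T^*_\cE b[j])
\]
is an isomorphism for all $a,b \in \Omega$ and all $j \in \mathbb{Z}$. Three of the four cases are immediate: if $a,b \in \cE^\perp$ then by~\ref{goodbye octahedral} the functor is literally the identity on both arguments; and in either mixed case (one argument in $\cE^\perp$, the other equal to $\cE$), both source and target vanish, using $\cE^\perp = {}^\perp\cE$ from~\ref{Serre gives spanning class} together with $T^*_\cE(\cE) \cong \cE[2]$.

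The only nontrivial case is $a = b = \cE$, and here I would reduce to the Zariski-local model, where the corresponding statement is already known. Using $\cE \cong \Ri_*\cEU$ from~\ref{highers vanish} and the fully faithfulness of $\Ri_*$ (as $i$ is an open embedding), one obtains a natural isomorphism $\Hom_X(\cE, \cE[j]) \cong \Hom_U(\cEU, \cEU[j])$. By Proposition~\ref{diagram commutes U to X}, this isomorphism intertwines the action of $T^*_\cE$ with that of $T^*_{\cEU}$. But $T^*_{\cEU}$ is an autoequivalence of $\Db(\coh U)$ by Theorem~\ref{I tilts not complete local}, where $I_{\con}$ was shown to be a tilting $\Lambda$-module. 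Hence the induced map on Homs is an isomorphism in the remaining case, and fully faithfulness follows.

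The principal obstacle is not the spanning-class verification itself, which becomes essentially formal once the local equivalence is in place, but rather the preceding technical work guaranteeing that the kernel $\cW_X'$ and the adjoints of the associated Fourier--Mukai functors preserve $\Db(\coh X)$---this is Corollary~\ref{our NC twist has adjoints}. Without those adjoints, neither the Huybrechts spanning-class criterion nor Lemma~\ref{equiv trick} is applicable, and in the singular setting the standard Serre-functor shortcut used in~\cite[2.4]{BKR} is unavailable. Once those adjunctions are secured, the argument is a clean globalisation of the local tilting equivalence from \S\ref{alg NC twists section}.
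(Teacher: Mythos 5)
Your proposal is correct and follows essentially the same route as the paper: reduce to fully faithfulness via Lemma~\ref{equiv trick} (using \ref{our NC twist has adjoints} and \ref{goodbye octahedral}), then verify the spanning-class criterion on $\Omega=\cE\cup\cE^\perp$ case by case, with the only nontrivial case $\cG_1=\cG_2=\cE$ handled by transporting the question to $U$ via $\Ri_*$ and \ref{diagram commutes U to X}, where $T^*_{\cEU}$ is already an equivalence (ultimately because $I_{\con}$ is tilting, \ref{I tilts not complete local}). The paper's Cases 1--4 match your argument step for step, so there is nothing to add.
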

\begin{proof}
We know that $T^*_{\cE}\colon\Db(\coh X)\to\Db(\coh X)$ by \ref{funct triangles global}, and has left and right adjoints that preserve $\Db(\coh X)$ by \ref{our NC twist has adjoints}.  Combining \ref{goodbye octahedral} with \ref{equiv trick}, it suffices to show that $T^*_{\cE}$ is fully faithful. 

We know from \ref{Serre gives spanning class} that $\Omega=\cE\cup\cE^\perp$ is a spanning class, so by  \cite[1.49]{HuybrechtsFM} we just need to check that 
\begin{eqnarray}
T^*_{\cE}\colon\Hom_{\D(X)}(\cG_1,\cG_2[i])\to\Hom_{\D(X)}(T^*_{\cE}(\cG_1),T^*_{\cE}(\cG_2)[i])\label{bij map}
\end{eqnarray}
is a bijection for all $\cG_1,\cG_2\in\Omega$ and all $i\in\mathbb{Z}$.  This is taken care of in the following four cases.\\
\emph{Case 1:} $\cG_1=\cG_2=\cE$.  Since $\Ri_* \cEU=\cE$, by \ref{diagram commutes U to X} we have a commutative diagram
 \[
\begin{array}{c}
\opt{10pt}{\begin{tikzpicture}}
\opt{12pt}{\begin{tikzpicture}[scale=1.2]}
\node (a1) at (0,0) {$\Hom_{\D(U)}(\cEU,\cEU[i])$};
\node (a2) at (5.5,0) {$\Hom_{\D(X)}(\cE,\cE[i])$};
\node (b1) at (0,-1.5) {$\Hom_{\D(U)}(T^*_{\cEU}(\cEU),T^*_{\cEU}(\cEU)[i])$};
\node (b2) at (5.5,-1.5) {$\Hom_{\D(X)}(T^*_{\cE}(\cE),T^*_{\cE}(\cE)[i])$};
\draw[->] (a1) to node[above] {$\scriptstyle \Ri_*$} node[below] {$\scriptstyle \sim$} (a2);
\draw[->] (b1) to node[above] {$\scriptstyle \Ri_*$}  node[below] {$\scriptstyle \sim$} (b2);
\draw[->] (a1) to node[left] {$\scriptstyle T^*_{\cEU}$} (b1);
\draw[->] (a2) to node[right] {$\scriptstyle T^*_{\cE}$} (b2);
\end{tikzpicture}
\end{array}
\]
We already know that the left-hand map is a bijection since $T^*_{\cEU}$ is an equivalence, hence the right-hand map is also a bijection.\\
\emph{Case 2:} $\cG_1=\cE$, $\cG_2\in\cE^{\perp}$.  The left-hand side of \eqref{bij map} is $H^i\RHom_X(\cE,\cG_2)$, which is zero.  By \ref{goodbye octahedral}, the right-hand side of \eqref{bij map} is $\Hom(\cE[2],\cG_2[i])=H^{i-2}\RHom_X(\cE,\cG_2)$, which also equals zero.\\
\emph{Case 3:} $\cG_1\in\cE^{\perp}$, $\cG_2=\cE$.  Since $\cE^\perp={}^\perp\cE$ by \ref{Serre gives spanning class},  we obtain $\RHom_X(\cG_1,\cE)=0$.  As in Case 2, this implies that both sides of \eqref{bij map} are zero.\\
\emph{Case 4:} $\cG_1,\cG_2\in\cE^{\perp}$. Since functorially $\Id|^{\phantom *}_{\cE^{\perp}}\cong T^*_{\cE}|^{\phantom *}_{\cE^{\perp}}$ by \ref{goodbye octahedral},  \eqref{bij map} is bijective for all $\cG_1,\cG_2\in\cE^{\perp}$.
\end{proof}

\subsection{The global noncommutative twist}\label{newsubsection} In this subsection we define the noncommutative twist functor, and show it sits in the expected functorial triangle.  We consider the triangle of Fourier--Mukai kernels
\begin{eqnarray}
\cW_X \to \cO_{\Delta,X}  \xrightarrow{\phi_X} \cQ_X\to\label{7.3 triangle}
\end{eqnarray} 
from \ref{global twist defin}, which by \ref{funct triangles global} give the Fourier--Mukai functors $T^*_{\cE}$, $\Id$ and $G_X^{\phantom{L}}\circ G_X^{\LA}$ respectively, which all preserve $\Db(\coh X)$.    Set $D:=(-)^\vee\otimes p_1^*\omega_X[\dim X]$.  By \ref{existence of adjoints thm}\eqref{existence of adjoints thm 3} and \ref{our NC twist has adjoints}\eqref{our NC twist has adjoints 2}, applying $D$ to \eqref{7.3 triangle} induces the right adjoints, which also preserve $\Db(\coh X)$.

\begin{defin}\label{NCTwist finally}
We define the \emph{noncommutative twist} $T_{\cE}\colon\Db(\coh X)\to \Db(\coh X)$ to be $\FM{D(\cW_X)}$.
\end{defin}

\begin{prop}\label{prop twist functorial triangle}
With the assumptions as in \S\ref{strategy},
\begin{enumerate}
\item\label{prop twist functorial triangle 1} $T_{\cE}$ is the inverse functor to $T^*_{\cE}$. In particular, $T_{\cE}$ is an autoequivalence.
\item\label{prop twist functorial triangle 2} There is a functorial triangle
\[
G_X^{\phantom{R}}\circ G_X^{\RA} \to \Id \to T_{\cE}\to 
\]
on $\Db(\coh X)$, where $G_X^{\phantom{R}}\circ G_X^{\RA} \cong \RHom_X(\cE,-)\otimes_{\CA}^{\bf L}\cE$.
\end{enumerate}
\end{prop}
\begin{proof}
(1) Since $T^*_{\cE}$ is an equivalence, its right adjoint $T_{\cE}$ is its inverse.\\
(2) By \ref{existence of adjoints thm}, the triangle
\begin{eqnarray*}
D(\cQ_X)\to D(\cO_{\Delta,X})\to D(\cW_X)\to
\end{eqnarray*}
gives the right adjoints.  The Fourier--Mukai functor of $D(\cQ_X)$ gives the right adjoint to $G_X^{\phantom{L}}\circ G_X^{\LA}$, namely $G_X^{\phantom{R}}\circ G_X^{\RA}$, which by \eqref{thelastlabel!} is isomorphic to $\RHom_X(\cE,-)\otimes^{\bf L}_{\CA}\cE$.  The Fourier--Mukai functor of $D(\cO_{\Delta,X})$ gives the right adjoint to the identity, which is the identity.  By definition, the Fourier--Mukai functor of $D(\cW_X)$ gives $T_{\cE}$. The result follows.
\end{proof}

\subsection{Relation with flop--flop functors} Finally, in this subsection we show that the noncommutative twist $T_{\cE}$ is inverse to an autoequivalence naturally associated with the flop, namely the flop--flop functor $\flopflop$. We thence obtain an intrinsic description of  $\flopflop$ in terms of the noncommutative deformation theory of the curve, without assuming that the flop exists.

Recall that the category $\Per X$ of perverse sheaves is defined as
\begin{equation}\label{eqn per X}
\Per X:=\left\{ a\in\Db(\coh X)\left| \begin{array}{c}H^i(a)=0\mbox{ if }i\neq 0,-1\\
f_*H^{-1}(a)=0\mbox{, }\Rfi{1}_* H^0(a)=0\\ \Hom(c,H^{-1}(a))=0\mbox{ for all }c\in\cC \end{array}\right. \right\},
\end{equation}
with $\cC:=\{ c\in\coh X\mid \Rf_*c=0\}$.

The following two straightforward lemmas, \ref{twist lemma} and \ref{flopflop lemma}, record the properties of $T_{\cE}$ and $\flopflop$ that we will need. In \ref{psi num equiv lemma}, we deduce that the functor $\TwistVsFlop := T_{\cE} \circ \flopflop$ preserves the category $\Per X$ above, and preserves the numerical equivalence class of skyscraper sheaves. The proof that $T_{\cE}$ is inverse to $\flopflop$ then concludes in \ref{twist flopflop}, following an argument given by Toda.

\begin{lemma}\label{twist lemma}
The twist functor $T_{\cE}$ has the following properties:
\begin{enumerate}
\item\label{twist lemma 1} $\Rf_* \circ \, T_{\cE} \cong \Rf_*$.
\item\label{twist lemma 2} $T_{\cE} (\cO_X) \cong \cO_X$.
\item\label{twist lemma 3} $T_{\cE} (E) \cong E[-2] $.
\end{enumerate}
\end{lemma}
\begin{proof}Property \eqref{twist lemma 1} may be deduced by applying $\Rf_*$ to the triangle representation of $T_{\cE}$ in \ref{prop twist functorial triangle}\eqref{prop twist functorial triangle 2}, using the fact that $\Rf_* \circ \, G_X \cong 0$, which we prove now.  The commutative diagram
\[
\begin{array}{c}
\begin{tikzpicture}
\node (U) at (1,0) {$U$};
\node (X) at (3.5,0) {$X$};
\draw[right hook->] (U) to node[above] {$\scriptstyle i$} (X);
\node (Uc) at (1,-1.5) {$\Spec R$};
\node (Xc) at (3.5,-1.5) {$X_{\con}$};
\draw[right hook->] (Uc) to node[above] {$\scriptstyle i_{\con}$} (Xc);
\draw[->] (X) --  node[right] {$\scriptstyle f$} (Xc);
\draw[->] (U) --  node[right] {$\scriptstyle f$}  (Uc);
\end{tikzpicture}
\end{array}
\]
gives
\begin{align*}
\Rf_* \circ \, G_X & = \Rf_* \Ri_* ( - \otimes_{\CA}^{\bf L} \cEU ) \\
& \cong \Ri_{\con *} \Rf_* ( - \otimes_{\CA}^{\bf L} \cEU )\\
& \cong \Ri_{\con *} ( - \otimes_{\CA}^{\bf L} \Rf_*\cEU ).
\end{align*}
As $\cEU$ is filtered by $E$, it follows that $\Rf_*\cEU \cong0$, and the property follows.

To obtain \eqref{twist lemma 2}, we note that $f^!\cO_{X_\con}=\cO_X$ since $f$ is crepant, so
\[
\RHom_X(\cE,\cO_X)=\RHom_X(\cE,f^!\cO_{X_\con})\cong\RHom_{X_\con}(\Rf_*\cE,\cO_{X_\con})=0,
\]
again using $\RDerived{f_{*}} \cE \cong 0$.  Hence  $T_{\cE} (\cO_X) \cong \cO_X$ by \ref{prop twist functorial triangle}\eqref{prop twist functorial triangle 2}.

The last property \eqref{twist lemma 3} follows from \ref{goodbye octahedral}.
\end{proof}

\begin{lemma}\label{flopflop lemma}
The flop--flop functor $\flopflop$ has the following properties:
\begin{enumerate}
\item\label{flopflop lemma 1} $\Rf_* \circ \, \flopflop \cong \Rf_*$.
\item\label{flopflop lemma 2} $\flopflop (\cO_X) \cong \cO_X$.
\item\label{flopflop lemma 3} $\flopflop (E) \cong E[2] $.
\end{enumerate}
\end{lemma}
\begin{proof}
We denote the flop by $X^+$. Property \eqref{flopflop lemma 1} follows immediately since the individual flop functors commute with the pushdown by \cite[(4.4)]{Bridgeland}.  Further, by \cite[(4.8)]{Bridgeland} noting the correction to the sign in \cite[(27)]{TodaWidth}, the flop functor induces an equivalence
\[
\Per X\to\mPer X^+.
\]
By morita theory, under this equivalence projectives must go to projectives, and simples must go to simples.  By Van den Bergh's description of these objects \cite[3.2.7, 3.5.7, 3.5.8]{VdB1d}, and the fact that the flop functor commutes with pushdown, it follows that the flop functor must take $\cO_X$ to $\cO_{X^+}$, and $E$ to $E[1]$.  Applying the flop functor again,  properties \eqref{flopflop lemma 2} and \eqref{flopflop lemma 3} follow.
\end{proof}

The following observations are used in the proof of \ref{twist flopflop}, where we will show that the functor $\TwistVsFlop$ defined below is in fact isomorphic to the identity.

\begin{prop}\label{psi num equiv lemma}
The functor $\TwistVsFlop := T_{\cE} \circ \flopflop$ is an autoequivalence of $\Db(\coh X)$, preserving the category $\Per X$ of perverse sheaves given in \eqref{eqn per X}. It also preserves the numerical equivalence class of skyscraper sheaves, that is for points $p\in X$ we have \[\chi(\cL,\cO_p)=\chi(\cL,\TwistVsFlop(\cO_p))\] for all locally free sheaves $\cL$ on $X$, where $\chi$ is the Euler characteristic. For definitions, see for instance \cite[7.61]{FMNahm}.
\end{prop}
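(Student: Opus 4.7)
The plan is to verify each of the three assertions. The autoequivalence property is immediate: $T_{\cE}$ is an autoequivalence by \ref{prop twist functorial triangle}(1), and $\flopflop = \flopback \circ \flop$ is an autoequivalence as a composition of the Bridgeland--Chen flop equivalences.

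For preservation of $\Per X$, the essential inputs are that $\Rf_*\circ\TwistVsFlop\cong\Rf_*$ (combining \ref{twist lemma}(1) and \ref{flopflop lemma}(1)), and that $\TwistVsFlop$ restricts to the identity on the open complement $X\setminus C$. The latter holds because the Fourier--Mukai kernels of both constituent functors agree with $\cO_{\Delta,X}$ away from $C\times C$: for $T_{\cE}$ this follows from the fact that $\cQ_X'$ is a pushforward from $U\times U$ and its associated Fourier--Mukai functor factors through $\cE$, which has support $C$; for $\flopflop$, because the flop is an isomorphism off $C$. Together these properties imply that for $a\in\Per X$, the cohomology sheaves $H^i(\TwistVsFlop(a))$ agree with $H^i(a)$ off $C$, and the conditions in \eqref{eqn per X} involving $\Rf_*$ are automatic. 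To verify the remaining cohomological conditions near $C$, we reduce via \ref{diagram commutes U to X} (and its flop analogue) to checking that $\TwistVsFlop|_U$ preserves $\Per U$. On $U$ this is manageable since $\Per U$ has only finitely many simple objects supported on $C$, corresponding to the simple modules of the tilted algebra $\Lambda$ at $\m$, and $\TwistVsFlop|_U$ fixes each such simple up to isomorphism; the case of $E$ is immediate from $T_{\cE_U}(E)\cong E[-2]\cong\flopflop|_U(E)$, with the other simples handled analogously.

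For the numerical class of skyscrapers, the case $p\notin C$ is immediate: $T_{\cE}(\cO_p)\cong\cO_p$ since $\RHom_X(\cE,\cO_p)=0$ when $p\notin\Supp(\cE)=C$, and $\flopflop(\cO_p)\cong\cO_p$ since the flop is an isomorphism off $C$. For $p\in C$, we use the autoequivalence property to write
\[
\chi(\cL,\TwistVsFlop(\cO_p))=\chi(\TwistVsFlop^{-1}(\cL),\cO_p).
\]
The object $\TwistVsFlop^{-1}(\cL)$ is a perfect complex (both constituent functors preserve perfect complexes, using perfectness of $\cE$ from \ref{Gsph conditions ok}), and it agrees with $\cL$ off $C$, hence has generic rank $1$. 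For any perfect complex $G$ on the Gorenstein $3$-fold $X$ and any closed point $p\in X$, a local computation yields $\chi(G,\cO_p)=\rank(G)$, by representing $G_p$ as a bounded complex of finite free $\cO_{X,p}$-modules and computing the Euler characteristic of $\RHom_{\cO_{X,p}}(G_p,k(p))$ as a complex of $\K$-vector spaces. Applied with $G=\TwistVsFlop^{-1}(\cL)$ this gives $\chi(\cL,\TwistVsFlop(\cO_p))=1=\chi(\cL,\cO_p)$.

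The principal obstacle is Part 2: while Parts 1 and 3 follow from formal properties of autoequivalences together with the structure of the Fourier--Mukai kernels off the exceptional locus, preservation of the perverse heart requires the reduction to the local model via \ref{diagram commutes U to X} and explicit identification of the action on the perverse simples at $\m$.
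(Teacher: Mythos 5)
The gap is in your argument that $\TwistVsFlop$ preserves $\Per X$, and it is twofold. First, your claim that $\TwistVsFlop|_U$ fixes each perverse simple supported on $C$, with the non-$E$ simples "handled analogously", is not available: the computations you can cite (\ref{track contraction2FM}, \ref{twist lemma}(3), \ref{flopflop lemma}(3), \cite[5.1(i)]{TodaRes}) cover only $E$, and there is no analogous computation of $T_{\cE}(\cO_C[1])$ or of $\flopflop(\cO_C[1])$ in the paper; indeed the paper obtains the fact that $\TwistVsFlop$ fixes $\cO_C[1]$ as a \emph{consequence} of heart preservation (an equivalence preserving $\Per X$ permutes its simples, and since $E$ and the $\cO_x$, $x\notin C$, are fixed, so is $\cO_C[1]$), so feeding it in as an input is circular. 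Second, even granting that every simple supported on $C$ is fixed, the step "fixes the simples, hence preserves the heart" fails because $\Per U$ (and $\Per X$) is not a length category: objects such as $\cO_X$ have no finite filtration by $E$, $\cO_C[1]$ and skyscrapers, so the heart is not determined by its simples. The paper's proof avoids both issues by working with Bridgeland's aisle description: property (1), $\Rf_*\circ\TwistVsFlop\cong\Rf_*$, shows that $\TwistVsFlop^{-1}$ preserves the subcategory $\{c\mid \Rf_*c=0\}$, whose heart $\cC$ \emph{is} a length category with unique simple $E$; property (2), $\TwistVsFlop(E)\cong E$, then gives preservation of the standard $t$-structure there, hence of $\cC^{>0}$ and $\cC^{<0}$, hence of ${}^p\cA^{\leq 0}$, ${}^p\cA^{\geq 0}$ and of $\Per X$. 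Some argument of this kind is needed; as written your sketch does not close.

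By contrast, your treatment of the numerical class of skyscrapers is a genuinely different and essentially correct route. The paper filters $\cO_p$ (for $p\in C$) by perverse simples and uses additivity of $\chi$, which requires heart preservation and the permutation-of-simples argument; you instead use invariance of the Euler pairing under the equivalence together with the facts that $\TwistVsFlop^{-1}(\cL)$ is perfect and agrees with $\cL$ off $C$, so that $\chi(\TwistVsFlop^{-1}\cL,\cO_p)$ equals its locally constant rank, namely $1$. You should justify preservation of perfect complexes (for instance via the intrinsic characterization of perfect objects as the homologically finite ones, or via the finite projective dimension of the kernels in \ref{our NC twist has adjoints}) and the identity-off-$C$ claim for $T_{\cE}$ (immediate from the triangle in \ref{prop twist functorial triangle}(2), since $\RHom_X(\cE,-)\otimes^{\bf L}_{\CA}\cE$ takes values supported on $C$); with those in place this part works and, unlike the paper's argument, is independent of the preservation of $\Per X$.
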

\begin{proof}
Note that $\TwistVsFlop$ is an equivalence by construction, using \ref{prop twist functorial triangle}\eqref{prop twist functorial triangle 1}, and also that combining \ref{twist lemma} and \ref{flopflop lemma} gives
\begin{enumerate}
\item\label{psi num equiv lemma proof 1} $\Rf_* \circ \, \TwistVsFlop \cong \Rf_*$, 
\item\label{psi num equiv lemma proof 2} $\TwistVsFlop (E) \cong E$.
\end{enumerate}
We first argue that $\TwistVsFlop$ preserves the category $\Per X$ of perverse sheaves. As explained in \cite[Lemma 3.2]{Bridgeland}, this category is obtained as the intersection, for perversity $p=0$, of the subcategories
\begin{align*}
{}^p \cA^{\leq 0} & := \{ a\in\Db(\coh X)\mid \Rf_*a \in \Db(\coh X_{\con})^{\leq 0}, \: a \in {}^{\perp}(\cC^{>p}) \}, \\
{}^p \cA^{\geq 0} & := \{ a\in\Db(\coh X)\mid \Rf_*a \in \Db(\coh X_{\con})^{\geq 0}, \: a \in (\cC^{<p}){}^{\perp} \},
\end{align*}
where $\cC^{>p}:=\{ c\in\Db(\coh X)\mid \Rf_*c=0\mbox{, }H^i(c)=0\mbox{ for }i\leq p\}$ and $\cC^{<p}$ is defined similarly.  Now  the property \eqref{psi num equiv lemma proof 1} implies that $\TwistVsFlop^{-1}$ preserves $\{ c\in\Db(\coh X)\mid \Rf_*c=0\}$, and using property \eqref{psi num equiv lemma proof 2} it is easy to see that  $\TwistVsFlop^{-1}$ preserves the induced standard $t$-structure on $\{ c\in\Db(\coh X)\mid \Rf_*c=0\}$, and hence preserves $\cC^{>p}$ and $\cC^{<p}$.  From this, it follows that $\TwistVsFlop^{-1}$ preserves ${}^p \cA^{\leq 0}$ and ${}^p \cA^{\geq 0}$, and hence $\Per X$, as claimed.

Next we argue that $\TwistVsFlop$ preserves the numerical equivalence class of skyscraper sheaves. Taking a sheaf $\cO_p$ for a point $p\in X$, we consider two cases. If $p \not\in C$, then $f$ is an isomorphism at $p$. Property \eqref{psi num equiv lemma proof 1} above then gives that $\TwistVsFlop(\cO_p) \cong \cO_p$, and so there is nothing to prove in this case. If $p \in C$, then $\cO_p$ is no longer simple in $\Per X$ but it is filtered by the simples $\{\omega_C[1]\mbox{, }E\}$.  Thus we have a series of  exact triangles 
\begin{equation}
\begin{array}{c}
\begin{tikzpicture}
\node (b0) at (0,0) {$b_0$};
\node (a1) at (2,0) {$a_1$};
\node (a2) at (4,0) {$a_2$};
\node (dots) at (6,0) {$\hdots$};
\node (a3) at (8,0) {$a_{n-1}$};
\node (a4) at (10,0) {$\cO_p$};
\node (b1) at (1,-1) {$b_1$};
\node (b2) at (3,-1) {$b_2$};
\node (b4) at (9,-1) {$b_n$};
\draw[->] (b0) -- (a1);
\draw[->] (a1) -- (b1);
\draw[->,densely dotted] (b1) -- (b0);
\draw[->] (a1) -- (a2);
\draw[->] (a2) -- (b2);
\draw[->,densely dotted] (b2) -- (a1);
\draw[->] (a2) -- (dots);
\draw[->] (dots) -- (a3);
\draw[->] (a3) -- (a4);
\draw[->] (a4) -- (b4);
\draw[->,densely dotted] (b4) -- (a3);
\end{tikzpicture}
\end{array}\label{filt 1}
\end{equation}
where all $b_i\in \{\omega_C[1]\mbox{, }E\}$.  But $\TwistVsFlop$ is an equivalence preserving $\Per X$, so it must permute the simples.  Since we already know that $\TwistVsFlop$ fixes $E$ and $\cO_x$ for $x\in X\backslash C$, it follows that $\TwistVsFlop$ also fixes $\omega_C[1]$ and hence fixes all the simples.

Because of this, we also have a series of exact triangles
\begin{equation}
\begin{array}{c}
\begin{tikzpicture}
\node (b0) at (0,0) {$b_0$};
\node (a1) at (2,0) {$\TwistVsFlop a_1$};
\node (a2) at (4,0) {$\TwistVsFlop a_2$};
\node (dots) at (6,0) {$\hdots$};
\node (a3) at (8,0) {$ \TwistVsFlop a_{n-1}$};
\node (a4) at (10,0) {$\TwistVsFlop \cO_p$};
\node (b1) at (1,-1) {$b_1$};
\node (b2) at (3,-1) {$b_2$};
\node (b4) at (9,-1) {$b_n$};
\draw[->] (b0) -- (a1);
\draw[->] (a1) -- (b1);
\draw[->,densely dotted] (b1) -- (b0);
\draw[->] (a1) -- (a2);
\draw[->] (a2) -- (b2);
\draw[->,densely dotted] (b2) -- (a1);
\draw[->] (a2) -- (dots);
\draw[->] (dots) -- (a3);
\draw[->] (a3) -- (a4);
\draw[->] (a4) -- (b4);
\draw[->,densely dotted] (b4) -- (a3);
\end{tikzpicture}
\end{array}\label{filt 2}
\end{equation}
Using additivity of $\chi(\cL,-)$ on triangles, the left-hand triangles in \eqref{filt 1} and \eqref{filt 2} imply that $\chi(\cL,a_1)=\chi(\cL,\TwistVsFlop a_1)$.  Inducting to the right on \eqref{filt 1} and \eqref{filt 2} gives $\chi(\cL,\cO_p)=\chi(\cL,\TwistVsFlop \cO_p)$, as required.
\end{proof}

The following is the main result of this subsection.

\begin{prop}\label{twist flopflop}
$T_{\cE}$ is an inverse of $\flopflop$, the Bridgeland--Chen flop--flop functor of \S\ref{sect hom alg der auto}.
\end{prop}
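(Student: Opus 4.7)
The plan is to show that $\Psi := T_{\cE} \circ \flopflop^{-1}$ is naturally isomorphic to $\Id$, which is equivalent to the desired $T_{\cE} \cong \flopflop$. From \ref{psi num equiv lemma} together with its proof, we already know that $\Psi$ is an autoequivalence of $\Db(\coh X)$ which preserves $\Per X$, fixes every simple object of $\Per X$ (namely $E$, $\cO_C[1]$, and $\cO_x$ for $x \in X \setminus C$) up to isomorphism, preserves the numerical equivalence class of every skyscraper sheaf, and satisfies $\Psi(\cO_X) \cong \cO_X$.

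Following the strategy of Toda, I would proceed in three stages. \emph{First}, $\Psi$ is trivial away from $C$: both $T_{\cE}$ and $\flopflop$ restrict to the identity on $X \setminus C$. For $T_{\cE}$ this follows from the functorial triangle of \ref{prop twist functorial triangle}(2), since $G_X = \Ri_*(- \otimes^{\bf L}_{\CA} \cEU)$ has image supported on $C$; for $\flopflop$ this is standard, the flop being an isomorphism outside $C$. Hence one obtains a natural isomorphism $\Psi|_{X \setminus C} \cong \Id|_{X \setminus C}$.

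\emph{Second}, restrict to $U$ and transport $\Psi|_U$ through the tilting equivalence $\Db(\coh U) \cong \Db(\mod \Lambda)$ of \eqref{derived equivalence}, obtaining an autoequivalence $\widetilde{\Psi}$ of $\Db(\mod \Lambda)$ which fixes the vertex simples and the projective module $\Lambda$. Such an autoequivalence is given, up to natural isomorphism, by derived tensor product with a two-sided tilting complex; the conditions on simples and on $\Lambda$ should then force this tilting complex to be isomorphic to $\Lambda$ itself as a bimodule, so that $\widetilde{\Psi}$, and thence $\Psi|_U$, is naturally isomorphic to the identity. \emph{Third}, glue the natural isomorphisms on the open cover $\{U,\, X \setminus C\}$ to a global natural isomorphism $\Psi \cong \Id$, using that $\Psi$ is a Fourier--Mukai functor with kernel in $\Db(\coh X \times X)$ by \ref{our NC twist has adjoints}, and that such a kernel is determined by its restriction to the induced open cover of $X \times X$.

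The main obstacle is the second step: upgrading object-wise conditions (fixing simples and $\Lambda$ up to isomorphism) to a \emph{natural} isomorphism between $\widetilde{\Psi}$ and $\Id$. This rigidification requires analysing the action of $\widetilde{\Psi}$ on morphism spaces and extension classes between simples, and showing that the only two-sided tilting complex compatible with these constraints is the diagonal bimodule $\Lambda$. In essence, one must control the automorphism group of the identity functor on $\mod \Lambda$ under the constraints coming from the numerical conditions on $\Psi$.
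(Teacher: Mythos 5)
There is a genuine gap, and it sits exactly where you flag it: step two. Knowing that $\widetilde{\Psi}$ fixes the isomorphism classes of the simple $\Lambda$-modules and of $\Lambda$ does \emph{not} force the underlying two-sided tilting complex to be the diagonal bimodule. For instance, for any outer automorphism $\sigma$ of $\Lambda$ fixing the isomorphism classes of the simples, the bimodule ${}_{1}\Lambda_{\sigma}$ gives an autoequivalence satisfying all of your object-wise constraints (it fixes each simple and $\Lambda$ as a right module) without being naturally isomorphic to the identity. So the phrase ``should then force'' is precisely the assertion that needs proof, and no mechanism is supplied; in particular the numerical conditions on skyscrapers from \ref{psi num equiv lemma} — which are the decisive input — are never actually used. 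Your step three has a further problem: $\{U\times U,\ (X\setminus C)\times(X\setminus C)\}$ does not cover $X\times X$ (points $(a,b)$ with $a\in C$ and $b\notin U$ lie in neither piece), and even over a genuine cover, reconstructing a kernel and matching the two natural isomorphisms on the overlap is not automatic in the derived category.

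The paper's proof avoids the rigidification problem altogether by a global, point-object argument in the style of Toda. It shows that $\TwistVsFlop:=T_{\cE}\circ\flopflop^{-1}$ sends every skyscraper $\cO_p$ to a skyscraper: for $p\in C$ one uses that $\TwistVsFlop$ preserves $\Per X$ and fixes its simples, applies $\TwistVsFlop$ to the sequence $0\to\cI_p\to\cO_X\to\cO_p\to 0$ (exact in $\Per X$) to get a surjection $\cO_X\to H^0\TwistVsFlop(\cO_p)$, shows this $H^0$ is non-zero using numerical equivalence against a suitably anti-ample line bundle, and then kills $H^{-1}\TwistVsFlop(\cO_p)$ via the Grothendieck spectral sequence for $f$ and the perversity conditions; numerical equivalence then identifies the resulting sheaf as a skyscraper. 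The standard classification of equivalences taking points to points then yields $\TwistVsFlop\cong\cL\otimes\phi_*(-)$ — this is exactly where the passage from object-level data to a natural isomorphism is accomplished — and $\cL\cong\cO_X$, $\phi=\Id$ follow from $\TwistVsFlop(\cO_X)\cong\cO_X$ and $\Rf_*\circ\TwistVsFlop\cong\Rf_*$. To repair your outline you would need to replace step two by an argument of this kind; the constraints you list do not control the relevant group of outer automorphisms of $\Lambda$ (equivalently, of non-diagonal two-sided tilting complexes), so no purely local comparison of simples and projectives can close the gap.
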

\begin{proof}
We use the argument indicated in \cite[end of proof of 3.1]{Toda}, but write the proof in full to show that there is nothing particular about commutative deformations or smoothness used in {\em loc.\ cit.} that affects the proof. As above, consider the autoequivalence 
\begin{equation*}
\TwistVsFlop := T_{\cE} \circ \flopflop.
\end{equation*} 
Using \ref{twist lemma} and \ref{flopflop lemma} we know that $\TwistVsFlop(\cO_X)\cong\cO_X$, and by \ref{psi num equiv lemma} $\TwistVsFlop$ preserves the category $\Per X$ of perverse sheaves, and preserves the numerical equivalence class of skyscraper sheaves.

We now follow the argument in \cite[end of proof of 6.1]{TodaRes} to deduce that in fact $\TwistVsFlop\cong\Id$. We first show that for points $p\in X$, $\TwistVsFlop(\cO_p)$ is a skyscraper sheaf. Most of the work here is to show that $\TwistVsFlop(\cO_p)$ is a sheaf, as the result will then follow from the above remark on numerical equivalence classes. We take the short exact sequence of sheaves
\[ 
0 \to \cI_p \to \cO_X \to \cO_p \to 0, 
\]
and observe that it is also a short exact sequence in $\Per X$ (for this, we need only check that $\Rfi{1}_* = 0$ for each sheaf). Since $\TwistVsFlop$ preserves $\Per X$, and $\TwistVsFlop(\cO_X)\cong\cO_X$,
\[ 
0 \to \TwistVsFlop(\cI_p) \to \cO_X \to \TwistVsFlop(\cO_p) \to 0 \]
is also an exact sequence in $\Per X$.  The perversity gives that $H^1 \TwistVsFlop(\cI_p) = 0$, and so the associated long exact sequence yields a surjection 
\begin{equation}\label{eqn crucial surjection}
\cO_X \to H^0\TwistVsFlop(\cO_p).
\end{equation}
We claim that the target of this surjection must be non-trivial.  Otherwise, the perversity implies that $\TwistVsFlop(\cO_p)=H^{-1}\TwistVsFlop(\cO_p)[1]$, which then implies that for $\cL$ suitably anti-ample
\[
1= \chi(\cL,\cO_p) \stackrel{\mbox{\scriptsize\ref{psi num equiv lemma}}}{=} \chi(\cL,\TwistVsFlop(\cO_p))=-\chi(\cL,H^{-1}\TwistVsFlop(\cO_p))\leq 0,
\]
which is a contradiction.  Hence the map \eqref{eqn crucial surjection} above is non-trivial. Furthermore, it is adjoint to a map $\cO_{X_{\con}} \to f_* H^0\TwistVsFlop(\cO_p)$, which must consequently also be non-trivial, and so we find that \begin{equation}\label{eqn skyscraper image non-vanishing}f_* H^0\TwistVsFlop(\cO_p) \neq 0.\end{equation}

Now the fibres of $f$ have dimension at most $1$, so $\Rfi{>1}_*=0$ and therefore the Grothendieck spectral sequence
\[E^{p,q}_2 = \Rfi{p}_*(H^q \TwistVsFlop (-)) \to \RDerivedi{p+q}{(f_* \circ \TwistVsFlop)}(-) \cong \Rfi{p+q}_*(-)\]
collapses.  Hence the exact sequence of low-degree terms applied to $\cO_p$ gives a short exact sequence
\[ 
0 \to \Rfi{1}_* H^{-1} \TwistVsFlop(\cO_p) \to f_*(\cO_p) \to f_* H^{0} \TwistVsFlop(\cO_p) \to 0.
\]
The middle term is just $\cO_{f(p)}$. This surjects onto the last term, which we know by \eqref{eqn skyscraper image non-vanishing} to be non-zero.  We deduce that $\Rfi{1}_* H^{-1} \TwistVsFlop(\cO_p) = 0.$ Recall that $\TwistVsFlop(\cO_p) \in \Per X$, and hence $f_* H^{-1} \TwistVsFlop(\cO_p) = 0$ also, so we conclude that \[\Rf_* H^{-1} \TwistVsFlop(\cO_p) = 0,\] so that $H^{-1} \TwistVsFlop(\cO_p) \in \cC$, the category used in the definition of $\Per X$. Using once more the fact that $\TwistVsFlop(\cO_p) \in \Per X$, we see from the definition that \[\Hom_X( H^{-1} \TwistVsFlop(\cO_p), H^{-1} \TwistVsFlop(\cO_p) ) = 0,\] and thence that $H^{-1} \TwistVsFlop(\cO_p) =0,$ and that $\TwistVsFlop(\cO_p)$ is a sheaf. This sheaf is numerically equal to a skyscraper sheaf by \ref{psi num equiv lemma}, and thence by \cite[7.62(2)]{FMNahm} is itself a skyscraper sheaf.

Finally, we therefore have that $\TwistVsFlop(\cO_p) \cong \cO_{\phi(p)}$ for all $p$ and some function $\phi$. We deduce that $\phi$ is an automorphism because $\TwistVsFlop$ is an equivalence. Therefore \[\TwistVsFlop \cong \cL \otimes \phi_*(-)\] for some line bundle $\cL$, which must be trivial as $\TwistVsFlop(\cO_X) \cong \cO_X$. We must have $f \circ \phi = f$ because $\TwistVsFlop$ commutes with the contraction, and so $\phi = \Id$. This completes the proof.
\end{proof}

\appendix
\newpage
\section{List of Notations}
\label{list of notations}
\opt{10pt}{\begin{longtable}{p{2.7cm} p{9cm} p{0.5cm}}}
\opt{12pt}{\begin{longtable}{p{3cm} p{8.5cm} p{0.5cm}}}
$\eta$, $\varepsilon$ & unit and counit morphisms & \\
$(-)^{\ab}$ & abelianization & \\
$(-)^{\rm e}$ & enveloping algebra & \\
$\pd$ & projective dimension & \\
$\Mod A$ ($\mod A$) & (finitely generated) right $A$-modules \\
$\add M$ & summands of finite sums of module $M$ \\
$\CM R$ & category of maximal Cohen--Macaulay\opt{12pt}{\\&} $R$-modules \\

$\cart$ & category of local commutative artinian\opt{12pt}{\\&} $\K$-algebras \\

$\D$ ($\Db$) & derived category (bounded) \\ 
$\D^-$, $\D^+$ & derived category bounded above, and below \\ 
$\FM{\cW}$ & Fourier--Mukai functor with kernel $\cW$ \\
$F^{\RA}$, $F^{\LA}$ & right and left adjoints, for a functor $F$\\ 

$\flop$, $\flopback$ & flop functors & \S\ref{sect hom alg der auto} \\
$\flopflop$ & flop--flop functor & \S\ref{sect hom alg der auto}\\
$\alg_1$ ($\calg_1$) & category of $1$-pointed (commutative) \opt{10pt}{$\K$-algebras} & \ref{defin.n-pointed_non-comm_K-alg} \\
\opt{12pt}{& $\K$-algebras \\}
$\art_1$ ($\cart_1$) & category of $1$-pointed artinian (commutative) $\K$-algebras & \ref{defin.n-pointed_non-comm_K-alg} \\
$\Gamma$ & $1$-pointed artinian $\K$-algebra & \ref{defin.n-pointed_non-comm_K-alg} \\
$a,b\in\cA$ & objects of abelian category $\cA$ & \S\ref{NCdef section} \\
$\PairsCat{\cA}{\Gamma}$ & $\Gamma$-objects in $\cA$ & \ref{rem pairs cat}\\
$\Def^{\cA}_{a}$ ($\cDef^{\cA}_a$) & noncommutative (commutative) deformation functor & \ref{NC deformation definition} \\
$C\subset X$ & curve $C$ in a $3$-fold $X$ & \S\ref{setup section 2.1} \\
$C^{\redu}$ & reduced subscheme associated to $C$ ($\cong \mathbb{P}^1$) & \S\ref{setup section 2.1} \\
$E$ & sheaf $\cO_{\mathbb{P}^1}(-1)$ supported on $C^{\redu}$ & \S\ref{setup section 2.1} \\
$p \in X_{\con}$ & singular point $p$ in contracted $3$-fold $X_{\con}$ & \S\ref{setup section 2.1} \\
$f$ & contraction morphism & \S\ref{setup section 2.1} \\
$U_{\con}$ & affine open neighbourhood of singular point $p$ & \S\ref{setup section 2.1} \\
$R$ & $\mathbb{C}$-algebra, with $U_{\con} \cong \Spec R$ & \S\ref{setup section 2.1} \\
$U$ & $:=f^{-1}(U_{\con})$, open neighbourhood of curve $C$ & \S\ref{setup section 2.1} \\
$i$ & open embedding of $U$ in $X$ & \S\ref{setup section 2.1} \\
$\cV$ & $:=\cO_U\oplus\cN$, tilting bundle on $U$ & \S\ref{sect geom to alg} \\
$N$ & $:=f_*\cN$ & \S\ref{sect geom to alg} \\
$T$ & $:=\Hom_U(\cV,E)$ & \S\ref{sect geom to alg} \\
$\Per U$ & perverse sheaves on $U$ & \S\ref{sect geom to alg} \\
$\Lambda$ & endomorphism algebra $\End_U(\cV)$ & \S\ref{sect geom to alg} \\
$\Lambda_{\con}$ & contraction algebra associated to $\Lambda$ & \ref{contraction def lambda} \\
$I_{\con}$ & ideal associated to $\Lambda_{\con}$ & \ref{contraction def lambda} \\
$[R]$ & ideal of morphisms of $\Lambda$ which factor \opt{10pt}{through $\add R$} & \ref{contraction def lambda} \\
\opt{12pt}{& through $\add R$ \\}
$\widehat{U}$ & formal fibre of $f$ over singular point $p$ & \S\ref{complete local geometry summary} \\ 
$\widehat{R}$, $\mathfrak{R}$ & completion of $R$ at $p$ & \S\ref{complete local geometry summary} \\ 
$\widehat{\Lambda}$ & completion of $\Lambda$ & \S\ref{complete local geometry summary} \\
$\cO_{\widehat{U}}\oplus \cN_1$ & tilting bundle on $\widehat{U}$ & \S\ref{complete local geometry summary} \\ 
$N_1$& $:=f_*\cN_1$ & \S\ref{complete local geometry summary} \\
$\mathbb{F}$ & morita equivalence & \S\ref{complete local geometry summary} \\
$\AB$ & basic algebra, morita equivalent to $\widehat{\Lambda}$ & \ref{def basic algebra} \\
$\CA$ & contraction algebra & \ref{def basic algebra} \\
$\cE$, $\cEU$ & noncommutative universal sheaf on $X$, \opt{10pt}{and on $U$} & \S\ref{sect chasing rep couple} \\
\opt{12pt}{& and on $U$ \\}
$\cF$, $\cFU$ & commutative universal sheaf on $X$, and on $U$ & \S\ref{sect chasing rep couple} \\
$\DA$ & $:=\End_X(\cE)$, noncommutative deformation \opt{10pt}{algebra} & \ref{contraction def} \\
\opt{12pt}{& algebra \\}
$\width(C)$, $\cwidth(C)$ & width, and commutative width, of $C$ & \ref{width def} \\
$\upmu$, $\upnu$& right and left mutation & \ref{setup2} \\
$\Phi$ & mutation functor & \S\ref{sect on the mutation functor} \\
$\mathbb{S}$ & Serre functor & \ref{Serre def for alg} \\
$T_{\con}$, $T^*_{\con}$ & algebraic twist, and inverse & \ref{def twist alg local} \\
$T_{\cEU}$, $T^*_{\cEU}$ & noncommutative twist on $U$, and inverse & \ref{local twist geom defin} \\
$\cO_Y^\Gamma$ & $:=\cO_Y\otimes_{\underline{\mathbb{C}}}\underline{\Gamma}$ & \ref{notn O-Gamma modules} \\
$\cW_U$, $\cQ_U$ & FM kernel for $T^*_{\cEU}$, and related kernel & \S\ref{section global inverse twist} \\
$T^*_{\cE}$ & inverse twist on $X$ & \ref{global twist defin} \\
$\cW_X$, $\cQ_X$ & FM kernel for $T^*_{\cE}$, and related kernel & \ref{global twist defin} \\
$T_{\cE}$ & noncommutative twist on $X$ & \ref{NCTwist finally}
\end{longtable}


\begin{thebibliography}{HMS1}



\bibitem[AL10]{AL1}
R.~Anno and T.~Logvinenko, \emph{Orthogonally spherical objects and spherical fibrations}, Adv.\ Math.\ \textbf{286} (2016), 338--386.

\bibitem[AL13]{AL2}
R.~Anno and T.~Logvinenko, \emph{Spherical DG functors},  {\sf arXiv:1309.5035}.

\bibitem[AM]{AM}
P.~S.~Aspinwall and D.~R.~Morrison, \emph{Quivers from Matrix Factorizations}, Commun.\ Math Phys., vol.\ 313 (2012), 607--633. 

\bibitem[BBH]{FMNahm}
C.~Bartocci, U.~Bruzzo, and D.~Hern\'andez~Ruip\'erez, \emph{Fourier--Mukai and Nahm Transforms in Geometry and Mathematical Physics}, Progress in Mathematics, \textbf{276}, Birkh\"auser, 2009.

\bibitem[B02]{Bridgeland}
T.~Bridgeland, \emph{Flops and derived categories}, Invent. Math. \textbf{147} (2002), no.~3, 613--632.

\bibitem[BKR]{BKR}
T.~Bridgeland, A.~King and M.~Reid, \emph{The McKay correspondence as an equivalence of derived categories}, J. Amer. Math. Soc. \textbf{14} (2001), no. 3, 535--554. 

\bibitem[BH]{BH}
R.-O.~Buchweitz and L.~Hille, \emph{Hochschild (co-)homology of schemes with tilting object}, Trans.\ Amer.\ Math.\ Soc.\ \textbf{365} (2013), no.~6, 2823--2844.

\bibitem[C02]{Chen}
J.-C.~Chen, \emph{Flops and equivalences of derived categories for threefolds with only terminal Gorenstein singularities}, J. Differential Geom. \textbf{61} (2002), no. 2, 227--261.

\bibitem[CKM]{ClemensKollarMori}
H.~Clemens, J.~Koll\'ar, and S.~Mori, \emph{Higher dimensional complex geometry}, Ast\'erisque, vol. 166, Soci\'et\'e Math\'ematique de France, Paris, 1988.

\bibitem[DW2]{DW2}
W.~Donovan and M.~Wemyss, \emph{Contractions and deformations}, {\sf arXiv:1511.00406}.

\bibitem[ELO1]{ELO1}
A.~Efimov, V.~Lunts and D.~Orlov, \emph{Deformation theory of objects in homotopy and derived categories I: General theory}, Adv.\ Math.\ \textbf{222} (2009), no.~2, 359--401.

\bibitem[ELO2]{ELO2}
A.~Efimov, V.~Lunts and D.~Orlov, \emph{Deformation theory of objects in homotopy and derived categories II: Pro-representability of the deformation functor}, Adv.\ Math.\ \textbf{224} (2010), no.~1, 45--102.

\bibitem[ELO3]{ELO3}
A.~Efimov, V.~Lunts and D.~Orlov, \emph{Deformation theory of objects in homotopy and derived categories III: Abelian categories}, Adv.\ Math.\ \textbf{226} (2011), no.~5, 3857--3911.

\bibitem[E80]{Eisenbud}
D.~Eisenbud, \emph{Homological algebra on a complete intersection, with an application to group representations}, 
Trans.\ Amer.\ Math.\ Soc.\ \textbf{260} (1980), no.~1, 35--64.

\bibitem[E95]{EisenbudBook}
D.~Eisenbud, \emph{Commutative algebra. With a view toward algebraic geometry}, Graduate Texts in Mathematics, \textbf{150} Springer-Verlag, New York, 1995. xvi+785 pp.

\bibitem[E03]{Eriksen2}
E.~Eriksen, \emph{An introduction to noncommutative deformations of modules}, Noncommutative algebra and geometry, 90--125, Lect.\ Notes Pure Appl.\ Math., \textbf{243}, Chapman and Hall/CRC, Boca Raton, FL, 2006.

\bibitem[E07]{Eriksen}
E.~Eriksen, \emph{Computing noncommutative deformations of presheaves and sheaves of modules}, Canad.\ J.\ Math.\ \textbf{62} (2010), no.~3, 520--542.


\bibitem[G06]{Ginz}
V.~Ginzburg, \emph{Calabi--Yau algebras}, \textsf{arXiv:math/0612139}.

\bibitem[HMS1]{HMS07}
D.~Hern{\'a}ndez Ruip{\'e}rez, A.~C.~L{\'o}pez Mart{\'{\i}}n and F.~Sancho de Salas, \emph{Fourier--Mukai transforms for Gorenstein schemes}, Adv.\ Math.\ \textbf{211} (2007), no.~2, 594--620.

\bibitem[HMS2]{HMS09}
D.~Hern{\'a}ndez Ruip{\'e}rez, A.~C.~L{\'o}pez Mart{\'{\i}}n and F.~Sancho de Salas, \emph{Relative integral functors for singular fibrations and singular partners}, J.\ Eur.\ Math.\ Soc.\ (JEMS) \textbf{11} (2009), no.~3, 597--625.



\bibitem[H06]{HuybrechtsFM}
D.~Huybrechts, \emph{Fourier-Mukai transforms in algebraic geometry}, Oxford Mathematical Monographs. The Clarendon Press, Oxford University Press, Oxford, 2006. viii+307 pp.



\bibitem[IR08]{IR}
O.~Iyama and I.~Reiten, \emph{Fomin-Zelevinsky mutation and tilting modules over Calabi-Yau algebras}, Amer.\ J.\ Math.\ \textbf{130} (2008), no.\ 4, 1087--1149.

\bibitem[IW08]{IW}
O.~Iyama and M.~Wemyss, \emph{The classification of special Cohen Macaulay modules}, Math.\ Z.\ \textbf{265} (2010), no.\ 1, 41--83.


\bibitem[IW10]{IW4}
O.~Iyama and M.~Wemyss, \emph{Maximal modifications and Auslander--Reiten duality for non-isolated singularities}, Invent.\ Math. \textbf{197} (2014), no.~3, 521--586.

\bibitem[IW11]{IW5}
O.~Iyama and M.~Wemyss, \emph{Singular derived categories of $\mathds{Q}$-factorial terminalizations and maximal modification algebras},  Adv.\ Math.\ \textbf{261} (2014), 85--121.




\bibitem[KaM]{KM}
S.~Katz and D.~R.~Morrison, \emph{Gorenstein threefold singularities with small resolutions via invariant theory for Weyl groups}, J.\ Algebraic Geom.\ \textbf{1} (1992) 449--530.

\bibitem[K94]{Kawa}
Y.~Kawamata, \emph{General hyperplane sections of nonsingular flops in dimension 3}, Math.\ 
Res.\ Lett.\ \textbf{1} (1994) 49--52.

\bibitem[KoM]{KollarMori}
J.~Koll\'ar and S.~Mori, \emph{Classification of three-dimensional flips}, J.\ Amer.\ Math.\ Soc.\ \textbf{5} (1992), no.~3, 533--703.

\bibitem[L02]{Laudal}
O.~A.~Laudal, \emph{Noncommutative deformations of modules}. The Roos Festschrift volume, 2.\ Homology Homotopy Appl.\ \textbf{4} (2002), no.~2, part 2, 357--396.

\bibitem[LV]{LowenVdB}
W.~Lowen, M.~Van~den~Bergh, \emph{Deformation theory of Abelian categories}. Trans.\ Amer.\ Math.\ Soc.\ \textbf{358} (12) (2006), 5441--5483.

\bibitem[P83]{Pinkham}
H.~Pinkham, \emph{Factorization of birational maps in dimension 3}, Singularities (P. Orlik, ed.), Proc. Symp. Pure Math., vol. 40, Part 2, 343--371, American Mathematical Society, Providence, 1983.

\bibitem[R83]{Pagoda}
M.~Reid, \emph{Minimal models of canonical 3-folds}, Algebraic varieties and analytic varieties (Tokyo, 1981), 131--180, Adv. Stud. Pure Math., 1, North-Holland, Amsterdam, 1983.

\bibitem[S08]{Segal}
E.~Segal, \emph{The $A_\infty$ deformation theory of a point and the derived categories of local Calabi--Yaus}, J.\ Alg.\ \textbf{320} (2008), no.~8, 3232--3268.

\bibitem[ST01]{ST}
P.~Seidel and R.~P.~Thomas, \emph{Braid group actions on derived categories of sheaves}, Duke Math.\ Jour.\ \textbf{108} (2001), 37--108.

\bibitem[S13]{Agata}
A.~Smoktunowicz, \emph{Growth, entropy and commutativity of algebras satisfying prescribed relations}, Selecta Math.\ (N.S.) \textbf{20} (2014), no.~4, 1197--1212.

\bibitem[T06]{TodaRes}
Y.~Toda, \emph{Stability conditions and crepant small resolutions}, preprint version 2, {\sf arXiv:0512648v2}.

\bibitem[T07]{Toda}
Y.~Toda, \emph{On a certain generalization of spherical twists},
Bulletin de la Soci\'et\'e Math\'ematique de France \textbf{135}, fascicule 1 (2007), 119--134.

\bibitem[T14]{TodaWidth}
Y.~Toda, \emph{Non-commutative width and Gopakumar-Vafa invariants}, Manuscripta Math. \textbf{148} (2015), 521--533.

\bibitem[V04]{VdB1d}
M.~Van den Bergh, \emph{Three-dimensional flops and noncommutative rings}, 
Duke Math. J. \textbf{122} (2004), no.~3, 423--455. 

\bibitem[W]{HomMMP}
M.~Wemyss, \emph{Aspects of the homological minimal model program}, {\sf arXiv:1411.7189}.

\end{thebibliography}
\end{document}